\theoremstyle{plain}
\newtheorem{thm}{\protect\theoremname}
\theoremstyle{definition}
\newtheorem{defn}[thm]{\protect\definitionname}
\theoremstyle{definition}
\newtheorem{example}[thm]{\protect\examplename}
\theoremstyle{remark}
\newtheorem{rem}[thm]{\protect\remarkname}
\theoremstyle{plain}
\newtheorem{prop}[thm]{\protect\propositionname}
\theoremstyle{plain}
\newtheorem{lem}[thm]{\protect\lemmaname}
\theoremstyle{plain}
\newtheorem{cor}[thm]{\protect\corollaryname}
\theoremstyle{plain}
\newtheorem{assumption}[thm]{\protect\assumptionname}
\DeclareMathSymbol{\shortminussymb}{\mathbin}{AMSa}{"39}
\providecommand{\assumptionname}{Assumption}
\providecommand{\corollaryname}{Corollary}
\providecommand{\definitionname}{Definition}
\providecommand{\examplename}{Example}
\providecommand{\lemmaname}{Lemma}
\providecommand{\propositionname}{Proposition}
\providecommand{\remarkname}{Remark}
\providecommand{\theoremname}{Theorem}
\begin{document}
\title{Poincaré inequalities for Markov chains: a meeting with Cheeger, Lyapunov
and Metropolis}
\author{Christophe Andrieu, Anthony Lee, Sam Power, Andi Q. Wang\\
\\
School of Mathematics, University of Bristol}

\maketitle

\begin{abstract}
We develop a theory of weak Poincaré inequalities to characterize
convergence rates of ergodic Markov chains. Motivated by the application
of Markov chains in the context of algorithms, we develop a relevant
set of tools which enable the practical study of convergence rates
in the setting of Markov chain Monte Carlo methods, but also well
beyond.

\newpage{}

\tableofcontents{}
\end{abstract}
\global\long\def\dif{\mathrm{d}}%
\global\long\def\Var{\mathrm{Var}}%
\global\long\def\R{\mathbb{R}}%
\global\long\def\X{\mathcal{X}}%
\global\long\def\calE{\mathcal{E}}%
\global\long\def\E{\mathsf{E}}%
\global\long\def\Ebb{\mathbb{E}}%

\global\long\def\ELL{\mathrm{L}^{2}}%
\global\long\def\osc{\mathrm{osc}}%
\global\long\def\Id{\mathrm{Id}}%
\global\long\def\essup{\mathrm{ess\,sup}}%
 
\global\long\def\shortminus{\mathrm{\shortminussymb}}%
\global\long\def\muess{\mathrm{ess_{\mu}}}%

\newpage{}

\section{Introduction}

This report is the result of a research programme initiated in \cite{ALPW2021}
that aims to understand and develop functional-analytic tools to characterize
the rate of convergence to equilibrium of discrete-time Markov chains.
While analysis of the right-spectral gap of time-reversible Markov
chains is fairly standard and has played an important rôle in the
analysis of Markov chain Monte Carlo (MCMC) algorithms, functional-analytic
results for nonreversible or subgeometrically convergent Markov chains
are scarce. Notable exceptions are \cite{fill1991eigenvalue} and
\cite{diaconis1996nash}, the latter being the closest in spirit to
our work. On the other hand, the characterization of the convergence
to equilibrium of \textit{continuous-time} processes, both reversible
and nonreversible, geometric and subgeometric, is considerably more
developed. Study of subgeometric rates of convergence can be traced
back to \cite{liggett1991l_2}, which was later generalized and developed
in \cite{Rockner2001}, with a general framework relying on \textit{weak
Poincaré inequalities} (WPIs). Further significant contributions to
the analysis of diffusion processes were made by the French school
in the late 2000s -- early 2010s in a series of contributions, for
instance \cite{bakry2008simple,Bakry2010,cattiaux2010functional,Cattiaux2012}.

Beyond the scattered nature of this literature, the continuous-time
scenario possesses a plethora of specific technical difficulties,
which often render it difficult to penetrate for the uninitiated.
On the other hand, while the discrete-time Markov chain setup is indeed
technically simpler, it has its own subtleties and challenges, which
have not thus far been covered in a comprehensive way in the literature.
As such, many of our present results are not merely transpositions
of existing continuous-time results into the discrete-time setting. 

Importantly, the main motivation behind our work being our interest
in MCMC methods -- and more generally algorithms which utilize ergodic
Markov chains -- we address numerous questions not addressed in the
existing literature, concerning for example optimality and comparison
of Markov chains. Our own recent experience shows that these functional-analytic
tools we develop are complementary to the classical drift and minorization
approach, à la Meyn and Tweedie \cite{meyn:tweedie:1993}, which has
proved particularly useful and fruitful in the context of MCMC algorithms.
We provide several concrete examples and applications of our techniques
which are relevant for the analysis of MCMC methods; in particular
we have been able to answer some questions (see, for instance, \cite{ALPW2021}
or Subsection~\ref{subsec:Spectral-gap-of-RWM}) which had eluded
us and others previously.

\subsection{A roadmap}

Beyond an attempt to develop a coherent and self-contained document
on WPIs for Markov chains, we also make a number of novel contributions.

This manuscript can be summarized as follows:
\begin{itemize}
\item Section~\ref{sec:Fundamentals} focuses on definitions of weak Poincaré
inequalities (WPIs) in the discrete-time setting and their immediate
implications. In Subsection~\ref{subsec:fundamentals-Definitions-and-basic}
three equivalent parametrizations of WPIs are discussed in detail
and we summarise their implications for rates of convergence to equilibrium.
In Subsection~\ref{subsec:bounded_to_p}, we connect convergence
for bounded functions in $\ELL$ with convergence of $\mathrm{L}^{p}$
functions. We establish in Subsection~\ref{subsec:fundamentals-Deducing-WPIs-from}
reverse implications: showing that a given rate of convergence implies
the existence of a WPI. In Subsection~\ref{subsec:fundamentals-Bounds-on-the}
we show how WPIs can be used to bound directly the asymptotic variance
of ergodic averages. In Subsection~\ref{subsec:fundamentals-Towards-spectral-interpretations}
we draw links between WPIs and subgeometric rates of convergence with
spectral properties of the operators involved.
\item Section~\ref{sec:Optimal-choices} is dedicated to the notion of
\textit{optimal} WPIs (Subsection~\ref{subsec:optimal-choices-alpha-beta}),
lower bounds on rates of convergence (Subsection~\ref{subsec:optimal-choice-Lower-bounds-on}),
comparison results of the Peskun--Tierney type (Subsection \ref{subsec:optimal-choice-Ordering-of-rates}),
optimal sieve functionals (Subsection~\ref{subsec:optimal-choice-phi})
and a form of duality (Subsection~\ref{subsec:optimal-choice-Duality}).
\item Section~\ref{sec:Establishing-WPIs} develops practical tools for
establishing WPIs in practice. In Subsection~\ref{subsec:Cheeger-meets-Poincar=0000E9}
we generalize Cheeger inequalities for Markov chains to establish
WPIs. In Subsection~\ref{subsec:establish-WPI-WPIs-from-RUPI} we
establish links between $\mu-$irreducibility and the existence of
WPIs via the abstract \textit{RUPI} condition. Subsection~\ref{subsec:establish-WPI-Lyapunov-meets-Poincar=0000E9}
discusses connections between drift and minorization techniques with
Poincaré inequalities. We discuss an alternative strategy to establish
WPIs: a \textit{local} Poincaré inequality for a \textit{restricted}
version of the Markov chain is combined with a drift condition. Finally
in Subsection~\ref{subsec:establish-WPI-Restricted-Markov-chains},
we study how the knowledge of SPIs for restricted versions of a given
Markov chain can be used to deduce WPIs for the unrestricted chain.
\item In Section~\ref{sec:Examples-and-applications} we present applications
of the theory in particular scenarios. In Subsections~\ref{subsec:example-Lower-bounds-pseudo}--\ref{subsec:example-Lower-bounds-RWM-heavy-tail}
we establish lower bounds on the rate of convergence of a type of
pseudo-marginal algorithm and the random walk Metropolis (RWM) algorithm
targeting heavy-tailed distributions. In Subsection~\ref{subsec:Spectral-gap-of-RWM}
we establish dimension dependence of $d^{-1}$ of the spectral gap
of the RWM algorithm for a class of light-tailed target distributions,
effectively providing the first direct proof of this result. This
result is specialized to the Gaussian scenario in Subsection~\ref{subsec:Spectral-gap-for-gaussian}.
In Subsection~\ref{subsec:example-Central-limit-theorems} we show
how our results can be used to establish the existence of a central
limit theorem for ergodic averages.
\item Finally the Appendix contains some deferred proofs and miscellaneous
results omitted from the main body of the text.
\end{itemize}
The highlights of this report will ultimately be turned into standard,
more succinct and focussed manuscripts for specialists.

\subsection{Notation}

We will write $\mathbb{N}=\left\{ 1,2,\dots\right\} $ for the set
of natural numbers, $\mathbb{N}_{0}:=\mathbb{N}\cup\left\{ 0\right\} $,
and $\R_{+}=\left(0,\infty\right)$ for positive real numbers. 

Outside of specific examples, we will be working throughout on a general
measurable space $\left(\E,\mathscr{E}\right)$. 
\begin{itemize}
\item For a set $A\in\mathscr{E}$, its complement in $\E$ is denoted by
$A^{\complement}$. We denote the corresponding indicator function
by $\mathbf{1}_{A}:\E\to\left\{ 0,1\right\} $.
\item We assume that $\left(\E,\mathscr{E}\right)$ is equipped with a probability
measure $\mu$, and write $\ELL\left(\mu\right)$ for the Hilbert
space of (equivalence classes of) real-valued $\mu$--square-integrable
measurable functions with inner product 
\[
\langle f,g\rangle=\int_{\E}f\left(x\right)g\left(x\right)\,\dif\mu\left(x\right)\,,
\]
and corresponding norm $\|\cdot\|_{2,\mu}$, and if there is no ambiguity,
we may just write $\|\cdot\|_{2}$. We write $\ELL_{0}\left(\mu\right)$
for the set of functions $f\in\ELL\left(\mu\right)$ which also satisfy
$\mu(f)=0$.
\item More generally, for $p\in[1,\infty)$, we write $\mathrm{L}^{p}\left(\mu\right)$
for the Banach space of real-valued measurable functions with finite
$p$-norm, $\|f\|_{p}:=\left(\int_{\E}|f|^{p}\,\dif\mu\right)^{1/p}$,
and $\mathrm{L}_{0}^{p}\left(\mu\right)$ for $f\in\mathrm{L}^{p}\left(\mu\right)$
with $\mu\left(f\right)=0$.
\item We assume that the diagonal is measurable in $\mathsf{E}\times\mathsf{E}$,
i.e. $\{(x,x):x\in\mathsf{E}\}\in\mathscr{E}\otimes\mathscr{E}$.
This assumption holds, for instance, on a Polish space endowed with
its Borel $\sigma$-algebra.
\item We write $\mathscr{E}_{+}:=\{A\in\mathscr{E}:\mu(A)>0\}$.
\item For $\mu$ and $\nu$ probability measures on $\left(\E,\mathscr{E}\right)$,
we let $\left\Vert \mu-\nu\right\Vert _{{\rm TV}}:=\sup_{A\in\mathscr{E}}\left|\mu\left(A\right)-\nu\left(A\right)\right|$.
\item For a measurable function $f:\mathsf{E}\to\R$, let $\|f\|_{\osc}:=\mathrm{ess_{\mu}}\sup f-\mathrm{ess}_{\mu}\inf f$.
\item For two probability measures $\mu$ and $\nu$ on $(\E,\mathscr{E})$
we let $\mu\otimes\nu\left(A\times B\right)=\mu\left(A\right)\nu\left(B\right)$
for $A,B\in\mathscr{E}$. For a Markov kernel $P\left(x,\dif y\right)$
on $\E\times\mathscr{E}$, we write for $\bar{A}\in\mathscr{E}\otimes\mathscr{E}$,
the minimal product $\sigma$-algebra, $\mu\otimes P\left(\bar{A}\right)=\int_{\bar{A}}\mu\left(\dif x\right)P\left(x,\dif y\right)$. 
\item A point mass distribution at $x$ will be denoted by $\delta_{x}\left(\dif y\right)$.
\item $\Id:\ELL\left(\mu\right)\to\ELL\left(\mu\right)$ denotes the identity
mapping, $f\mapsto f$. We also use this symbol for the identity ${\rm Id}\colon\mathsf{X}\rightarrow\mathsf{X}$.
\item Given a bounded linear operator $T:\ELL\left(\mu\right)\to\ELL\left(\mu\right)$,
we let $\calE\left(T,f\right)$ be the Dirichlet form defined by $\left\langle \left(\Id-T\right)f,f\right\rangle $
for any $f\in\ELL\left(\mu\right)$.
\item For such an operator $T$, we write $T^{*}$ for its adjoint operator
$T^{*}:\ELL\left(\mu\right)\to\ELL\left(\mu\right)$, which satisfies
$\left\langle f,Tg\right\rangle =\left\langle T^{*}f,g\right\rangle $
for any $f,g\in\ELL\left(\mu\right)$.
\item For such an operator $T$, we denote its spectrum by $\sigma\left(T\right)$.
We denote the spectrum of the restriction of $T$ to $\ELL_{0}\left(\mu\right)$
by $\sigma_{0}(T)$.
\item For a $\mu\shortminus$invariant Markov kernel $T$ we let the right-spectral
gap be
\[
{\rm Gap}_{\mathrm{R}}\left(T\right):=\inf_{g\in\ELL_{0}\left(\mu\right),g\neq0}\frac{\calE\left(T,g\right)}{\left\Vert g\right\Vert _{2}^{2}}\,.
\]
\item For a given $f\in\ELL\left(\mu\right)$, the asymptotic variance is
defined as ${\rm {\rm var}}\left(T,f\right):=\lim_{n\to\infty}n{\rm {\rm var}}\left(\frac{1}{n}\sum_{i=1}^{n}T^{n}f\right)$.
\item We will write $a\wedge b$ to mean the (pointwise) minimum of real-valued
functions $a,b$ and $a\vee b$ for the corresponding maximum. For
$s\in\R$, we will write $\left(s\right)_{+}:=s\vee0$ for the positive
part.
\item $\inf A$ denotes the infimum of set $A\subset\R$ and $\inf\emptyset=\infty$.
\item For a norm $\left|\cdot\right|$, which will always be clear from
the context, we define the closed ball of radius $r$ around $x$
to be
\[
\mathcal{B}(x,r):=\left\{ y\in\mathsf{E}:\left|y-x\right|\leq r\right\} .
\]
\item We adopt the following $\mathcal{O}$ (resp. $\Omega$) notation to
indicate when functions grow no faster than (resp. no slower than)
other functions. For $a\in\mathbb{R}\cup\{\infty\}$
\begin{itemize}
\item If $f(x)\in\mathcal{O}(g(x))$ as $x\to a$, this means $\underset{x\to a}{\lim\sup}\left|\frac{f(x)}{g(x)}\right|<\infty$.
When $a=+\infty$ then we may drop explicit mention of $a$. 
\item If $f(x)\in\Omega(g(x))$ as $x\to a$, this means $\underset{x\to a}{\lim\inf}\left|\frac{f(x)}{g(x)}\right|>0$.
In particular $f\in\mathcal{O}(g)\iff g\in\Omega(f)$. 
\end{itemize}
\end{itemize}

\section{Fundamentals\label{sec:Fundamentals}}

\subsection{Definitions and basic properties\label{subsec:fundamentals-Definitions-and-basic}}

We first give the basic definitions needed in order to define a weak
Poincaré inequality.
\begin{defn}
\label{def:Phi_fn}
\begin{enumerate}
\item \label{enu:def_Phi1}We call a functional $\Phi:\ELL\left(\mu\right)\to\left[0,\infty\right]$
a \textit{sieve functional}, or \textit{sieve}, if for any $f\in\ELL\left(\mu\right)$,
$c>0$, it holds that
\[
\Phi\left(cf\right)=c^{2}\Phi\left(f\right),\quad\left\Vert f-\mu\left(f\right)\right\Vert _{2}^{2}\leqslant\mathfrak{a}\Phi\left(f-\mu\left(f\right)\right),
\]
for a finite constant $\mathfrak{a}:=\sup_{f\in\ELL_{0}\left(\mu\right)\backslash\left\{ 0\right\} }\left\Vert f\right\Vert _{2}^{2}/\Phi\left(f\right)$. 
\item Let $P$ be a $\mu\shortminus$invariant Markov kernel. We say that
a sieve is $P\shortminus$\textit{non-expansive} if $\Phi\left(Pf\right)\leqslant\Phi\left(f\right)$
for $f\in\ELL_{0}\left(\mu\right)$.
\end{enumerate}
\end{defn}

For simplicity and when no ambiguity is possible, we may refer to
a $P\shortminus$non-expansive sieve simply as a sieve.
\begin{example}
\label{exa:osc2}Our main example of a $P$--non-expansive sieve,
for any $P$, is $\Phi=\left\Vert \cdot\right\Vert _{{\rm osc}}^{2}$,
with $\mathfrak{a}\leq1$.
\end{example}

There are two ways to parameterize weak Poincaré inequalities for
$P$, which are equivalent under a mild assumption.
\begin{defn}
\label{def:WPI}We say that a $\mu\shortminus$reversible kernel $T$
satisfies a $\left(\Phi,\alpha\right)\shortminus$\textit{weak Poincaré
inequality}, abbreviated $\left(\Phi,\alpha\right)\shortminus$WPI,
if for a sieve $\Phi$ and a decreasing function $\alpha:\left(0,\infty\right)\to[0,\infty)$,
\begin{equation}
\left\Vert f\right\Vert _{2}^{2}\le\alpha\left(r\right)\calE\left(T,f\right)+r\Phi\left(f\right),\quad\forall r>0,f\in\ELL_{0}(\mu).\label{eq:WPI}
\end{equation}
Secondly, using the same notation, we can parameterize in terms of
$\beta$: we say that a $\left(\Phi,\beta\right)\shortminus$WPI holds
if:
\begin{equation}
\left\Vert f\right\Vert _{2}^{2}\le s\calE\left(T,f\right)+\beta\left(s\right)\Phi\left(f\right),\quad\forall s>0,f\in\ELL_{0}\left(\mu\right),\label{eq:beta-WPI}
\end{equation}
where $\beta:\left(0,\infty\right)\to[0,\infty)$ is a decreasing
function with $\beta\left(s\right)\to0$ as $s\to\infty$.
\end{defn}

If $T$ satisfies a $(\Phi,\alpha)$-WPI or a $(\Phi,\beta)$-holds
but the specific $\alpha$ or $\beta$ are not relevant, we may say
that a $\Phi$-WPI holds.

In practice, we are interested in bounding the convergence to equilibrium
of a given $\mu$-invariant Markov kernel, $P$. To obtain such bounds,
in the framework of Definition~\ref{def:WPI}, we will take $T=P^{*}P$,
or if $P$ is $\mu$-reversible, we may take directly $T=P$. 
\begin{rem}
\label{rem:nonrev_WPI}Given a general $\mu$-invariant Markov kernel
$T$ (which is not necessarily reversible), one can still define a
WPI for $T$, namely the requirement that (\ref{eq:WPI}) holds for
our general kernel $T$. However, it is enough to define (\ref{eq:WPI})
only for reversible kernels, since 
\begin{align*}
\calE\left(T,f\right) & =\left\langle \left(\Id-T\right)f,f\right\rangle \\
 & =\left\langle \left(\Id-\left(T+T^{*}\right)/2\right)f,f\right\rangle \\
 & =\calE\left(\left(T+T^{*}\right)/2,f\right),
\end{align*}
due to the fact that $\left(T-T^{*}\right)/2$ is antisymmetric, and
we are considering real-valued $f$. Since the kernel $\left(T+T^{*}\right)/2$
is reversible, it is thus sufficient to consider WPIs for reversible
kernels.
\end{rem}

For any decreasing function $F\colon\mathbb{R}_{+}\rightarrow\mathbb{R}$
we let $F^{\shortminus}:\R\to\left[0,\infty\right]$ given by $F^{\shortminus}\left(x\right):=\inf\left\{ y>0\colon F\left(y\right)\leqslant x\right\} $,
for $x\in\R$, be its generalized inverse. The following proposition
shows that one can straightforwardly move between the two formulations
of WPIs.
\begin{prop}
\label{prop:a-b-WPI-corres}Let $P$ be a Markov kernel on $\big(\mathsf{E},\mathscr{E}\big)$,
$\Phi$ be a sieve, and $\mathfrak{a}:=\sup_{f\in\ELL_{0}\left(\mu\right)\backslash\left\{ 0\right\} }\left\Vert f\right\Vert _{2}^{2}/\Phi\left(f\right)$. 
\begin{enumerate}
\item \label{enu:alphavsbeta-alpha}If a $\left(\Phi,\alpha\right)\shortminus$\textup{WPI}
holds with $\alpha\left(r\right)=0$ for $r\geqslant\mathfrak{a}$,
then a $\left(\Phi,\beta\right)\shortminus$\textup{WPI, with} $\beta:=\alpha^{\shortminus}$
on $\left(0,\infty\right)$, holds and for any $r,s\geqslant0$,
\begin{enumerate}
\item $\alpha^{\shortminus}\circ\alpha\left(r\right)\leqslant r$ with equality
when $\alpha$ is strictly decreasing; 
\item $s\leqslant\alpha\circ\alpha^{\shortminus}\left(s\right)$ if $\alpha$
is right continuous;
\item $\beta\leqslant\mathfrak{a}$.
\end{enumerate}
\item \label{enu:alphavsbeta-beta}If a $\left(\Phi,\beta\right)\shortminus$\textup{WPI}
holds with $\beta\leqslant\mathfrak{a}$, then a $\left(\Phi,\alpha\right)\shortminus$\textup{WPI}
holds, with $\alpha:=\beta^{\shortminus}$ on $\left(0,\infty\right)$,
and for any $r,s>0$,
\begin{enumerate}
\item $\beta^{\shortminus}\circ\beta\left(s\right)\leqslant s$; 
\item $r\leqslant\beta\circ\beta^{\shortminus}\left(r\right)$ if $\beta$
is right continuous;
\item $\alpha\left(r\right)=0$ for $r\geqslant\mathfrak{a}$.
\end{enumerate}
\item \label{enu:alpha-alpha-minus-identity}If $\alpha$ in \ref{enu:alphavsbeta-alpha}
(resp. $\beta$ in \ref{enu:alphavsbeta-beta}) is right continuous
then $\left(\alpha^{\shortminus}\right)^{\shortminus}=\alpha$ (resp.
$\left(\beta^{\shortminus}\right)^{\shortminus}=\beta$); that is,
the two parametrizations are equivalent.
\end{enumerate}
\end{prop}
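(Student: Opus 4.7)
My plan is to view \eqref{eq:beta-WPI} and \eqref{eq:WPI} as two parametrizations of a single family of inequalities that are interchanged by a change of parameter via the generalized inverse $F^{\shortminus}$. With that viewpoint, both implications and the auxiliary identities (i)--(iii) reduce to elementary properties of generalized inverses of decreasing functions, applied along an infimising sequence of parameters.

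For part \ref{enu:alphavsbeta-alpha}, I would fix $f\in\ELL_{0}(\mu)$ and $s>0$, set $\beta:=\alpha^{\shortminus}$, and consider $\mathcal{S}_{s}:=\{r>0:\alpha(r)\leqslant s\}$. The tail assumption $\alpha(r)=0$ for $r\geqslant\mathfrak{a}$ ensures $[\mathfrak{a},\infty)\subset\mathcal{S}_{s}$, so $\beta(s)=\inf\mathcal{S}_{s}\leqslant\mathfrak{a}$, which already gives (iii). For every $r\in\mathcal{S}_{s}$ the $(\Phi,\alpha)$--WPI yields
\[
\left\Vert f\right\Vert _{2}^{2}\leqslant\alpha(r)\calE(T,f)+r\Phi(f)\leqslant s\calE(T,f)+r\Phi(f),
\]
and since $\calE(T,f),\Phi(f)\geqslant0$, taking the infimum over $r\in\mathcal{S}_{s}$ produces the $(\Phi,\beta)$--WPI. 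To see $\beta(s)\to 0$ as $s\to\infty$ I would observe that for any $\varepsilon>0$ and any $s\geqslant\alpha(\varepsilon)$ one has $\varepsilon\in\mathcal{S}_{s}$, so $\beta(s)\leqslant\varepsilon$. For (i), I would note that $r$ trivially lies in $\{r'>0:\alpha(r')\leqslant\alpha(r)\}$, giving $\alpha^{\shortminus}\circ\alpha(r)\leqslant r$; strict monotonicity of $\alpha$ rules out any smaller element and upgrades this to equality. For (ii), I would pick $r_{n}\downarrow\alpha^{\shortminus}(s)$ with $\alpha(r_{n})\leqslant s$ (existence by definition of the infimum) and invoke right-continuity of $\alpha$ to pass to the limit.

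Part \ref{enu:alphavsbeta-beta} proceeds by the same template with $\alpha$ and $\beta$ swapped: the hypothesis $\beta\leqslant\mathfrak{a}$ plays the role the tail condition played in part (1), forcing $\mathcal{T}_{r}:=\{s>0:\beta(s)\leqslant r\}$ to equal $(0,\infty)$ whenever $r\geqslant\mathfrak{a}$, which delivers $\alpha(r)=0$ there. Non-emptiness of $\mathcal{T}_{r}$ for arbitrary $r>0$ is guaranteed by $\beta(s)\to0$, and the same inf-then-pass-to-the-limit argument yields the $(\Phi,\alpha)$--WPI. For part \ref{enu:alpha-alpha-minus-identity}, my approach is to combine the one-sided bounds (i) and (ii) applied first to $\alpha$ and then to its generalized inverse $\beta$, sandwiching $(\alpha^{\shortminus})^{\shortminus}$ between $\alpha$ and itself and using right-continuity to promote these bounds to equality of functions.

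The main obstacle is less conceptual than notational. I need to keep track of two things: first, that the parameter sets $\mathcal{S}_{s}$ and $\mathcal{T}_{r}$ are always non-empty, which is precisely what the hypotheses $\alpha(r)=0$ on $[\mathfrak{a},\infty)$ and $\beta\leqslant\mathfrak{a}$ are designed to enforce; and second, that the infimum defining $F^{\shortminus}$ need not be attained, so the heuristic ``apply the inequality at the optimal parameter'' must really be implemented as ``apply along an infimising sequence and pass to the limit using monotonicity of the bound in the parameter and right-continuity of the coefficient function''.
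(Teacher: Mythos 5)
Your approach follows the paper's own proof: for fixed $s$, you substitute the $(\Phi,\alpha)$-WPI at every $r\in\mathcal{S}_s=\{r>0:\alpha(r)\leqslant s\}$ and take an infimum over $r$ to extract the $(\Phi,\alpha^\shortminus)$-WPI, with part (b) handled symmetrically. The paper delegates the generalized-inverse facts (i)--(ii) to an external reference; you rederive them directly, and your argument that $\beta(s)\to 0$ (for any $\varepsilon>0$, take $s\geqslant\alpha(\varepsilon)$ so $\varepsilon\in\mathcal{S}_s$) is a little cleaner than the paper's.

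The one substantive issue is item (ii). Picking $r_n\downarrow\alpha^\shortminus(s)$ with $\alpha(r_n)\leqslant s$ and passing to the limit using right-continuity of $\alpha$ gives $\alpha(\alpha^\shortminus(s))=\lim_n\alpha(r_n)\leqslant s$, which is the \emph{reverse} of the displayed inequality $s\leqslant\alpha\circ\alpha^\shortminus(s)$. With the convention $\alpha^\shortminus(s)=\inf\{r>0:\alpha(r)\leqslant s\}$ for a decreasing $\alpha$, right-continuity delivers $\alpha\circ\alpha^\shortminus\leqslant\mathrm{Id}$, whereas $\alpha\circ\alpha^\shortminus\geqslant\mathrm{Id}$ needs a left-sided limit; a step function such as $\alpha=2\cdot\mathbf{1}_{(0,1)}+\mathbf{1}_{[1,\infty)}$ is right-continuous yet has $\alpha^\shortminus(3/2)=1$ and $\alpha(1)=1<3/2$. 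So your sequence argument does not establish (ii) as stated. This matters for your sketch of part (c) as well: to show $(\alpha^\shortminus)^\shortminus(r)\geqslant\alpha(r)$ you want to argue that $\alpha^\shortminus(s')\leqslant r$ forces $s'\geqslant\alpha(r)$, and the natural chain $\alpha(r)\leqslant\alpha(\alpha^\shortminus(s'))\leqslant s'$ uses precisely the $\leqslant$ form of (ii), i.e., the bound your right-continuity argument actually yields, not the $\geqslant$ form stated in the proposition. Work out carefully which inequality you are proving and which one each subsequent step needs; as written, your (ii) proves a different statement than it claims and your (c) is too compressed to verify.
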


\begin{proof}
Statement \ref{enu:alphavsbeta-alpha}. Assume that a $\left(\Phi,\alpha\right)\shortminus$WPI
holds, let $s>0$ and $\mathfrak{R}\left(s\right):=\left\{ r>0:\alpha\left(r\right)\leqslant s\right\} \neq\emptyset$,
where the nonemptiness follows from the assumption on $\alpha$. Then
for any $r\in\mathfrak{R}\left(s\right)$, it holds that
\[
\left\Vert f\right\Vert _{2}^{2}\leqslant s\calE\left(T,f\right)+r\Phi\left(f\right),
\]
and therefore
\begin{align*}
\left\Vert f\right\Vert _{2}^{2} & \leqslant\inf\left\{ s{\cal E}\left(T,f\right)+r\Phi\left(f\right):r\in\mathfrak{R}\left(s\right)\right\} \\
 & =s\calE\left(T,f\right)+\alpha^{\shortminus}\left(s\right)\Phi\left(f\right).
\end{align*}
Note that $\alpha\left(r\right)=0$ for $r\geqslant\mathfrak{a}$
implies that for any $s>0$, 
\[
\alpha^{\shortminus}\left(s\right):=\inf\left\{ r>0:\alpha\left(r\right)\leqslant s\right\} =\inf\{r\in\left(0,\frak{\mathfrak{a}}\right]:\alpha\left(r\right)\leqslant s\}\leqslant\mathfrak{a}\,.
\]
We use the results of \cite{embrechts2013note}, stated for an increasing
function $\mathsf{T}$, but directly applicable here by setting, using
their notation, $\mathsf{T}=-\alpha$ and noting that $\alpha^{\shortminus}\left(s\right)=\mathsf{T}^{\shortminus}\left(-s\right)$.
From \cite[Proposition 1, (2)]{embrechts2013note}, $\alpha^{\shortminus}$
is decreasing.

For any $\varepsilon\geqslant0$, let $s\left(\varepsilon\right):=\sup_{r>\varepsilon}\alpha\left(r\right)$.
If $s\left(0\right)<\infty$, then $\alpha^{\shortminus}\left(s\right)=0$
for $s>s\left(0\right)$. Otherwise, $\lim_{\varepsilon\downarrow0}s\left(\varepsilon\right)=\infty$,
since $\alpha$ is decreasing. Therefore for any $\varepsilon>0$
and any $s\geqslant s\left(\varepsilon\right)$, we have $\inf\left\{ r>0:\alpha\left(r\right)\leqslant s\right\} \leqslant\inf\left\{ r>0:\alpha\left(r\right)\leqslant s\left(\varepsilon\right)\right\} \leqslant\varepsilon$
and $\alpha^{\shortminus}\left(s\right)\leq\alpha^{\shortminus}\left(s\left(\varepsilon\right)\right)\leqslant\varepsilon$.
Hence, $\lim_{s\rightarrow\infty}\alpha^{\shortminus}\left(s\right)=0$,
and thus a $\left(\Phi,\beta\right)\shortminus$WPI with $\beta:=\alpha^{\shortminus}$
holds.

The other listed properties are standard for generalized inverse (monotone)
functions \cite[Proposition 1, (3) and (4)]{embrechts2013note}, using
that $\alpha^{\shortminus}\circ\alpha\left(r\right)=\alpha^{\shortminus}\left(-\mathsf{T}\left(r\right)\right)=\mathsf{T}^{\shortminus}\circ\mathsf{T}\left(r\right)$
and noting that here $\alpha^{\shortminus}\leqslant\mathfrak{a}<\infty$. 

The second statement \ref{enu:alphavsbeta-beta} follows along the
same lines. 

For statement \ref{enu:alpha-alpha-minus-identity} we use that from
\cite[Proposition 1, (5)]{embrechts2013note}, $\alpha\left(r\right)\geqslant s\iff r\leqslant\alpha^{-}\left(s\right)$,
therefore
\begin{align*}
\left(\alpha^{\shortminus}\right)^{\shortminus}\left(r\right) & =\inf\left\{ s>0:\alpha^{\shortminus}\left(s\right)\leqslant r\right\} \\
 & =\inf\left\{ s>0:\alpha^{\shortminus}\left(s\right)<r\right\} \\
 & =\inf\left\{ s>0:\alpha\left(r\right)\leqslant s\right\} \\
 & =\alpha\left(s\right).
\end{align*}
The proof for $\beta$ is identical.
\end{proof}
\begin{defn}
In the situation where a $\left(\Phi,\alpha\right)\shortminus$WPI
(resp. $\left(\Phi,\beta\right)-$WPI) holds for $\alpha$ (resp.
$\beta$) right continuous, we refer to it as a $\left(\Phi,\alpha,\beta\right)\shortminus$WPI
where $\beta=\alpha^{\shortminus}$ (resp. $\alpha=\beta^{\shortminus}$).
\end{defn}

The main interest of WPIs is summarized below:
\begin{thm}[Theorem 8 \cite{ALPW2021}]
\label{thm:WPI_F_bd}Let $P$ be a $\mu-$invariant Markov kernel
on $\big(\mathsf{E},\mathscr{E}\big)$ and assume that $T:=P^{*}P$
satisfies a $\left(\Phi,\beta\right)-$WPI for a sieve $\Phi$. Then
for $f\in\ELL_{0}\left(\mu\right)$ such that $0<\Phi\left(f\right)<\infty$
and any $n\in\mathbb{N}$, it holds that
\begin{equation}
\left\Vert P^{n}f\right\Vert _{2}^{2}\leq\gamma\left(n\right)\Phi\left(f\right),\label{eq:P-Phi-gamma-convergent}
\end{equation}
where $\gamma\left(n\right):=F_{\mathfrak{a}}^{-1}\left(n\right)$,
where $F_{\mathfrak{a}}\colon(0,\mathfrak{a}]\rightarrow\mathbb{R}$
is the decreasing convex and invertible function
\[
F_{\mathfrak{a}}\left(x\right):=\int_{x}^{\mathfrak{a}}\frac{{\rm d}v}{K^{*}\left(v\right)},
\]
with $K^{*}\colon[0,\infty)\rightarrow\left[0,\infty\right]$ defined
as $K^{*}\left(v\right):=\sup_{u\ge0}\left\{ uv-K\left(u\right)\right\} $,
the convex conjugate of $K\colon[0,\infty)\rightarrow[0,\infty)$
given by $K\left(u\right):=u\,\beta\left(1/u\right)$ for $u>0$ and
$K\left(0\right):=0$.

The function $\gamma$ satisfies $\gamma\left(n\right)\to0$ as $n\to\infty$.
\end{thm}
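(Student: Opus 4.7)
The plan is to convert the $(\Phi,\beta)\shortminus$WPI into a discrete recursion for the sequence $u_n := \|P^{n}f\|_{2}^{2}$ and then pass to a continuous bound by integrating against $1/K^{*}$. The crucial identity, using $T = P^{*}P$, is
\[
\calE(T,f) = \bigl\langle (\Id - P^{*}P)f, f \bigr\rangle = \|f\|_{2}^{2} - \|Pf\|_{2}^{2}.
\]
Since $P$ is $\mu$-invariant, $P^{n}f \in \ELL_{0}(\mu)$ whenever $f \in \ELL_{0}(\mu)$, and $P$-non-expansiveness of $\Phi$ gives $\Phi(P^{n}f) \leqslant \Phi(f)$. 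Substituting $P^{n}f$ into the $(\Phi,\beta)\shortminus$WPI (\ref{eq:beta-WPI}) therefore yields, for every $s>0$ and $n \in \mathbb{N}_{0}$,
\[
u_{n} \leqslant s\,(u_{n}-u_{n+1}) + \beta(s)\,\Phi(f).
\]

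Next I would normalise by $a_{n} := u_{n}/\Phi(f)$, noting that $a_{0} \leqslant \mathfrak{a}$ by the sieve inequality $\|f\|_{2}^{2} \leqslant \mathfrak{a}\,\Phi(f)$. Rearranging the previous display gives $a_{n}-a_{n+1} \geqslant (a_{n}-\beta(s))/s$ for all $s>0$. The right-hand side is then optimised through the substitution $t = 1/s$: since $t\,a_{n} - t\,\beta(1/t) = t\,a_{n}-K(t)$, taking the supremum produces a Legendre transform,
\[
\sup_{s>0} \frac{a_{n}-\beta(s)}{s} = \sup_{t\geqslant 0}\bigl(t\,a_{n} - K(t)\bigr) = K^{*}(a_{n}),
\]
yielding the key discrete recursion $a_{n+1} \leqslant a_{n} - K^{*}(a_{n})$.

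Passing from discrete to continuous is the routine part. Observe that $K^{*}$ is convex on $[0,\infty)$ with $K^{*}(0)=0$, hence non-negative and non-decreasing; together with $K^{*}(a_{n}) \geqslant 0$ this forces $(a_{n})$ to be non-increasing. For $v \in [a_{n+1},a_{n}]$, monotonicity of $K^{*}$ gives $1/K^{*}(v) \geqslant 1/K^{*}(a_{n})$, so
\[
F_{\mathfrak{a}}(a_{n+1}) - F_{\mathfrak{a}}(a_{n}) \;=\; \int_{a_{n+1}}^{a_{n}} \frac{\dif v}{K^{*}(v)} \;\geqslant\; \frac{a_{n}-a_{n+1}}{K^{*}(a_{n})} \;\geqslant\; 1.
\]
Telescoping, together with $F_{\mathfrak{a}}(a_{0}) \geqslant F_{\mathfrak{a}}(\mathfrak{a}) = 0$, yields $F_{\mathfrak{a}}(a_{n}) \geqslant n$, hence $a_{n} \leqslant F_{\mathfrak{a}}^{-1}(n) = \gamma(n)$, which is (\ref{eq:P-Phi-gamma-convergent}) after multiplying by $\Phi(f)$.

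The main obstacle is the tail statement $\gamma(n) \to 0$, which reduces to showing $\lim_{x \downarrow 0} F_{\mathfrak{a}}(x) = \infty$, i.e.\ that $1/K^{*}$ is non-integrable at $0$. Here one uses the hypothesis $\beta(s)\to 0$ as $s\to\infty$: this forces $K(u) = u\,\beta(1/u) = o(u)$ as $u \downarrow 0$, from which a direct estimate on $K^{*}(v) = \sup_{t>0}(v-\beta(t))/t$ gives $K^{*}(v) = o(v)$ as $v \downarrow 0$, so that $1/K^{*}(v) \geqslant c(v)/v$ with $c(v) \to \infty$ and the integral diverges. In the degenerate case where $F_{\mathfrak{a}}(0^{+})$ is finite, one extends $\gamma$ by $0$ past the range of $F_{\mathfrak{a}}$ and the bound $F_{\mathfrak{a}}(a_{n}) \geqslant n$ forces $a_{n}=0$ from some $n$ onwards, so $\gamma(n)\to 0$ unconditionally.
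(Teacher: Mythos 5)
Your proposal is correct and follows precisely the chain of inequalities that the paper indicates in the remark immediately after the statement: you derive the discrete recursion $a_{n+1}\le a_n-K^*(a_n)$ by substituting $P^nf$ into the WPI, exploiting the identity $\mathcal{E}(P^*P,f)=\|f\|_2^2-\|Pf\|_2^2$ and $P$-non-expansiveness of $\Phi$, then optimising over $s$ via the Legendre transform, and finally pass to $F_{\mathfrak{a}}(a_n)\ge n$ by the monotonicity estimate on $1/K^*$; the tail statement $\gamma(n)\to 0$ is handled by the estimate $K^*(v)\le v/\beta^{-}(v)=o(v)$, which makes $1/K^*$ non-integrable at the origin. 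This is the same approach as the paper's (cited) proof.
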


\begin{rem}
In practice, the precise value of $\mathfrak{a}$ as given in Definition~\ref{def:Phi_fn}
may not be known, however an upper bound $a\ge\mathfrak{a}$ is typically
known, as in Example~\ref{exa:osc2}. The conclusions of Theorem~\ref{thm:WPI_F_bd}
remain true when we consider $F_{a}:=\int_{\cdot}^{a}\dif v/K^{*}(v)=F_{\mathfrak{a}}+c$
for $c=\int_{\mathfrak{a}}^{a}\dif v/K^{*}(v)\ge0$, and we obtain
the convergence bound in (\ref{eq:P-Phi-gamma-convergent}) with $\gamma=\gamma(\cdot;a):=F_{a}^{-1}=F_{\mathfrak{a}}^{-1}(\cdot-c)\ge F_{\mathfrak{a}}^{-1}$.
\end{rem}

\begin{rem}
Our proof of this theorem actually supplies a collection of bounds
on $\left\Vert P^{n}f\right\Vert _{2}^{2}$ which trade off tightness
for tractability. In particular, writing $v_{n}=\left\Vert P^{n}f\right\Vert _{2}^{2}/\Phi\left(f\right)$,
one can deduce (in decreasing order of tightness) the bounds
\begin{align*}
\text{for all }n\geqslant1,\quad v_{n} & \leqslant v_{n-1}-K^{*}\left(v_{n-1}\right)\\
\implies\quad v_{n} & \leqslant\left(\Id-K^{*}\right)^{\circ n}\left(v_{0}\right)\\
\implies\quad\left\Vert P^{n}f\right\Vert {}_{2}^{2} & \leqslant\Phi\left(f\right)\cdot\left(\Id-K^{*}\right)^{\circ n}\left(\frac{\left\Vert f\right\Vert _{2}^{2}}{\Phi\left(f\right)}\right)
\end{align*}
and
\begin{align*}
\text{for all }n\geqslant1,\quad F_{\mathfrak{a}}\left(v_{n}\right)-F_{\mathfrak{a}}\left(v_{n-1}\right) & \geqslant1\\
\implies\quad F_{\mathfrak{a}}\left(v_{n}\right) & \geqslant n+F_{\mathfrak{a}}\left(v_{0}\right)\\
\implies\quad\left\Vert P^{n}f\right\Vert {}_{2}^{2} & \leqslant\Phi\left(f\right)\cdot F_{\mathfrak{a}}^{-1}\left(n+F_{\mathfrak{a}}\left(\frac{\left\Vert f\right\Vert _{2}^{2}}{\Phi\left(f\right)}\right)\right).
\end{align*}
Each of these forms will be useful in deducing converse results, i.e.
converting rates of convergence into WPIs.
\end{rem}

\begin{rem}
Given only a WPI for $P$, one can deduce variance dissipation for
the continuous-time semigroup obtained by Poissonizing $P$, i.e.
let $P_{t}=\exp\left(t\mathcal{L}\right)$ with $\mathcal{L}=P-I$,
then

\[
\left\Vert P_{t}f\right\Vert ^{2}\leqslant\Phi\left(f\right)\cdot\gamma\left(2t\right).
\]
\end{rem}

\begin{defn}
\label{def:Phi-gamma-convergence}A $\mu-$invariant Markov kernel
$P$ satisfying (\ref{eq:P-Phi-gamma-convergent}) with $\gamma\downarrow0$
as $n\to\infty$ is said to be $\left(\Phi,\gamma\right)\shortminus$convergent.
If the specific rate $\gamma$ is not important, we may say that $P$
is $\Phi$-convergent.
\end{defn}

The Dirichlet form $\mathcal{E}\left(P^{*}P,f\right)$ may not be
tractable or straightforward to work with. In the reversible scenario,
it is possible to deduce a $\left(\Phi,\beta\right)-$WPI for $\mathcal{E}\left(P^{2},f\right)$
from simpler Dirichlet forms or properties of $P$.
\begin{thm}[\cite{ALPW2021}, Theorem~21 and Theorem~42]
 Let $P$ be a $\mu-$invariant Markov kernel on $\left(\mathsf{E},\mathscr{E}\right)$
and assume that $P$ satisfies a $\left(\tilde{\Phi},\beta_{+}\right)-$WPI
for a sieve $\tilde{\Phi}$. Then, 
\begin{enumerate}
\item if, in addition, \textcolor{red}{${\color{black}P}$ }is $\mu$-reversible
and $\left(-P\right)$ satisfies a $\left(\tilde{\Phi},\beta_{-}\right)-$\textup{WPI,
we have that} $P^{2}$ satisfies a $\left(\Phi,\beta\right)-$\textup{WPI
with, for $s>0$ and $f\in\mathrm{L}_{0}^{2}\left(\mu\right)$,
\begin{align*}
\beta\left(s\right) & :=\inf\left\{ s_{1}\beta_{+}\left(s_{2}\right)+\beta_{-}\left(s_{1}\right)|s_{1}>0,s_{2}>0,s_{1}s_{2}=s\right\} \,,\\
\Phi\left(f\right) & :=\tilde{\Phi}\left(f\right)\vee\tilde{\Phi}\left(\left(\Id+P\right)^{1/2}f\right)\,;
\end{align*}
}
\item if for any $\left(x,A\right)\in\mathsf{E}\times\mathcal{\mathscr{E}}$
we have $P\left(x,A\right)\geqslant\varepsilon\left(x\right)\cdot\int_{A}\delta_{x}\left({\rm d}y\right)$
for some $\varepsilon\colon\mathsf{E}\rightarrow\left[0,1\right]$,
we have that $P^{2}$ satisfies a $\left(\Phi,\beta\right)-$\textup{WPI
with}, \textup{for $s>0$ and $f\in\mathrm{L}_{0}^{2}\left(\mu\right)$,
\begin{align*}
\beta\left(s\right) & :=\inf\left\{ s_{1}\beta_{-}\left(s_{2}\right)+\beta_{+}\left(s_{1}\right)|s_{1}>0,s_{2}>0,s_{1}s_{2}=s\right\} ,\\
\Phi\left(f\right) & :=\tilde{\Phi}\left(f\right){\color{black}{\color{red}{\color{black}\vee\|f\|_{\osc}^{2}}}\,},
\end{align*}
}where here $\beta_{-}\left(s\right):=\frac{1}{2}\mu\big(\varepsilon\left(X\right)^{-1}\geqslant s\big)$. 
\end{enumerate}
For practical purposes it may be useful to note that $K^{*}=K_{+}^{*}\circ K_{-}^{*}$
and $K^{*}=K_{-}^{*}\circ K_{+}^{*}$ in the respective cases above,
with $K_{\pm}^{*}$ defined as in Theorem~\ref{thm:WPI_F_bd}, but
for $\beta_{\pm}$.
\end{thm}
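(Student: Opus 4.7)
The central tool for both parts is the spectral identity
\[
\mathcal{E}(P^{2},f)=\mathcal{E}(P,h),\qquad h:=(\Id+P)^{1/2}f,
\]
together with its companion $\mathcal{E}(-P,f)=\|h\|_{2}^{2}$, both valid whenever $P$ is $\mu$-reversible. These follow from the factorisation $\Id-P^{2}=(\Id-P)(\Id+P)$ and the commutativity of functions of the self-adjoint operator $P$; note that $\Id+P\succeq 0$ since the spectrum of the Markov operator $P$ lies in $[-1,1]$, so $(\Id+P)^{1/2}$ is defined via the functional calculus. Since $P\mathbf{1}=\mathbf{1}$, the operator $(\Id+P)^{1/2}$ preserves $\ELL_{0}(\mu)$, so $h\in\ELL_{0}(\mu)$ whenever $f$ is.

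\textbf{Part 1.} Apply the $(\tilde{\Phi},\beta_{-})$-WPI for $(-P)$ to $f$ and the $(\tilde{\Phi},\beta_{+})$-WPI for $P$ to $h$:
\[
\|f\|_{2}^{2}\leqslant s_{1}\|h\|_{2}^{2}+\beta_{-}(s_{1})\tilde{\Phi}(f),\qquad \|h\|_{2}^{2}\leqslant s_{2}\,\mathcal{E}(P^{2},f)+\beta_{+}(s_{2})\tilde{\Phi}(h).
\]
Chaining, dominating $\tilde{\Phi}(f)\vee\tilde{\Phi}(h)$ by $\Phi(f)=\tilde{\Phi}(f)\vee\tilde{\Phi}((\Id+P)^{1/2}f)$, and optimising over factorisations $s_{1}s_{2}=s$ yields the claimed $(\Phi,\beta)$-WPI for $P^{2}$ with $\beta$ given by the stated inf-convolution. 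Verifying that $\Phi$ is itself a sieve is a routine consequence of the sieve axioms for $\tilde{\Phi}$.

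\textbf{Part 2.} The plan is to mimic Part 1 after first establishing a $(\|\cdot\|_{\osc}^{2},\beta_{-})$-WPI for $(-P)$ from the minorisation. Write $P=\varepsilon\Id+(1-\varepsilon)Q$ for a Markov kernel $Q$; using $\mu P=\mu$, verify that the positive measure $\nu:=(1-\varepsilon)\mu$ is $Q$-invariant. Combining Jensen's inequality $(Qf)^{2}\leqslant Q(f^{2})$ (applied against $\nu$) with Young's inequality $|fQf|\leqslant(f^{2}+(Qf)^{2})/2$, I obtain the Dirichlet lower bound
\[
\mathcal{E}(-P,f)\ \geqslant\ 2\int\varepsilon\,f^{2}\,\dif\mu.
\]
Splitting $\|f\|_{2}^{2}$ across $A_{s}:=\{\varepsilon\geqslant1/s\}$ and its complement, bounding $\int_{A_{s}}f^{2}\leqslant(s/2)\mathcal{E}(-P,f)$ on the one part and using $|f|\leqslant\|f\|_{\osc}$ for $f\in\ELL_{0}(\mu)$ together with Markov's inequality on $\varepsilon^{-1}$ on the other, yields a WPI for $(-P)$ with sieve $\|\cdot\|_{\osc}^{2}$ and rate controlled by $\mu(\varepsilon^{-1}\geqslant\cdot)$. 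Substituting this into the Part 1 argument, with $\|\cdot\|_{\osc}^{2}$ replacing $\tilde{\Phi}$ in the second application, yields the advertised $(\Phi,\beta)$-WPI with $\Phi=\tilde{\Phi}\vee\|\cdot\|_{\osc}^{2}$.

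\textbf{Main obstacle and closing remarks.} The Dirichlet lower bound in Part 2 is the technically subtle step: identifying the hidden $Q$-invariant measure $\nu=(1-\varepsilon)\mu$ is crucial for the Jensen step, and one must track the factors of $1/2$ carefully to match the stated $\beta_{-}(s)=\frac{1}{2}\mu(\varepsilon^{-1}\geqslant s)$ up to the allowed constants. The final identification $K^{\ast}=K_{+}^{\ast}\circ K_{-}^{\ast}$ (respectively $K_{-}^{\ast}\circ K_{+}^{\ast}$) for the convex conjugates is a routine Legendre-duality calculation: after the change of variables $u=1/s$ the inf-convolution defining $\beta$ becomes an additive convolution in the $K_{\pm}$, whose Legendre transform factors as the composition of transforms in the order dictated by which of $\beta_{\pm}$ carries the $s_{1}$-prefactor.
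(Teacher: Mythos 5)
Note first that the paper does not actually prove this theorem: it is quoted from \cite{ALPW2021} (Theorems~21 and~42) without proof, so I am assessing your argument on its own merits rather than against a proof in this manuscript.

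Your Part~1 is correct. The factorisation $\Id-P^{2}=(\Id-P)(\Id+P)$, the identities $\calE(-P,f)=\|h\|_{2}^{2}$ and $\calE(P^{2},f)=\calE(P,h)$ with $h=(\Id+P)^{1/2}f$, the chaining, and the verification that $\Phi$ is a sieve are all routine and yield exactly the stated $\beta(s)=\inf\{s_{1}\beta_{+}(s_{2})+\beta_{-}(s_{1})\colon s_{1}s_{2}=s\}$ and $\Phi(f)=\tilde{\Phi}(f)\vee\tilde{\Phi}((\Id+P)^{1/2}f)$. (A small simplification: your Dirichlet lower bound in Part~2, $\calE(-P,f)\geqslant2\int\varepsilon f^{2}\,\dif\mu$, needs neither Jensen nor the auxiliary measure $\nu$; it drops out of the identity $\calE(-P,f)=\tfrac{1}{2}\int\mu(\dif x)P(x,\dif y)[f(x)+f(y)]^{2}$ by keeping only the diagonal mass $P(x,\dif y)\geqslant\varepsilon(x)\delta_{x}(\dif y)$.)

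Part~2, however, has a genuine gap: the plan to ``mimic Part~1'' with the new $(\|\cdot\|_{\osc}^{2},\beta_{-})$-WPI for $(-P)$ substituted in does not produce the stated conclusion. If you run Part~1's chain with that substitution, the sieve that appears is $\|f\|_{\osc}^{2}\vee\tilde{\Phi}\big((\Id+P)^{1/2}f\big)$, not $\tilde{\Phi}(f)\vee\|f\|_{\osc}^{2}$, and the inf-convolution remains $\inf\{s_{1}\beta_{+}(s_{2})+\beta_{-}(s_{1})\}$ rather than the stated $\inf\{s_{1}\beta_{-}(s_{2})+\beta_{+}(s_{1})\}$. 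The swapped order of $\beta_{\pm}$ in the statement signals that the chain must be run the other way: apply the $\beta_{+}$-WPI to $f$ first to get $\|f\|_{2}^{2}\leqslant s_{1}\calE(P,f)+\beta_{+}(s_{1})\tilde{\Phi}(f)$, and then bound $\calE(P,f)$ by $s_{2}\calE(P^{2},f)+\beta_{-}(s_{2})\|f\|_{\osc}^{2}$. If you do this through $g:=(\Id-P)^{1/2}f$, you are forced to compare $\|g\|_{\osc}$ with $\|f\|_{\osc}$, which you do not address; it can be done (e.g.\ $(\Id-P)^{1/2}=\Id-R$ for a Markov kernel $R$, giving $\|g\|_{\osc}\leqslant2\|f\|_{\osc}$), but it is not automatic and the constant must be tracked. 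A cleaner route avoids $g$ altogether: from the minorisation and $\mu$-reversibility one has the kernel inequality $P^{2}(x,\dif y)\geqslant\tfrac{1}{2}\big(\varepsilon(x)+\varepsilon(y)\big)P(x,\dif y)$, whence the integrand in $\calE(P,f)=\tfrac{1}{2}\int\mu(\dif x)P(x,\dif y)[f(y)-f(x)]^{2}$ can be split on $\{\varepsilon(x)+\varepsilon(y)\geqslant2/s\}$ (controlled by $s\,\calE(P^{2},f)$) and its complement (controlled by $\mu(\varepsilon^{-1}\geqslant s)\|f\|_{\osc}^{2}$ via Markov), giving the inequality $\calE(P,f)\leqslant s\,\calE(P^{2},f)+\beta_{-}(s)\|f\|_{\osc}^{2}$ directly, with the sieve applied to $f$. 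Chaining with the $\beta_{+}$-WPI then produces the theorem as stated. You should replace your Part~2 sketch with an argument along these lines.
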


\subsection{$(\|\cdot\|_{p}^{2},\gamma_{p})-$convergence from $(\|\cdot\|_{{\rm osc}}^{2},\gamma)-$convergence\label{subsec:bounded_to_p}}

In practice it can sometime be difficult to establish that a candidate
sieve $\Phi$, found through calculations, is indeed a sieve. In contrast
the cases $\Phi=\|\cdot\|_{\infty}^{2}$ or $\Phi=\|\cdot\|_{\osc}^{2}$
can simplify calculations greatly. This appears at first sight to
be at the expense of generality in terms of the class of functions
for which convergence can be established. The following, which follows
directly from \cite[Lemma 5.1]{Cattiaux2012}, shows that $(\|\cdot\|_{{\rm osc}}^{2},\gamma)\shortminus$convergence
automatically implies $(\|\cdot\|_{p}^{2},\gamma_{p})\shortminus$convergence.
(We note that the result of \cite[Lemma 5.1]{Cattiaux2012} is even
more general, but this full generality is not needed here.) We will
make use of this result throughout this manuscript in order to simplify
presentation. An alternative strategy to handle broader classes of
functions is suggested in \cite[Proposition~37, Theorems~38,~42]{ALPW2021},
where $\left(\left\Vert \cdot\right\Vert _{p}^{2},\beta_{p}:=\beta^{1-1/p}\right)\shortminus$WPIs
for $p\in\left[2,\infty\right]$ are considered directly. We do not
know whether either of these two approaches is suboptimal in general
but have observed that one recovers similar rates in the polynomial
scenario. We note however that we have found the approach given in
\cite{ALPW2021} more difficult to use in practice. We provide a proof
of the result of \cite[Lemma 5.1]{Cattiaux2012} in Appendix \ref{app:first-appendix}
for the reader's convenience.
\begin{prop}
\label{prop:cattiaux-et-al-gamma-p}Let $P$ be a $\mu\shortminus$invariant
Markov kernel, assumed to be\linebreak{}
 $\left(\left\Vert \cdot\right\Vert _{{\rm osc}}^{2},\gamma\right)\shortminus$convergent.
Then $P$ is also $\left(\left\Vert \cdot\right\Vert _{p}^{2},\gamma_{p}\right)\shortminus$convergent
for $p>2$, with 
\[
\gamma_{p}\left(n\right)\leqslant2^{4+4/p}\left[\gamma\left(n\right)\right]^{1-\frac{2}{p}},\quad n\in\mathbb{N}.
\]

\end{prop}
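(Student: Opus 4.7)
The plan is to exploit $\|\cdot\|_{\osc}^{2}$-convergence via a truncation-and-interpolation argument: given $f\in\mathrm{L}^{p}_{0}(\mu)$, split it into a bounded part (for which the osc-hypothesis applies with a good constant) and a tail part (which we control via $\mathrm{L}^{2}$-contractivity of $P$ and Markov's inequality against the $\mathrm{L}^{p}$-norm). Then optimize over the truncation threshold.

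First, fix $M>0$ and set $f_{M}:=(-M)\vee f\wedge M$, so that $\|f_{M}\|_{\osc}\leq 2M$. Since the $(\|\cdot\|_{\osc}^{2},\gamma)$-convergence applies only to functions in $\mathrm{L}_{0}^{2}(\mu)$, write the decomposition $f=g_{1}+g_{2}$, where
\[
g_{1}:=f_{M}-\mu(f_{M}),\qquad g_{2}:=(f-f_{M})-\mu(f-f_{M}).
\]
Both $g_{1},g_{2}\in\mathrm{L}_{0}^{2}(\mu)$. Crucially, $\|g_{1}\|_{\osc}=\|f_{M}\|_{\osc}\leq 2M$, and $\|g_{2}\|_{2}\leq\|f-f_{M}\|_{2}$ since centering only decreases the $\mathrm{L}^{2}$-norm.

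Next, apply the two bounds: by the osc-hypothesis, $\|P^{n}g_{1}\|_{2}^{2}\leq 4M^{2}\gamma(n)$; and since $P$ is an $\mathrm{L}^{2}(\mu)$-contraction (by Jensen and $\mu$-invariance), $\|P^{n}g_{2}\|_{2}\leq\|f-f_{M}\|_{2}$. The tail part is controlled using $|f-f_{M}|\leq |f|\mathbf{1}_{\{|f|>M\}}$, whence
\[
\|f-f_{M}\|_{2}^{2}\leq\int_{\{|f|>M\}}|f|^{2}\,\dif\mu\leq M^{2-p}\|f\|_{p}^{p}.
\]
By the triangle inequality and $(a+b)^{2}\leq 2(a^{2}+b^{2})$,
\[
\|P^{n}f\|_{2}^{2}\leq 2\bigl(4M^{2}\gamma(n)+M^{2-p}\|f\|_{p}^{p}\bigr).
\]

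Finally, optimize in $M$. Choosing $M=\|f\|_{p}\gamma(n)^{-1/p}$ equalises the two terms up to constants and yields $\|P^{n}f\|_{2}^{2}\leq C_{p}\|f\|_{p}^{2}\gamma(n)^{1-2/p}$; tracking the constants recovers the stated $2^{4+4/p}$. The main (minor) subtlety is book-keeping the centering so that the $\|\cdot\|_{\osc}^{2}$-hypothesis applies to $g_{1}$, which works because $\|\cdot\|_{\osc}$ is invariant under the addition of constants.
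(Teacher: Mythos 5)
Your proof is correct and follows essentially the same truncation-and-interpolation route as the paper's proof in Appendix~\ref{app:first-appendix}: truncate at a level $M$ (resp.\ $R$), apply the $\left\Vert\cdot\right\Vert_{\osc}^{2}$-hypothesis to the centered bounded part, control the tail via $\mathrm{L}^{2}$-contractivity plus Markov's inequality against $\|f\|_{p}$, and optimize the threshold. Your decomposition $f=g_{1}+g_{2}$ with both pieces centered is a mild reorganization of the paper's $g=(g-g_{R})+(g_{R}-m_{R})+m_{R}$; with your choice $M=\|f\|_{p}\gamma(n)^{-1/p}$ the resulting constant is $10$, which is strictly below the claimed $2^{4+4/p}\geq 16$, so the stated bound follows (and your phrase ``recovers the stated $2^{4+4/p}$'' in fact undersells what you proved).
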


Since the bound for $\gamma_{2}$ is not decreasing, the above result
does not provide an $\ELL$ convergence rate for all $\ELL$ functions.
However, as mentioned in \cite{Rockner2001}, we can deduce uniform
${\rm L}^{1}$ convergence for all $\ELL$ functions from uniform
$\ELL$ convergence for all bounded functions.
\begin{prop}
The following are equivalent:

\begin{equation}
\lim_{n\to\infty}\sup_{f:\mu(f^{2})\le1}\left\Vert P^{n}f-\mu(f)\right\Vert _{1}=0,\label{eq:l1_conv-disc}
\end{equation}
 and
\begin{equation}
\lim_{n\to\infty}\sup_{f:\|f\|_{\infty}\le1}\left\Vert P^{n}f-\mu(f)\right\Vert _{2}=0.\label{eq:l2_conv-disc}
\end{equation}
\end{prop}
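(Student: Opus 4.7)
The plan is to cast both conditions as operator norms on mean-zero subspaces and then shuttle between $P$ and $P^*$ via $\mathrm{L}^1$--$\mathrm{L}^\infty$ duality together with the adjoint identity $\|Tf\|_2^2 = \langle f, T^*T f\rangle$. Set $A_n := \sup\bigl\{\|P^n f\|_2 : f\in\ELL_0(\mu),\, \|f\|_\infty\leqslant 1\bigr\}$ and $B_n := \sup\bigl\{\|P^n f\|_1 : f\in\ELL_0(\mu),\, \|f\|_2\leqslant 1\bigr\}$, so that (\ref{eq:l2_conv-disc}) reads $A_n\to 0$ and (\ref{eq:l1_conv-disc}) reads $B_n\to 0$; restricting to mean-zero $f$ is harmless because $P^n$ preserves the zero mean and $\mu(f)$ is subtracted on both sides. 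Write $\tilde A_n,\tilde B_n$ for the analogous suprema with $P$ replaced by $P^*$.

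For $B_n\to 0\,\Rightarrow\,A_n\to 0$ I would prove the quantitative bound $A_n^2\leqslant B_n$. For $f\in\ELL_0(\mu)$ with $\|f\|_\infty\leqslant 1$,
\[
\|P^n f\|_2^2 = \langle f,(P^*)^n P^n f\rangle \leqslant \|f\|_\infty\,\|(P^*)^n P^n f\|_1 \leqslant \|f\|_\infty\,\|P^n f\|_1 \leqslant B_n\,\|f\|_\infty^2,
\]
using H\"older, the $\mathrm{L}^1$-contractivity of $P^*$, the estimate $\|f\|_2\leqslant\|f\|_\infty$, and the definition of $B_n$. The same computation with $P$ and $P^*$ swapped gives $\tilde A_n^2\leqslant\tilde B_n$.

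For $A_n\to 0\,\Rightarrow\,B_n\to 0$ I would convert the problem to the adjoint via $\mathrm{L}^1$--$\mathrm{L}^\infty$ duality: for $f$ with $\|f\|_2\leqslant 1$ and $\mu(f)=0$,
\[
\|P^n f\|_1 = \sup_{\|g\|_\infty\leqslant 1}\langle P^n f, g-\mu(g)\rangle = \sup_{\|g\|_\infty\leqslant 1}\langle f,(P^*)^n(g-\mu(g))\rangle \leqslant 2\tilde A_n,
\]
using $\mu(P^n f)=0$ to centre $g$, Cauchy--Schwarz, and the fact that $g-\mu(g)$ is mean-zero with $\|g-\mu(g)\|_\infty\leqslant 2$. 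The symmetric bound $\tilde B_n\leqslant 2 A_n$ (same argument with $P\leftrightarrow P^*$) combines with $\tilde A_n^2\leqslant\tilde B_n$ to yield $B_n\leqslant 2\tilde A_n\leqslant 2\sqrt{\tilde B_n}\leqslant 2\sqrt{2A_n}$, closing the equivalence. The main technical point to verify is the $\mathrm{L}^1$-contractivity of $P^*$, which under the paper's measurability assumptions follows either by realising $P^*$ as the $\mu$-reverse Markov kernel or, more abstractly, by duality from $P$ being an $\mathrm{L}^\infty$-contraction.
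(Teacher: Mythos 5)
Your proof is correct, but it takes a genuinely different route from the paper's. For $(\ref{eq:l1_conv-disc})\Rightarrow(\ref{eq:l2_conv-disc})$ the paper avoids the adjoint entirely: it just writes $\|P^n f-\mu(f)\|_2^2 \leq \|P^nf-\mu(f)\|_\infty\,\|P^nf-\mu(f)\|_1\leq 2\|P^nf-\mu(f)\|_1$ and invokes $\{f:\|f\|_\infty\le 1\}\subset\{f:\mu(f^2)\le 1\}$; your version routes through $\langle f,(P^*)^nP^nf\rangle$ and $\mathrm{L}^1$-contractivity of $P^*$, giving the same quantitative bound $A_n^2\leq B_n$. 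The real divergence is in the converse. The paper truncates: it splits $f=f\mathbf{1}_A+f\mathbf{1}_{A^\complement}$ at level $K=4/\epsilon$, controls the tail piece by Markov plus Cauchy--Schwarz, and applies (\ref{eq:l2_conv-disc}) to the bounded piece. You instead pass to the adjoint by $\mathrm{L}^1$--$\mathrm{L}^\infty$ duality and bootstrap: $B_n\leq 2\tilde A_n\leq 2\sqrt{\tilde B_n}\leq 2\sqrt{2A_n}$. This is cleaner, exhibits the symmetry in $P\leftrightarrow P^*$, and yields an explicit rate transfer that the paper leaves implicit (though the paper's truncation, optimized over $K$, gives a comparable $\mathcal{O}(\sqrt{A_n})$ bound). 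The price is that you must know $P^*$ is an $\mathrm{L}^1(\mu)$-contraction. That is true, but your suggested justification ``by duality from $P$ being an $\mathrm{L}^\infty$-contraction'' is slightly off, since the dual of $\mathrm{L}^\infty$ is not $\mathrm{L}^1$; the clean argument is that $P^*$ is positivity-preserving (because $\langle P^*f,g\rangle=\langle f,Pg\rangle\ge 0$ for $f,g\ge 0$), satisfies $P^*\mathbf{1}=\mathbf{1}$ and $\mu(P^*f)=\mu(f)$, hence $|P^*f|\le P^*|f|$ and $\|P^*f\|_1\le\mu(P^*|f|)=\|f\|_1$, with no kernel representation needed.
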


\begin{proof}
We start with (\ref{eq:l1_conv-disc})$\Rightarrow$(\ref{eq:l2_conv-disc}).
So consider $f$ with $\|f\|_{\infty}\le1$.
\begin{align*}
\|P^{n}f-\mu(f)\|_{2}^{2} & =\int|P^{n}f-\mu(f)|\cdot|P^{n}f-\mu(f)|\,\dif\mu\\
 & \le2\int|P^{n}f-\mu(f)|\,\dif\mu,
\end{align*}
and this final expression converges uniformly over $f$ to 0 by (\ref{eq:l1_conv-disc}),
since $\{f:\|f\|_{\infty}\le1\}\subset\{f:\mu(f^{2})\le1\}$. We now
consider the converse, (\ref{eq:l2_conv-disc})$\Rightarrow$(\ref{eq:l1_conv-disc}).
Without loss of generality we may consider $f\in\mathcal{F}=\{g\in{\rm L}_{0}^{2}(\mu):\left\Vert g\right\Vert _{2}\leq1\}$.
Let $\epsilon>0$ be arbitrary; we will show that for $n$ large enough,
$\sup_{f:\left\Vert f\right\Vert _{2}\le1}\int\left|P^{n}f\right|\,\dif\mu\leq\epsilon$.
Take $K=4/\epsilon$ and $N$ large enough such that
\[
\sup_{g:\left\Vert g\right\Vert _{\infty}\le K}\|P^{N}(g)-\mu(g)\|_{2}\leq\frac{\epsilon}{2},
\]
which is valid due to (\ref{eq:l2_conv-disc}). Decomposing an arbitrary
$f\in\mathcal{F}$ as $f=f\cdot{\bf 1}_{A}+f\cdot{\bf 1}_{A^{\complement}}$
for $A\in\mathscr{E}$, we have
\[
\int\left|P^{N}f\right|\,\dif\mu\le\int\left|P^{N}\left(f\cdot{\bf 1}_{A}\right)\right|\,\dif\mu+\int\left|P^{N}\left(f\cdot{\bf 1}_{A^{\complement}}\right)\right|\,\dif\mu,
\]
by Minkowski's inequality. Now by Jensen's inequality, $\mu$-invariance
of $P^{N}$, and Cauchy--Schwarz,
\[
\int\left|P^{N}\left(f\cdot{\bf 1}_{A^{\complement}}\right)\right|\,\dif\mu\leq\int\left|f\cdot{\bf 1}_{A^{\complement}}\right|\,\dif\mu\leq\left\Vert f\right\Vert _{2}\mu(A^{\complement})^{1/2}\leq\mu(A^{\complement})^{1/2}.
\]
Take $A=\{x\in\mathsf{E}:\left|f(x)\right|\leq K\}$, and we obtain
by Markov's inequality
\[
\mu(A^{\complement})=\mu({\bf 1}_{\left|f\right|^{2}>K^{2}})\leq\frac{1}{K^{2}}.
\]
From 
\[
\left|\mu(f\cdot{\bf 1}_{A^{\complement}})\right|\leq\mu\left(\left|f\cdot{\bf 1}_{A^{\complement}}\right|\right)\leq1/K,
\]
and $\mu(f)=0$ we also obtain $\left|\mu(f\cdot{\bf 1}_{A})\right|\leq1/K$.
Finally, we deduce that
\begin{align*}
\int\left|P^{N}f\right|\,\dif\mu & \leq\int\left|P^{N}\left(f\cdot{\bf 1}_{A}\right)\right|\,\dif\mu+\int\left|P^{N}\left(f\cdot{\bf 1}_{A^{\complement}}\right)\right|\,\dif\mu.\\
 & \leq\int\left|P^{N}\left(f\cdot{\bf 1}_{A}\right)-\mu(f\cdot{\bf 1}_{A})\right|\,\dif\mu+\left|\mu(f\cdot{\bf 1}_{A})\right|+\frac{1}{K}\\
 & \leq\frac{\epsilon}{2}+\frac{2}{K}\\
 & \leq\epsilon.
\end{align*}
Since $f\in\mathcal{F}$ was arbitrary, the result follows. 
\end{proof}

\subsection{Deducing WPIs from subgeometric rates of convergence \label{subsec:fundamentals-Deducing-WPIs-from}}

Given a quantitative estimate of the convergence of $\left\Vert P^{n}f\right\Vert _{2}^{2}$,
it is possible to deduce a quantitative WPI for $\mathcal{E}\left(P^{*}P,f\right)$.
\begin{prop}[{\cite[Proposition 24; see also Remark 25]{ALPW2021}}]
 Let $P$ be a $\mu-$invariant Markov kernel on $\big(\mathsf{E},\mathscr{E}\big)$,
and let $\Phi$ be a sieve. 
\end{prop}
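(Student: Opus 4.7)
The proposition presumably asserts the converse of Theorem~\ref{thm:WPI_F_bd}: given a quantitative convergence estimate $\|P^n f\|_2^2 \le \gamma(n)\Phi(f)$ for $f \in \ELL_0(\mu)$ with $0 < \Phi(f) < \infty$, one can recover a $(\Phi,\alpha)$-WPI (equivalently a $(\Phi,\beta)$-WPI via Proposition~\ref{prop:a-b-WPI-corres}) for $P^*P$, with $\alpha$ and $\beta$ expressed as generalised inverses of $\gamma$. The plan is to exploit the telescoping identity
\[
\|f\|_2^2 - \|P^n f\|_2^2 = \sum_{k=0}^{n-1} \mathcal{E}(P^*P, P^k f),
\]
which holds for any $\mu$-invariant $P$, together with the hypothesised decay to control the residual term on the left.

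Concretely, I would first observe that $u_k := \|P^k f\|_2^2$ is nonincreasing by Jensen and $\mu$-invariance, so each summand is nonnegative. Substituting the hypothesis gives
\[
\|f\|_2^2 \le \gamma(n)\Phi(f) + \sum_{k=0}^{n-1} \mathcal{E}(P^*P, P^k f),
\]
and everything reduces to bounding the telescoped sum by $n\,\mathcal{E}(P^*P, f)$. If this is done, then $\|f\|_2^2 \le n\,\mathcal{E}(P^*P, f) + \gamma(n)\Phi(f)$ is already a $(\Phi,\alpha)$-WPI realised at the point $(r,\alpha(r)) = (\gamma(n), n)$; running over $n \in \mathbb{N}$ produces the decreasing $\alpha(r) := \inf\{n : \gamma(n) \le r\}$, and Proposition~\ref{prop:a-b-WPI-corres} converts this into a $(\Phi,\beta)$-WPI with $\beta := \alpha^{\shortminus}$, yielding the quantitative equivalence with $\gamma$ up to standard generalised-inverse technicalities.

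The main obstacle will be establishing $\mathcal{E}(P^*P, P^k f) \le \mathcal{E}(P^*P, f)$. In the $\mu$-reversible case this is clean: the spectral theorem for $P$ furnishes a scalar measure $\nu_f$ on $\sigma(P) \subset [-1,1]$ with
\[
\mathcal{E}(P^*P, P^k f) = \int (1-\lambda^2)\,\lambda^{2k}\,\nu_f(\dif\lambda) \le \int (1-\lambda^2)\,\nu_f(\dif\lambda) = \mathcal{E}(P^*P, f),
\]
since $\lambda^{2k} \le 1$. For non-reversible $P$ this monotonicity can fail on small non-normal examples, and the natural remedy is to apply the spectral argument directly to the self-adjoint operator $T := P^*P$ on $\ELL_0(\mu)$ rather than to $P$ itself. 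In that route the hypothesis enters through $\|Tf\|_2^2 \le \|Pf\|_2^2 \le \gamma(1)\Phi(f)$ and, more generally, through operator inequalities $(P^*)^k P^k \le I$ combined with the sieve non-expansiveness $\Phi(Pf) \le \Phi(f)$, which together produce tail control on the spectral measure of $T$ at $f$. A spectral truncation at some $\lambda_0 \in (0,1)$ then splits
\[
\|f\|_2^2 \le \tfrac{1}{1-\lambda_0}\,\mathcal{E}(T, f) + \lambda_0^{-2n}\,\gamma(n)\,\Phi(f),
\]
and optimising over $\lambda_0$ and $n$ recovers the target WPI with an $\alpha$ that is an explicit functional inverse of $\gamma$, closing the loop with Theorem~\ref{thm:WPI_F_bd}.
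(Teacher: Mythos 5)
Your guess at the content is broadly right: this proposition is indeed the converse of Theorem~\ref{thm:WPI_F_bd}, recovering a Poincar\'e-type inequality from a decay estimate on $\left\Vert P^{n}f\right\Vert _{2}^{2}$. But the proof route you propose both overcomplicates and fails in the general case, and it misses the one-line identity that makes the result immediate.

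The key observation is that for \emph{any} $\mu$-invariant $P$, reversible or not,
\[
\mathcal{E}(P^{*}P,f)=\left\langle \left(\Id-P^{*}P\right)f,f\right\rangle =\left\Vert f\right\Vert _{2}^{2}-\left\Vert Pf\right\Vert _{2}^{2}.
\]
So you only ever need the hypothesised bound at $n=1$. In part (a), with $v_{0}:=\left\Vert f\right\Vert _{2}^{2}/\Phi(f)$, the hypothesis at $n=1$ reads $\left\Vert Pf\right\Vert _{2}^{2}\leq\Phi(f)\cdot\left(\Id-K^{*}\right)(v_{0})$, and subtracting from $\left\Vert f\right\Vert _{2}^{2}=\Phi(f)\,v_{0}$ gives $\mathcal{E}(P^{*}P,f)\geq\Phi(f)K^{*}(v_{0})$ at once. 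Part (b) is identical with $K^{*}=\Id-F^{-1}\left(1+F(\cdot)\right)$, and what remains is only to verify the analytic properties of this $K^{*}$ from those of $F$. Part (c), where reversibility is assumed, is reduced to (b) by noting that $n\mapsto\left\Vert P^{n}f\right\Vert _{2}^{2}$ is convex with log-convex negative derivative for reversible $P$ (see the remark following the proposition), so one can envelope $\gamma$ by a function $F^{-1}$ satisfying the hypotheses of (b). The conclusion of the proposition is also not an $(\Phi,\alpha)$- or $(\Phi,\beta)$-WPI per se but the optimized $K^{*}$-form $\mathcal{E}(P^{*}P,f)\geq\Phi(f)K^{*}\left(\left\Vert f\right\Vert _{2}^{2}/\Phi(f)\right)$, which is the correct level of sharpness to close the loop with Theorem~\ref{thm:WPI_F_bd}.

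Your telescoping route has two concrete defects. First, it hinges on $\mathcal{E}(P^{*}P,P^{k}f)\leq\mathcal{E}(P^{*}P,f)$, which you only establish via the spectral theorem for $\mu$-reversible $P$; but parts (a) and (b) of the proposition make no reversibility assumption, so your argument does not cover them. Second, even when reversibility is available, the telescoped bound $\left\Vert f\right\Vert _{2}^{2}\leq n\,\mathcal{E}(P^{*}P,f)+\gamma(n)\Phi(f)$ never does better than its $n=1$ instance under the hypotheses of (a) and (b) (since $v_{0}-\left(\Id-K^{*}\right)^{\circ n}(v_{0})\leq nK^{*}(v_{0})$), so you recover nothing beyond the direct one-step argument at the cost of an extra lemma. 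Your proposed repair for the nonreversible case---applying the spectral theorem to $T=P^{*}P$ and truncating the spectral measure---also does not close, because $\left\langle T^{n}f,f\right\rangle =\left\langle (P^{*}P)^{n}f,f\right\rangle$ is not $\left\Vert P^{n}f\right\Vert _{2}^{2}=\left\langle (P^{*})^{n}P^{n}f,f\right\rangle$ unless $P$ is normal, so the hypothesised decay on $\left\Vert P^{n}f\right\Vert _{2}$ gives you no control on the spectral measure of $T$ near $1$. Dropping the telescoping and using the $n=1$ identity resolves all of this.
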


\begin{enumerate}
\item Suppose that for some $K^{*}$ nonnegative, increasing, convex, and
satisfying $K^{*}\left(0\right)=0$, there holds for all $f\in\ELL_{0}\left(\mu\right)$
such that $0<\Phi\left(f\right)<\infty$ and for all $n\geqslant0$
an estimate of the form $\left\Vert P^{n}f\right\Vert {}_{2}^{2}\leqslant\Phi\left(f\right)\cdot\left(\mathrm{Id}-K^{*}\right)^{\circ n}\left(\frac{\left\Vert f\right\Vert _{2}^{2}}{\Phi\left(f\right)}\right)$.
It then follows that $\mathcal{E}\left(P^{*}P,f\right)\geqslant\Phi\left(f\right)\cdot K^{*}\left(\frac{\left\Vert f\right\Vert _{2}^{2}}{\Phi\left(f\right)}\right)$ 
\item Suppose that for a function $F:\R_{+}\to\left(0,\infty\right)$ which
is decreasing, continuous, divergent at $0$, with an inverse function
$F^{-1}$ which is decreasing, continuous, and convex, and such that
$\log\left(-\mathrm{D}F^{-1}\right)$ is convex, there holds for all
$f\in\ELL_{0}\left(\mu\right)$ such that $0<\Phi\left(f\right)<\infty$
and for all $n\geqslant0$ an estimate of the form $\left\Vert P^{n}f\right\Vert {}_{2}^{2}\leqslant\Phi\left(f\right)\cdot F^{-1}\left(n+F\left(\frac{\left\Vert f\right\Vert _{2}^{2}}{\Phi\left(f\right)}\right)\right).$
It then follows that $\mathcal{E}\left(P^{*}P,f\right)\geqslant\Phi\left(f\right)\cdot K^{*}\left(\frac{\left\Vert f\right\Vert _{2}^{2}}{\Phi\left(f\right)}\right)$,
where $K^{*}=\mathrm{Id}-F^{-1}\left(1+F\left(\cdot\right)\right)$
is nonnegative, increasing, convex, and satisfies $K^{*}\left(0\right)=0$.
\item Suppose that for a function $\gamma:\R_{+}\to\left(0,\infty\right)$
which is decreasing and has limit $0$ at $\infty$, there holds for
all $f\in\ELL_{0}\left(\mu\right)$ such that $0<\Phi\left(f\right)<\infty$
and for all $n\geqslant0$ an estimate of the form $\left\Vert P^{n}f\right\Vert {}_{2}^{2}\leqslant\Phi\left(f\right)\cdot\gamma\left(n\right).$
Suppose also that $P$ is $\mu$-reversible. It then follows that
$\mathcal{E}\left(P^{*}P,f\right)\geqslant\Phi\left(f\right)\cdot K^{*}\left(\frac{\left\Vert f\right\Vert _{2}^{2}}{\Phi\left(f\right)}\right)$,
for some $K^{*}$ which is nonnegative, increasing, convex, and satisfies
$K^{*}\left(0\right)=0$.
\end{enumerate}
\begin{rem}
Note that for reversible kernels $P$, it holds for all $f\in\ELL_{0}\left(\mu\right)$
that the sequence $\gamma_{f}:n\mapsto\left\Vert P^{n}f\right\Vert {}_{2}^{2}$
is decreasing, continuous, convex, and that $\log\left(-{\rm D}\gamma_{f}\right)$
is convex, and hence that the assumption in Part 2 of the above Proposition
holds.
\end{rem}

\subsection{Bounds on the Asymptotic Variance \label{subsec:fundamentals-Bounds-on-the}}

A by-product of the WPI analysis is that the asymptotic variance of
ergodic averages of the Markov chain in question can be upper-bounded
for suitable functions.
\begin{thm}
Let $P$ be a $\mu-$reversible Markov kernel on $\big(\mathsf{E},\mathscr{E}\big)$
and let $\Phi$ be a sieve such that for all $f\in\ELL_{0}\left(\mu\right)$
such that $0<\Phi\left(f\right)<\infty$, the optimized WPI holds:

\[
\frac{\mathcal{E}\left(P^{*}P,f\right)}{\Phi\left(f\right)}\geqslant K^{*}\left(\frac{\left\Vert f\right\Vert _{2}^{2}}{\Phi\left(f\right)}\right).
\]
Assume also that the map $v\mapsto v-K^{*}\left(v\right)$ is increasing
on $(0,\mathfrak{a}]$. Define $B\left(v\right)=\int_{0}^{v}\frac{w}{K^{*}\left(w\right)}\:\mathrm{d}w$,
which is assumed to be finite for $v\in\left[0,\mathfrak{a}\right]$.
Then the asymptotic variance of $f$ can be bounded as

\[
\mathrm{var}\left(P,f\right)\leqslant4\cdot\Phi\left(f\right)\cdot B\left(\frac{\left\Vert f\right\Vert _{2}^{2}}{\Phi\left(f\right)}\right).
\]
 
\end{thm}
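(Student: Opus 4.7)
The plan is to reduce the asymptotic variance to a series in $\|P^{m}f\|_{2}^{2}$, obtain a discrete differential inequality for the normalized sequence $v_{m}:=\|P^{m}f\|_{2}^{2}/\Phi(f)$ from the optimized WPI, and finally turn that recursion into an integral bound against $B$.

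\emph{Step 1 (series representation).} Assume without loss of generality that $f\in\ELL_{0}(\mu)$. By $\mu$-reversibility and stationarity,
\[
\mathrm{var}(P,f)=\|f\|_{2}^{2}+2\sum_{k=1}^{\infty}\langle f,P^{k}f\rangle.
\]
Splitting into even and odd $k$, reversibility gives $\langle f,P^{2m}f\rangle=\|P^{m}f\|_{2}^{2}$ and $\langle f,P^{2m+1}f\rangle=\langle P^{m}f,P^{m+1}f\rangle$, the latter bounded in absolute value by $\|P^{m}f\|_{2}\,\|P^{m+1}f\|_{2}\le\|P^{m}f\|_{2}^{2}$ via Cauchy--Schwarz and the fact that $P$ is a contraction on $\ELL_{0}(\mu)$. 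A crude regrouping then yields $\mathrm{var}(P,f)\le 4\sum_{m=0}^{\infty}\|P^{m}f\|_{2}^{2}$.

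\emph{Step 2 (recursion from the WPI).} Since $P^{*}=P$, we have $P^{*}P=P^{2}$ and $\calE(P^{*}P,g)=\|g\|_{2}^{2}-\|Pg\|_{2}^{2}$. Applying the optimized WPI to $g=P^{m}f\in\ELL_{0}(\mu)$ gives
\[
\|P^{m}f\|_{2}^{2}-\|P^{m+1}f\|_{2}^{2}\ge\Phi(P^{m}f)\,K^{*}\!\left(\frac{\|P^{m}f\|_{2}^{2}}{\Phi(P^{m}f)}\right).
\]
Since $K^{*}$ is convex with $K^{*}(0)=0$, the map $t\mapsto K^{*}(t)/t$ is non-decreasing on $(0,\mathfrak{a}]$. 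Combined with $P$-non-expansivity $\Phi(P^{m}f)\le\Phi(f)$, this implies $\Phi(P^{m}f)\,K^{*}(\|P^{m}f\|_{2}^{2}/\Phi(P^{m}f))\ge\Phi(f)\,K^{*}(v_{m})$, yielding the clean recursion
\[
v_{m}-v_{m+1}\ge K^{*}(v_{m}),\qquad m\ge0.
\]
The hypothesis that $v\mapsto v-K^{*}(v)$ is increasing on $(0,\mathfrak{a}]$ ensures $v_{m}$ is genuinely decreasing and, together with Theorem~\ref{thm:WPI_F_bd}, that $v_{m}\downarrow 0$.

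\emph{Step 3 (integration against $B$).} Again using that $K^{*}(t)/t$ is non-decreasing, the function $w\mapsto w/K^{*}(w)$ is non-increasing. Hence, for each $m$,
\[
v_{m}=\frac{v_{m}}{K^{*}(v_{m})}\cdot K^{*}(v_{m})\le\frac{v_{m}}{K^{*}(v_{m})}\bigl(v_{m}-v_{m+1}\bigr)\le\int_{v_{m+1}}^{v_{m}}\frac{w}{K^{*}(w)}\,\mathrm{d}w.
\]
Telescoping over $m$ and using $v_{m}\downarrow 0$ yields $\sum_{m=0}^{\infty}v_{m}\le B(v_{0})$. Multiplying by $\Phi(f)$ and plugging into the bound from Step~1 gives $\mathrm{var}(P,f)\le 4\,\Phi(f)\,B(\|f\|_{2}^{2}/\Phi(f))$, as required.

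The main obstacle is Step~2: passing from the WPI applied at $P^{m}f$, whose normalization involves the possibly smaller $\Phi(P^{m}f)$, to a recursion for the $v_{m}$ normalized uniformly by $\Phi(f)$. The key ingredient is the monotonicity of $t\mapsto K^{*}(t)/t$, which is a direct consequence of convexity of $K^{*}$ and $K^{*}(0)=0$, together with $P$-non-expansivity of the sieve. Once this monotone comparison is in place, Steps~1 and~3 are essentially routine bookkeeping.
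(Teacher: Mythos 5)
Your proposal is correct, and while the overall strategy is the same (reduce to a series in $\|P^{m}f\|_{2}^{2}$, extract a recursion from the WPI, and integrate against $w/K^{*}(w)$), the execution of the final step is genuinely cleaner than the paper's. The paper obtains $\mathrm{var}(P,f)\leq 4\sum_{n}\|P^{n}f\|_{2}^{2}$ from the spectral representation and the bound $\frac{1+\lambda}{1-\lambda}\leq\frac{4}{1-\lambda^{2}}$; your even/odd splitting with Cauchy--Schwarz is a more hands-on derivation of the same inequality. Your Step~2 re-derives the one-step contraction $v_{m}-v_{m+1}\geq K^{*}(v_{m})$ directly from the WPI applied to $P^{m}f$, which is the heart of Theorem~\ref{thm:WPI_F_bd} anyway; the crucial observation that $\phi\mapsto\phi\,K^{*}(a/\phi)$ is nonincreasing (from convexity of $K^{*}$ and $K^{*}(0)=0$) combined with $\Phi(P^{m}f)\leq\Phi(f)$ is correct and exactly what is needed. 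Where the two arguments really part ways is Step~3: the paper introduces the auxiliary sum $\tilde{B}(v)=\sum_{n\geq 0}(\Id-K^{*})^{\circ n}(v)$, establishes its concavity by induction, passes through a derivative bound $\tilde{B}'(v)\leq v/K^{*}(v)$, and then integrates from $\tilde{B}(0)=0$ (the latter stated without a full justification). You instead bound each $v_{m}$ directly by $\int_{v_{m+1}}^{v_{m}}\frac{w}{K^{*}(w)}\,\mathrm{d}w$ using monotonicity of $w/K^{*}(w)$ and the one-step inequality, then telescope; this avoids the concavity argument for $\tilde{B}$ entirely and makes the role of $v_{m}\downarrow 0$ explicit (if $v_{m}\to v_{\infty}>0$ then $K^{*}(v_{\infty})>0$ would force $v_{m}\to-\infty$, a contradiction). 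Your version buys transparency and self-containedness; the paper's version buys a characterization of the quantity $\tilde{B}(v_{0})=\sum_{n}(\Id-K^{*})^{\circ n}(v_{0})$ as the tightest bound that the WPI alone can yield, which is then dominated by the more tractable $B(v_{0})$.
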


\begin{proof}
Using reversibility of the kernel, we write the asymptotic variance
of $f$ as

\[
\mathrm{var}\left(P,f\right)=\int_{-1}^{1}\nu_{f}\left(\mathrm{d}\lambda\right)\cdot\frac{1+\lambda}{1-\lambda}.
\]
Bounding $\frac{1+\lambda}{1-\lambda}=\frac{\left(1+\lambda\right)^{2}}{1-\lambda^{2}}\leqslant4\cdot\frac{1}{1-\lambda^{2}}$,
we can thus bound

\begin{align*}
\mathrm{var}\left(P,f\right) & \leqslant4\cdot\int_{-1}^{1}\nu_{f}\left(\mathrm{d}\lambda\right)\cdot\frac{1}{1-\lambda^{2}}\\
 & =4\cdot\sum_{n\geqslant0}\left\Vert P^{n}f\right\Vert _{2}^{2}.
\end{align*}
Recall now our tightest discrete-time bound on the variance of the
semigroup, with $S:=\Id-K^{*}$,

\[
\left\Vert P^{n}f\right\Vert {}_{2}^{2}\leqslant\Phi\left(f\right)\cdot S^{\circ n}\left(\frac{\left\Vert f\right\Vert _{2}^{2}}{\Phi\left(f\right)}\right),
\]
we write $v=\frac{\left\Vert f\right\Vert _{2}^{2}}{\Phi\left(f\right)}$
and bound the asymptotic variance as

\begin{align*}
\mathrm{var}\left(P,f\right) & \leqslant4\cdot\Phi\left(f\right)\cdot\sum_{n\geqslant0}S^{\circ n}\left(v\right)\\
 & =:4\cdot\Phi\left(f\right)\cdot\tilde{B}\left(v\right).
\end{align*}

We now control the growth of $\tilde{B}$. Noting that $S$ is nonnegative,
increasing, and concave, a simple induction argument proves that $S^{\circ n}$
also has these properties, and since $\tilde{B}$ is a nonnegative
combination of these functions, it too has these properties.

Now, isolating the first term in the sum which defines $\tilde{B}$,
we have the recursion $\tilde{B}\left(v\right)=v+\tilde{B}\left(S\left(v\right)\right)$,
which allows us to write

\begin{align*}
v & =\tilde{B}\left(v\right)-\tilde{B}\left(S\left(v\right)\right)\\
 & =\int_{S\left(v\right)}^{v}\tilde{B}'\left(w\right)\,\mathrm{d}w.
\end{align*}
By concavity, it holds that for $w\in\left[S\left(v\right),v\right]$,
$\tilde{B}'\left(w\right)\geqslant\tilde{B}'\left(v\right)$, whence

\begin{align*}
v & \geqslant\left(v-S\left(v\right)\right)\cdot\tilde{B}'\left(v\right)\\
 & =K^{*}\left(v\right)\cdot\tilde{B}'\left(v\right)\\
\implies\quad\tilde{B}'\left(v\right) & \leqslant\frac{v}{K^{*}\left(v\right)}.
\end{align*}
Now, arguing that $\tilde{B}\left(0\right)=0$ and integrating, we
obtain the expression
\[
\tilde{B}\left(v\right)\le B\left(v\right):=\int_{0}^{v}\frac{w}{K^{*}\left(w\right)}\:\mathrm{d}w
\]
from which the result follows.
\end{proof}
\begin{rem}
An analogous result can be shown for a continuous-time Markov process
$\{P_{t}:t\geqslant0\}$, by defining the Dirichlet form in terms
of the infinitesimal generator.
\end{rem}

\begin{rem}
It is plausible that the assumption that $v\mapsto v-K^{*}\left(v\right)$
is increasing might follow from the defining properties of $K$ and/or
$\beta$, but we have been unable to establish this directly. In all
of our explicit examples, this condition holds.
\end{rem}

\subsection{Towards spectral interpretations \label{subsec:fundamentals-Towards-spectral-interpretations}}

In the reversible scenario, spectral representations of the operator
$P$ can provide useful insights. Subgeometric convergence naturally
implies that the spectral radius of $P$ is one and therefore that
the spectrum accumulates at $-1$ or $1$. The following are attempts
to make these ideas more concrete.

\subsubsection{Concentration of the spectrum}

When $P$ is reversible, we can utilize the spectral projection-valued
measure representation of $P$. Thus for a given $f\in\ELL_{0}(\mu)$,
let $\nu_{f}(\dif\lambda)$ be the positive measure on $\sigma(P)$
which satisfies
\[
\langle P^{n}f,f\rangle=\int_{\sigma(P)}\lambda^{n}\,\nu_{f}(\dif\lambda).
\]
Note that $\nu_{f}$ is a probability measure precisely when $\left\Vert f\right\Vert _{2}=1$.
From our $(\Phi,\beta)-$WPI, we can conclude $(\Phi,\gamma)$--
convergence of $\|P^{n}f\|_{2}^{2}$ for some $\gamma:\mathbb{N}_{0}\to\R$
with $\gamma\left(n\right)\downarrow0$ as $n\to\infty$. This gives
some control on the moments of $\nu_{f}$: for any $f\in\ELL_{0}(\mu)$
with $\left\Vert f\right\Vert _{2}=1$,
\begin{equation}
\|P^{n}f\|_{2}^{2}=\int_{\sigma(P)}\lambda^{2n}\,\nu_{f}\left(\dif\lambda\right)\le\Phi\left(f\right)\gamma\left(n\right).\label{eq:nu_f-moment-bd}
\end{equation}
In particular, we have
\[
\sup_{f:\left\Vert f\right\Vert _{2}=1}\left\{ \frac{\int_{\sigma(P)}\lambda^{2n}\,\nu_{f}\left(\dif\lambda\right)}{\Phi\left(f\right)}\right\} \leq\gamma\left(n\right),
\]
from which we may deduce by Markov's inequality
\[
\sup_{f:\left\Vert f\right\Vert _{2}=1}\left\{ \frac{\mathbb{P}_{\nu_{f}}\left(\lambda^{2}>\exp\left(-\delta\right)\right)}{\Phi\left(f\right)}\right\} \leq\inf_{n\geq1}\left\{ \frac{\gamma\left(n\right)}{\exp\left(-\delta n\right)}\right\} .
\]
For example, if $\gamma(n)\leq cn^{-k}$, then there exists $C$ such
that 
\[
\sup_{f:\left\Vert f\right\Vert _{2}=1}\left\{ \frac{\mathbb{P}_{\nu_{f}}\left(\lambda^{2}>\exp\left(-\delta\right)\right)}{\Phi\left(f\right)}\right\} \leq C\delta^{k}.
\]
This may be viewed as the subgeometric counterpart to the fact that
if $\gamma(n)=\rho^{n}$ then this implies by the same reasoning that
$\mathbb{P}_{\nu_{f}}(\lambda^{2}>\rho)=0$ for all $f$ with $\Phi(f)<\infty$
and $\left\Vert f\right\Vert _{2}=1$.

\subsubsection{Spectrum of the Independent Metropolis--Hastings algorithm}

Consider the Independent Metropolis--Hastings (IMH), also known as
an \textit{independence sampler}, on a countable state space $\E=\mathbb{N}_{0}$.
For a fixed target distribution $\pi$ and proposal distribution $q$
on $\E$, at position $X_{n}=x$, the chain proposes a move to $Y\sim q$,
and conditional on $Y=y$, accepts this move with probability $1\wedge\frac{\pi(y)q(x)}{\pi(x)q(y)}$
and sets $X_{n+1}=y$, otherwise the move is rejected and $X_{n+1}=x$.
For brevity, we define
\[
w(x):=\frac{\pi(x)}{q(x)},\quad x\in\E.
\]
For the IMH, the spectrum of the transition kernel $P$ has been characterized
in \cite{gaasemyr2003spectrum}:
\[
\sigma(P)=\{\mathsf{r}_{w}:w\in\mathcal{W}\}\cup\{1\},
\]
where $\mathcal{W}=\{w(x):x\in\E\}$, $\mathsf{r}_{w}:=\mathbb{P}(X_{1}=x\,|X_{0}=x,w(x)=w)$
are the rejection probabilities. 

In order to be concrete, we consider a specific choice of $\pi,q$:
we take geometric $\pi(x)=(1-a)\cdot a^{x}$ and $q(x)=(1-b)\cdot b^{x}$
for $x\in\E=\mathbb{N}_{0}$, where $0<b<a<1$. In this case, the
Markov chain will converge subgeometrically, with rate $n^{-\frac{b}{a-b}}$
for bounded functions (this can be seen by a straightforward adaptation
of the example in \cite[Section 2.3.1]{ALPW2021}). In this countable
state space setting, it is furthermore possible to explicitly characterize
the spectrum \cite{gaasemyr2003spectrum}. By computing explicitly
the rejection probabilities $\mathsf{r}_{w}$, we find that
\begin{equation}
\sigma(P)=\left\{ \Lambda_{m}:=1-\frac{1-b}{1-a}\cdot\left(\frac{b}{a}\right)^{m}+\frac{a-b}{1-a}\cdot b^{m}:m\in\mathbb{N}_{0}\right\} \cup\{1\}.\label{eq:spectrum_IMH}
\end{equation}
Since $\Lambda_{m}\uparrow1$ as $m\to\infty$, we see there is no
spectral gap, and indeed choosing a smaller value of $b$ -- which
leads to a slower rate of convergence for bounded functions -- causes
the spectrum to concentrate even more tightly around $1$.

Given a test function $f\in\ELL_{0}(\pi)$ with $\|f\|_{2}=1$, we
can consider its spectral measure $\nu_{f}(\cdot)$ on $\sigma(P)$,
which has the property that $\langle P^{n}f,f\rangle=\int_{\sigma(P)}\lambda^{n}\,\nu_{f}(\dif\lambda)$
for all $n\in\mathbb{N}_{0}$. Since $f$ has unit norm, $\nu_{f}$
is a probability mass function supported on $\{\Lambda_{m}:m\in\mathbb{N}_{0}\}$.
The function $f$ is thus entirely characterized by the measure $\nu_{f}$,
and many of its properties can be read off from this.

For example, if 
\begin{equation}
\int_{\sigma(P)}(1-\lambda)^{-1}\,\nu_{f}(\dif\lambda)=\sum_{m\in\mathbb{N}_{0}}(1-\Lambda_{m})^{-1}\,\nu_{f}(\Lambda_{m})<\infty,\label{eq:imh_spec_sum}
\end{equation}
then $f$ will have a finite asymptotic variance. Given our expression
for the $\Lambda_{m}$ (\ref{eq:spectrum_IMH}), we see this will
be the case when the masses $\nu_{f}(\Lambda_{m})$ decay strictly
faster than $(a/b)^{m}$, to ensure the sum in (\ref{eq:imh_spec_sum})
is finite.

\section{Optimal choices of $\alpha,\beta,\Phi$ and ordering \label{sec:Optimal-choices}}

Given our formulation of a WPI in Definition~\ref{def:WPI}, it is
natural to ask how one might optimize the constituent components:
that is, how to make formal the notion of a ``best'' possible $\alpha,\beta$
or $\Phi$. 

\subsection{Optimal $\alpha$ and $\beta$ \label{subsec:optimal-choices-alpha-beta}}

We start by fixing a given sieve $\Phi$, and seeking an optimal $\alpha$
and $\beta$. We assume that $\Phi$ is such that there exist functions
$f$ such that $0<\Phi\left(f\right)<\infty$. Since ${\rm var}_{\mu}\left(f\right)\leq\mathfrak{a}\Phi\left(f\right)$,
$\Phi(f)=0\Rightarrow{\rm var}_{\mu}(f)=0$ and so this assumption
means only that we avoid the scenario where the only functions such
that $\Phi\left(f\right)<\infty$ are constant functions.

We define minimal $\alpha$ and $\beta$ functions, for a given sieve
$\Phi$, as the (pointwise) minimal functions satisfying Definition~\ref{def:WPI}.
\begin{defn}
\label{def:alpha-beta-star}For a $\mu-$invariant Markov kernel $T$
and sieve $\Phi$ define,
\begin{enumerate}
\item for any $r>0$,
\[
\alpha^{\star}\left(r;\Phi\right):=\sup\left\{ \frac{\left\Vert g\right\Vert _{2}^{2}}{\mathcal{E}\left(T,g\right)}\left(1-\frac{r}{\left\Vert g\right\Vert _{2}^{2}}\right)\colon g\in\ELL_{0}\left(\mu\right),\Phi\left(g\right)=1\right\} \vee0,
\]
noting that if $r\ge\mathfrak{a}$, $\alpha^{\star}\left(r;\Phi\right)=0$;
\item for any $s>0$, 
\[
\beta^{\star}\left(s;\Phi\right):=\sup\left\{ \left\Vert g\right\Vert _{2}^{2}-s\mathcal{E}\left(T,g\right)\colon g\in\ELL_{0}\left(\mu\right),\Phi\left(g\right)=1\right\} \vee0\quad.
\]
\end{enumerate}
When $\Phi=\|\cdot\|_{{\rm osc}}^{2}$ we shall plainly write $\alpha^{\star}\left(\cdot\right):=\alpha^{\star}\left(\cdot;\Phi\right)$
and $\beta^{\star}\left(\cdot\right):=\beta^{\star}\left(\cdot;\Phi\right)$. 
\end{defn}

Despite their definitions it is not clear that the functions $\alpha^{\star}$
and $\beta^{\star}$ satisfy all the conditions required for a WPI
to hold. The following theorem clarifies this point and also establishes
that $\alpha^{\star}$ and $\beta^{\star}$ are inverses of each other
when restricted to appropriate domains. The statement requires the
existence of some $(\Phi,\alpha)$- or $(\Phi,\beta)$-WPI, which
we note can be established with the results of Subsection~\ref{subsec:establish-WPI-WPIs-from-RUPI}
for $\Phi=\|\cdot\|_{\osc}^{2}$. In particular Corollary~\ref{cor:WPI_from_irred}
establishes that $\mu-$irreducibility is a sufficient condition for
the existence of a WPI.
\begin{thm}
\label{thm:alpha-beta-star-WPI-continuous-convex} Suppose that the
$\mu-$invariant kernel $T$ possesses some $(\Phi,\alpha)$- or $(\Phi,\beta)$-WPI.
Then $\alpha^{\star}(\cdot;\Phi)$ defines a $(\Phi,\alpha^{\star})\shortminus$WPI
and $\beta^{\star}(\cdot;\Phi)$ defines a $(\Phi,\beta^{\star})\shortminus$WPI.
Furthermore, the functions $\alpha^{\star}(\cdot;\Phi):(0,\mathfrak{a}]\to[0,\infty)$
and $\beta^{\star}(\cdot;\Phi):[0,\infty)\to[0,\mathfrak{a}]$ are
convex and continuous. In addition, $\beta^{\star}$ is strictly decreasing
to $0$ and $\alpha^{\star}=\left(\beta^{\star}\right)^{-1}$ is the
inverse function, which is well-defined on $(0,\mathfrak{a}]$ and
strictly decreasing.
\end{thm}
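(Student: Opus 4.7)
My proof plan is to unpack the definitions of $\alpha^{\star}$ and $\beta^{\star}$ to establish the respective WPIs directly, use suprema-of-affine-functions arguments for convexity and continuity, derive monotonicity and decay from the hypothesis that some WPI exists, and finally obtain the inverse relationship $\alpha^{\star} = (\beta^{\star})^{-1}$ via a Galois-type duality between the two definitions.

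First, to verify the $(\Phi,\alpha^{\star})$- and $(\Phi,\beta^{\star})$-WPIs, I would pick $f \in \ELL_{0}(\mu)$ with $0 < \Phi(f) < \infty$ and exploit sieve homogeneity $\Phi(cf) = c^{2}\Phi(f)$ to rescale to $g := f/\sqrt{\Phi(f)}$, so that $\Phi(g) = 1$; plugging $g$ into the defining suprema and multiplying through by $\Phi(f)$ recovers (\ref{eq:WPI}) and (\ref{eq:beta-WPI}). The degenerate cases $\Phi(f) \in \{0, \infty\}$ are trivial (the former using $\|f\|_{2}^{2} \leq \mathfrak{a}\Phi(f) = 0$ from the definition of a sieve).

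Next, for convexity and continuity, I would note that for each admissible $g$ the maps $r \mapsto (\|g\|_{2}^{2} - r)/\calE(T, g)$ and $s \mapsto \|g\|_{2}^{2} - s\calE(T, g)$ are affine and decreasing in $r$ and $s$ respectively; taking pointwise suprema over $g$ and joining with $0$ preserves convexity and monotonicity. The assumed existence of some $(\Phi, \alpha)$- or $(\Phi, \beta)$-WPI, switched between parametrizations via Proposition~\ref{prop:a-b-WPI-corres}, provides the pointwise upper bounds $\alpha^{\star} \leq \alpha$ and $\beta^{\star} \leq \beta$, so both functions are finite on their stated domains. Real-valued convex functions are automatically continuous on the interior; continuity at the endpoints $r = \mathfrak{a}$ and $s = 0$ follows from direct estimation of the defining suprema, using $\sup_{\Phi(g)=1}\|g\|_{2}^{2} = \mathfrak{a}$. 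The strict decrease of $\beta^{\star}$ on $\{\beta^{\star} > 0\}$ follows from convexity combined with $\beta^{\star}(s) \to 0$ as $s \to \infty$ (inherited from $\beta^{\star} \leq \beta$): a convex decreasing function that is constant on an interval cannot subsequently strictly decrease, so $\beta^{\star}$ cannot be flat while still positive.

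The main step, and what I expect to be the principal obstacle, is the identification $\alpha^{\star} = (\beta^{\star})^{-1}$. The crucial observation is the Galois-type duality
\[
\alpha^{\star}(r) \leq s \iff \beta^{\star}(s) \leq r,
\]
which follows by inspection: the left inequality reads $\|g\|_{2}^{2} - r \leq s\,\calE(T, g)$ for every admissible $g$, which rearranges to $\|g\|_{2}^{2} - s\calE(T, g) \leq r$, i.e., the right inequality, and conversely. Given this biconditional, continuity and strict monotonicity of $\beta^{\star}$ allow inversion: for any $r \in (0, \mathfrak{a}]$, set $s := \alpha^{\star}(r)$; the forward direction of the duality gives $\beta^{\star}(s) \leq r$, while applying the contrapositive to each $s' < s$ forces $\beta^{\star}(s') > r$, so continuity of $\beta^{\star}$ yields $\beta^{\star}(s) = r$. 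The delicate part will be cleanly matching the boundary behaviour: as $r \downarrow 0$ the value $\alpha^{\star}(r)$ may diverge (no spectral gap), whereas $\alpha^{\star}(\mathfrak{a}) = 0$, and one must verify that $\alpha^{\star}$ is exactly the inverse of $\beta^{\star}$ restricted to the strictly decreasing portion of its graph.
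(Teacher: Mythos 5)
Your proof is correct and follows essentially the same architecture as the paper's: rescale to $\Phi(g)=1$ to recover the WPIs, obtain convexity (and hence interior continuity) as a supremum of affine maps joined with $0$, derive finiteness and decay from the hypothesized WPI, and then identify $\alpha^{\star}=(\beta^{\star})^{-1}$. The one place where your route genuinely differs is the inversion step. The paper fixes $s:=(\beta^{\star})^{-1}(r)$, proves $\alpha^{\star}(r)\leq s$ directly, and then rules out $\alpha^{\star}(r)<s$ by contradiction with the strict monotonicity of $\beta^{\star}$. You instead isolate the biconditional $\alpha^{\star}(r)\leq s\iff\beta^{\star}(s)\leq r$ (which is exactly the two implications the paper uses, but packaged as a Galois-type adjunction) and then conclude $\beta^{\star}(\alpha^{\star}(r))=r$ by a one-sided limit argument using continuity. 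This is a cleaner, more symmetric organization of the same underlying rearrangement; the paper's version requires one to notice that the inequality $\beta^{\star}(t)\leq\beta^{\star}(s)$ with $t<s$ is only contradictory because $\beta^{\star}$ is \emph{strictly} decreasing on the relevant range, a point that is a little buried. One small improvement you could make: the continuity of $\alpha^{\star}$ at the right endpoint $r=\mathfrak{a}$ needs no special ``direct estimation of the suprema'' — since $\alpha^{\star}$ is finite (by the assumed WPI), convex and nonnegative on all of $(0,\infty)$, and $\mathfrak{a}>0$ is interior to that set, continuity at $\mathfrak{a}$ is automatic; the only endpoint that genuinely requires a separate argument is $s=0$ for $\beta^{\star}$, which you handle correctly via $\sup_{\Phi(g)=1}\|g\|_{2}^{2}=\mathfrak{a}$.
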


\begin{proof}
We consider the $\beta$ formulation, and drop explicit reference
to the fixed $\Phi$ under consideration; the $\alpha$ formulation
is analogous. By assumption, we know that $T$ possesses a $(\Phi,\beta)$-WPI,
for some function $\beta$ as in Definition~\ref{def:WPI} (\textit{c.f.}
Proposition~\ref{prop:a-b-WPI-corres}). By definition of $\beta^{\star}$,
we have that $0\leq\beta^{\star}\le\beta$ pointwise and so $\beta^{\star}(s)\to0$
as $s\to\infty$. Since the pointwise supremum of affine functions
(of $s$) is convex, we obtain convexity and continuity of $\beta^{\star}$,
from the fact that it is the composition of a nondecreasing convex
continuous function, $s\mapsto\max\{0,s\}$, with a convex function.
We observe that $\beta^{\star}(0)=\mathfrak{a}$. Now, let $s_{0}:=\inf\{s>0:\beta^{\star}(s)=0\}$,
which may be infinite. Since $\beta^{\star}$ is convex and continuous,
it is strictly decreasing on $(0,s_{0})$. It follows that $\beta^{\star}$
is invertible on $(0,s_{0})$ with inverse $(\beta^{\star})^{-1}:(0,\mathfrak{a}]\to[0,\infty)$
that is also convex and strictly decreasing.

Now we show that $\alpha^{\star}=(\beta^{\star})^{-1}$. For $r\in(0,\mathfrak{a}]$,
let $s:=(\beta^{\star})^{-1}(r)$. For any $f\in\ELL_{0}(\mu)$ with
$\Phi(f)=1$ we have
\[
\|f\|_{2}^{2}-s\mathcal{E}(T,f)\leq\beta^{\star}(s)=r,
\]
and this implies 
\[
\alpha^{\star}(r)=\sup_{f:\Phi(f)=1}\frac{\|f\|_{2}^{2}}{\mathcal{E}(T,f)}-\frac{r}{\mathcal{E}(T,f)}\leq s.
\]
Assume for the sake of contradiction that $\alpha^{\star}(r)=t<s$.
For any $f\in\ELL_{0}(\mu)$ with $\Phi(f)=1$ we have
\[
\frac{\|f\|_{2}^{2}}{\mathcal{E}(T,f)}-\frac{r}{\mathcal{E}(T,f)}\leq t,
\]
and so
\[
\beta^{\star}(t)=\sup_{f:\Phi(f)=1}\|f\|_{2}^{2}-t\mathcal{E}(T,f)\leq r=\beta^{\star}(s),
\]
which is a contradiction since $\beta^{\star}$ is decreasing, and
we conclude.
\end{proof}
\begin{rem}
\label{rem:psi-alpha}The function $\alpha^{\star}$ may be upper
and lower bounded using the function $\psi:\R_{+}\to[0,\infty)$,
\[
\psi(t;\Phi):=\inf_{f:\Phi(f)=1,\|f\|_{2}^{2}>t}\frac{\mathcal{E}(T,f)}{\|f\|_{2}^{2}},
\]
which is nondecreasing. The behaviour of $\psi(\cdot;\Phi)$ as $t$
decreases to $0$ gives bounds on $\alpha^{\star}(\cdot;\Phi)$. Indeed,
we find that for any $t>r$,
\[
\frac{1}{\psi(t;\Phi)}\left(1-\frac{r}{t}\right)\leq\alpha^{\star}(r;\Phi)\leq\frac{1}{\psi(r;\Phi)}.
\]
Taking $t=2r$ we obtain 
\[
\frac{1}{2\psi(2r;\Phi)}\leq\alpha^{\star}(r;\Phi)\leq\frac{1}{\psi(r;\Phi)},
\]
and we may also deduce that $\lim_{r\downarrow0}\alpha^{\star}(r;\Phi)=\psi(0;\Phi)^{-1}$.
We see that $\alpha^{\star}$ is intimately connected to the rate
at which $\psi$ decreases as $t$ decreases, i.e. as the variance
of functions $f$ with $\Phi(f)=1$ is allowed to decrease to $0$.
We will see in Theorem~\ref{thm:WPI_from_cond} that, when $\Phi=\left\Vert \cdot\right\Vert _{{\rm osc}}^{2}$,
upper and lower bounds may also be obtained by considering only indicator
functions. One can also bound $\beta^{\star}$ in a similar manner
using the function $\psi^{-}(u):=\sup\{t:\psi(t)\leq u\}$, in which
case one finds
\[
\frac{1}{2}\psi^{-}\left(\frac{1}{2s};\Phi\right)\leq\beta^{\star}(s;\Phi)\leq\psi^{-}\left(\frac{1}{s};\Phi\right).
\]
\end{rem}

In fact, if $\Phi$ defines a subspace $\mathcal{F}$ of $\ELL_{0}(\mu)$
then one may view $\psi(0;\Phi)$ as the right spectral gap associated
with $T$ as an operator on the closure of $\mathcal{F}$; see Lemma~\ref{lem:phi-spectral-gap}.
In the case where $T=P^{*}P$ and $\psi(0;\Phi)>0$ then this implies
$\left\Vert P^{n}f\right\Vert _{2}^{2}\leq\left\{ 1-\psi(0;\Phi)\right\} ^{n}\left\Vert f\right\Vert _{2}^{2}$
for functions $f\in\mathcal{F}$; see Remark~\ref{rem:phi-operator-norm}.
This is also natural by observing that if we define $\alpha^{\star}(0;\Phi):=\lim_{r\downarrow0}\alpha^{\star}(r;\Phi)=\psi(0;\Phi)^{-1}$
we observe that a $(\Phi,\alpha^{\star})$-WPI implies that $\left\Vert f\right\Vert _{2}^{2}\leq\alpha^{\star}(0;\Phi)\mathcal{E}(T,f)$
for all $f\in\mathcal{F}$, from which the same bound on $\left\Vert P^{n}f\right\Vert _{2}^{2}$
may be directly obtained. Finally, when $\Phi=\left\Vert \cdot\right\Vert _{{\rm osc}}^{2}$
then $\psi(0;\Phi)$ is the $\ELL_{0}(\mu)$ spectral gap; see Lemma~\ref{lem:phi-spectral-gap}.

\subsection{Lower bounds on convergence rates \label{subsec:optimal-choice-Lower-bounds-on}}

In principle, noting that $\alpha^{\star}$ and $\beta^{\star}$ are
pointwise minimal functions, any function $f\in\ELL_{0}(\mu)$ with
$\Phi(f)=1$ may be used to construct a lower bound. For example,
for any such function, $\beta^{\star}$ satisfies
\[
\beta^{\star}(s)\geq\|f\|_{2}^{2}-s\mathcal{E}(T,f),\qquad s>0.
\]
In practice, to produce an informative lower bound for the whole function
$\beta^{\star}$, one will need to identify an appropriate sequence
of functions. Indicator functions of measurable sets are always in
$\ELL_{0}(\mu)$, have finite oscillation, and they can provide a
tractable source of such functions as $\mathcal{E}(P,{\bf 1}_{A})$
has a natural probabilistic interpretation. We show that such functions
can provide both lower and upper bounds for $\beta^{\star}$ in Section~\ref{subsec:Cheeger-meets-Poincar=0000E9}. 

We now show that a lower bound on $\beta_{1}$ in a $(\Phi,\beta_{1})$-WPI
for $P$ can imply a lower bound on $\beta_{2}$ in a $(\Phi,\beta_{2})$-WPI
for $P^{*}P$. 
\begin{lem}[{\cite[Remark~3.1]{diaconis1996nash}}]
\label{lem:PP-dirichlet-form-ub}Let $P$ be $\mu$-invariant. Then
\[
\mathcal{E}(P^{*}P,f)\leq2\mathcal{E}(P,f),\qquad f\in\ELL_{0}(\mu).
\]
\end{lem}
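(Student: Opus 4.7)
The plan is to directly expand both Dirichlet forms in terms of inner products and recognize the difference as a nonnegative quantity. Writing $\mathcal{E}(P,f) = \langle (\Id - P)f, f\rangle = \|f\|_2^2 - \langle Pf, f\rangle$, and similarly $\mathcal{E}(P^*P, f) = \langle (\Id - P^*P)f, f\rangle = \|f\|_2^2 - \|Pf\|_2^2$ (using $\langle P^*Pf, f\rangle = \langle Pf, Pf\rangle$), the claimed inequality is equivalent to
\[
2\mathcal{E}(P,f) - \mathcal{E}(P^*P,f) = \|f\|_2^2 - 2\langle Pf, f\rangle + \|Pf\|_2^2 \geq 0.
\]

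The key observation is that the right-hand side is exactly $\|f - Pf\|_2^2$, which is manifestly nonnegative. This gives the bound immediately.

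Since the argument takes only one display line, there is no real obstacle; the only subtlety worth remarking on is that the restriction to $f \in \ELL_0(\mu)$ is not used in the inequality itself (it holds for all $f \in \ELL(\mu)$), but it is the natural domain for Dirichlet forms in the WPI framework, and it ensures consistency with the definition $\mathcal{E}(T,f) = \langle (\Id - T)f, f\rangle$ used earlier in the text. No use of reversibility is needed either, just $\mu$-invariance of $P$ (which ensures $P^*$ exists on $\ELL(\mu)$ with the stated adjoint relation).
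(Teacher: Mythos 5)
Your proof is correct, and it is the standard argument: expanding both Dirichlet forms and recognizing $2\mathcal{E}(P,f)-\mathcal{E}(P^{*}P,f)=\|f-Pf\|_{2}^{2}\geq 0$. The paper itself supplies no proof here, only a citation to Diaconis and Saloff-Coste, so there is nothing to compare against; your one-line derivation is exactly the computation behind that cited remark. Your side observations are also right: neither the restriction to $\ELL_{0}(\mu)$ nor reversibility is used, only $\mu$-invariance (so that $P$ and hence $P^{*}$ act on $\ELL(\mu)$).
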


\begin{rem}
If $P$ is $\mu$-reversible, one can obtain $\mathcal{E}(P^{2},f)\leq(1+\lambda_{\star})\mathcal{E}(P,f)$
by using the spectral theorem, where $\lambda_{\star}=\sup\sigma_{0}(P)$.
However, since the focus here is on WPIs, the case $\lambda_{\star}<1$
is less relevant.
\end{rem}

We note that a converse may be obtained when $P$, and therefore $P^{*}$,
satisfies $P(x,\{x\})\geq\varepsilon$ on a $\mu$-full set; see Lemma~\ref{lem:stick-dirichlet-lower-bound}. 

\begin{lem}
\label{lem:PP-beta-star-from-P-beta-star}Let $P$ be $\mu$-invariant,
and assume it satisfies a $(\Phi,\beta_{1}^{\star})$-WPI, where $\beta_{1}^{\star}$
is pointwise minimal. Assume $P^{*}P$ satisfies a $(\Phi,\beta_{2}^{\star})$-WPI
where $\beta_{2}^{\star}$ is pointwise minimal. Then $\beta_{2}^{\star}(s)\geq\beta_{1}^{\star}(2s)$.
\end{lem}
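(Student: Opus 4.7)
The plan is to deduce the inequality directly from the preceding Dirichlet-form comparison in Lemma~\ref{lem:PP-dirichlet-form-ub} together with the variational characterization of the pointwise minimal $\beta$ given in Definition~\ref{def:alpha-beta-star}.

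First I would unpack the definition: for any $s>0$,
\[
\beta_{2}^{\star}(s)=\sup\bigl\{\|g\|_{2}^{2}-s\mathcal{E}(P^{*}P,g):g\in\ELL_{0}(\mu),\,\Phi(g)=1\bigr\}\vee0.
\]
Next, for a fixed admissible $g$, I would apply Lemma~\ref{lem:PP-dirichlet-form-ub} to obtain $\mathcal{E}(P^{*}P,g)\leq2\mathcal{E}(P,g)$, which (since $s>0$) yields
\[
\|g\|_{2}^{2}-s\mathcal{E}(P^{*}P,g)\geq\|g\|_{2}^{2}-2s\,\mathcal{E}(P,g).
\]

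Taking the supremum over all $g\in\ELL_{0}(\mu)$ with $\Phi(g)=1$ and passing through the maximum with $0$ preserves the inequality, so
\[
\beta_{2}^{\star}(s)\geq\sup\bigl\{\|g\|_{2}^{2}-2s\,\mathcal{E}(P,g):g\in\ELL_{0}(\mu),\,\Phi(g)=1\bigr\}\vee0=\beta_{1}^{\star}(2s),
\]
which is exactly the claim. There is essentially no obstacle here: the entire statement reduces to combining the factor-of-two Dirichlet-form bound with the definition of $\beta^{\star}$ as a supremum. The only minor subtlety is to ensure the truncation at $0$ and the suprema interact correctly, but since the inequality holds at the level of each individual $g$ before maximizing, this is automatic.
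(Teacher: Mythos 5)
Your proof is correct and follows essentially the same route as the paper: unpack the variational definition of $\beta^{\star}$, apply the Dirichlet-form bound $\mathcal{E}(P^{*}P,g)\leq2\mathcal{E}(P,g)$ termwise, and pass to the supremum. No discrepancies.
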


\begin{proof}
Let $\mathcal{F}=\{f\in\ELL_{0}(\mu):\Phi(f)=1\}$. By Lemma~\ref{lem:PP-dirichlet-form-ub}
we have $\mathcal{E}(P^{*}P,f)\leq2\mathcal{E}(P,f)$. We may write
\[
\beta_{1}^{\star}(s)=0\vee\sup_{f\in\mathcal{F}}{\rm var}_{\mu}(f)-s\mathcal{E}(P,f).
\]
We then have
\begin{align*}
\beta_{2}^{\star}(s) & =0\vee\sup_{f\in\mathcal{F}}{\rm var}_{\mu}(f)-s\mathcal{E}(P^{*}P,f)\\
 & \geq0\vee\sup_{f\in\mathcal{F}}{\rm var}_{\mu}(f)-2s\mathcal{E}(P,f)\\
 & =\beta_{1}^{\star}(2s),
\end{align*}
and we conclude.
\end{proof}
In the case where $P$ is $\mu$-reversible, we can then deduce from
a $(\Phi,\beta_{1})$-WPI for $P$ a lower bound on a separable rate
of convergence for $\left\Vert P^{n}f\right\Vert $. 
\begin{prop}
\label{prop:rev-L2-conv-rate-lower-bound}Assume $P$ is $\mu$-reversible,
satisfies (\ref{eq:beta-WPI}) and the pointwise minimal $\beta^{\star}$
satisfies $\beta^{\star}(s)\in\Omega(s^{-p})$ for some $p>0$. Then
it cannot hold that with $q>p$, $\left\Vert P^{n}f\right\Vert _{2}^{2}\in\mathcal{O}(n^{-q})$
for all $f\in\ELL_{0}(\mu)$ with $\Phi(f)<\infty$.
\end{prop}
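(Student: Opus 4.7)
The plan is to argue by contradiction: suppose that $\|P^n f\|_2^2 \in \mathcal{O}(n^{-q})$ for every $f \in \ELL_0(\mu)$ with $\Phi(f) < \infty$. The strategy is to convert this hypothetical polynomial convergence rate into a $(\Phi, \beta)$-WPI for $P^{*}P$ with $\beta(s) \in \mathcal{O}(s^{-q})$, and then play this off against the assumed lower bound $\beta^{\star}(s) \in \Omega(s^{-p})$ via Lemma~\ref{lem:PP-beta-star-from-P-beta-star}.

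The first step is to upgrade the pointwise rate to a \emph{uniform} one: to produce a single function $\gamma(n) \in \mathcal{O}(n^{-q})$ with $\|P^n f\|_2^2 \leq \Phi(f)\,\gamma(n)$ for every $f$ with $\Phi(f) < \infty$. Once this is in hand, the reversibility of $P$---which, by the Remark following Proposition~24, ensures that $n \mapsto \|P^n f\|_2^2$ possesses the convexity and log-convexity-of-derivative properties required---lets me invoke Proposition~24 Part~2 with $F^{-1}(n) \sim n^{-q}$, yielding $\mathcal{E}(P^{*}P, f) \geq \Phi(f)\, K^{*}(\|f\|_2^2/\Phi(f))$ with $K^{*}(v) \sim v^{1+1/q}$ as $v \downarrow 0$. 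The Fenchel relations of Theorem~\ref{thm:WPI_F_bd} (via $K(u) = u\,\beta(1/u)$ and $K^{**}=K$) then translate this Dirichlet form lower bound into a $(\Phi,\beta)$-WPI for $P^{*}P$ with $\beta(s) \sim s^{-q}$; a short Legendre computation confirms that $K^{*}(v) = q\,v^{1+1/q}$ corresponds to $K(u) = u^{q+1}/(q+1)^{q+1}$ and hence to $\beta(s) = s^{-q}/(q+1)^{q+1}$.

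Finally, Lemma~\ref{lem:PP-beta-star-from-P-beta-star} compares the pointwise minimal sieve functions: writing $\beta_1^{\star}$ for $P$ and $\beta_2^{\star}$ for $P^{*}P$, it gives $\beta_2^{\star}(s) \geq \beta_1^{\star}(2s)$, which by hypothesis is at least $c\,(2s)^{-p}$ for all sufficiently large $s$. Combining with the pointwise minimality $\beta_2^{\star}(s) \leq \beta(s) \leq C\,s^{-q}$ then yields $c'\, s^{-p} \leq C\, s^{-q}$ for all large $s$, i.e.\ $s^{q-p} \leq C/c'$, which is impossible for $q > p$.

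The step I expect to be the main obstacle is the upgrade from pointwise to uniform convergence. The natural remedy is a Banach--Steinhaus argument on the subspace $\mathcal{G} := \{f \in \ELL_0(\mu) : \Phi(f) < \infty\}$ equipped with a norm such as $(\|f\|_2^2 + \Phi(f))^{1/2}$---a genuine Banach space at least for the canonical sieve $\Phi = \|\cdot\|_{\osc}^2$---applied to the family of operators $T_n := n^{q/2} P^n$, each continuous as a map $\mathcal{G} \to \ELL(\mu)$ and pointwise bounded there by the contradiction hypothesis. For general sieves $\Phi$ a mild lower semicontinuity or completeness assumption on $\Phi$ may be needed to justify this step; once the uniform rate is secured, the remainder of the argument is bookkeeping through the already-established machinery.
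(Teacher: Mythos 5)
Your proof is correct and takes the same essential route as the paper: combine Lemma~\ref{lem:PP-beta-star-from-P-beta-star} (transferring the $\Omega(s^{-p})$ lower bound from $\beta_1^\star$ to $\beta_2^\star$ for $P^2$) with the converse direction of the WPI machinery (Proposition~24 of \cite{ALPW2021}, restated in Subsection~\ref{subsec:fundamentals-Deducing-WPIs-from}) applied to the hypothetical $\mathcal{O}(n^{-q})$ decay, and read off the contradiction from pointwise minimality. Where you add genuine value is in flagging and addressing the pointwise-to-uniform step: the contradiction hypothesis ``$\|P^n f\|_2^2 \in \mathcal{O}(n^{-q})$ for all $f$ with $\Phi(f)<\infty$'' allows the implied constant to depend on $f$, whereas Proposition~24 requires a single $\gamma(n) \in \mathcal{O}(n^{-q})$ dominating $\|P^n f\|_2^2/\Phi(f)$ uniformly. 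The paper elides this by citing \cite[Remark~25]{ALPW2021}; your Banach--Steinhaus argument on the operators $T_n := n^{q/2}P^n$, acting on $\mathcal{G} = \{f \in \ELL_0(\mu): \Phi(f) < \infty\}$ with norm $(\|f\|_2^2 + \Phi(f))^{1/2}$, is a clean and self-contained remedy. For $\Phi = \|\cdot\|_{\osc}^2$ one indeed has $\mathcal{G} \cong \mathrm{L}_0^\infty(\mu)$ (since $\mu(f) = 0$ forces $\|f\|_\infty \leq \|f\|_\osc \leq 2\|f\|_\infty$), so completeness is not an issue; as you note, for a general sieve a completeness hypothesis would be needed, but the proposition is applied in this paper only with the oscillation sieve. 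One small correction: your appeal to ``Part~2'' of Proposition~24 is a misattribution --- the Remark after Proposition~24 establishes the convexity/log-convexity of $n \mapsto \|P^n f\|_2^2$ for reversible $P$ so that one can interpolate the uniform $\gamma$ into the form required by Part~2, but the cleanest entry point given a uniform $\gamma$ is Part~3; the quantitative identification $K^*(v) \sim q v^{1+1/q}$ near the origin (and hence $\beta(s) \sim s^{-q}$), which Part~3 does not state explicitly, follows from the log-convexity exactly as in your Legendre computation. These are presentation details; the logic is sound and matches the paper.
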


\begin{proof}
If $\beta^{\star}(s)\in\Omega(s^{-p})$ then we may deduce by Lemma~\ref{lem:PP-beta-star-from-P-beta-star}
that if $P^{2}$ satisfies (\ref{eq:beta-WPI}), its pointwise minimal
$\beta_{2}^{\star}$ also satisfies $\beta_{2}^{\star}(s)\in\Omega(s^{-p})$.
Now assume for the sake of contradiction that $\left\Vert P^{n}f\right\Vert _{2}^{2}\in\mathcal{O}(n^{-q})$
for all $f\in\mathrm{L}_{0}^{2}(\mu)$ such that $\Phi(f)<\infty$.
Then by \cite[Proposition 24 and Remark 25]{ALPW2021}, we deduce
that a WPI for $P^{2}$ holds with $\beta_{2}(s)\in\mathcal{O}(s^{-q})$,
which contradicts $\beta_{2}^{\star}(s)\in\Omega(s^{-p})$ being pointwise
minimal.
\end{proof}
The following result establishes a lower bound on $\beta^{\star}$
for Markov kernels $P$ that can exhibit sticky behaviour in regions
of the state space. \cite[Theorem~5.1]{roberts1996geometric} showed
that for a $\mu$-invariant Markov kernel $P$ with $\mu$ not concentrated
at a single point, that ${\rm ess}_{\mu}\sup_{x}P(x,\{x\})=1$ implies
that $P$ cannot converge geometrically. In \cite[Theorem 1]{lee-latuszynski-2014}
conductance is used to prove the same when $P$ is $\mu$-reversible,
and the following provides a quantitative refinement. 
\begin{thm}
\label{thm:lower-bound-beta-star}Let $P$ be $\mu-$reversible satisfying
a $(\Phi,\beta)\shortminus$WPI for $\Phi=\|\cdot\|_{\osc}^{2}$.
For any $\varepsilon>0$, define the set $A_{\varepsilon}:=\big\{ x\in\mathsf{X}\colon P(x,\{x\})\geq1-\varepsilon\big\}$.
Then for any $s>0$, 
\[
\beta(s)\geq\beta^{\star}(s)\geq\sup_{\varepsilon\in(0,1)}\left\{ \mu(A_{\varepsilon})(1-s\varepsilon-\mu(A_{\varepsilon}))\right\} .
\]
\end{thm}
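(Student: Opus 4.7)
The first inequality $\beta \geq \beta^{\star}$ is automatic from the pointwise minimality of $\beta^{\star}$ (established in Theorem~\ref{thm:alpha-beta-star-WPI-continuous-convex}), so the content of the result is the second inequality. The plan is to plug a carefully chosen family of centred indicator functions into the variational characterization of $\beta^{\star}$ and exploit the stickiness hypothesis on $A_{\varepsilon}$ to bound the Dirichlet form.

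Fix $\varepsilon \in (0,1)$ and consider $g_{\varepsilon} := \mathbf{1}_{A_{\varepsilon}} - \mu(A_{\varepsilon})$. We may assume $0 < \mu(A_{\varepsilon}) < 1$, since otherwise $g_\varepsilon = 0$ almost surely and the claimed lower bound is nonpositive, hence implied by $\beta^{\star} \geq 0$. Under this assumption $g_\varepsilon \in \ELL_0(\mu)$, the essential range of $g_\varepsilon$ is $\{-\mu(A_\varepsilon), 1-\mu(A_\varepsilon)\}$, so $\|g_\varepsilon\|_{\osc}^2 = 1$, i.e.\ $\Phi(g_\varepsilon) = 1$, making $g_\varepsilon$ admissible in the defining supremum for $\beta^{\star}(s)$. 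A direct computation gives $\|g_\varepsilon\|_2^2 = \mu(A_\varepsilon)(1-\mu(A_\varepsilon))$.

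Next I bound $\mathcal{E}(P,g_\varepsilon)$. Since $P\mathbf{1} = \mathbf{1}$, subtracting a constant does not change the Dirichlet form, so $\mathcal{E}(P,g_\varepsilon) = \mathcal{E}(P,\mathbf{1}_{A_\varepsilon}) = \int_{A_\varepsilon}(1-P(x,A_\varepsilon))\,\dif\mu(x)$. For $x \in A_\varepsilon$, the stickiness hypothesis yields $P(x,A_\varepsilon) \geq P(x,\{x\}) \geq 1-\varepsilon$, so $1-P(x,A_\varepsilon) \leq \varepsilon$. Hence
\[
\mathcal{E}(P,g_\varepsilon) \leq \varepsilon\,\mu(A_\varepsilon).
\]
Substituting into $\beta^{\star}(s) \geq \|g_\varepsilon\|_2^2 - s\,\mathcal{E}(P,g_\varepsilon)$ and rearranging gives
\[
\beta^{\star}(s) \geq \mu(A_\varepsilon)\bigl(1 - \mu(A_\varepsilon)\bigr) - s\,\varepsilon\,\mu(A_\varepsilon) = \mu(A_\varepsilon)\bigl(1 - s\varepsilon - \mu(A_\varepsilon)\bigr).
\]
Taking the supremum over $\varepsilon \in (0,1)$ yields the stated bound.

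The only non-routine step is recognizing that stickiness $P(x,\{x\}) \geq 1-\varepsilon$ on $A_\varepsilon$ translates directly into $P(x,A_\varepsilon) \geq 1-\varepsilon$ and hence an upper bound $\varepsilon\mu(A_\varepsilon)$ for the Dirichlet form of $\mathbf{1}_{A_\varepsilon}$; once this is seen, the rest reduces to the variational definition of $\beta^{\star}$ and the observation that the oscillation of a centred indicator equals $1$. No appeal to reversibility beyond that already implicit in Definition~\ref{def:WPI} (and Theorem~\ref{thm:alpha-beta-star-WPI-continuous-convex}) is required in the test-function computation itself.
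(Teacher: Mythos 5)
Your proof is correct and follows essentially the same route as the paper's: plug the centred indicator of $A_{\varepsilon}$ into the variational characterization of $\beta^{\star}$, observe its oscillation is (at most) one, and bound $\mathcal{E}(P,\mathbf{1}_{A_{\varepsilon}}) = \mu\otimes P(A_{\varepsilon}\times A_{\varepsilon}^{\complement})$ by $\varepsilon\mu(A_{\varepsilon})$ using the stickiness hypothesis. You are slightly more explicit about centring, normalization and the degenerate cases $\mu(A_{\varepsilon})\in\{0,1\}$, but the underlying argument is identical.
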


\begin{proof}
For any $A\subset\mathsf{X}$, from Lemma~\ref{lem:dirichlet-form-indicator},
we have $\mathcal{E}(P,\mathbf{1}_{A})=\mu\otimes P\big(A\times A^{\complement}\big)$
and ${\rm var}\big(\mathbf{1}_{A}\big)=\mu\otimes\mu\big(A\times A^{\complement}\big)$.
Since $\Phi(\mathbf{1}_{A_{\varepsilon}})\leq1$, for any $\varepsilon>0$
we have
\begin{align*}
\beta^{\star}(s) & :=\sup\left\{ \|f\|_{2}^{2}-s\mathcal{E}(P,f)\colon f\in\ELL_{0}(\mu),\Phi(f)\leq1\right\} \\
 & \geq{\rm var}_{\mu}(\mathbf{1}_{A_{\varepsilon}})-s\int\mu({\rm d}x)P(x,{\rm d}y)\mathbf{1}_{A_{\varepsilon}}(x)\mathbf{1}_{A_{\varepsilon}^{\complement}}(y)\\
 & \geq{\rm var}_{\mu}(\mathbf{1}_{A_{\varepsilon}})-s\int\mu({\rm d}x)P(x,\{x\}^{\complement})\mathbf{1}_{A_{\varepsilon}}(x)\\
 & \geq\mu(A_{\varepsilon})\mu(A_{\varepsilon}^{\complement})-s\mu(A_{\varepsilon})\varepsilon\\
 & =\mu(A_{\varepsilon})(1-s\varepsilon-\mu(A_{\varepsilon})).
\end{align*}
Thus we conclude.
\end{proof}
\begin{example}
Assume $C\varepsilon^{\alpha}\geq\mu(A_{\varepsilon})\geq c\varepsilon^{\alpha}$
for some $\alpha,c,C>0$ and for all $\varepsilon>0$ sufficiently
small. Then for $s>0$, we seek to maximize $\zeta(\varepsilon)=\varepsilon^{\alpha}(1-s\varepsilon-C\varepsilon^{\alpha})$.
One can check that 
\begin{align*}
\zeta'(\varepsilon) & =(1+\alpha)\varepsilon^{\alpha-1}\left[\frac{\alpha}{1+\alpha}-s\varepsilon-C\varepsilon^{\alpha}\right],
\end{align*}
and since $\mathbb{R}_{+}\ni\varepsilon\mapsto s\varepsilon+c\varepsilon^{\alpha}$
is increasing, there is a unique $\varepsilon^{*}$ such that $\zeta'(\varepsilon_{*})=0$,
$\zeta(\varepsilon)>0$ (resp. $\zeta(\varepsilon)<0$ ) for $\varepsilon<\varepsilon_{*}$
(resp. $\varepsilon>\varepsilon_{*}$). Note that for $s\geq\alpha/(1+\alpha)$,
$\varepsilon_{*}\in(0,1)$ and let
\begin{align*}
\varepsilon_{0}:=\frac{\alpha}{1+\alpha}s^{-1},
\end{align*}
from above. Then notice that $\zeta(\varepsilon_{0})\leq0$ and $\varepsilon_{0}'=\varepsilon_{0}-Cs^{-1}\varepsilon_{0}$
is such that $\zeta'(\varepsilon_{0}^{'})\geq0$, implying $\varepsilon_{0}-cs^{-1}\varepsilon_{0}\leq\varepsilon_{*}\leq\varepsilon_{0}$
and we obtain the lower bound, for $s>0$
\[
\beta^{\star}(s)\geq\text{\ensuremath{\underbar{\ensuremath{\beta}}}}^{\star}(s):=c\left(\frac{\alpha}{1+\alpha}\right)^{\alpha}s^{-\alpha}\left[\frac{1}{1+\alpha}-C\left(\frac{\alpha}{1+\alpha}\right)^{\alpha}s^{-\alpha}\right]\,,
\]
which is positive for $s$ sufficiently large. Therefore, since from
earlier results $\beta^{\star}\geq\text{\ensuremath{\underbar{\ensuremath{\beta}}}}^{\star}$
implies $\underline{\gamma}^{\star}(n)\leq\gamma^{\star}(n)$ if $\mu(A_{\varepsilon})\geq c\varepsilon^{\alpha}$
then the corresponding Markov chain cannot converge at a rate faster
than the polynomial rate $\text{\ensuremath{\underbar{\ensuremath{\gamma}}}}^{\star}(n)\propto n^{-\alpha}$. 
\end{example}

\begin{example}
In the case of the Independent Metropolis-Hastings (IMH) we are interested
in lower bounding the probability
\[
\varpi(\varepsilon):=\pi\left(\int\pi({\rm d}y)\min\left\{ w^{-1}(X),w^{-1}(y)\right\} <\varepsilon\right).
\]
Note that for any $x\in\mathsf{X}$ we have
\[
\int\pi({\rm d}y)\min\left\{ w^{-1}(x),w^{-1}(y)\right\} \leq w^{-1}(x),
\]
therefore, since for random variables $Z(\omega)\leq Z'(\omega)$
implies $\mathbb{P}(Z(\omega)<\varepsilon)\geq\mathbb{P}(Z'(\omega)<\varepsilon)$
\[
\varpi(\varepsilon)\geq\pi\left(w^{-1}(X)<\varepsilon\right)=\pi\left(w(X)>\varepsilon^{-1}\right).
\]
As a result for $s>0$
\[
\beta^{\star}(s)\geq\sup_{\varepsilon\in(0,1)}\left\{ \pi\left(w(X)>\varepsilon^{-1}\right)\left(1-s\varepsilon-\pi\big(w(X)>\varepsilon^{-1}\big)\right)\right\} ,
\]
therefore implying a lower bound on the fastest rate of convergence
possible.
\end{example}

\subsection{Ordering of $\alpha$'s, $\beta$'s and $\gamma$'s and Peskun--Tierney
ordering \label{subsec:optimal-choice-Ordering-of-rates}}
\begin{thm}
\label{thm:order-alpha-beta-gamma}Let $P_{1}$ and $P_{2}$ be $\mu-$invariant
Markov kernels such that for a sieve $\Phi$, $P_{1}^{*}P_{1}$ satisfies
a $(\Phi,\alpha_{1},\beta_{1})\shortminus$WPI and $P_{2}^{*}P_{2}$
a $(\Phi,\alpha_{2},\beta_{2})\shortminus$WPI respectively. Then
we have
\begin{enumerate}
\item $\alpha_{2}(\cdot;\Phi)\geq\alpha_{1}(\cdot;\Phi)$ if and only if
$\beta_{2}(\cdot;\Phi)\geq\beta_{1}(\cdot;\Phi)$; 
\item $\beta_{2}(\cdot;\Phi)\geq\beta_{1}(\cdot;\Phi)$ implies $\gamma_{2}(\cdot;\Phi)\geq\gamma_{1}(\cdot;\Phi)$.
\end{enumerate}
\end{thm}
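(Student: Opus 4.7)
The plan is to reduce both assertions to monotonicity properties of elementary operations on decreasing functions, exploiting the inverse relationship between $\alpha$ and $\beta$ established in Proposition~\ref{prop:a-b-WPI-corres} and the explicit construction of $\gamma$ from Theorem~\ref{thm:WPI_F_bd}.

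For Part (1), the key observation is that in a $(\Phi,\alpha_i,\beta_i)$-WPI the functions $\alpha_i$ and $\beta_i$ are right-continuous generalized inverses of one another, so by Proposition~\ref{prop:a-b-WPI-corres} we have $\beta_i = \alpha_i^\shortminus$ and $\alpha_i = \beta_i^\shortminus$ for $i=1,2$. A direct check shows that the generalized-inverse operation is order-preserving on decreasing functions: if $f_1 \leq f_2$ pointwise then $\{y>0 : f_2(y)\leq x\}\subseteq\{y>0 : f_1(y)\leq x\}$ for every $x$, so taking infima yields $f_1^\shortminus \leq f_2^\shortminus$. Applying this with $(f_1,f_2) = (\beta_1,\beta_2)$ gives $\beta_2\geq\beta_1 \Rightarrow \alpha_2 = \beta_2^\shortminus \geq \beta_1^\shortminus = \alpha_1$, and applying it with $(f_1,f_2) = (\alpha_1,\alpha_2)$ gives the converse, settling the equivalence.

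For Part (2), the plan is to trace the chain of transformations $\beta_i \mapsto K_i \mapsto K_i^* \mapsto F_{\mathfrak{a},i} \mapsto \gamma_i$ from Theorem~\ref{thm:WPI_F_bd} and verify monotonicity at each step. First, $\mathfrak{a}$ depends only on $\Phi$, so it is shared by the two constructions. From $\beta_2\geq\beta_1$ and $K_i(u) := u\beta_i(1/u)$ one immediately obtains $K_2\geq K_1$ pointwise (with $K_i(0)=0$). Convex conjugation $K\mapsto K^*$ is order-reversing, so $K_2^*\leq K_1^*$; reciprocation on positive functions is also order-reversing, and integration is order-preserving, so
\[
F_{\mathfrak{a},2}(x) = \int_x^{\mathfrak{a}}\frac{\dif v}{K_2^*(v)} \geq \int_x^{\mathfrak{a}}\frac{\dif v}{K_1^*(v)} = F_{\mathfrak{a},1}(x).
\]
Finally $\gamma_i = F_{\mathfrak{a},i}^{-1}$, and invoking the order-preserving property of inverses of strictly decreasing functions used in Part (1) yields $\gamma_2\geq\gamma_1$.

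There is no single hard step; the argument amounts to tracking the direction of the inequality through a sequence of monotone operations. The only point requiring mild care is to notice that the order-reversal introduced by convex conjugation is undone by a further order-reversal in reciprocation, so that the initial inequality $\beta_2 \geq \beta_1$ propagates with the correct orientation through integration and inversion all the way to $\gamma_2 \geq \gamma_1$.
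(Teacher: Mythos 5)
Your proof is correct and follows essentially the same route as the paper's: Part (1) by the order-preservation of the generalized inverse on decreasing functions (which the paper phrases directly via the set-inclusion $\{r:\alpha_2(r)\leq s\}\subset\{r:\alpha_1(r)\leq s\}$), and Part (2) by tracking the inequality through the chain $\beta\mapsto K\mapsto K^*\mapsto F_{\mathfrak{a}}\mapsto\gamma$, noting the two order-reversals (conjugation, reciprocation) cancel.
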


\begin{proof}
First statement: we drop $\Phi$ for notational simplicity. For the
direction $(\implies):$ for any $s>0$ we have $\{r>0\colon\alpha_{2}(r)\leq s\}\subset\{r>0\colon\alpha_{1}(r)\leq s\}$
and hence $\beta_{2}=\alpha_{2}^{\shortminus}\geq\alpha_{1}^{\shortminus}=\beta_{1}$;
$(\Longleftarrow)$ follows along the same lines. For the second statement:
from their definitions, $K_{1}\leq K_{2}$ and hence $K_{1}^{*}\geq K_{2}^{*}$.
As a result, $F_{1,\mathfrak{a}}\leq F_{2,\mathfrak{a}}$ and consequently
$\gamma_{1}:=F_{1,\mathfrak{a}}^{-1}\leq F_{2,\mathfrak{a}}^{-1}=:\gamma_{2}$.
\end{proof}
We know from \cite{tierney-1998} that for $P_{1},P_{2}$ $\mu-$reversible,
then $\mathcal{E}(P_{1},g)\geq\mathcal{E}(P_{2},g)$ for any $g\in\ELL(\mu)$
implies ${\rm {\rm var}}(P_{1},f)\leq{\rm {\rm var}}(P_{2},f)$ for
$f\in\ELL(\mu)$ and ${\rm Gap}_{\mathrm{R}}(P_{1})\geq{\rm Gap}_{\mathrm{R}}(P_{2})$,
the latter being useful when ${\rm Gap}_{\mathrm{R}}(P_{1})>0$, and
say $P_{1}$ and $P_{2}$ are positive, since this implies faster
convergence to equilibrium in most scenarios of interest. The following
generalizes the latter statement to the subgeometric setup -- the
statement on asymptotic the variances remains naturally true.
\begin{thm}
Let $P_{1},P_{2}$ be $\mu-$invariant Markov kernels such that for
a sieve $\Phi$, 
\begin{enumerate}
\item $P_{1}^{*}P_{1}$ (resp. $P_{2}^{*}P_{2}$) satisfies a $(\Phi,\alpha_{1},\beta_{1})\shortminus$WPI
(resp. a $(\Phi,\alpha_{2},\beta_{2})$--WPI), 
\item $\mathcal{E}(P_{1}^{*}P_{1},g)\geq\mathcal{E}(P_{2}^{*}P_{2},g)$
for any $g\in\mathrm{L}^{2}(\mu)$ such that $\Phi(g)\leq1$.
\end{enumerate}
Then with $\alpha_{i}^{\star}(\cdot;\Phi)$ and $\beta_{i}^{\star}(\cdot;\Phi)$
for $i=1,2$ defined as in Definition~\ref{def:alpha-beta-star},
a $(\Phi,\alpha_{i}^{\star},\beta_{i}^{\star})\shortminus$WPI holds
for $i=1,2$ and we have for the corresponding convergence rates $\gamma_{1}^{\star}\leq\gamma_{2}^{\star}$. 
\end{thm}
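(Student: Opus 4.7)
The plan is to combine the pointwise characterization of the optimal $\beta^\star$ from Definition~\ref{def:alpha-beta-star} with the Dirichlet form inequality in hypothesis~(2), and then invoke Theorem~\ref{thm:order-alpha-beta-gamma} to transfer the resulting ordering all the way to the convergence rates $\gamma_i^\star$.

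First I would apply Theorem~\ref{thm:alpha-beta-star-WPI-continuous-convex} to each of the $\mu$-invariant kernels $T_i := P_i^*P_i$ for $i = 1, 2$: the existence of \emph{some} $(\Phi, \beta_i)$-WPI, which is exactly hypothesis~(1), is precisely what is needed to conclude that the pointwise-minimal functions $\alpha_i^\star(\cdot;\Phi)$ and $\beta_i^\star(\cdot;\Phi)$ themselves constitute a valid $(\Phi, \alpha_i^\star, \beta_i^\star)$-WPI, with $\alpha_i^\star = (\beta_i^\star)^{-1}$ on the appropriate domains.

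Next, I would establish the pointwise ordering $\beta_1^\star \leq \beta_2^\star$, which is where hypothesis~(2) intervenes. Recall that
\[
\beta_i^\star(s;\Phi) = 0 \vee \sup\left\{ \|g\|_2^2 - s\,\calE(P_i^*P_i, g) : g \in \ELL_0(\mu),\ \Phi(g) = 1 \right\}.
\]
Since $\{g : \Phi(g) = 1\} \subset \{g : \Phi(g) \leq 1\}$, hypothesis~(2) yields, for every admissible $g$ and every $s > 0$,
\[
\|g\|_2^2 - s\,\calE(P_1^*P_1, g) \leq \|g\|_2^2 - s\,\calE(P_2^*P_2, g),
\]
and taking the supremum over such $g$ gives $\beta_1^\star(s) \leq \beta_2^\star(s)$ for every $s > 0$. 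Finally, I would invoke part~2 of Theorem~\ref{thm:order-alpha-beta-gamma} applied to the pair $(\beta_1^\star, \beta_2^\star)$ to obtain $\gamma_1^\star \leq \gamma_2^\star$.

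There is no substantive obstacle here: the argument is a short chain of applications of previously established results, with hypothesis~(1) invoked only to activate Theorem~\ref{thm:alpha-beta-star-WPI-continuous-convex} and hypothesis~(2) giving the pointwise comparison of suprema. The only mildly delicate point worth flagging is the passage from the hypothesis set $\{g : \Phi(g) \leq 1\}$ to the constraint set $\{g : \Phi(g) = 1\}$ used in the definition of $\beta_i^\star$, which is however immediate by set inclusion.
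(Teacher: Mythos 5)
Your proposal is correct and follows essentially the same route as the paper: use the Dirichlet form ordering to compare the suprema defining $\beta_1^\star$ and $\beta_2^\star$, then invoke Theorem~\ref{thm:order-alpha-beta-gamma} to pass to $\gamma_1^\star \leq \gamma_2^\star$. You are slightly more explicit than the paper in flagging that Theorem~\ref{thm:alpha-beta-star-WPI-continuous-convex} is what guarantees the $(\Phi,\alpha_i^\star,\beta_i^\star)$-WPIs actually hold, but this is the same argument.
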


\begin{proof}
From the ordering of Dirichlet forms we have for any $g\in\ELL(\mu)$
\[
\|g\|_{2}^{2}-s\mathcal{E}(P_{1}^{*}P_{1},g)\leq\|g\|_{2}^{2}-s\mathcal{E}(P_{2}^{*}P_{2},g),
\]
from Definition~\ref{def:alpha-beta-star} we deduce $\beta_{1}^{\star}(\cdot;\Phi)\leq\beta_{2}^{\star}(\cdot;\Phi)$
and from Theorem~\ref{thm:order-alpha-beta-gamma} we conclude $\gamma_{1}^{\star}\leq\gamma_{2}^{\star}$.
\end{proof}

\subsection{Optimal $\Phi$ \label{subsec:optimal-choice-phi}}

On the other hand, we can fix a bounded $\beta$, say and seek the
optimal class of functions defined by a sieve $\Phi$ for this $\beta$.
As a starting point, we assume that some $(\Phi,\beta)-$WPI holds
for $T=P^{*}P$:
\[
\|f\|_{2}^{2}\le s\calE(P^{*}P,f)+\beta(s)\Phi(f),\quad\forall s>0,f\in\ELL_{0}(\mu),
\]
for a given $\Phi$. By Theorem~\ref{thm:WPI_F_bd}, we obtain the
convergence bound:
\[
\|P^{n}f\|_{2}^{2}\le\Phi(f)\gamma(n),
\]
for a function $\gamma:\mathbb{N}_{0}\to\R_{+}$ which satisfies $\gamma(n)\to0$
as $n\to\infty$.

We now seek the smallest sieve $\Phi_{\beta}^{\star}$ such that a
$(\Phi_{\beta}^{\star},\beta)-$WPI still holds.
\begin{defn}
\label{def:Phi_star}We define for any $f\in\ELL_{0}(\mu)$,
\[
\Phi_{\beta}^{\star}(f):=\sup_{n\in\mathbb{N}_{0}}\Phi_{\beta}(P^{n}f),
\]
where 
\[
\Phi_{\beta}(f):=\sup_{s>0}\frac{\|f\|_{2}^{2}-s\calE(P^{*}P,f)}{\beta(s)}=\|f\|_{2}^{2}\cdot\sup_{s>0}\frac{1-s\delta(f)}{\beta(s)},
\]
where $\delta(f):=\calE(P^{*}P,f)/\|f\|_{2}^{2}$ and satisfies $0<\delta(f)\le1$.
\end{defn}

\begin{lem}
The functional $\Phi_{\beta}^{\star}$ is a nonexpansive sieve for
$P$.
\end{lem}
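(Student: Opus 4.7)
The plan is to verify the three requirements imposed by Definitions~\ref{def:Phi_fn}\ref{enu:def_Phi1} and \ref{def:Phi_fn}(2) in turn for $\Phi_{\beta}^{\star}$: homogeneity of degree two, finiteness of the constant $\mathfrak{a}^{\star}:=\sup_{f\in\ELL_{0}(\mu)\setminus\{0\}}\|f\|_{2}^{2}/\Phi_{\beta}^{\star}(f)$, and $P$-nonexpansiveness. Throughout, we rely on the standing hypothesis of Subsection~\ref{subsec:optimal-choice-phi} that $T=P^{*}P$ satisfies a $(\Phi,\beta)\shortminus$WPI for some starting sieve $\Phi$, so that in particular $\beta\leqslant\mathfrak{a}$ by Proposition~\ref{prop:a-b-WPI-corres}\ref{enu:alphavsbeta-beta}(c). (If $\Phi_{\beta}^{\star}$ is to be viewed on all of $\ELL(\mu)$, we extend by $\Phi_{\beta}^{\star}(f):=\Phi_{\beta}^{\star}(f-\mu(f))$, which is consistent with its use in Definition~\ref{def:Phi_fn}.)

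Two of the three items are essentially immediate. For homogeneity, I would use that both $\|\cdot\|_{2}^{2}$ and $\calE(P^{*}P,\cdot)$ are quadratic forms, so $\Phi_{\beta}(cf)=c^{2}\Phi_{\beta}(f)$ for every $c>0$; since $P^{n}(cf)=cP^{n}f$, one passes the factor of $c^{2}$ through the outer supremum and obtains $\Phi_{\beta}^{\star}(cf)=c^{2}\Phi_{\beta}^{\star}(f)$. For $P$-nonexpansiveness, the construction is tailor-made: for any $f\in\ELL_{0}(\mu)$,
\[
\Phi_{\beta}^{\star}(Pf)=\sup_{n\in\mathbb{N}_{0}}\Phi_{\beta}(P^{n+1}f)\leqslant\sup_{n\in\mathbb{N}_{0}}\Phi_{\beta}(P^{n}f)=\Phi_{\beta}^{\star}(f).
\]

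The substantive point is showing that $\mathfrak{a}^{\star}<\infty$. My approach is to discard the supremum over $n$ and work only with $\Phi_{\beta}$ at $n=0$, so that $\Phi_{\beta}^{\star}(f)\geqslant\Phi_{\beta}(f)$. From the definition,
\[
\Phi_{\beta}(f)=\sup_{s>0}\frac{\|f\|_{2}^{2}-s\,\calE(P^{*}P,f)}{\beta(s)}\geqslant\liminf_{s\downarrow0}\frac{\|f\|_{2}^{2}-s\,\calE(P^{*}P,f)}{\beta(s)}=\frac{\|f\|_{2}^{2}}{\beta(0^{+})},
\]
where $\beta(0^{+}):=\lim_{s\downarrow0}\beta(s)$ exists (monotonicity) and is bounded above by $\mathfrak{a}<\infty$ by Proposition~\ref{prop:a-b-WPI-corres}. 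Hence $\|f\|_{2}^{2}/\Phi_{\beta}^{\star}(f)\leqslant\beta(0^{+})\leqslant\mathfrak{a}$ uniformly in $f\in\ELL_{0}(\mu)\setminus\{0\}$, giving $\mathfrak{a}^{\star}\leqslant\mathfrak{a}$.

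The mild obstacle lies in handling degenerate cases in that limit argument: if $\calE(P^{*}P,f)=0$ then $\Phi_{\beta}(f)=+\infty$ (because $\beta(s)\to0$ as $s\to\infty$), which is harmless for the ratio $\|f\|_{2}^{2}/\Phi_{\beta}^{\star}(f)$; and if $\beta(0^{+})=0$ the WPI would force $\ELL_{0}(\mu)=\{0\}$, a trivial case. Once these edge cases are dispatched, the three properties above together establish that $\Phi_{\beta}^{\star}$ is a $P$-nonexpansive sieve, and in fact produces a $\mathfrak{a}^{\star}$ no larger than that of $\Phi$, as is consistent with the optimality motivation for Definition~\ref{def:Phi_star}.
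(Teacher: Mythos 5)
Your proof is correct and takes essentially the same route as the paper: both verify homogeneity, bound $\Phi_{\beta}^{\star}(f)\geqslant\Phi_{\beta}(f)\geqslant\|f\|_{2}^{2}/\beta(0^{+})$ to conclude $\mathfrak{a}^{\star}<\infty$, and observe that $P$-nonexpansiveness is built into the supremum over iterates. One small remark: the appeal to Proposition~\ref{prop:a-b-WPI-corres} to obtain $\beta\leqslant\mathfrak{a}$ is both mis-cited (the conclusion $\beta\leqslant\mathfrak{a}$ appears in part \ref{enu:alphavsbeta-alpha}, not \ref{enu:alphavsbeta-beta}) and unnecessary, since all you actually need is $\beta(0^{+})<\infty$, which follows directly from the standing boundedness assumption on $\beta$ in this subsection.
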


\begin{proof}
Note that for any $f\in\ELL_{0}(\mu)$, and $\Phi_{\beta}(cf)=c^{2}\Phi_{\beta}(f)$,
and furthermore $\Phi_{\beta}(f)\ge\|f\|_{2}^{2}/\beta(0)$, where
$\beta(0):=\lim_{s\to0}\beta(s)$, which exists and is finite and
nonzero by monotonicity and boundedness of $\beta$. Thus $\Phi_{\beta}$
satisfies condition~\ref{enu:def_Phi1} from Definition~\ref{def:Phi_fn}.

Now $\Phi_{\beta}^{\star}(f)\ge\Phi_{\beta}(f)$, and hence $\Phi_{\beta}^{\star}$
also satisfies condition~\ref{enu:def_Phi1} from Definition~\ref{def:Phi_fn}.
Finally, $\Phi_{\beta}^{\star}$ is nonexpansive for $P$ by construction,
and hence is a nonexpansive sieve.
\end{proof}
With this definition of $\Phi_{\beta}^{\star}$, it is clear that
we have a $(\Phi_{\beta}^{\star},\beta)-$WPI: for all $s>0$, $f\in\ELL_{0}(\mu)$,
\[
\|f\|_{2}^{2}\le s\calE(P^{*}P,f)+\beta(s)\Phi_{\beta}^{\star}(f),
\]
and so we can obtain the convergence bound
\[
\|P^{n}f\|_{2}^{2}\le\Phi_{\beta}^{\star}(f)\gamma(n),
\]
for the same $\gamma$, and by construction $\Phi_{\beta}^{\star}\le\Phi$.

\begin{example}
When $\beta(s)=s^{-\alpha}$, we can calculate that
\[
\Phi_{\beta}(f)=\|f\|_{2}^{2}\frac{\alpha^{\alpha}}{(\alpha+1)^{\alpha+1}}\left[\delta(f)\right]^{-\alpha}.
\]
Then we have
\begin{align*}
\delta(P^{n}f) & =\frac{\calE(P^{2},P^{n}f)}{\|P^{n}f\|_{2}^{2}}=\frac{\langle(\Id-P^{2})P^{n}f,P^{n}f\rangle}{\|P^{n}f\|_{2}^{2}}\\
 & =1-\frac{\int_{\sigma(P)}\lambda^{2n+2}\,\nu_{f}(\dif\lambda)}{\int_{\sigma(P)}\lambda^{2n}\,\nu_{f}(\dif\lambda)}.
\end{align*}
Thus the mapping $n\mapsto\int_{\sigma(P)}\lambda^{2n}\,\nu_{f}(\dif\lambda)$
will dictate for a given $f\in\ELL_{0}(\mu)$ whether or not $\Phi_{\beta}^{*}(f)$
is finite or infinite. As a concrete example, consider the situation
when $\sigma(P)=[0,1]$ and when $\nu_{f}(\dif\lambda)$ has density
proportional to $\lambda^{a-1}\dif\lambda$ for some $a>1$. Then
the $2n$th moment is $\prod_{r=0}^{2n-1}\frac{a+r}{a+1+r}=\frac{a}{a+2n}$,
and so the ratio is
\[
\frac{\int_{\sigma(P)}\lambda^{2n+2}\,\nu_{f}(\dif\lambda)}{\int_{\sigma(P)}\lambda^{2n}\,\nu_{f}(\dif\lambda)}=\frac{a+2n}{a+2+2n}.
\]
In particular, we find
\begin{align*}
1-\frac{\int_{\sigma(P)}\lambda^{2n+2}\,\nu_{f}(\dif\lambda)}{\int_{\sigma(P)}\lambda^{2n}\,\nu_{f}(\dif\lambda)} & =\frac{2}{a+2+2n}.
\end{align*}
Thus we see that asymptotically, $\Phi_{\beta}\left(P^{n}f\right)$
must grow like $\|P^{n}f\|_{2}^{2}\cdot n^{\alpha}$. This will diverge
to infinity as $n\to\infty$ if $n^{\alpha}$ dominates the rate of
convergence to 0 of $\|P^{n}f\|_{2}^{2}$. So informally speaking,
if we consider the set $\left\{ f\in\ELL_{0}\left(\mu\right):\Phi_{\beta}^{\star}\left(f\right)<\infty\right\} $,
$\Phi_{\beta}^{\star}$ is in effect `sieving out' functions $f\in\ELL_{0}(\mu)$
whose spectral measures $\nu_{f}(\dif\lambda)$ place too much mass
close to 1.

To be more explicit, by applying Chernoff's inequality to (\ref{eq:nu_f-moment-bd}),
we can conclude that for $f\in\ELL_{0}\left(\mu\right)$ with $\|f\|_{2}^{2}=1$,
for any $\delta>0$,
\[
\int_{1-\delta}^{1}\lambda^{2}\,\nu_{f}\left(\dif\lambda\right)\le C\cdot\Phi\left(f\right)\cdot\delta^{\alpha},
\]
for a constant $C>0$ independent of $f$, thus demonstrating that
$\nu_{f}$ cannot place mass in an arbitrary fashion in a neighbourhood
of $1$.
\end{example}

\subsection{Duality \label{subsec:optimal-choice-Duality}}

The preceding two sections suggest the following natural approach
to deriving convergence bounds and then refining them:
\begin{enumerate}
\item Choose a class of functions we seek convergence bounds for, and the
corresponding $\Phi$. For example, we could consider the class of
bounded functions and correspondingly take $\Phi=\|\cdot\|_{\mathrm{osc}}^{2}$.
As argued in Section~\ref{subsec:bounded_to_p}, this choice is in
a sense canonical.
\item Given this function class and its $\Phi$, derive an optimal $\beta^{\star}(\cdot;\Phi)$
for this class, as given in Definition~\ref{def:alpha-beta-star}.
\item Given this optimal $\beta^{\star}(\cdot;\Phi)$, find the optimal
$\Phi^{\star}:=\Phi_{\beta^{\star}(\cdot;\Phi)}^{\star}$, given in
Definition~\ref{def:Phi_star}.
\end{enumerate}
This procedure in fact is optimal after a single iteration; recursing
these steps does not lead to any improvement.
\begin{prop}
We have that
\[
\beta^{\star}(\cdot;\Phi^{\star})=\beta^{\star}(\cdot;\Phi).
\]
\end{prop}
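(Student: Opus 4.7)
The plan is to prove the two inequalities $\beta^{\star}(\cdot;\Phi^{\star})\le\beta^{\star}(\cdot;\Phi)$ and $\beta^{\star}(\cdot;\Phi^{\star})\ge\beta^{\star}(\cdot;\Phi)$ separately, each by exhibiting a WPI and invoking pointwise minimality of $\beta^{\star}$ from Definition~\ref{def:alpha-beta-star} / Theorem~\ref{thm:alpha-beta-star-WPI-continuous-convex}.

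\textbf{Step 1: the comparison $\Phi^{\star}\le\Phi$.} Since $T=P^{*}P$ satisfies the $(\Phi,\beta^{\star}(\cdot;\Phi))$--WPI, for every $f\in\ELL_{0}(\mu)$ and every $s>0$ one has $\|f\|_{2}^{2}-s\mathcal{E}(T,f)\le\beta^{\star}(s;\Phi)\,\Phi(f)$. Dividing by $\beta^{\star}(s;\Phi)>0$ and taking the supremum over $s>0$ gives $\Phi_{\beta^{\star}(\cdot;\Phi)}(f)\le\Phi(f)$. Applying this to each $P^{n}f$ and using $P$-non-expansiveness of $\Phi$ yields
\[
\Phi^{\star}(f)=\sup_{n\in\mathbb{N}_{0}}\Phi_{\beta^{\star}(\cdot;\Phi)}(P^{n}f)\le\sup_{n\in\mathbb{N}_{0}}\Phi(P^{n}f)\le\Phi(f).
\]

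\textbf{Step 2: $\beta^{\star}(\cdot;\Phi^{\star})\ge\beta^{\star}(\cdot;\Phi)$.} By Step~1 and the definition of $\beta^{\star}(\cdot;\Phi^{\star})$, the $(\Phi^{\star},\beta^{\star}(\cdot;\Phi^{\star}))$--WPI gives, for any $f\in\ELL_{0}(\mu)$ and $s>0$,
\[
\|f\|_{2}^{2}-s\mathcal{E}(T,f)\le\beta^{\star}(s;\Phi^{\star})\,\Phi^{\star}(f)\le\beta^{\star}(s;\Phi^{\star})\,\Phi(f),
\]
which is a $(\Phi,\beta^{\star}(\cdot;\Phi^{\star}))$--WPI. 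Pointwise minimality of $\beta^{\star}(\cdot;\Phi)$ then forces $\beta^{\star}(\cdot;\Phi)\le\beta^{\star}(\cdot;\Phi^{\star})$.

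\textbf{Step 3: $\beta^{\star}(\cdot;\Phi^{\star})\le\beta^{\star}(\cdot;\Phi)$.} From $\Phi^{\star}(f)\ge\Phi_{\beta^{\star}(\cdot;\Phi)}(f)\ge(\|f\|_{2}^{2}-s\mathcal{E}(T,f))/\beta^{\star}(s;\Phi)$ (corresponding to the $n=0$ term in the supremum defining $\Phi^{\star}$), rearranging gives
\[
\|f\|_{2}^{2}-s\mathcal{E}(T,f)\le\beta^{\star}(s;\Phi)\,\Phi^{\star}(f),\qquad f\in\ELL_{0}(\mu),\,s>0,
\]
which is a $(\Phi^{\star},\beta^{\star}(\cdot;\Phi))$--WPI. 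Pointwise minimality of $\beta^{\star}(\cdot;\Phi^{\star})$ then gives the reverse inequality, and combining with Step~2 concludes.

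The only step that requires a moment of care is Step~1: one must use both the WPI characterization of $\Phi^{\star}$ \emph{and} the non-expansiveness of $\Phi$ under $P$ in order to control the $\sup_{n}$ appearing in the definition of $\Phi^{\star}$; the rest of the argument is then a clean application of the variational definition of $\beta^{\star}$ together with its pointwise minimality.
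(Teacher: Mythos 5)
Your proof is correct and follows essentially the same two--inequality strategy as the paper's, differing only in packaging: you establish each direction by exhibiting an admissible $\beta$ for a WPI and then appealing to pointwise minimality of $\beta^{\star}$, whereas the paper phrases the identical observations via inclusions of the feasible sets $\{f:\Phi(f)\le 1\}\subset\{f:\Phi^{\star}(f)\le 1\}$ appearing in the supremum defining $\beta^{\star}$. These are equivalent; Step~3 and the paper's ``take $n=0$'' step are literally the same calculation.

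Where you add something is Step~1. The paper asserts $\Phi^{\star}\le\Phi$ merely as ``by construction,'' referring back to the discussion after Definition~\ref{def:Phi_star}, without spelling out why. You give the derivation and, importantly, identify correctly that the chain $\Phi_{\beta^{\star}(\cdot;\Phi)}(P^{n}f)\le\Phi(P^{n}f)\le\Phi(f)$ uses \emph{both} the defining $(\Phi,\beta^{\star}(\cdot;\Phi))$--WPI applied to $P^{n}f$ \emph{and} the $P$--non-expansiveness of $\Phi$. The paper's ``by construction'' relies silently on that non-expansiveness, which is consistent with its convention that sieves are understood to be $P$--non-expansive (see the remark after Definition~\ref{def:Phi_fn} and Example~\ref{exa:osc2}), but your explicit acknowledgement of the hypothesis is the right level of care and makes the argument self-contained. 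No gaps.
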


\begin{proof}
By definition,
\begin{equation}
\beta^{\star}(s;\Phi^{\star})=\sup_{f\in\ELL_{0}(\mu),\Phi^{\star}(f)\le1}\left\{ \|f\|_{2}^{2}-s\calE(P^{*}P,f)\right\} .\label{eq:beta*_Phi*}
\end{equation}
Firstly, note that since $\Phi^{\star}$ is optimal,
\[
\Phi^{\star}(f)\le\Phi(f),\quad\forall f\in\ELL_{0}(\mu).
\]
Therefore,
\[
\left\{ f\in\ELL_{0}(\mu):\Phi(f)\le1\right\} \subset\left\{ f\in\ELL_{0}(\mu):\Phi^{\star}(f)\le1\right\} .
\]
Thus the supremum in the definition of $\beta^{\star}(s;\Phi^{\star})$
(\ref{eq:beta*_Phi*}) is over a larger class of functions than that
of $\beta^{\star}(s;\Phi)$ in Definition~\ref{def:alpha-beta-star}.
Therefore we can immediately conclude that
\begin{equation}
\beta^{\star}(s;\Phi^{\star})\ge\beta^{\star}(s;\Phi),\quad\forall s>0.\label{eq:beta**lebeta*}
\end{equation}
However, by definition, if $\Phi^{\star}(f)\le1$, we have that
\[
\sup_{s>0,n\in\mathbb{N}_{0}}\frac{\|P^{n}f\|_{2}^{2}-s\calE(P^{*}P,P^{n}f)}{\beta^{\star}(s;\Phi)}\le1,
\]
which in particular (taking $n=0$) implies that for any $s>0$,
\[
\|f\|_{2}^{2}-s\calE(P^{*}P,f)\le\beta^{\star}(s;\Phi).
\]
Thus 
\[
\beta^{\star}(s;\Phi^{\star})\le\beta^{\star}(s;\Phi),\quad\forall s>0,
\]
which taken together with (\ref{eq:beta**lebeta*}), establishes the
result.
\end{proof}

\section{Establishing WPIs\label{sec:Establishing-WPIs}}

\subsection{Cheeger meets Poincaré\label{subsec:Cheeger-meets-Poincar=0000E9}}

In this section we discuss the connections between weak Poincaré inequalities
and methods based on the concept of conductance. In particular, we
define the notion of weak conductance, which extends the traditional
definition of conductance to the subgeometric setting. Similar ideas
were proposed in \cite[Sections 4, 5]{Rockner2001} in the (continuous
time) diffusion setting, but our arguments differ significantly and
are inspired by the discrete-time proofs of \cite{lawler1988bounds,douc2018markov}.
We fix a $\mu$-reversible Markov transition kernel $P$ on our measure
space $(\E,\mathscr{E})$.
\begin{defn}
\label{def:capacity-function} For a $\mu$-reversible kernel $P$,
we define the \textit{weak conductance} $\kappa:[0,\infty)\to[0,\infty]$
to be 
\[
\kappa(u):=\inf_{A\in\mathscr{E}:u<\mu\otimes\mu(A\times A^{\complement})}\frac{\mathcal{E}(P,\mathbf{1}_{A})}{\|\mathbf{1}_{A}-\mu(A)\|_{2}^{2}}=\inf_{A\in\mathscr{E}:u<\mu\otimes\mu(A\times A^{\complement})}\frac{\mu\otimes P(A\times A^{\complement})}{\mu\otimes\mu(A\times A^{\complement})}.
\]
The last inequality follows from Lemma~\ref{lem:dirichlet-form-indicator}
in the Appendix. Note that since for any $A\in\calE$, $\mu\otimes\mu(A\times A^{\complement})\le1/4$,
by convention we have $\kappa(u)=\infty$ for $u\ge1/4$.
\end{defn}

The  definition of (strong) conductance \cite{lawler1988bounds}
is recovered by taking $u=0$; $\kappa(0)$ is Cheeger's constant,
which in the subgeometric case is 0. 
\begin{rem}
Following \cite{jerrum1988conductance} rather than \cite{lawler1988bounds},
some authors use a slightly different definition of conductance:
\[
\kappa_{*}:=\inf_{A\in\mathscr{E},\mu(A)\leq1/2}\frac{\mu\otimes P\big(A\times A^{\complement}\big)}{\mu(A)},
\]
which possesses a clear probabilistic interpretation. We note however
that $\kappa_{*}\leq\kappa(0)\leq2\kappa_{*}$, and the key quantity
used to establish Cheeger's inequalities, and our generalization,
relies on $\kappa$ as in Definition~\ref{def:capacity-function}.
\end{rem}

There is some resemblance between the weak conductance $\kappa$ and
the $s$-conductance introduced by \cite{lovasz1993random}. However,
it is not straightforward to compare the two or the type of convergence
results obtained; see, e.g., \cite[Lemma~2.1]{atchade2021approximate}.

Cheeger's inequality \cite{lawler1988bounds} obtains a lower bound
on $\mathcal{E}(P,f)/\|f\|_{2}^{2}$ for all $f\in\text{\ensuremath{\ELL_{0}(\mu)}}$,
$f\neq0$, from a lower bound on this same quantity when restricted
to functions $f=\mathbf{1}_{A}-\mu(A)$ for $A\in\mathscr{E}$ (namely,
$\kappa(0)$). This leads to the following celebrated inequalities
when $\kappa(0)>0$:
\begin{equation}
\kappa^{2}(0)/8\leq{\rm Gap}_{\mathrm{R}}(P)\le\kappa(0).\label{eq:cheeger-ineqs}
\end{equation}
We generalize this idea to the scenario where the quantity $\kappa(0)$
is zero, so there is no right-spectral gap. As we shall see, this
generalization involves an upper and lower bound for the function
$\alpha$ in (\ref{eq:WPI}). 

This generalization will be particularly useful when we seek to establish
the existence of WPIs from the abstract RUPI condition in Section~\ref{subsec:establish-WPI-WPIs-from-RUPI}.

\begin{thm}
\label{thm:WPI_from_cond}Let $P$ be a $\mu$-reversible kernel and
$\Phi=\|\cdot\|_{\osc}^{2}$. 

Provided that $\kappa(u)>0$ for all $u\in(0,1/4)$, a $(\Phi,\alpha)\shortminus$\textup{WPI}
holds for $P$, with
\[
\alpha(r):=\frac{16}{\kappa^{2}(r/16)},\quad r>0.
\]
 Conversely, if a $(\|\cdot\|_{{\rm osc}}^{2},\alpha)\shortminus$WPI
holds for some $\alpha:(0,\infty)\to[0,\infty)$, we have the bound
\begin{equation}
\frac{1}{\alpha(r)}\le\inf_{u>1}\left\{ \kappa(ur)\frac{u}{u-1}\right\} \leq2\kappa(2r),\quad r>0.\label{eq:cheeger-lower}
\end{equation}
\end{thm}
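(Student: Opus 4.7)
The plan is to treat the two directions separately, with the forward (WPI from conductance) direction following a quantitative adaptation of the classical Cheeger argument of \cite{lawler1988bounds} in which level sets are split according to whether $\mu\otimes\mu(A_t\times A_t^\complement)$ exceeds the threshold $u$ at which $\kappa(u)$ is evaluated, and the converse (conductance from WPI) direction being a direct plug-in of indicator functions.

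For the forward direction, take $f\in\ELL_0(\mu)$ with $\|f\|_{\osc}$ finite, and let $c$ be a median of $f$. Set $g=(f-c)_+$ and $h=(c-f)_+$, so that $f-c=g-h$, the supports of $g$ and $h$ have $\mu$-measure at most $1/2$, and $\|g\|_\infty^2+\|h\|_\infty^2\leq\|f\|_{\osc}^2$. Since $gh\equiv0$ and $Pg,h\geq0$, one finds $\mathcal{E}(P,f)=\mathcal{E}(P,f-c)\geq\mathcal{E}(P,g)+\mathcal{E}(P,h)$, while $\|f\|_2^2\leq\|f-c\|_2^2=\|g\|_2^2+\|h\|_2^2$ because $\mu(f)=0$. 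It thus suffices to bound $\|g\|_2^2$ when $g\geq0$ and $\mu(g>0)\leq1/2$. For such $g$, the co-area identity gives
\[
\tfrac{1}{2}\int\!\!\int |g^2(x)-g^2(y)|\,\mu(\dif x)P(x,\dif y)=\int_0^\infty\mu\otimes P(A_t\times A_t^\complement)\,\dif t,\qquad A_t:=\{g^2>t\},
\]
and Cauchy--Schwarz applied to $|a^2-b^2|=|a-b|\,|a+b|$ shows that the left-hand side is at most $\sqrt{2\mathcal{E}(P,g)}\cdot 2\|g\|_2$. For the lower bound, partition the $t$-integral according to $T:=\{t>0:\mu\otimes\mu(A_t\times A_t^\complement)>u\}$: on $T$ apply the definition of $\kappa(u)$ to get $\mu\otimes P(A_t\times A_t^\complement)\geq\kappa(u)\,\mu\otimes\mu(A_t\times A_t^\complement)$, while on $T^\complement\cap(0,\|g\|_\infty^2]$ the integrand is trivially non-negative and the Lebesgue measure is at most $\|g\|_\infty^2$. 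Using $\mu(A_t)\leq 1/2$, we have $\int_0^\infty\mu\otimes\mu(A_t\times A_t^\complement)\,\dif t\geq\tfrac{1}{2}\|g\|_2^2$, so combining yields
\[
\sqrt{2\mathcal{E}(P,g)}\cdot 2\|g\|_2\geq\kappa(u)\left(\tfrac{1}{2}\|g\|_2^2-u\|g\|_\infty^2\right).
\]
A standard Young inequality $ab\leq a^2/(4\epsilon)+\epsilon b^2$ with $\epsilon\propto\kappa(u)$ turns this into a bound of the form $\|g\|_2^2\leq C_1\mathcal{E}(P,g)/\kappa^2(u)+C_2u\|g\|_\infty^2$. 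Adding the analogous bounds for $g$ and $h$ and setting $r$ to be the resulting multiple of $u$ produces the stated WPI with $\alpha(r)=16/\kappa^2(r/16)$; tracking the constants carefully through the Cauchy--Schwarz, Young, and $g/h$-summation steps is where the precise scaling $r\mapsto r/16$ appears, and this bookkeeping will be the main technical obstacle.

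For the converse, fix $A\in\mathscr{E}$ and apply the $(\Phi,\alpha)$-WPI to the centred indicator $f_A:=\mathbf{1}_A-\mu(A)\in\ELL_0(\mu)$. Since $\|f_A\|_{\osc}^2=1$, $\|f_A\|_2^2=\mu\otimes\mu(A\times A^\complement)$, and by Lemma~\ref{lem:dirichlet-form-indicator} $\mathcal{E}(P,f_A)=\mu\otimes P(A\times A^\complement)$, the WPI reads
\[
\mu\otimes\mu(A\times A^\complement)\leq\alpha(r)\,\mu\otimes P(A\times A^\complement)+r.
\]
For any $u>1$ and any $A$ with $ur<\mu\otimes\mu(A\times A^\complement)$, rearranging and dividing by $\mu\otimes\mu(A\times A^\complement)$ gives
\[
\frac{\mu\otimes P(A\times A^\complement)}{\mu\otimes\mu(A\times A^\complement)}\geq\frac{1}{\alpha(r)}\Bigl(1-\frac{r}{\mu\otimes\mu(A\times A^\complement)}\Bigr)>\frac{1}{\alpha(r)}\cdot\frac{u-1}{u}.
\]
Taking the infimum over such $A$ yields $\kappa(ur)\geq(u-1)/(u\alpha(r))$, i.e. $1/\alpha(r)\leq\kappa(ur)\,u/(u-1)$, and then infimizing over $u>1$ (with the explicit choice $u=2$ giving the weaker bound $2\kappa(2r)$) completes the proof of \eqref{eq:cheeger-lower}.
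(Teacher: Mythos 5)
Your converse direction is correct and essentially the paper's own argument; the only cosmetic difference is that you normalise $f_A$ so that $\Phi(f_A)=1$, whereas the paper divides by $\sqrt{\mu(A)\mu(A^\complement)}$ to get $\|f\|_2^2=1$, and both arrive at the same rearranged inequality, the change of variable $u=s/r$, and the specialisation $u=2$.

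Your forward direction, by contrast, is a genuinely different route. The paper works with $g=f+c$, uses the co-area formula primarily with $\nu=\mu\otimes\mu$, and then combines the case $c=0$ with the limit $c\to\infty$ (taking a maximum over the two endpoints, via an inequality imported from \cite{Sherlock2018}). You instead decompose $f-c=g-h$ with $c$ a median of $f$, so that $g,h\geq0$ have disjoint supports of mass at most $1/2$, apply the co-area identity directly with $\nu=\mu\otimes P$, and close with Young's inequality. This is the other classical Cheeger-style proof and is arguably cleaner: it avoids the limit in $c$ entirely, and a careful accounting gives a sharper constant, namely $\alpha(r)=8/\kappa^2(r/4)$, which is pointwise below the stated $16/\kappa^2(r/16)$ (since $\kappa$ is increasing), so the theorem follows a fortiori.

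One slip needs fixing, and it is not purely cosmetic. Your Cauchy--Schwarz bound carries a spurious factor of $2$. Since $\int\!\!\int|g(x)-g(y)|^2\,\mu(\dif x)P(x,\dif y)=2\mathcal{E}(P,g)$ and $\int\!\!\int|g(x)+g(y)|^2\,\mu(\dif x)P(x,\dif y)=2\|g\|_2^2+2\langle g,Pg\rangle\leq4\|g\|_2^2$, the correct bound on $\tfrac12\int\!\!\int|g^2(x)-g^2(y)|\,\mu(\dif x)P(x,\dif y)$ is $\|g\|_2\sqrt{2\mathcal{E}(P,g)}$, not $2\|g\|_2\sqrt{2\mathcal{E}(P,g)}$. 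If you carry the loose factor through, you end up with something of the order $32/\kappa^2(r/4)$, which does \emph{not} dominate $16/\kappa^2(r/16)$ in general (take $\kappa$ constant, for instance), so the ``bookkeeping will work out'' sentence papers over a real issue. With the tight Cauchy--Schwarz constant and, say, $\epsilon=1/4$ in Young's inequality, one finds $\|g\|_2^2\leq 8\mathcal{E}(P,g)/\kappa^2(u)+4u\|g\|_\infty^2$, and summing over $g,h$ with $r=4u$ gives $\alpha(r)=8/\kappa^2(r/4)$, which is what you want.
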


\begin{rem}
\label{rem:cheeger-remark}In the notation of Section~\ref{sec:Optimal-choices},
and in analogue with (\ref{eq:cheeger-ineqs}), we can succinctly
express this theorem in terms of the optimal $\alpha^{\star}$ as:
\[
\frac{\kappa^{2}(r/16)}{16}\leq1/\alpha^{\star}(r)\leq\inf_{s>1}\left\{ \frac{s}{s-1}\kappa(sr)\right\} \leq2\kappa(2r),\quad r>0.
\]
From Theorem~\ref{thm:order-alpha-beta-gamma}, inequality (\ref{eq:cheeger-lower})
implies that convergence to equilibrium cannot occur at a rate $\gamma$
faster fast than that obtained with $\underline{\alpha}(r)=[2\kappa(2r)]^{-1}$.
\end{rem}

The proof is a direct consequence of Propositions~\ref{prop:cheeger-lower}
and \ref{prop:cheeger-upper}. We first show that the conductance
always provides a lower bound for $\alpha$ if a $(\|\cdot\|_{{\rm osc}}^{2},\alpha)\shortminus$WPI
holds.
\begin{prop}
\label{prop:cheeger-lower}Let $P$ be a $\mu$-reversible kernel
satisfying a $(\|\cdot\|_{\osc}^{2},\alpha)\shortminus$WPI. We have
the bound (\ref{eq:cheeger-lower}).
\end{prop}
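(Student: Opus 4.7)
The plan is to exploit the fact that indicator functions provide a rich, tractable subclass of test functions for the WPI, since both sides of the inequality reduce to simple probabilistic quantities on sets. Specifically, I will plug $f = \mathbf{1}_A - \mu(A)$ into the $(\|\cdot\|_{\osc}^2,\alpha)$-WPI, use the identities $\|f\|_2^2 = \mu\otimes\mu(A\times A^\complement)$ and $\mathcal{E}(P,f) = \mu\otimes P(A\times A^\complement)$ (the latter coming from the cited Lemma~\ref{lem:dirichlet-form-indicator}), and observe that $\Phi(f) = \|\mathbf{1}_A - \mu(A)\|_{\osc}^2 \leq 1$. This converts the WPI into the scalar inequality
\[
\mu\otimes\mu(A\times A^\complement) \leq \alpha(r)\,\mu\otimes P(A\times A^\complement) + r, \qquad \forall r>0,\ A\in\mathscr{E}.
\]

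Next, I restrict to those sets $A$ appearing in the definition of $\kappa(ur)$, namely those with $\mu\otimes\mu(A\times A^\complement) > ur$ for some fixed $u>1$. Writing $v := \mu\otimes\mu(A\times A^\complement)$, rearranging gives
\[
\frac{\mu\otimes P(A\times A^\complement)}{\mu\otimes\mu(A\times A^\complement)} \geq \frac{1}{\alpha(r)}\left(1 - \frac{r}{v}\right) > \frac{1}{\alpha(r)}\cdot\frac{u-1}{u},
\]
where the last step uses $v>ur$ so that $r/v<1/u$. Taking the infimum of the left-hand side over admissible $A$ yields
\[
\kappa(ur) \geq \frac{1}{\alpha(r)}\cdot\frac{u-1}{u},
\]
and rearranging produces $1/\alpha(r) \leq \kappa(ur)\cdot u/(u-1)$. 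The first inequality in (\ref{eq:cheeger-lower}) follows by taking the infimum over $u>1$, and the second is the trivial specialization $u=2$.

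There is no substantive obstacle here beyond bookkeeping; the entire argument is a direct calculation. The only minor subtlety is ensuring that the set over which $\kappa$ is defined is nonempty for the relevant values of $ur$ (otherwise $\kappa(ur) = \infty$ by convention, and the bound is vacuous), which is handled automatically by the infimum structure. The cleanness of the argument reflects the fact that indicator functions are essentially ``extremal'' for probing the $\|\cdot\|_{\osc}^2$-WPI, which is precisely the reason one expects a Cheeger-type converse to hold in this subgeometric regime.
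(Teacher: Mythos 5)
Your proof is correct and takes essentially the same route as the paper's: plug the indicator-based test function into the WPI, use the identities for the Dirichlet form and variance of indicators, and optimize the resulting scalar inequality over sets whose variance exceeds a threshold. The only cosmetic difference is that you work with the unnormalized $f = \mathbf{1}_A - \mu(A)$ while the paper divides by $\sqrt{\mu(A)\mu(A^\complement)}$ to get $\|f\|_2 = 1$; the algebra, the change of variable $s = ur$, and the specialization $u=2$ are identical.
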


\begin{proof}
Consider the function 
\[
f=\frac{\mathbf{1}_{A}-\mu(A)}{\sqrt{\mu(A)\mu(A^{\complement})}},
\]
for a measurable set $A\in\mathscr{E}$ such that $1>\mu(A)>0$. By
construction, $f\in\ELL_{0}(\mu)$ with $\|f\|_{2}^{2}=1$. Plugging
this into the weak Poincaré inequality, we find that for any $r>0$,
\[
1\le\alpha(r)\frac{\mu\otimes P(A\times A^{\complement})}{\mu\otimes\mu(A\times A^{\complement})}+\frac{r}{\mu\otimes\mu(A\times A^{\complement})}.
\]
Rearranging this, we obtain that for any $r>0$,
\[
\frac{1}{\alpha(r)}\left(1-\frac{r}{\mu\otimes\mu(A\times A^{\complement})}\right)\leq\frac{\mu\otimes P(A\times A^{\complement})}{\mu\otimes\mu(A\times A^{\complement})}.
\]
Now for any $s>r>0$, we consider only $A\in\mathscr{E}$ such that
$\mu\otimes\mu(A\times A^{\complement})>s$, yielding
\[
1/\alpha(r)\leq\left(1-\frac{r}{s}\right)^{-1}\frac{\mu\otimes P(A\times A^{\complement})}{\mu\otimes\mu(A\times A^{\complement})}.
\]
Therefore for $r>0$ we have
\[
1/\alpha(r)\leq\inf_{s>r}\frac{s}{s-r}\kappa(s)=\inf_{u>1}\frac{u}{u-1}\kappa(ru)\leq2\kappa(2r).
\]
where we have used the change of variable $u=s/r$ for the equality
and taken $u=2$ for the final inequality.
\end{proof}
We now prove the trickier converse: we show that the weak conductance
gives rise to an $\alpha$ such that a $(\|\cdot\|_{{\rm osc}}^{2},\alpha)\shortminus$WPI
holds. We make use of the fundamental Lemma~\ref{lem:lawler-sokal-beautiful}
of \cite{lawler1988bounds} which provides a bridge between Dirichlet
forms of indicator functions and general functions and can be found
in the appendix for the reader's convenience. 
\begin{prop}
\label{prop:cheeger-upper}Let $P$ be a $\mu$-reversible kernel.
Then provided $\kappa(u)>0$ for all $u\in(0,1/4)$, a $(\|\cdot\|_{{\rm osc}}^{2},\alpha)-$WPI
holds with
\begin{equation}
\alpha(r):=\frac{16}{\kappa^{2}\left(r/16\right)},\quad r>0.\label{eq:alpha_k}
\end{equation}
\end{prop}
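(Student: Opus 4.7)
The overall strategy is to reduce to a Cheeger--Sokal style co-area argument on nonnegative truncations, and then interpolate between the ``large level-set'' regime where weak conductance is informative and the ``small level-set'' regime where the oscillation term absorbs the deficit. Given $f\in\ELL_{0}(\mu)$, first let $m$ be a median of $f$ under $\mu$ and write $f-m=g_{+}-g_{-}$ with $g_{\pm}\geq 0$ having disjoint supports, $\mu(\{g_{\pm}=0\})\geq 1/2$, and $\|g_{+}\|_{\infty}+\|g_{-}\|_{\infty}=\|f\|_{\osc}$ (so $\|g_{+}\|_{\infty}^{2}+\|g_{-}\|_{\infty}^{2}\leq\|f\|_{\osc}^{2}$). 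The elementary ``sign'' inequality $(g(x)-g(y))^{2}\geq (g_{+}(x)-g_{+}(y))^{2}+(g_{-}(x)-g_{-}(y))^{2}$ combined with $\mu$-reversibility yields $\mathcal{E}(P,f)\geq \mathcal{E}(P,g_{+})+\mathcal{E}(P,g_{-})$, while $\|f\|_{2}^{2}\leq\|f-m\|_{2}^{2}=\|g_{+}\|_{2}^{2}+\|g_{-}\|_{2}^{2}$ since $m$ is not a better centering than $0=\mu(f)$. It therefore suffices to prove a bound of the form $\|h\|_{2}^{2}\leq C\kappa(u)^{-2}\mathcal{E}(P,h)+cu\|h\|_{\infty}^{2}$ for nonnegative $h$ with $\mu(\{h=0\})\geq 1/2$, and then add the two pieces with $u$ proportional to $r$.

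For the nonnegative $h$, I would apply the Lawler--Sokal Cauchy--Schwarz trick (the ``beautiful lemma'' cited in the excerpt): from $(h(x)-h(y))^{2}(h(x)+h(y))^{2}\geq (h^{2}(x)-h^{2}(y))^{2}$ combined with $\mu$-invariance of $P$ (which gives $\int(h(x)+h(y))^{2}\,\mu(\dif x)P(x,\dif y)\leq 4\|h\|_{2}^{2}$) one obtains
\[
\Bigl(\int |h^{2}(x)-h^{2}(y)|\,\mu(\dif x)P(x,\dif y)\Bigr)^{2}\leq 8\|h\|_{2}^{2}\,\mathcal{E}(P,h).
\]
Write $A_{t}:=\{h^{2}>t\}$; the layer-cake formula rewrites the left-hand integral as $2\int_{0}^{\infty}\mu\otimes P(A_{t}\times A_{t}^{\complement})\,\dif t$ and gives $\|h\|_{2}^{2}=\int_{0}^{\infty}\mu(A_{t})\,\dif t$. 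Since $\mu(A_{t})\leq 1/2$ one has $\mu(A_{t})\leq 2\,\mu\otimes\mu(A_{t}\times A_{t}^{\complement})$.

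The key step, and the main obstacle, is partitioning the $t$-axis to bring in the weak conductance. On $\{t:\mu\otimes\mu(A_{t}\times A_{t}^{\complement})>u\}$, the definition of $\kappa$ gives $\mu\otimes\mu(A_{t}\times A_{t}^{\complement})\leq \kappa(u)^{-1}\mu\otimes P(A_{t}\times A_{t}^{\complement})$, and hence this piece contributes at most $2\kappa(u)^{-1}\int\mu\otimes P(A_{t}\times A_{t}^{\complement})\,\dif t$ to $\|h\|_{2}^{2}$, which by the Cauchy--Schwarz bound above is at most $2\kappa(u)^{-1}\sqrt{2\|h\|_{2}^{2}\,\mathcal{E}(P,h)}$. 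On the complementary set $\mu(A_{t})\leq 2u$, and as $t$ ranges only in $[0,\|h\|_{\infty}^{2}]$ this contributes at most $2u\|h\|_{\infty}^{2}$. Combining gives
\[
\|h\|_{2}^{2}\leq \frac{2\sqrt{2}}{\kappa(u)}\sqrt{\|h\|_{2}^{2}\,\mathcal{E}(P,h)}+2u\|h\|_{\infty}^{2},
\]
and an AM--GM absorption of $\|h\|_{2}^{2}$ on the left yields $\|h\|_{2}^{2}\leq 8\kappa(u)^{-2}\,\mathcal{E}(P,h)+4u\|h\|_{\infty}^{2}$.

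Summing over $g_{+}$ and $g_{-}$ and choosing $u$ as a suitable multiple of $r$ (keeping track of the factor $\|g_{+}\|_{\infty}^{2}+\|g_{-}\|_{\infty}^{2}\leq\|f\|_{\osc}^{2}$ and of the constants produced by the AM--GM and the median decomposition) gives the $(\|\cdot\|_{\osc}^{2},\alpha)$-WPI with $\alpha$ of the form $\mathrm{const}/\kappa^{2}(\mathrm{const}\cdot r)$, and the specific tuning $u=r/16$ produces the claimed $\alpha(r)=16/\kappa^{2}(r/16)$. The only genuinely delicate point is the truncation: one must verify that the $t$-integral over $\{\mu\otimes\mu(A_{t}\times A_{t}^{\complement})\leq u\}$ is controlled purely through $\|h\|_{\infty}$ without losing the sharp dependence on $u$, which is precisely why the sieve $\Phi=\|\cdot\|_{\osc}^{2}$ (rather than $\|\cdot\|_{2}^{2}$) is the natural one for this argument.
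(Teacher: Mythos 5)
Your proof is correct, and it takes a genuinely different route from the paper's. The paper fixes $f$ with $\|f\|_{2}^{2}=1$, introduces a shift $g=f+c$, and invokes a two-sided bound (their equation (\ref{eq:cheeger_max_c})) asserting that the maximum of $\mathbb{E}_{\mu\otimes\mu}[|g^2(X)-g^2(Y)|]^2/\mathbb{E}_\mu[g^2(X)]$ over $c=0$ and $c\to\infty$ is at least $1$; the two cases are then handled separately, with an oscillation bound tailored to each. You instead decompose $f-m$ around a $\mu$-median $m$ into nonnegative pieces $g_{\pm}$ with disjoint supports and $\mu(\{g_{\pm}=0\})\geq 1/2$. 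This is the classical Cheeger strategy, and it pays off twice: first, it eliminates the limit argument, since the condition $\mu(A_t)\leq 1/2$ for $A_t=\{h^2>t\}$ falls out of the median split and immediately gives $\mu(A_t)\leq 2\,\mu\otimes\mu(A_t\times A_t^{\complement})$; second, you close the argument with an AM--GM absorption (from $\|h\|_2^2 \leq K\sqrt{\|h\|_2^2\,\mathcal{E}(P,h)}+c$) rather than the paper's normalization-plus-max device. The subadditivity estimates $\mathcal{E}(P,f)\geq\mathcal{E}(P,g_+)+\mathcal{E}(P,g_-)$ and $\|g_+\|_\infty^2+\|g_-\|_\infty^2\leq\|f\|_{\rm osc}^2$ both check out (the former from $ab\leq 0$ with $a=g_+(x)-g_+(y)$, $b=g_-(x)-g_-(y)$, which holds by the disjoint-support structure), and $\|f\|_2^2\leq\|f-m\|_2^2=\|g_+\|_2^2+\|g_-\|_2^2$ is correct because $\mu(f)=0$ is the optimal centering.

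One small bookkeeping remark: your argument actually yields $\|f\|_2^2\leq \frac{8}{\kappa^2(u)}\mathcal{E}(P,f)+4u\|f\|_{\rm osc}^2$, so with $u=r/4$ you get $\alpha(r)=8/\kappa^2(r/4)$, which, since $\kappa$ is nondecreasing, dominates (i.e. is no larger than) the stated $16/\kappa^2(r/16)$. Your last sentence claiming that $u=r/16$ "produces" the exact constant $16/\kappa^2(r/16)$ is imprecise — that choice gives an even stronger bound — but since a pointwise smaller $\alpha$ automatically gives the stated WPI, the proposition follows. So your proof establishes the result with better constants and, arguably, a cleaner argument.
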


\begin{proof}
Let us fix $f\in\ELL_{0}(\mu)$ with $\|f\|_{2}^{2}=1$. Our goal
is to show that $\alpha$ as defined in (\ref{eq:alpha_k}) gives
rise to a valid weak Poincaré inequality for $P$ with $\Phi=\|\cdot\|_{\osc}^{2}$;
since we have fixed $\|f\|_{2}^{2}=1$ this amounts to showing that
for $r>0$,
\[
1\le\frac{16}{\kappa^{2}\left(r/16\right)}\calE(P,f)+r\|f\|_{\osc}^{2}.
\]
We make use of the following two results, the proof of which can be
found in \cite{lawler1988bounds,douc2018markov,Sherlock2018}. Let
$g:=f+c$ for $c\in\R$. Firstly, it can be shown using the Cauchy--Schwarz
inequality that
\begin{equation}
\frac{\mathbb{E}_{\mu\otimes P}\big[|g^{2}(X)-g^{2}(Y)|\big]^{2}}{\mathbb{E}_{\mu}\left[g^{2}(X)\right]}\leq8\mathcal{E}(P,f).\label{eq:cheeger_dirichlet}
\end{equation}
Note that since $\|f\|_{2}^{2}=1$ and $\mu(f)=0$, $\mathbb{E}_{\mu}\left[g^{2}(X)\right]=\|g-c+c\|^{2}=1+c^{2}$.
Secondly, it can also be established (following the proof in \cite{Sherlock2018},
say) that
\begin{equation}
\max\left\{ \lim_{c\to\infty}\frac{\mathbb{E}_{\mu\otimes\mu}\left[|g^{2}(X)-g^{2}(Y)|\right]^{2}}{\mathbb{E}_{\mu}[g^{2}(X)]},\frac{\mathbb{E}_{\mu\otimes\mu}\left[|f^{2}(X)-f^{2}(Y)|\right]^{2}}{1}\right\} \geq1,\label{eq:cheeger_max_c}
\end{equation}
where the second term in the braces corresponds to the choice $c=0$.
The bound in (\ref{eq:cheeger_max_c}) is used below to lower bound
the left-hand side of (\ref{eq:cheeger_dirichlet}). Consider the
family of sets $\mathcal{T}_{s}:=\{t\ge0:\mu\otimes\mu(A_{t},A_{t}^{\complement})>s\}\subset[0,\infty)$
for $s>0$. Then using successively Lemma~\ref{lem:lawler-sokal-beautiful}
with $\nu=\mu\otimes\mu$, the bound $(a+b)^{2}\le2a^{2}+2b^{2}$,
the definition of $\kappa(s)$, Lemma~\ref{lem:lawler-sokal-beautiful}
with $\nu=\mu\otimes P$ and (\ref{eq:cheeger_dirichlet}), we obtain
for any $c\in\mathbb{R}$ and $s>0$,
\begin{align*}
\frac{\mathbb{E}_{\mu\otimes\mu}\left[|g^{2}(X)-g^{2}(Y)|\right]^{2}}{2\mathbb{E}_{\mu}[g^{2}(X)]}\\
 & \hspace{-2cm}=\frac{\left(2\int_{0}^{\infty}\mu\otimes\mu(A_{t},A_{t}^{\complement})\,{\rm d}t\right)^{2}}{2\mathbb{E}_{\mu}[g^{2}(X)]}\\
 & \hspace{-2cm}=\frac{\left(2\int_{\mathcal{T}_{s}}\mu\otimes\mu(A_{t},A_{t}^{\complement})\,{\rm d}t+2\int_{\mathcal{T}_{s}^{\complement}}\mu\otimes\mu(A_{t},A_{t}^{\complement})\,{\rm d}t\right)^{2}}{2\mathbb{E}_{\mu}[g^{2}(X)]}\\
 & \hspace{-2cm}\le\frac{\left(2\int_{\mathcal{T}_{s}}\mu\otimes\mu(A_{t},A_{t}^{\complement})\,{\rm d}t\right)^{2}}{\mathbb{E}_{\mu}[g^{2}(X)]}+\frac{\left(2\int_{\mathcal{T}_{s}^{\complement}}\mu\otimes\mu(A_{t},A_{t}^{\complement})\,{\rm d}t\right)^{2}}{\mathbb{E}_{\mu}[g^{2}(X)]}\\
 & \hspace{-2cm}\le\frac{\left(\frac{1}{\kappa(s)}2\int_{\mathcal{T}_{s}}\mu\otimes P(A_{t}\times A_{t}^{\complement})\,{\rm d}t\right)^{2}}{\mathbb{E}_{\mu}[g^{2}(X)]}+\frac{\left(2\int_{\mathcal{T}_{s}^{\complement}}\mu\otimes\mu(A_{t},A_{t}^{\complement})\,{\rm d}t\right)^{2}}{\mathbb{E}_{\mu}[g^{2}(X)]}\\
 & \hspace{-2cm}\le\frac{\frac{1}{\kappa^{2}(s)}\mathbb{E}_{\mu\otimes P}\big[|g^{2}(X)-g^{2}(Y)|\big]^{2}}{\mathbb{E}_{\mu}[g^{2}(X)]}+\frac{\left(2\int_{\mathcal{T}_{s}^{\complement}}\mu\otimes\mu(A_{t},A_{t}^{\complement})\,{\rm d}t\right)^{2}}{\mathbb{E}_{\mu}[g^{2}(X)]}\\
 & \hspace{-2cm}\le\frac{8}{\kappa^{2}(s)}\mathcal{E}(P,f)+\frac{\left(2\int_{\mathcal{T}_{s}^{\complement}}\mu\otimes\mu(A_{t},A_{t}^{\complement})\,{\rm d}t\right)^{2}}{\mathbb{E}_{\mu}[g^{2}(X)]}.
\end{align*}
We now focus on the second term. We begin with the case $c=0$. 
For $t\in\mathcal{T}_{s}^{\complement}$,
\[
\mu(A_{t})\mu(A_{t}^{\complement})=\mu(g^{2}(X)\geq t)\mu(g^{2}(X)<t)\leq s.
\]
In particular, since we are assuming that $\|f\|_{\infty}<\infty$,
if $t>\left(\|f\|_{\infty}+|c|\right)^{2}$, then $\mu(g^{2}(X)\geq t)=0$.
This enables us to bound, in the case $c=0$: since we have $\|f\|_{\infty}^{2}\le\|f\|_{\osc}^{2}$,
\begin{align*}
\int_{\mathcal{T}_{s}^{\complement}}\mu\otimes\mu(A_{t},A_{t}^{\complement})\,{\rm d}t & \le\int_{0}^{\|f\|_{\infty}^{2}}s\,\dif t\\
 & \le s\|f\|_{\osc}^{2}.
\end{align*}
From Lemma~\ref{lem:lawler-sokal-beautiful} we also have the bound
\begin{align*}
\int_{\mathcal{T}_{s}^{\complement}}\mu\otimes\mu(A_{t},A_{t}^{\complement})\,{\rm d}t & \le\mathbb{E}_{\mu\otimes\mu}\left[|g^{2}(X)-g^{2}(Y)|\right]\\
 & \leq2\mathbb{E}_{\mu}\left[f^{2}(X)\right]=2\,.
\end{align*}
Using these two bounds to upper bound the square below, we obtain
that for $c=0$,
\[
\frac{\left(2\int_{\mathcal{T}_{s}^{\complement}}\mu\otimes\mu(A_{t},A_{t}^{\complement})\,{\rm d}t\right)^{2}}{\mathbb{E}_{\mu}[g^{2}(X)]}\le4s\|f\|_{\osc}^{2}\cdot2.
\]
We now consider the case $c\to\infty$. Since we are interested in
the case when $c>\|f\|_{\infty}$, we know that $g>0$ everywhere.
In particular, this implies that if $t>\left(\muess\sup f+c\right)^{2}$,
then $\mu(g^{2}(X)\geq t)=0$. Similarly, if $t<(c+\muess\,\inf f)^{2}$,
then $\mu(g^{2}(X)<t)=0$. Thus we bound
\begin{align*}
\int_{\mathcal{T}_{s}^{\complement}}\mu\otimes\mu(A_{t},A_{t}^{\complement})\,{\rm d}t\\
 & \hspace{-1.5cm}=\int_{0}^{\infty}\mu\otimes\mu(A_{t},A_{t}^{\complement})1_{\mathcal{T}_{s}^{\complement}}(t)\,{\rm d}t\\
 & \hspace{-1.5cm}=\int_{\left(c+\muess\inf f\right)^{2}}^{\left(\muess\sup f+c\right)^{2}}\mu\otimes\mu(A_{t},A_{t}^{\complement})1_{\mathcal{T}_{s}^{\complement}}(t)\,{\rm d}t\\
 & \hspace{-1.5cm}\le s\left[\left(\muess\sup f+c\right)^{2}-\left(c+\muess\,\inf f\right)^{2}\right]\\
 & \hspace{-1.5cm}=s\left[\left(\muess\sup f\right)^{2}-\left(\mu\shortminus\muess\,\inf f\right)^{2}+2c\left(\muess\sup f-\muess\,\inf f\right)\right]\\
 & \hspace{-1.5cm}=s\left[\left(\muess\sup f\right)^{2}-\left(\muess\,\inf f\right)^{2}+2c\|f\|_{\osc}\right].
\end{align*}
So ultimately we obtain
\begin{multline*}
\frac{\left(2\int_{\mathcal{T}_{s}^{\complement}}\mu\otimes\mu(A_{t},A_{t}^{\complement})\,{\rm d}t\right)^{2}}{\mathbb{E}_{\mu}[g^{2}(X)]}\le s\frac{4\left[\left(\muess\sup f\right)^{2}-\left(\muess\,\inf f\right)^{2}+2c\|f\|_{\osc}\right]}{\sqrt{1+c^{2}}}\\
\times\frac{\mathbb{E}_{\mu\otimes\mu}\left[|g^{2}(X)-g^{2}(Y)|\right]}{\sqrt{\mathbb{E}_{\mu}[g^{2}(X)]}}.
\end{multline*}
Then taking the limit, we get
\begin{align*}
\limsup_{c\to\infty}\frac{\left(2\int_{\mathcal{T}_{s}^{\complement}}\mu\otimes\mu(A_{t},A_{t}^{\complement})\,{\rm d}t\right)^{2}}{\mathbb{E}_{\mu}[g^{2}(X)]} & \le s\cdot8\|f\|_{\osc}\cdot2\|f\|_{\osc}\\
 & =s\cdot16\|f\|_{\osc}^{2}.
\end{align*}
Rearranging then gives the desired bound.
\end{proof}

\subsection{WPIs from RUPI and $\mu$-irreducibility\label{subsec:establish-WPI-WPIs-from-RUPI}}

Given our notion of a WPI in Definition~\ref{def:WPI}, a natural
question to ask is under what general conditions on a kernel $T$,
a WPI for $T$ will hold. In particular, a WPI for the kernel $T=\left(P^{*}\right)^{k}P^{k}$
for $k\in\mathbb{N}$ enables one to deduce (subgeometric) convergence
bounds for $\|P^{kn}f\|_{2}$, where $f\in\ELL_{0}(\mu)$ is such
that $\Phi(f)<\infty$. Thus, we seek simple conditions on a Markov
kernel $T$ under which (\ref{eq:WPI}) will hold, for sieve $\Phi=\|\cdot\|_{\osc}^{2}$,
with $T=P$ or $T=(P^{*})^{k}P^{k}$ for $k\in\mathbb{N}$, for a
finite-valued function $\alpha$. 

We will see that for a Markov operator $T$, a necessary and sufficient
condition for a $(\|\cdot\|_{\osc}^{2},\alpha)-$WPI to hold is the
\textit{resolvent-uniform-positivity-improving} (RUPI) property. This
property appeared in \cite{Gong2006}, and in \cite{Wang2014} it
was suggested that an equivalence between the RUPI property and the
existence of a WPI was already established in an unpublished manuscript
by L.~Wu. However, we have not been able to access this manuscript,
and so in Section~\ref{subsec:Equivalence-of--WPI} we provide a
direct proof of this equivalence.

In Section~\ref{subsec:Holding-probabilities,-WPIs} we will demonstrate
that arbitrarily small, uniform holding probabilities allow one to
relate the existence of $\|\cdot\|_{\osc}^{2}$-WPIs for $P$, $P^{*}P$
and $\|\cdot\|_{\osc}^{2}$-convergence of $P$ (see Proposition~\ref{prop:hold-equiv-wpi-conv-pp}),
and also to deduce that $\|\cdot\|_{\osc}^{2}$-convergence of $P$
and its additive reversibilization can similarly be closely related
with a non-zero holding probabilities (see Proposition~\ref{prop:P/S-convergence-P/S}).

Furthermore, a simple sufficient condition for RUPI (and hence a WPI)
is \textit{$\mu$-irreducibility}, which we discuss in detail in Section~\ref{subsec:-irreducibility-implies-a};
see Corollary~\ref{cor:WPI_from_irred}. 

Hereafter we may omit the statement $A,B\in\mathscr{E}$ to alleviate
notation; no confusion should be possible.

\subsubsection{Equivalence of $\left\Vert \cdot\right\Vert _{{\rm osc}}^{2}$-WPI
and RUPI\label{subsec:Equivalence-of--WPI}}
\begin{defn}[UPI and RUPI]
A kernel $T$ is \textit{uniform-positivity-improving} (UPI) if for
each $\epsilon>0$,
\[
\inf\{\langle\mathbf{1}_{A},T\mathbf{1}_{B}\rangle:\mu(A)\wedge\mu(B)\ge\epsilon\}>0.
\]
A Markov kernel $T$ is said to be \textit{resolvent-uniform-positivity-improving}
(RUPI) if for some (and hence all) $0<\lambda<1$, we have that the
resolvent
\[
R\left(\lambda,T\right):=\sum_{n=0}^{\infty}\lambda^{n}T^{n}=(\Id-\lambda T)^{-1}\,,
\]
is UPI.
\end{defn}

\begin{thm}
\label{thm:WPI_iff_RUPI}Suppose that $T$ is a $\mu$--invariant
Markov kernel. Then $T$ satisfies an $\left\Vert \cdot\right\Vert _{{\rm osc}}^{2}$-WPI
if and only if $T$ is RUPI.
\end{thm}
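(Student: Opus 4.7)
The plan is to establish both implications, leveraging two tools from the earlier sections: the reduction of WPIs for nonreversible $T$ to those for the symmetric part $S := (T+T^*)/2$ via Remark~\ref{rem:nonrev_WPI} (since $\mathcal{E}(T,f)=\mathcal{E}(S,f)$, the weak conductance of Definition~\ref{def:capacity-function} is the same for $T$ and $S$), together with the Cheeger-type characterization of Theorem~\ref{thm:WPI_from_cond}. A second ingredient is Lemma~\ref{lem:PP-dirichlet-form-ub}, which upgrades a $\|\cdot\|_{\osc}^2$-WPI for $T$ into one (with the constant doubled) for the reversible kernel $T^*T$, so that Theorem~\ref{thm:WPI_F_bd} applied with $P=T$ produces an $\mathrm{L}^2$-decay estimate $\|T^n g\|_2^2\leq\gamma(n)\|g\|_{\osc}^2$ for $g\in\mathrm{L}_0^2(\mu)$, with some $\gamma(n)\downarrow 0$.

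For the direction RUPI $\Rightarrow$ WPI, by the reduction to $S$ and Theorem~\ref{thm:WPI_from_cond} it suffices to show the weak conductance $\kappa$ satisfies $\kappa(u)>0$ for every $u\in(0,1/4)$. I would argue by contrapositive: suppose $\kappa(u_0)=0$ for some $u_0\in(0,1/4)$, so there exist sets $\{A_n\}\subset\mathscr{E}$ with $\mu(A_n)\mu(A_n^\complement)>u_0$ but $\mathcal{E}(T,\mathbf{1}_{A_n})\to 0$. The elementary estimate $\|(I-T^*)f\|_2^2\leq 2\mathcal{E}(T^*,f)=2\mathcal{E}(T,f)$, valid because $T^*$ is also a Markov contraction on $\mathrm{L}^2(\mu)$, iterates via $T^{*m}-I=\sum_{k=0}^{m-1}(T^*)^k(T^*-I)$ to yield $\|T^{*m}\mathbf{1}_{A_n}-\mathbf{1}_{A_n}\|_2\leq m\sqrt{2\mathcal{E}(T,\mathbf{1}_{A_n})}$. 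Summing against $\lambda^m$ and pairing with $\mathbf{1}_{A_n^\complement}$ gives
\[
\langle\mathbf{1}_{A_n},R(\lambda,T)\mathbf{1}_{A_n^\complement}\rangle=\langle R(\lambda,T^*)\mathbf{1}_{A_n},\mathbf{1}_{A_n^\complement}\rangle\longrightarrow (1-\lambda)^{-1}\mu(A_n\cap A_n^\complement)=0,
\]
contradicting RUPI, since $\mu(A_n),\mu(A_n^\complement)\geq u_0$ on the sequence.

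For the direction WPI $\Rightarrow$ RUPI, fix $A,B$ with $\mu(A)\wedge\mu(B)\geq\varepsilon$ and decompose
\[
\langle\mathbf{1}_A,R(\lambda,T)\mathbf{1}_B\rangle=\frac{\mu(A)\mu(B)}{1-\lambda}+\sum_{n\geq 0}\lambda^n\bigl\langle\mathbf{1}_A-\mu(A),\,T^n(\mathbf{1}_B-\mu(B))\bigr\rangle.
\]
The first term is at least $\varepsilon^2/(1-\lambda)$. Using the decay $\|T^n(\mathbf{1}_B-\mu(B))\|_2\leq\sqrt{\gamma(n)}$ from the setup and Cauchy--Schwarz with $\|\mathbf{1}_A-\mu(A)\|_2\leq 1/2$, the error is bounded in absolute value by $C(\lambda):=\tfrac12\sum_n\lambda^n\sqrt{\gamma(n)}$. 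Since $\sqrt{\gamma(n)}\to 0$, a simple splitting argument (bounding the tail by $\delta/(1-\lambda)$ and the head by a constant) shows $(1-\lambda)C(\lambda)\to 0$ as $\lambda\uparrow 1$. Hence for $\lambda$ close enough to $1$ (depending only on $\varepsilon$), the main term dominates and $\langle\mathbf{1}_A,R(\lambda,T)\mathbf{1}_B\rangle\geq\varepsilon^2/[2(1-\lambda)]>0$; this is UPI for $R(\lambda,T)$ at that $\lambda$, which is the RUPI property.

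The principal obstacle lies in the WPI $\Rightarrow$ RUPI direction for non-reversible $T$: without reversibility, direct spectral-theorem bounds on $\|T^n g\|_2$ are unavailable, so the detour through Lemma~\ref{lem:PP-dirichlet-form-ub} (to obtain a WPI for the reversible operator $T^*T$) and Theorem~\ref{thm:WPI_F_bd} is essential. A subsidiary concern is uniformity in $(A,B)$: this is ensured because the decay estimate for $\|T^n(\mathbf{1}_B-\mu(B))\|_2$ depends on $B$ only through the universal bound $\|\mathbf{1}_B-\mu(B)\|_{\osc}\leq 1$.
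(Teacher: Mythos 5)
Your direction \emph{RUPI $\Rightarrow$ WPI} is correct and is a genuinely different argument from the paper's. The paper applies the Cheeger result (Theorem~\ref{thm:WPI_from_cond}) to the resolvent $S_{\lambda}=(1-\lambda)R(\lambda,T)$ and transfers the resulting WPI to $T$ by a change of variables $f=\frac{\Id-\lambda T}{1-\lambda}g$ (Lemmas~\ref{prop:RUPI_resolvent} and~\ref{lem:T_rev_RUPI}), whereas you show directly that the weak conductance of the additive reversibilization $S=(T+T^{*})/2$ is positive and then apply Theorem~\ref{thm:WPI_from_cond} to $S$, recalling that $\calE(T,\cdot)=\calE(S,\cdot)$. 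The telescoping estimate $\|T^{*m}\mathbf{1}_{A}-\mathbf{1}_{A}\|_{2}\leq m\sqrt{2\,\calE(T,\mathbf{1}_{A})}$, inserted into the resolvent series, is a clean way to contradict RUPI and avoids the change-of-variables machinery entirely; it buys a shorter path to $\kappa(u)>0$ at the cost of using the Cheeger upper bound on $\alpha$, whereas the paper's route produces explicit control through the resolvent.

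Your direction \emph{WPI $\Rightarrow$ RUPI}, however, has a genuine gap. You invoke Lemma~\ref{lem:PP-dirichlet-form-ub} to ``upgrade'' a WPI for $T$ into one for $T^{*}T$, but that lemma states $\calE(T^{*}T,f)\leq 2\,\calE(T,f)$ --- the \emph{wrong} direction. To pass from $\|f\|_{2}^{2}\leq\alpha(r)\calE(T,f)+r\|f\|_{\osc}^{2}$ to a WPI for $T^{*}T$ one needs a \emph{lower} bound $\calE(T^{*}T,f)\geq c\,\calE(T,f)$, not an upper bound. Indeed, as the paper remarks after Corollary~\ref{cor:WPI_from_irred}, a WPI for $T$ alone cannot yield the decay $\|T^{n}g\|_{2}\to 0$: a reversible, periodic chain (e.g.\ the deterministic two-point swap) satisfies a $\|\cdot\|_{\osc}^{2}$-WPI and is RUPI, yet $\|T^{n}g\|_{2}$ does not converge to zero. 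So the decay hypothesis that drives your second step is simply false in general. The fix, used in Proposition~\ref{prop:wpi-to-rupi}, is to pass to the lazy kernel $\tilde{T}=\tfrac{1}{2}(\Id+T)$: since $\calE(\tilde{T},f)=\tfrac{1}{2}\calE(T,f)$ the WPI transfers to $\tilde{T}$, the holding probability $\tilde{T}(x,\{x\})\geq 1/2$ lets Lemma~\ref{lem:stick-dirichlet-lower-bound} deliver $\calE(\tilde{T}^{*}\tilde{T},f)\geq\calE(\tilde{T},f)$, and Theorem~\ref{thm:WPI_F_bd} then gives the decay $\|\tilde{T}^{n}g\|_{2}^{2}\leq\gamma(n)\|g\|_{\osc}^{2}$. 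Your resolvent lower-bound argument (which closely parallels Lemma~\ref{lem:osc-conv-RUPI}) then gives RUPI for $\tilde{T}$; you recover RUPI for $T$ by expanding $\tilde{T}^{N}=2^{-N}\sum_{k=0}^{N}\binom{N}{k}T^{k}$ and using nonnegativity to bound $\langle\mathbf{1}_{A},\sum_{k=0}^{N}T^{k}\mathbf{1}_{B}\rangle\geq\langle\mathbf{1}_{A},\tilde{T}^{N}\mathbf{1}_{B}\rangle$, after which Lemma~\ref{lem:RUPI_equiv} applies.
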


\begin{proof}
This follows from Proposition~\ref{prop:RUPI-to-WPI} and Proposition~\ref{prop:wpi-to-rupi}
below.
\end{proof}
We follow \cite{Wang2014} and give an equivalent condition for RUPI
which will be convenient to work with.
\begin{lem}
\label{lem:RUPI_equiv}An equivalent condition for a Markov kernel
$T$ to be RUPI is the following: for any $\epsilon>0$, there exists
$m\in\mathbb{N}$ such that 
\begin{equation}
\inf\left\{ \left\langle \mathbf{1}_{A},\sum_{n=0}^{m}T^{n}\mathbf{1}_{B}\right\rangle :\mu(A)\wedge\mu(B)\ge\epsilon\right\} >0.\label{eq:RUPI-equivalence}
\end{equation}
\end{lem}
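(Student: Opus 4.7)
The plan is to establish the equivalence by two direct estimates, exchanging between the resolvent $R(\lambda,T)=\sum_{n\ge 0}\lambda^n T^n$ and its finite truncation $S_m:=\sum_{n=0}^{m}T^n$. In both directions the key observation is the uniform bound $\langle\mathbf{1}_A,T^n\mathbf{1}_B\rangle\le\langle\mathbf{1},T^n\mathbf{1}\rangle=1$, which follows from $T$ being Markov and $\mu$-invariant.

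For the implication RUPI $\Rightarrow$ (\ref{eq:RUPI-equivalence}), I would fix an arbitrary $\lambda\in(0,1)$ and an $\epsilon>0$, and set
\[
c_\epsilon:=\inf\bigl\{\langle\mathbf{1}_A,R(\lambda,T)\mathbf{1}_B\rangle:\mu(A)\wedge\mu(B)\ge\epsilon\bigr\}>0.
\]
Split $R(\lambda,T)=\sum_{n=0}^{m}\lambda^n T^n+\sum_{n>m}\lambda^n T^n$ and bound the tail uniformly in $A,B$ by $\sum_{n>m}\lambda^n=\lambda^{m+1}/(1-\lambda)$. Choose $m$ large enough that this tail is strictly less than $c_\epsilon/2$. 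Then $\sum_{n=0}^{m}\lambda^n\langle\mathbf{1}_A,T^n\mathbf{1}_B\rangle\ge c_\epsilon/2$ uniformly, and since $\lambda^n\le 1$ this immediately yields $\langle\mathbf{1}_A,S_m\mathbf{1}_B\rangle\ge c_\epsilon/2>0$, proving (\ref{eq:RUPI-equivalence}).

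For the converse implication (\ref{eq:RUPI-equivalence}) $\Rightarrow$ RUPI, fix any $\lambda\in(0,1)$ and $\epsilon>0$, and take $m\in\mathbb{N}$ and $c>0$ as provided by (\ref{eq:RUPI-equivalence}). Since all summands in $\sum_{n\ge 0}\lambda^n\langle\mathbf{1}_A,T^n\mathbf{1}_B\rangle$ are nonnegative, and $\lambda^n\ge\lambda^m$ for $0\le n\le m$,
\[
\langle\mathbf{1}_A,R(\lambda,T)\mathbf{1}_B\rangle\;\ge\;\sum_{n=0}^{m}\lambda^n\langle\mathbf{1}_A,T^n\mathbf{1}_B\rangle\;\ge\;\lambda^m\langle\mathbf{1}_A,S_m\mathbf{1}_B\rangle\;\ge\;\lambda^m c>0,
\]
uniformly in $A,B$ with $\mu(A)\wedge\mu(B)\ge\epsilon$. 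This gives UPI of $R(\lambda,T)$, so $T$ is RUPI; the bound holds for every $\lambda\in(0,1)$, which also reconciles the ``for some (and hence all)'' clause in the definition.

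There is no real obstacle — both directions reduce to the two trivial monotonicity bounds $\lambda^n\le 1$ and $\lambda^n\ge\lambda^m$ for $n\le m$, combined with a geometric tail estimate using the Markov property. The only small subtlety worth flagging is to make explicit that one may pick a single $\lambda$ and use it throughout, which legitimises the ``for some (and hence all)'' phrasing in the definition of RUPI.
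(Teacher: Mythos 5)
Your proposal is correct. The direction ``(\ref{eq:RUPI-equivalence}) $\Rightarrow$ RUPI'' is essentially identical to the paper's, using $\lambda^n\ge\lambda^m$ for $n\le m$ and nonnegativity of the summands. The other direction, ``RUPI $\Rightarrow$ (\ref{eq:RUPI-equivalence}),'' differs in presentation: the paper argues by contradiction, supposing (\ref{eq:RUPI-equivalence}) fails for some $\epsilon$ and every $m$, extracting a sequence $(A_j,B_j)$ of sets of mass at least $\epsilon$ with $\langle\mathbf{1}_{A_j},\sum_{n=0}^m T^n\mathbf{1}_{B_j}\rangle\to 0$, and contradicting the resolvent bound. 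You instead give a direct estimate: bound the geometric tail $\sum_{n>m}\lambda^n\langle\mathbf{1}_A,T^n\mathbf{1}_B\rangle\le\lambda^{m+1}/(1-\lambda)$ uniformly in $(A,B)$, choose $m$ so this is below $c_\epsilon/2$, and conclude that the truncated sum is bounded below by $c_\epsilon/2$ uniformly. Both rest on the same key bound $\langle\mathbf{1}_A,T^n\mathbf{1}_B\rangle\le 1$ (a consequence of $T$ being Markov). Your direct version is slightly cleaner and yields an explicit uniform lower bound $c_\epsilon/2$ on the truncated infimum, whereas the paper's contradiction argument conceals this constant; otherwise the two proofs are using the same ideas.
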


\begin{proof}
The condition in Lemma~\ref{lem:RUPI_equiv} directly implies RUPI.
To see this, take $\lambda\in\left(0,1\right)$, $\epsilon>0$ and
let $m\in\mathbb{N}$ such that $\inf\left\{ \left\langle \mathbf{1}_{A},\sum_{n=0}^{m}T^{n}\mathbf{1}_{B}\right\rangle :\mu(A)\wedge\mu(B)\ge\epsilon\right\} >0$,
which exists by assumption. Write

\begin{align*}
\left\langle \mathbf{1}_{A},R\left(\lambda,T\right)\mathbf{1}_{B}\right\rangle  & =\left\langle \mathbf{1}_{A},\sum_{n=0}^{\infty}\lambda^{n}T^{n}\mathbf{1}_{B}\right\rangle \\
 & \geqslant\left\langle \mathbf{1}_{A},\sum_{n=0}^{m}\lambda^{n}T^{n}\mathbf{1}_{B}\right\rangle \\
 & \geqslant\lambda^{m}\cdot\left\langle \mathbf{1}_{A},\sum_{n=0}^{m}T^{n}\mathbf{1}_{B}\right\rangle 
\end{align*}
to deduce that

\begin{multline*}
\inf\{\left\langle \mathbf{1}_{A},R\left(\lambda,T\right)\mathbf{1}_{B}\right\rangle :\mu(A)\wedge\mu(B)\ge\epsilon\}\\
\geqslant\lambda^{m}\cdot\inf\left\{ \left\langle \mathbf{1}_{A},\sum_{n=0}^{m}T^{n}\mathbf{1}_{B}\right\rangle :\mu(A)\wedge\mu(B)\ge\epsilon\right\} >0
\end{multline*}
from which the RUPI condition follows.

Conversely, suppose that $T$ is RUPI, fix $\lambda\in\left(0,1\right)$
and assume that for some $\epsilon>0$, (\ref{eq:RUPI-equivalence})
does not hold for any $m\in\mathbb{N}$. We show that this leads to
a contradiction. By the RUPI assumption we have that $\delta:=\inf\{\langle\mathbf{1}_{A},\sum_{n=0}^{\infty}\lambda^{n}T^{n}\mathbf{1}_{B}\rangle:\mu(A)\wedge\mu(B)\ge\epsilon\}>0$.
Choose $m\in\mathbb{N}$ large enough so that
\[
\sum_{n=m+1}^{\infty}\lambda^{n}<\delta/2.
\]
Since we have assumed that (\ref{eq:RUPI-equivalence}) is violated
for $\epsilon>0$ and $m\in\mathbb{N}$ as chosen above there exists
a sequence $\left\{ (A_{j},B_{j})\right\} _{j=1}^{\infty}$ of sets
all with mass at least $\epsilon$ such that $\langle\mathbf{1}_{A_{j}},\sum_{n=0}^{m}T^{n}\mathbf{1}_{B_{j}}\rangle\to0$,
therefore implying for any $j\in\mathbb{N}$,
\begin{align*}
\delta\leq\left\langle \mathbf{1}_{A_{j}},\sum_{n=0}^{\infty}\lambda^{n}T^{n}\mathbf{1}_{B_{j}}\right\rangle  & =\left\langle \mathbf{1}_{A_{j}},\sum_{n=0}^{m}\lambda^{n}T^{n}\mathbf{1}_{B_{j}}\right\rangle +\left\langle \mathbf{1}_{A_{j}},\sum_{n=m+1}^{\infty}\lambda^{n}T^{n}\mathbf{1}_{B_{j}}\right\rangle \\
 & \leqslant\left\langle \mathbf{1}_{A_{j}},\sum_{n=0}^{m}T^{n}\mathbf{1}_{B_{j}}\right\rangle +\sum_{n=m+1}^{\infty}\lambda^{n}\\
 & \leqslant\left\langle \mathbf{1}_{A_{j}},\sum_{n=0}^{m}T^{n}\mathbf{1}_{B_{j}}\right\rangle +\frac{\delta}{2}\\
 & \overset{j\to\infty}{\rightarrow}\frac{\delta}{2}\,,
\end{align*}
therefore leading to a contradiction. The conclusion follows.
\end{proof}

We first establish that for reversible kernels, RUPI implies a WPI
for the resolvent.
\begin{lem}
\label{prop:RUPI_resolvent}Suppose that a reversible Markov kernel
$T$ is RUPI. Then for any $\lambda\in(0,1)$, the resolvent Markov
kernel $S_{\lambda}:=(1-\lambda)R(\lambda,T)$ is reversible and has
the following property: for any $\epsilon>0$,
\[
\inf_{A:\mu(A)\mu(A^{\complement})\ge\epsilon}\frac{\mathcal{E}(S_{\lambda},\mathbf{1}_{A})}{\mu(A)\mu(A^{\complement})}>0.
\]
Thus by Theorem~\ref{thm:WPI_from_cond}, $S_{\lambda}$ satisfies
an $\|\cdot\|_{\osc}^{2}$-WPI.
\end{lem}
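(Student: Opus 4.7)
The kernel $S_\lambda = (1-\lambda)\sum_{n=0}^\infty \lambda^n T^n$ is a convex combination of Markov kernels with weights summing to $1$, so it is Markov. Since $T$ is $\mu$-reversible, so is each $T^n$, and hence $S_\lambda$ (a convex combination of self-adjoint contractions on $\ELL(\mu)$) is $\mu$-reversible. This handles the first assertion.

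For the main bound, my plan is to compute the Dirichlet form explicitly on indicators and then invoke RUPI. Using Lemma~\ref{lem:dirichlet-form-indicator}, for any $A\in\mathscr{E}$,
\[
\mathcal{E}(S_\lambda, \mathbf{1}_A) \;=\; \mu\otimes S_\lambda(A\times A^\complement) \;=\; \langle \mathbf{1}_A, S_\lambda \mathbf{1}_{A^\complement}\rangle \;=\; (1-\lambda)\,\langle \mathbf{1}_A, R(\lambda,T)\mathbf{1}_{A^\complement}\rangle.
\]
Now I need to pass from a constraint of the form $\mu(A)\mu(A^\complement)\ge \epsilon$ to one of the form $\mu(A)\wedge\mu(A^\complement)\ge \eta$ that matches the RUPI hypothesis. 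Since $\mu(A)+\mu(A^\complement)=1$, the condition $\mu(A)\mu(A^\complement)\ge \epsilon$ forces $\mu(A),\mu(A^\complement)\in[\eta(\epsilon),1-\eta(\epsilon)]$ where $\eta(\epsilon):=\tfrac{1}{2}(1-\sqrt{1-4\epsilon})>0$; in particular $\mu(A)\wedge\mu(A^\complement)\ge \eta(\epsilon)$.

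Applying the RUPI property of $T$ to the pair $(A,A^\complement)$ yields
\[
\delta_\epsilon \;:=\; \inf\bigl\{\langle \mathbf{1}_C, R(\lambda,T)\mathbf{1}_D\rangle : \mu(C)\wedge\mu(D)\ge \eta(\epsilon)\bigr\} \;>\; 0,
\]
so $\mathcal{E}(S_\lambda,\mathbf{1}_A)\ge (1-\lambda)\delta_\epsilon$ for every $A$ with $\mu(A)\mu(A^\complement)\ge \epsilon$. Combining with the elementary upper bound $\mu(A)\mu(A^\complement)\le 1/4$, I obtain
\[
\inf_{A:\,\mu(A)\mu(A^\complement)\ge\epsilon}\frac{\mathcal{E}(S_\lambda,\mathbf{1}_A)}{\mu(A)\mu(A^\complement)} \;\ge\; 4(1-\lambda)\delta_\epsilon \;>\;0,
\]
which is the stated positivity of the weak conductance $\kappa_{S_\lambda}(\epsilon)$ for every $\epsilon\in(0,1/4)$. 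An application of Theorem~\ref{thm:WPI_from_cond} then yields the $\|\cdot\|_{\osc}^2$-WPI for $S_\lambda$.

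The argument is essentially bookkeeping; there is no real obstacle beyond the small translation between the product-form threshold $\mu(A)\mu(A^\complement)\ge\epsilon$ appearing in the definition of $\kappa$ and the minimum-form threshold $\mu(A)\wedge\mu(A^\complement)\ge\eta$ appearing in the RUPI hypothesis. Everything else reduces to identifying $\mathcal{E}(S_\lambda,\mathbf{1}_A)$ with $(1-\lambda)\langle \mathbf{1}_A, R(\lambda,T)\mathbf{1}_{A^\complement}\rangle$ and invoking the cited results.
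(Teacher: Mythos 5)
Your proof is correct and follows essentially the same route as the paper: express $\mathcal{E}(S_\lambda,\mathbf{1}_A)$ as $\langle\mathbf{1}_A,S_\lambda\mathbf{1}_{A^\complement}\rangle$, translate the product threshold $\mu(A)\mu(A^\complement)\ge\epsilon$ into a minimum threshold, invoke RUPI, and finish with $\mu(A)\mu(A^\complement)\le 1/4$. The only cosmetic difference is that you derive $\mu(A)\wedge\mu(A^\complement)\ge\eta(\epsilon)$ by solving the quadratic, whereas the paper uses the simpler observation that $\mu(A)\ge\mu(A)\mu(A^\complement)\ge\epsilon$ (and likewise for $A^\complement$) since each factor is bounded by $1$.
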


\begin{proof}
Fix $\epsilon>0$ and $\lambda\in(0,1)$. By the RUPI condition, $\inf\{\langle\mathbf{1}_{A},S_{\lambda}\mathbf{1}_{B}\rangle:\mu(A)\wedge\mu(B)\ge\epsilon\}>0$.
In particular, if $A$ is such that $\mu(A)\mu(A^{\complement})\ge\epsilon$,
we must have that both $\mu(A)\ge\epsilon$ and $\mu(A^{\complement})\ge\epsilon$.
Thus since 
\[
\mathcal{E}(S_{\lambda},\mathbf{1}_{A})=\langle\mathbf{1}_{A},S_{\lambda}\mathbf{1}_{A^{\complement}}\rangle,
\]
we must have that
\[
\langle\mathbf{1}_{A},S_{\lambda}\mathbf{1}_{A^{\complement}}\rangle\ge\delta>0
\]
for some $\delta>0$, whenever $\mu(A)\mu(A^{\complement})\ge\epsilon$.
\end{proof}
We now establish one direction of Theorem~\ref{thm:WPI_iff_RUPI}
through a sequence of lemmas: we first consider the case when $T$
is reversible, and then deduce the case for general $T$; see Remark~\ref{rem:nonrev_WPI}.
\begin{lem}
\label{lem:T_rev_RUPI}Suppose $T$ is a reversible Markov kernel
that is RUPI. Then $T$ satisfies an $\|\cdot\|_{\osc}^{2}$-WPI.
\end{lem}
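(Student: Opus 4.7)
The plan is to transfer the $\|\cdot\|_{\osc}^2$-WPI that Lemma~\ref{prop:RUPI_resolvent} already supplies for the resolvent $S_\lambda := (1-\lambda)R(\lambda, T)$, for any fixed $\lambda\in(0,1)$, back to the original kernel $T$. The transfer is purely spectral: since $T$ is $\mu$-reversible, both $T$ and $S_\lambda$ are self-adjoint on $\ELL_0(\mu)$ with spectra contained in $[-1,1]$, and we compare their Dirichlet forms via functional calculus.

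First, fix any $\lambda\in(0,1)$. Reversibility of $T$ is inherited by $S_\lambda = (1-\lambda)\sum_{n\geq 0}\lambda^n T^n$, and by Lemma~\ref{prop:RUPI_resolvent} its weak conductance $\kappa_\lambda$ satisfies $\kappa_\lambda(u)>0$ for every $u\in(0,1/4)$. Hence Theorem~\ref{thm:WPI_from_cond} yields a $(\|\cdot\|_{\osc}^2,\alpha_\lambda)$-WPI for $S_\lambda$ with $\alpha_\lambda(r):=16/\kappa_\lambda^2(r/16)<\infty$ for all $r>0$.

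Next, I would establish the Dirichlet form comparison
\[
\mathcal{E}(S_\lambda, f) \leq \frac{\lambda}{1-\lambda}\,\mathcal{E}(T,f), \qquad f\in\ELL_0(\mu).
\]
The algebraic identity $\Id - S_\lambda = \lambda(\Id-\lambda T)^{-1}(\Id-T)$ follows by multiplying $(\Id-\lambda T)S_\lambda = (1-\lambda)\Id$ on the left by $(\Id-\lambda T)^{-1}$ and rearranging. Both $(\Id-\lambda T)^{-1}$ and $(\Id-T)$ are bounded self-adjoint functions of $T$ and hence commute; by the spectral theorem they are simultaneously diagonalizable and, for any spectral value $\nu\in[-1,1]$,
\[
(1-\nu)\left(\frac{1}{1-\lambda\nu} - \frac{1}{1-\lambda}\right) = -\frac{\lambda(1-\nu)^2}{(1-\lambda)(1-\lambda\nu)} \leq 0.
\]
This yields the operator inequality $(\Id-\lambda T)^{-1}(\Id-T) \preceq \frac{1}{1-\lambda}(\Id-T)$, and multiplying by $\lambda$ and pairing with $f$ gives the claimed comparison.

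Combining the two steps, for every $r>0$ and $f\in\ELL_0(\mu)$,
\[
\|f\|_2^2 \leq \alpha_\lambda(r)\,\mathcal{E}(S_\lambda,f) + r\|f\|_{\osc}^2 \leq \frac{\lambda}{1-\lambda}\,\alpha_\lambda(r)\,\mathcal{E}(T,f) + r\|f\|_{\osc}^2,
\]
so $T$ satisfies a $(\|\cdot\|_{\osc}^2,\alpha)$-WPI with $\alpha(r):=\frac{\lambda}{1-\lambda}\alpha_\lambda(r)$ (any fixed $\lambda\in(0,1)$ works, e.g.\ $\lambda=1/2$). The main obstacle is the Dirichlet form comparison itself: it is here that reversibility is essential, since without self-adjointness the operators $(\Id-\lambda T)^{-1}$ and $(\Id-T)$ need not commute and the pointwise spectral calculation above is unavailable. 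Consequently the non-reversible case (addressed in the next lemma of the paper) will require a different device, presumably the reduction $\mathcal{E}(T,f)=\mathcal{E}((T+T^*)/2,f)$ of Remark~\ref{rem:nonrev_WPI} applied to the additive symmetrization.
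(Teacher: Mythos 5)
Your proof is correct and takes a genuinely different — and in fact cleaner — route than the paper's. The paper also starts from the WPI for the resolvent $S_\lambda$, but it then substitutes the specific function $f := \frac{\Id - \lambda T}{1-\lambda}g$ into that WPI and performs a sequence of algebraic manipulations: expanding $\langle(\Id-S_\lambda)f,f\rangle$ in terms of $g$, bounding $\|(\Id-T)g\|_2^2 \leq 2\langle(\Id-T)g,g\rangle$ via $\langle(\Id-T^2)g,g\rangle\geq 0$, estimating $\|f\|_{\osc}^2\leq\frac{(1+\lambda)^2}{(1-\lambda)^2}\|g\|_{\osc}^2$, and finally reparameterizing in $r$. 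You instead keep the same test function in both inequalities and reduce everything to the single operator comparison $\Id - S_\lambda \preceq \frac{\lambda}{1-\lambda}(\Id-T)$, established by the pointwise check $\frac{1-\nu}{1-\lambda\nu}\leq\frac{1-\nu}{1-\lambda}$ for $\nu\in[-1,1]$ together with the spectral theorem for the self-adjoint $T$ restricted to $\ELL_0(\mu)$. This sidesteps the $\|\cdot\|_{\osc}^2$ estimate and the $\|(\Id-T)g\|_2^2$ bound entirely, and also gives a strictly sharper constant: since $\alpha_\lambda$ is decreasing, your $\alpha(r)=\frac{\lambda}{1-\lambda}\alpha_\lambda(r)$ is smaller than the paper's $\left(\frac{\lambda}{1-\lambda}+2\bigl(\frac{\lambda}{1-\lambda}\bigr)^2\right)\alpha_\lambda\!\left(\frac{(1-\lambda)^2}{(1+\lambda)^2}r\right)$ for every $\lambda\in(0,1)$ and $r>0$. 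Your closing observation — that the spectral argument is where reversibility is essential and that the nonreversible case (Proposition~\ref{prop:RUPI-to-WPI}) must proceed through the additive symmetrization — matches the paper's structure.
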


\begin{proof}
Since $T$ is RUPI, we have established above in Lemma~\ref{prop:RUPI_resolvent}
that the resolvent $S_{\lambda}:=(1-\lambda)R(\lambda,T)$ satisfies
a WPI. In other words, we can find some $\alpha_{\lambda}:(0,\infty)\to[0,\infty)$
such that for any $f\in\mathrm{L}_{0}^{2}(\mu)$ and $r>0$, 
\[
\|f\|_{2}^{2}\le\alpha_{\lambda}(r)\langle({\rm Id}-S_{\lambda})f,f\rangle+r\|f\|_{\osc}^{2}.
\]
Now, given a function $g\in\mathrm{L}_{0}^{2}(\mu)$, define $f:=\frac{\Id-\lambda T}{1-\lambda}g\Leftrightarrow g=(1-\lambda)(\Id-\lambda T)^{-1}f$.
(Note that since $0<\lambda<1$, the operator is $(\Id-\lambda T)$
invertible.)

Now since $g\in\mathrm{L}_{0}^{2}(\mu)$, we have that $f\in\mathrm{L}_{0}^{2}(\mu)$;
for instance, consider the power series representation of $R(\lambda,T)$.
Furthermore, we have that 
\begin{align*}
\|g\|_{2}^{2} & =(1-\lambda)^{2}\|(\Id-\lambda T)^{-1}f\|_{2}^{2}\\
 & \le(1-\lambda)^{2}\|(\Id-\lambda T)^{-1}\|^{2}\|f\|_{2}^{2}\\
 & \le\|f\|_{2}^{2},
\end{align*}
since the operator norm $\|({\rm Id}-\lambda T)^{-1}\|\le\frac{1}{\lambda}\cdot\frac{1}{1/\lambda-1}=\frac{1}{1-\lambda}$,
by standard norm bounds for resolvents based on the distance to the
spectrum.

Thus we have
\begin{align*}
\|g\|_{2}^{2} & \le\|f\|_{2}^{2}\le\alpha_{\lambda}(r)\langle(\Id-S_{\lambda})f,f\rangle+r\|f\|_{\osc}^{2}\\
 & =\alpha_{\lambda}(r)\langle(\Id-(1-\lambda)(\Id-\lambda T)^{-1})f,f\rangle+r\|f\|_{\osc}^{2}\\
 & =\alpha_{\lambda}(r)\left\langle \frac{\Id-\lambda T}{1-\lambda}g-g,\frac{\Id-\lambda T}{1-\lambda}g\right\rangle +r\|f\|_{\osc}^{2}\\
 & =\alpha_{\lambda}(r)\left\langle \frac{\lambda}{1-\lambda}(\Id-T)g,g+\frac{\lambda}{1-\lambda}(\Id-T)g\right\rangle +r\|f\|_{\osc}^{2}\\
 & =\alpha_{\lambda}(r)\left\{ \frac{\lambda}{1-\lambda}\langle(\Id-T)g,g\rangle+\left(\frac{\lambda}{1-\lambda}\right)^{2}\|(\Id-T)g\|^{2}\right\} +r\|f\|_{\osc}^{2}.
\end{align*}
Now we have that
\begin{align*}
\|(\Id-T)g\|^{2} & =\langle(\Id-T)g,(\Id-T)g\rangle\\
 & =\langle(\Id-T)g,g\rangle-\langle(\Id-T)g,Tg\rangle.
\end{align*}
It is enough to bound this final term by
\[
-\langle(\Id-T)g,Tg\rangle\le\langle(\Id-T)g,g\rangle.
\]
To see why this inequality is true, note that it is equivalent to
\begin{align*}
0 & \le\langle(\Id-T)g,(\Id+T)g\rangle\\
 & =\langle(\Id+T)(\Id-T)g,g\rangle\\
 & =\langle(\Id-T^{2})g,g\rangle,
\end{align*}
where we have made use of reversibility of $T$. And we certainly
have that $0\le\langle(\Id-T^{*}T)g,g\rangle=\langle(\Id-T^{2})g,g\rangle$. 

Overall, this gives us that 
\begin{align*}
\|g\|^{2} & \le\alpha_{\lambda}(r)\left(\frac{\lambda}{1-\lambda}+2\left(\frac{\lambda}{1-\lambda}\right)^{2}\right)\langle(\Id-T)g,g\rangle+r\left\Vert \frac{\Id-\lambda T}{1-\lambda}g\right\Vert _{\osc}^{2}\\
 & \le\alpha_{\lambda}(r)\left(\frac{\lambda}{1-\lambda}+2\left(\frac{\lambda}{1-\lambda}\right)^{2}\right)\langle(\Id-T)g,g\rangle+r\cdot\frac{(1+\lambda)^{2}}{\left(1-\lambda\right)^{2}}\|g\|_{\osc}^{2}.
\end{align*}
By reparameterizing with $r'=r\cdot\frac{(1+\lambda)^{2}}{\left(1-\lambda\right)^{2}}$,
this is a standard WPI for $T$.
\end{proof}
\begin{prop}
\label{prop:RUPI-to-WPI}Suppose a $\mu$-invariant Markov kernel
$T$ is RUPI. Then $T$ satisfies an $\|\cdot\|_{\osc}^{2}$-WPI.
\end{prop}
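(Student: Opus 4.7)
The plan is to reduce to the reversible case handled by Lemma~\ref{lem:T_rev_RUPI} via symmetrization. Set $S := (T+T^*)/2$. Because $T$ is a $\mu$-invariant Markov operator on $\ELL(\mu)$, its adjoint $T^*$ is also a $\mu$-invariant Markov operator: $\mu$-invariance of $T$ gives $T^*\mathbf{1}=\mathbf{1}$, and positivity preservation follows from $\langle T^*f,\mathbf{1}_A\rangle=\langle f,T\mathbf{1}_A\rangle\geq 0$ for $f,\mathbf{1}_A\geq 0$, so $T^*f\geq 0$ $\mu$-a.e. whenever $f\geq 0$. Hence $S$ is a $\mu$-reversible Markov operator, and by Remark~\ref{rem:nonrev_WPI} we have $\mathcal{E}(T,f)=\mathcal{E}(S,f)$ for every $f\in\ELL(\mu)$, so any $(\|\cdot\|_{\osc}^{2},\alpha)\shortminus$WPI for $S$ is automatically one for $T$.

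It therefore suffices to prove that $S$ is RUPI. The main point is the pointwise domination
\[
2^n\langle\mathbf{1}_A,S^n\mathbf{1}_B\rangle=\langle\mathbf{1}_A,(T+T^*)^n\mathbf{1}_B\rangle\geq\langle\mathbf{1}_A,T^n\mathbf{1}_B\rangle,
\]
which is obtained by expanding $(T+T^*)^n$ as the sum of the $2^n$ length-$n$ ordered products $U_1\cdots U_n$ with $U_i\in\{T,T^*\}$, and noting that each such product maps nonnegative functions to nonnegative functions (since both $T$ and $T^*$ are Markov operators), so $\langle\mathbf{1}_A,U_1\cdots U_n\mathbf{1}_B\rangle\geq 0$; then retaining only the summand corresponding to $U_1=\cdots=U_n=T$ gives the claim. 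Summing against $\lambda^n$ for $\lambda\in(0,1)$ yields
\[
\langle\mathbf{1}_A,R(\lambda,S)\mathbf{1}_B\rangle\geq\langle\mathbf{1}_A,R(\lambda/2,T)\mathbf{1}_B\rangle,
\]
and since $T$ is RUPI, the right-hand side is bounded below by a positive constant, uniformly over $\{(A,B):\mu(A)\wedge\mu(B)\geq\varepsilon\}$, for any $\varepsilon>0$. Hence $S$ is RUPI.

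Invoking Lemma~\ref{lem:T_rev_RUPI} applied to $S$ produces an $\|\cdot\|_{\osc}^{2}$-WPI for $S$ which, via the Dirichlet form identity recalled above, transfers verbatim to $T$, concluding the proof.

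The main conceptual step is the positivity-based domination of $S^n$ by $2^{-n}T^n$ on indicator pairs; once this is in place, the reduction to the reversible setting via Remark~\ref{rem:nonrev_WPI} and the RUPI-implies-WPI fact (Lemma~\ref{lem:T_rev_RUPI}) is immediate. The one technical subtlety to verify carefully is that $T^*$ is genuinely a Markov operator on $\ELL(\mu)$ (so that the expansion of $(T+T^*)^n$ consists of termwise nonnegative contributions on indicators), which relies crucially on the $\mu$-invariance of $T$.
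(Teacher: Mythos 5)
Your argument is correct and follows essentially the same strategy as the paper's proof: reduce to the additive reversibilization $S=(T+T^*)/2$ via Remark~\ref{rem:nonrev_WPI}, show $S$ is RUPI by expanding powers of $S$ and using that each ordered product of $T$ and $T^*$ preserves nonnegativity, then invoke Lemma~\ref{lem:T_rev_RUPI}. The only (minor) variation is that you pass the positivity bound through the resolvent series directly, obtaining $\langle\mathbf{1}_A,R(\lambda,S)\mathbf{1}_B\rangle\geq\langle\mathbf{1}_A,R(\lambda/2,T)\mathbf{1}_B\rangle$, whereas the paper routes the same positivity argument through the finite-sum characterization of RUPI (Lemma~\ref{lem:RUPI_equiv}); both are valid.
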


\begin{proof}
It suffices to show that $(T+T^{*})/2$ is RUPI, as then by Lemma~\ref{lem:T_rev_RUPI},
$(T+T^{*})/2$ possesses a WPI, which is equivalent to $T$ possessing
a WPI (see Remark~\ref{rem:nonrev_WPI}). Since $T$ is RUPI, for
any $\epsilon>0$, we can find some $\delta>0$ and $N\in\mathbb{N}$
such that whenever $\mu(A)\wedge\mu(B)\ge\epsilon$,
\begin{equation}
\left\langle \mathbf{1}_{A},\sum_{n=0}^{N}T^{n}\mathbf{1}_{B}\right\rangle \ge\delta>0.\label{eq:lem_T_RUPI}
\end{equation}
So now we wish to obtain such a statement for the kernel $(T+T^{*})/2$.
So fix $\epsilon>0$, and consider 
\[
\left\langle \mathbf{1}_{A},\sum_{n=0}^{N}\left(\frac{T+T^{*}}{2}\right)^{n}\mathbf{1}_{B}\right\rangle =\left\langle \mathbf{1}_{A},\sum_{n=0}^{N}\frac{T^{n}}{2^{n}}\mathbf{1}_{B}\right\rangle +\left\langle \mathbf{1}_{A},R\mathbf{1}_{B}\right\rangle ,
\]
where $R$ is a sum of operators of the form $cT^{a_{1}}\left(T^{*}\right)^{b_{1}}\cdot\dots\cdot T^{a_{r}}\left(T^{*}\right)^{b_{r}}$
for some $r\in\mathbb{N}$, $a_{i},b_{i}\in\mathbb{N}_{0}$ for all
$i=1,\dots,r$ and $c\ge0$. Thus since $T$ and $T^{*}$ are Markov
kernels, we have that $\langle\mathbf{1}_{A},R\mathbf{1}_{B}\rangle\ge0.$
So we can continue and have, for any sets with $\mu(A)\wedge\mu(B)\ge\epsilon$,
\begin{align*}
\left\langle \mathbf{1}_{A},\sum_{n=0}^{N}\left(\frac{T+T^{*}}{2}\right)^{n}\mathbf{1}_{B}\right\rangle  & \ge\left\langle \mathbf{1}_{A},\sum_{n=0}^{N}\frac{T^{n}}{2^{n}}\mathbf{1}_{B}\right\rangle \\
 & \ge2^{-N}\langle\mathbf{1}_{A},\sum_{n=0}^{N}T^{n}\mathbf{1}_{B}\rangle\\
 & \ge\delta/2^{N}>0,
\end{align*}
since each summand is positive, and we have used the fact that $T$
is RUPI (\ref{eq:lem_T_RUPI}).
\end{proof}
For the other direction, we first prove some auxiliary lemmas.

Lemma~\ref{lem:stick-dirichlet-lower-bound} is a general state space
extension of the argument referenced by \cite[Remark~2.16]{montenegro2006mathematical}.
\begin{lem}
\label{lem:stick-probs-equal}$P(x,\{x\})=P^{*}(x,\{x\})$ for $\mu$-almost
all $x$.
\end{lem}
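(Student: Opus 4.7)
The plan is to leverage the adjoint relationship between $P$ and $P^{*}$ through an appropriate pair of measures on the product space $\mathsf{E}\times\mathsf{E}$, restrict to the diagonal (which is measurable by assumption), and then obtain pointwise $\mu$-a.e.\ equality by varying the measurable set on which we localize.

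First, I would define two measures on $(\mathsf{E}\times\mathsf{E},\mathscr{E}\otimes\mathscr{E})$: the standard one $\nu(\mathrm{d}x,\mathrm{d}y):=\mu(\mathrm{d}x)P(x,\mathrm{d}y)$, and its ``flip'' $\tilde{\nu}(\mathrm{d}x,\mathrm{d}y):=\mu(\mathrm{d}y)P^{*}(y,\mathrm{d}x)$. On rectangles $A\times B$ these measures agree, since the adjoint identity yields
\[
\nu(A\times B)=\langle\mathbf{1}_{A},P\mathbf{1}_{B}\rangle=\langle P^{*}\mathbf{1}_{A},\mathbf{1}_{B}\rangle=\tilde{\nu}(A\times B).
\]
By a standard $\pi$-$\lambda$ argument, $\nu=\tilde{\nu}$ on all of $\mathscr{E}\otimes\mathscr{E}$.

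Next, for any $A\in\mathscr{E}$, consider the set $D_{A}:=\{(x,x):x\in A\}=D\cap(A\times A)$, where $D$ is the diagonal. By the assumed measurability of $D$, $D_{A}\in\mathscr{E}\otimes\mathscr{E}$. Computing each side of $\nu(D_{A})=\tilde{\nu}(D_{A})$ by Fubini gives
\[
\int_{A}\mu(\mathrm{d}x)\,P(x,\{x\})=\int_{A}\mu(\mathrm{d}x)\,P^{*}(x,\{x\}).
\]
Measurability of the integrands $x\mapsto P(x,\{x\})$ and $x\mapsto P^{*}(x,\{x\})$ follows from the same diagonal-measurability assumption, since $P(x,\{x\})=\int\mathbf{1}_{D}(x,y)P(x,\mathrm{d}y)$ and likewise for $P^{*}$. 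Since the identity of integrals holds for every $A\in\mathscr{E}$, standard measure theory gives $P(x,\{x\})=P^{*}(x,\{x\})$ for $\mu$-almost every $x$.

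The only mildly delicate point — and, accordingly, the one I would flag as the main obstacle — is ensuring that the ``holding'' functions $P(\cdot,\{\cdot\})$ and $P^{*}(\cdot,\{\cdot\})$ are actually well-defined measurable objects. This is precisely why the standing assumption that the diagonal lies in $\mathscr{E}\otimes\mathscr{E}$ is invoked; everything else is a routine $\pi$-$\lambda$ and Fubini computation. No other structure of $P$ (reversibility, irreducibility, etc.) is required beyond $\mu$-invariance, which is what ensures $P^{*}$ exists as an $\mathrm{L}^{2}(\mu)$-bounded operator admitting a Markov kernel realization.
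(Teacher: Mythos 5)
Your proof is correct and matches the paper's approach: both reduce to showing that $\mu\otimes P$ and the swap of $\mu\otimes P^{*}$ coincide as measures on $\mathscr{E}\otimes\mathscr{E}$ (so in particular on $D\cap(A\times A)$), then integrate against indicators of arbitrary $A\in\mathscr{E}$ to upgrade to $\mu$-a.e. pointwise equality. You are slightly more explicit about the $\pi$-$\lambda$ step and the measurability of the diagonal slices, which the paper leaves implicit.
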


\begin{proof}
Let $D=\{(x,y)\in\mathsf{E}^{2}:(x=y)\}$, $s(x):=P(x,\{x\})$ and
$s^{*}(x):=P^{*}(x,\{x\})$ for $x\in\mathsf{E}$. For any $B\in\mathcal{E}$,
we have 
\[
\mu({\bf 1}_{B}\cdot s)=\mu\otimes P(D\cap B^{2})=\mu\otimes P^{*}(D\cap B^{2})=\mu({\bf 1}_{B}\cdot s^{*}),
\]
and so taking $B_{+}=\{x\in\mathsf{E}:s(x)>s^{*}(x)\}$ and $B_{-}=\{x\in\mathsf{E}:s(x)<s^{*}(x)\}$
we deduce
\[
\mu\left(\left(s-s^{*}\right)^{+}\right)=0=\mu\left(\left(s-s^{*}\right)^{-}\right),
\]
and hence $s=s^{*}$ $\mu$-almost everywhere.
\end{proof}
\begin{lem}
\label{lem:stick-dirichlet-lower-bound}Assume $P(x,\{x\})\geq\varepsilon$
for $\mu$-almost all $x$. Then $\mathcal{E}(P^{*}P,f)\geq2\varepsilon\mathcal{E}(P,f)$.
\end{lem}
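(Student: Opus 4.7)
The starting point is the algebraic identity, valid for any $\mu$-invariant $P$ and $f \in \ELL(\mu)$,
\[
\mathcal{E}(P^{*}P,f) \;=\; \|f\|_{2}^{2} - \|Pf\|_{2}^{2} \;=\; 2\mathcal{E}(P,f) - \|(\Id-P)f\|_{2}^{2},
\]
which is already the engine behind the upper bound $\mathcal{E}(P^{*}P,f)\leq 2\mathcal{E}(P,f)$ of Lemma~\ref{lem:PP-dirichlet-form-ub}. The entire job is therefore to produce the matching upper bound $\|(\Id-P)f\|_{2}^{2} \leq 2(1-\varepsilon)\mathcal{E}(P,f)$, which when combined with the identity gives exactly $\mathcal{E}(P^{*}P,f)\geq 2\varepsilon\mathcal{E}(P,f)$.

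To obtain this upper bound, I would exploit the holding assumption to make a convex decomposition. Since $P(x,\{x\})\geq\varepsilon$ for $\mu$-almost all $x$, the signed kernel $P(x,\cdot) - \varepsilon\,\delta_{x}(\cdot)$ is, for such $x$, a nonnegative measure of total mass $1-\varepsilon$. Assuming $\varepsilon<1$ (the case $\varepsilon=1$ gives $P=\Id$ $\mu$-a.e. and both sides vanish), set
\[
Q(x,\cdot) \;:=\; \frac{P(x,\cdot)-\varepsilon\,\delta_{x}(\cdot)}{1-\varepsilon},
\]
which is a Markov kernel, and observe from $\mu P=\mu$ that $\mu Q=\mu$ as well. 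Thus $P = \varepsilon\Id + (1-\varepsilon)Q$ as operators on $\ELL(\mu)$, giving at once
\[
\Id - P = (1-\varepsilon)(\Id - Q), \qquad \mathcal{E}(P,f) = (1-\varepsilon)\mathcal{E}(Q,f).
\]

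The final ingredient is a Jensen-type bound for $Q$: since $Q$ is Markov and $\mu$-invariant,
\[
\|Qf\|_{2}^{2} \;=\; \int (Qf)^{2}\,\dif\mu \;\leq\; \int Q(f^{2})\,\dif\mu \;=\; \|f\|_{2}^{2},
\]
so expanding the square yields
\[
\|(\Id-Q)f\|_{2}^{2} \;=\; \|f\|_{2}^{2} - 2\langle f,Qf\rangle + \|Qf\|_{2}^{2} \;\leq\; 2\mathcal{E}(Q,f).
\]
Putting the pieces together, $\|(\Id-P)f\|_{2}^{2} = (1-\varepsilon)^{2}\|(\Id-Q)f\|_{2}^{2} \leq 2(1-\varepsilon)^{2}\mathcal{E}(Q,f) = 2(1-\varepsilon)\mathcal{E}(P,f)$, which inserted into the opening identity gives the claim. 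No step is really an obstacle; the only mildly delicate point is verifying that $Q$ is a genuine Markov kernel on a $\mu$-full set (handled by redefining $P$ on the null set where the holding bound fails, or simply working $\mu$-almost everywhere throughout, which is legitimate for Dirichlet-form computations).
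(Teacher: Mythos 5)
Your proof is correct, and it takes a genuinely different route from the paper. The paper works at the level of kernels: it first invokes Lemma~\ref{lem:stick-probs-equal} to get $P^{*}(x,\{x\})\geq\varepsilon$ $\mu$-a.e., then writes $\mathcal{E}(P^{*}P,f)=\tfrac{1}{2}\int\mu({\rm d}x)P^{*}P(x,{\rm d}y)\{f(x)-f(y)\}^{2}$ and lower-bounds the measure $P^{*}P(x,{\rm d}y)$ by $\varepsilon P(x,{\rm d}y)+\varepsilon P^{*}(x,{\rm d}y)$ off the diagonal (the two $\varepsilon$-contributions come from peeling off the holding mass of $P^{*}$ in the first step and of $P$ in the second; the overlap sits on $\{y=x\}$ where the integrand vanishes), concluding via $\mathcal{E}(P,f)=\mathcal{E}(P^{*},f)$. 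You instead work entirely at the operator level: the identity $\mathcal{E}(P^{*}P,f)=2\mathcal{E}(P,f)-\|(\Id-P)f\|_{2}^{2}$ reduces the claim to $\|(\Id-P)f\|_{2}^{2}\leq2(1-\varepsilon)\mathcal{E}(P,f)$, which you obtain from the convex decomposition $P=\varepsilon\Id+(1-\varepsilon)Q$ (so $\Id-P=(1-\varepsilon)(\Id-Q)$ and $\mathcal{E}(P,f)=(1-\varepsilon)\mathcal{E}(Q,f)$) together with the universal bound $\|(\Id-Q)f\|_{2}^{2}\leq2\mathcal{E}(Q,f)$ coming from $\|Qf\|_{2}\leq\|f\|_{2}$. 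Your route is arguably cleaner: it needs no auxiliary fact about $P^{*}(x,\{x\})$, it never touches the symmetric integral representation of the Dirichlet form, and it makes transparent that the lemma is exactly the sharpened version of Lemma~\ref{lem:PP-dirichlet-form-ub} obtained by squeezing the nonnegative deficit $\|(\Id-P)f\|_{2}^{2}$. What the paper's argument buys in exchange is a slightly stronger pointwise statement, namely the measure-level inequality $P^{*}P(x,{\rm d}y)\geq\varepsilon\{P(x,{\rm d}y)+P^{*}(x,{\rm d}y)\}$ away from the diagonal, which could be useful in other contexts. Both arguments apply to general $\mu$-invariant (not necessarily reversible) $P$, and your treatment of the edge case $\varepsilon=1$ and of the $\mu$-null exceptional set is adequate.
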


\begin{proof}
We have $P^{*}(x,\{x\})\geq\varepsilon$ for $\mu$-almost all $x$
by Lemma~\ref{lem:stick-probs-equal}. Hence,
\begin{align*}
\mathcal{E}(P^{*}P,f) & =\frac{1}{2}\int\mu({\rm d}x)P^{*}P(x,{\rm d}y)\left\{ f(x)-f(y)\right\} ^{2}\\
 & \geq\frac{1}{2}\int\mu({\rm d}x)\left\{ \varepsilon P(x,{\rm d}y)+\varepsilon P^{*}(x,{\rm d}y)\right\} \left\{ f(x)-f(y)\right\} ^{2}\\
 & =2\varepsilon\mathcal{E}(P,f).
\end{align*}
\end{proof}
The following is a useful implication of $\left\Vert \cdot\right\Vert _{{\rm osc}}^{2}$-convergence,
that we will rely on below and also in Section~\ref{subsec:Holding-probabilities,-WPIs}.
\begin{lem}
\label{lem:osc-conv-RUPI}Assume $T$ is $\left\Vert \cdot\right\Vert _{{\rm osc}}^{2}$-convergent.
Then for any $\epsilon>0$, there exists $n_{0}\in\mathbb{N}$ such
that for any $N\geq n_{0}$
\[
\inf\left\{ \left\langle \mathbf{1}_{A},T^{N}\mathbf{1}_{B}\right\rangle :\mu(A)\wedge\mu(B)\geq\epsilon\right\} >0.
\]
In particular, for all $k\in\mathbb{N}$, $T^{k}$ is RUPI and $T^{k}$
satisfies an $\left\Vert \cdot\right\Vert _{{\rm osc}}^{2}$-WPI.
\end{lem}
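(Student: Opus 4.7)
The plan is to instantiate the $(\|\cdot\|_{\osc}^{2},\gamma)$-convergence hypothesis at the centred indicator $f := \mathbf{1}_{B}-\mu(B)$ and pair the result against $\mathbf{1}_{A}$ via Cauchy--Schwarz. In the nontrivial case $0<\mu(B)<1$ we have $f\in\ELL_{0}(\mu)$ with $\|f\|_{\osc}\leq 1$, so convergence yields $\|T^{N}f\|_{2}^{2}\leq\gamma(N)$, where $\gamma(N)\downarrow 0$. Using $\mu$-invariance of $T$ (so $T^{N}\mathbf{1}=\mathbf{1}$) I would rewrite
\[
\langle\mathbf{1}_{A},T^{N}\mathbf{1}_{B}\rangle-\mu(A)\mu(B)=\langle\mathbf{1}_{A},T^{N}(\mathbf{1}_{B}-\mu(B))\rangle,
\]
and then Cauchy--Schwarz bounds the right-hand side in modulus by $\|\mathbf{1}_{A}\|_{2}\|T^{N}f\|_{2}\leq\sqrt{\mu(A)\gamma(N)}\leq\sqrt{\gamma(N)}$.

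Hence, for any $A,B$ with $\mu(A)\wedge\mu(B)\geq\epsilon$,
\[
\langle\mathbf{1}_{A},T^{N}\mathbf{1}_{B}\rangle\geq\epsilon^{2}-\sqrt{\gamma(N)}.
\]
Picking $n_{0}$ large enough that $\gamma(n_{0})\leq\epsilon^{4}/4$ (possible since $\gamma\to 0$) gives the uniform lower bound $\epsilon^{2}/2>0$ for every $N\geq n_{0}$, establishing the main inequality. The edge case $\mu(B)\in\{0,1\}$ is trivial: then $\mathbf{1}_{B}$ is $\mu$-a.e. constant, so $T^{N}\mathbf{1}_{B}=\mu(B)$ a.e. and $\langle\mathbf{1}_{A},T^{N}\mathbf{1}_{B}\rangle=\mu(A)\mu(B)\geq\epsilon^{2}$ directly.

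For the ``in particular'' statement, I would apply the first part to $T^{k}$: since $T$ is $(\|\cdot\|_{\osc}^{2},\gamma)$-convergent, $T^{k}$ is $(\|\cdot\|_{\osc}^{2},n\mapsto\gamma(kn))$-convergent, which still tends to $0$. The first part then furnishes some $m_{0}$ such that $\inf\{\langle\mathbf{1}_{A},(T^{k})^{M}\mathbf{1}_{B}\rangle:\mu(A)\wedge\mu(B)\geq\epsilon\}>0$ for all $M\geq m_{0}$. Because each term $\langle\mathbf{1}_{A},(T^{k})^{n}\mathbf{1}_{B}\rangle$ is nonnegative (as $T^{k}$ is a Markov kernel), taking $m=m_{0}$ verifies the sum criterion of Lemma~\ref{lem:RUPI_equiv}, so $T^{k}$ is RUPI; Theorem~\ref{thm:WPI_iff_RUPI} then delivers the $\|\cdot\|_{\osc}^{2}$-WPI. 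There is no real obstacle here: the proof is essentially a single Cauchy--Schwarz estimate combined with $\mu$-invariance, and the passage to $T^{k}$ is immediate from the definition of convergence.
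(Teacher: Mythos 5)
Your proposal is correct and follows essentially the same route as the paper: centre $\mathbf{1}_B$, apply $(\|\cdot\|_{\osc}^2,\gamma)$-convergence to $f=\mathbf{1}_B-\mu(B)$, pair against $\mathbf{1}_A$ by Cauchy--Schwarz, and pick $n_0$ large; the paper's handling of the ``in particular'' clause is slightly terser (it exploits that multiples of $k$ eventually exceed $n_0$) but is the same idea as your direct application of part one to $T^k$.
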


\begin{proof}
Let $\epsilon>0$ be arbitrary. Since $T$ is $\left\Vert \cdot\right\Vert _{{\rm osc}}^{2}$-convergent,
we may take $n_{0}\in\mathbb{N}$ large enough such that $\|T^{N}f\|_{2}\le\|f\|_{{\rm osc}}\epsilon^{2}/2$
for all $N\geq n_{0}$. Let $A,B\in\mathscr{E}$ be such that $\mu(A)\wedge\mu(B)\ge\epsilon$.
For any $N\geq n_{0}$ we have
\begin{align*}
\langle\mathbf{1}_{A},T^{N}\mathbf{1}_{B}\rangle & =\langle\mathbf{1}_{A},(T^{N}-\mu)\mathbf{1}_{B}\rangle+\langle\mathbf{1}_{A},\mu\mathbf{1}_{B}\rangle\\
 & =\langle\mathbf{1}_{A},(T^{N}-\mu)\mathbf{1}_{B}\rangle+\mu(A)\mu(B).
\end{align*}
Let $f={\bf 1}_{B}-\mu(B)$ and we have by Cauchy--Schwarz,
\[
|\langle\mathbf{1}_{A},(T^{N}-\mu)\mathbf{1}_{B}\rangle|=|\langle\mathbf{1}_{A},T^{N}f\rangle|\leq\mu(A)^{1/2}\|f\|_{{\rm osc}}\epsilon^{2}/2\leq\epsilon^{2}/2,
\]
and therefore 
\[
\langle\mathbf{1}_{A},T^{N}\mathbf{1}_{B}\rangle\ge-\epsilon^{2}/2+\epsilon^{2}=\epsilon^{2}/2>0,
\]
from which we can conclude. Now let $k\in\{1,2,\ldots\}$ be arbitrary.
Since we may choose $N$ to be a multiple of $k$ it follows from
Lemma~\ref{lem:RUPI_equiv} that $T^{k}$ is RUPI. Hence, by Proposition~\ref{prop:RUPI-to-WPI}
$T^{k}$ satisfies an $\left\Vert \cdot\right\Vert _{{\rm osc}}^{2}$-WPI.
\end{proof}
\begin{prop}
\label{prop:wpi-to-rupi}Let $T$ be a $\mu$-invariant Markov kernel
satisfying a $(\|\cdot\|_{\osc}^{2},\alpha)-$WPI for some $\alpha:(0,\infty)\to[0,\infty)$.
Then $T$ is RUPI.
\end{prop}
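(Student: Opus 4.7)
The plan is to reduce to the setting of Lemma~\ref{lem:osc-conv-RUPI} by passing to the lazy Markov kernel $P := \tfrac{1}{2}(\Id + T)$, which is $\mu$-invariant and satisfies $P(x,\{x\}) \geq \tfrac{1}{2}$ for all $x$. I would (i) transfer the WPI from $T$ to $P^{*}P$, (ii) deduce $\|\cdot\|_{\osc}^{2}$-convergence and hence RUPI of $P$, and (iii) translate RUPI of $P$ back to RUPI of $T$ via an explicit resolvent identity.

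For step (i), note that $\Id - P = \tfrac{1}{2}(\Id - T)$ gives $\mathcal{E}(P, f) = \tfrac{1}{2}\mathcal{E}(T, f)$, and applying Lemma~\ref{lem:stick-dirichlet-lower-bound} to $P$ with $\varepsilon = \tfrac{1}{2}$ yields $\mathcal{E}(P^{*}P, f) \geq \mathcal{E}(P, f) = \tfrac{1}{2}\mathcal{E}(T, f)$. Substituting this into the hypothesized $(\|\cdot\|_{\osc}^{2}, \alpha)$-WPI for $T$ immediately gives a $(\|\cdot\|_{\osc}^{2}, 2\alpha)$-WPI for $P^{*}P$. For step (ii), Theorem~\ref{thm:WPI_F_bd} (applied with our $P$ as the kernel whose convergence we track, and with the ``$T$'' there equal to our $P^{*}P$) then yields $\|P^{n} f\|_{2}^{2} \leq \gamma(n) \|f\|_{\osc}^{2}$ for some $\gamma(n) \downarrow 0$; so $P$ is $\|\cdot\|_{\osc}^{2}$-convergent, and Lemma~\ref{lem:osc-conv-RUPI} gives that $P$ is RUPI.

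For step (iii), expanding $P^{n} = 2^{-n}\sum_{k=0}^{n} \binom{n}{k} T^{k}$ and interchanging summation in $R(\lambda, P) = \sum_{n \geq 0} \lambda^{n} P^{n}$ produces, for every $\lambda \in (0,1)$, the identity
\[
R(\lambda, P) = \frac{2}{2-\lambda}\, R\!\left(\frac{\lambda}{2-\lambda},\, T\right).
\]
As $\lambda \mapsto \lambda/(2-\lambda)$ is a bijection of $(0,1)$ onto itself and the prefactor is positive, UPI of $R(\lambda, P)$ is equivalent to UPI of $R(\lambda/(2-\lambda), T)$; hence $T$ is RUPI. The only mild subtlety is that $T$ need not be reversible, which is why Theorem~\ref{thm:WPI_F_bd} (phrased for $P^{*}P$) cannot be applied to $T$ directly, and the lazy-chain passage is precisely what circumvents this.
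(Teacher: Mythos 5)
Your proof is correct and follows the same core structure as the paper: pass to the lazy kernel $\tilde{T}=\tfrac12(\Id+T)$, use $\mathcal{E}(\tilde{T},f)=\tfrac12\mathcal{E}(T,f)$ together with Lemma~\ref{lem:stick-dirichlet-lower-bound} to get a WPI for $\tilde{T}^{*}\tilde{T}$, deduce $\|\cdot\|_{\osc}^{2}$-convergence via Theorem~\ref{thm:WPI_F_bd}, and then transfer RUPI back to $T$. The only (minor) divergence is the closing step: the paper takes the single fixed power $\tilde{T}^{N}$ furnished by Lemma~\ref{lem:osc-conv-RUPI}, expands it binomially and dominates $\langle\mathbf{1}_{A},\tilde{T}^{N}\mathbf{1}_{B}\rangle$ by $\langle\mathbf{1}_{A},\sum_{k=0}^{N}T^{k}\mathbf{1}_{B}\rangle$, verifying the Lemma~\ref{lem:RUPI_equiv} characterisation of RUPI directly for $T$; you instead first conclude RUPI of $P=\tilde{T}$ and then transfer it to $T$ via the exact resolvent identity $R(\lambda,P)=\tfrac{2}{2-\lambda}\,R\bigl(\tfrac{\lambda}{2-\lambda},T\bigr)$ together with the bijectivity of $\lambda\mapsto\lambda/(2-\lambda)$ on $(0,1)$. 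The resolvent identity is easiest to see algebraically from $\Id-\lambda P=\tfrac{2-\lambda}{2}\bigl(\Id-\tfrac{\lambda}{2-\lambda}T\bigr)$ (which avoids worrying about the series interchange, though Tonelli also justifies it for the nonnegative pairings $\langle\mathbf{1}_{A},\cdot\,\mathbf{1}_{B}\rangle$ you need). Both closings are valid; yours is arguably a little slicker.
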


\begin{proof}
Consider the Markov operator $\tilde{T}:=\frac{1}{2}(\Id+T)$, which
satisfies $\tilde{T}(x,\{x\})\geq1/2$ by construction. Note that
\begin{align*}
\mathcal{E}\left(\tilde{T},f\right) & =\langle(\Id-(\Id+T)/2)f,f\rangle\\
 & =\frac{1}{2}\langle(\Id-T)f,f\rangle\\
 & =\frac{1}{2}\mathcal{E}(T,f).
\end{align*}
Therefore, since $T$ satisfies a $(\|\cdot\|_{\osc}^{2},\alpha)-$WPI,
we have that $\tilde{T}$ satisfies a $(\|\cdot\|_{\osc}^{2},2\alpha)-$WPI:
\begin{align*}
\|f\|_{2}^{2} & \le\alpha(r)\mathcal{E}(T,f)+r\|f\|_{\osc}^{2}\\
 & =2\alpha(r)\mathcal{E}\left(\tilde{T},f\right)+r\|f\|_{\osc}^{2}.
\end{align*}
Since $\mathrm{ess_{\mu}\inf}_{x}\tilde{T}(x,\{x\})\geq1/2$, by Lemma~\ref{lem:stick-dirichlet-lower-bound}
we have the inequality $\mathcal{E}(\tilde{T},f)\le\mathcal{E}(\tilde{T}^{*}\tilde{T},f)$,
so we deduce a $(\|\cdot\|_{\osc}^{2},2\alpha)-$WPI for $\tilde{T}^{*}\tilde{T}$.
Hence, $\tilde{T}$ is $\left\Vert \cdot\right\Vert _{{\rm osc}}^{2}$-convergent
by Theorem~\ref{thm:WPI_F_bd}.

We will now verify the condition for RUPI in Lemma~\ref{lem:RUPI_equiv}.
Let $\epsilon\in(0,1)$ be arbitrary. Since $\tilde{T}$ is $\left\Vert \cdot\right\Vert _{{\rm osc}}^{2}$-convergent,
Lemma~\ref{lem:osc-conv-RUPI} implies that there exists $N\in\mathbb{N}$
such that 
\[
\delta=\inf\left\{ \left\langle \mathbf{1}_{A},\tilde{T}^{N}\mathbf{1}_{B}\right\rangle :\mu(A)\wedge\mu(B)\geq\epsilon\right\} >0.
\]
Now, 
\[
\tilde{T}^{N}=\left(\frac{\Id+T}{2}\right)^{N}=\frac{1}{2^{N}}\sum_{k=0}^{N}a_{k}T^{k},
\]
for binomial coefficients $\{a_{i}\}$. Since $\sum_{i=0}^{N}a_{i}=2^{N}$
and ${\bf 1}_{A}$, ${\bf 1}_{B}$ are non-negative, we have 
\[
\left\langle \mathbf{1}_{A},\sum_{k=0}^{N}T^{k}\mathbf{1}_{B}\right\rangle \geq\left\langle \mathbf{1}_{A},\left(\frac{\Id+T}{2}\right)^{N}\mathbf{1}_{B}\right\rangle ,\qquad A,B\in\mathscr{E},
\]
and this implies that 
\[
\inf\left\{ \left\langle \mathbf{1}_{A},\sum_{k=0}^{N}T^{k}\mathbf{1}_{B}\right\rangle :\mu(A)\wedge\mu(B)\geq\epsilon\right\} \ge\delta>0,
\]
so $T$ is RUPI.
\end{proof}

\subsubsection{\label{subsec:Holding-probabilities,-WPIs}Holding probabilities,
WPIs and $\left\Vert \cdot\right\Vert _{{\rm osc}}^{2}$-convergence}
\begin{defn}
For a $\mu$-invariant Markov kernel $T$, and $\epsilon\in(0,1)$
we denote by $T_{\epsilon}$ the $\mu$-invariant kernel $T_{\epsilon}=\epsilon{\rm Id}+(1-\epsilon)T$.
\end{defn}

We show in this section that there are close connections between existence
of an $\left\Vert \cdot\right\Vert _{{\rm osc}}^{2}$-WPI for a Markov
kernel $P$, and existence of an $\left\Vert \cdot\right\Vert _{{\rm osc}}^{2}$-WPI
for $P_{\epsilon}^{*}P_{\epsilon}$, where $\epsilon$ is any non-trivial\emph{
holding probability}. This is also closely connected to $\left\Vert \cdot\right\Vert _{{\rm osc}}^{2}$-convergence.

Throughout this section, we write $S:=(P+P^{*})/2$ for the additive
reversibilization of $P$.

\begin{prop}
\label{prop:wpi-P-PP-iff}Let $\epsilon\in(0,1)$. Then $P$ satisfies
an $\left\Vert \cdot\right\Vert _{{\rm osc}}^{2}$-WPI if and only
if $P_{\epsilon}^{*}P_{\epsilon}$ satisfies an $\left\Vert \cdot\right\Vert _{{\rm osc}}^{2}$-WPI. 
\end{prop}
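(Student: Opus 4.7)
The approach is to establish a two-sided comparison of Dirichlet forms
\[
2\epsilon(1-\epsilon)\,\mathcal{E}(P,f) \;\leq\; \mathcal{E}(P_{\epsilon}^{*}P_{\epsilon},f) \;\leq\; 2(1-\epsilon)\,\mathcal{E}(P,f), \qquad f\in\ELL_{0}(\mu),
\]
from which the equivalence of $\|\cdot\|_{\osc}^{2}$-WPIs is immediate, since the sieve term $r\|f\|_{\osc}^{2}$ is untouched and the function $\alpha$ is merely rescaled by an $\epsilon$-dependent constant.

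The main algebraic step is to expand, using $P_{\epsilon}^{*}=\epsilon\,\Id+(1-\epsilon)P^{*}$,
\[
\Id-P_{\epsilon}^{*}P_{\epsilon}=(1-\epsilon^{2})\,\Id-\epsilon(1-\epsilon)(P+P^{*})-(1-\epsilon)^{2}P^{*}P,
\]
and then pair with $f$. Using $\langle(P+P^{*})f,f\rangle=2\|f\|_{2}^{2}-2\mathcal{E}(P,f)$ (valid for real $f$) and $\langle P^{*}Pf,f\rangle=\|f\|_{2}^{2}-\mathcal{E}(P^{*}P,f)$, the $\|f\|_{2}^{2}$ terms cancel (the coefficient is $(1-\epsilon^{2})-2\epsilon(1-\epsilon)-(1-\epsilon)^{2}=0$) and one obtains the clean identity
\[
\mathcal{E}(P_{\epsilon}^{*}P_{\epsilon},f)=2\epsilon(1-\epsilon)\,\mathcal{E}(P,f)+(1-\epsilon)^{2}\,\mathcal{E}(P^{*}P,f).
\]
Since both Dirichlet forms are nonnegative, the lower bound $\mathcal{E}(P_{\epsilon}^{*}P_{\epsilon},f)\geq 2\epsilon(1-\epsilon)\mathcal{E}(P,f)$ is automatic. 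For the upper bound, I invoke Lemma~\ref{lem:PP-dirichlet-form-ub}, $\mathcal{E}(P^{*}P,f)\leq 2\mathcal{E}(P,f)$, which yields $\mathcal{E}(P_{\epsilon}^{*}P_{\epsilon},f)\leq[2\epsilon(1-\epsilon)+2(1-\epsilon)^{2}]\mathcal{E}(P,f)=2(1-\epsilon)\mathcal{E}(P,f)$.

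With the two-sided bound in hand, the equivalence of WPIs is mechanical. If $P$ satisfies $\|f\|_{2}^{2}\leq\alpha(r)\mathcal{E}(P,f)+r\|f\|_{\osc}^{2}$, then the lower bound on $\mathcal{E}(P_{\epsilon}^{*}P_{\epsilon},f)$ gives a $(\|\cdot\|_{\osc}^{2},\alpha_{1})$-WPI for $P_{\epsilon}^{*}P_{\epsilon}$ with $\alpha_{1}=\alpha/[2\epsilon(1-\epsilon)]$; conversely, a $(\|\cdot\|_{\osc}^{2},\alpha)$-WPI for $P_{\epsilon}^{*}P_{\epsilon}$ combined with the upper bound gives a $(\|\cdot\|_{\osc}^{2},\alpha_{2})$-WPI for $P$ with $\alpha_{2}=2(1-\epsilon)\alpha$.

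I do not expect any real obstacle here: the identity is a direct calculation, and Lemma~\ref{lem:PP-dirichlet-form-ub} does all the heavy lifting on the upper side. The only thing to watch is that $\epsilon\in(0,1)$ strictly, so that $2\epsilon(1-\epsilon)>0$ and the lower bound is nondegenerate; the endpoint $\epsilon=0$ would give only $\mathcal{E}(P_{\epsilon}^{*}P_{\epsilon},f)=\mathcal{E}(P^{*}P,f)$, losing the direct comparison with $\mathcal{E}(P,f)$, and $\epsilon=1$ trivializes $P_{\epsilon}=\Id$. An alternative, less quantitative route would be to appeal to Theorem~\ref{thm:WPI_iff_RUPI} and check that RUPI is preserved under the transformations $P\mapsto P_{\epsilon}$ and $Q\mapsto Q^{*}Q$, but the Dirichlet-form calculation is sharper and yields explicit $\alpha$'s.
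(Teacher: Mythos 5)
Your proof is correct, and it takes a genuinely different --- and sharper --- route than the paper's for the reverse implication. The forward direction matches the paper's Lemma~\ref{lem:P-to-PP-WPI}: both observe from the expansion $P_\epsilon^*P_\epsilon=\epsilon^2\Id+\epsilon(1-\epsilon)(P+P^*)+(1-\epsilon)^2P^*P$ that $\mathcal{E}(P_\epsilon^*P_\epsilon,f)\geq 2\epsilon(1-\epsilon)\mathcal{E}(P,f)$ and hence a $(\Phi,\alpha)$-WPI for $P$ transfers to a $(\Phi,\alpha/[2\epsilon(1-\epsilon)])$-WPI for $P_\epsilon^*P_\epsilon$. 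For the reverse direction, however, the paper's Lemma~\ref{lem:PP-to-P-WPI} takes a long, non-quantitative detour through the RUPI characterization (Theorem~\ref{thm:WPI_iff_RUPI}): it passes from a WPI for $P_\epsilon^*P_\epsilon$ to one for $S_\epsilon^2$ with $S_\epsilon=(P_\epsilon+P_\epsilon^*)/2$, invokes Theorem~\ref{thm:WPI_iff_RUPI} to get RUPI for $S_\epsilon^2$, propagates RUPI to $S_\epsilon$ and then to $S=(P+P^*)/2$ via binomial expansions of the resolvent, and applies Theorem~\ref{thm:WPI_iff_RUPI} once more --- producing no explicit $\alpha$. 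You anticipated this alternative and rightly judged it less informative. Your exact Dirichlet-form identity
\[
\mathcal{E}(P_\epsilon^*P_\epsilon,f)=2\epsilon(1-\epsilon)\,\mathcal{E}(P,f)+(1-\epsilon)^2\,\mathcal{E}(P^*P,f),
\]
combined with Lemma~\ref{lem:PP-dirichlet-form-ub}, yields the clean two-sided comparison $2\epsilon(1-\epsilon)\mathcal{E}(P,f)\leq\mathcal{E}(P_\epsilon^*P_\epsilon,f)\leq 2(1-\epsilon)\mathcal{E}(P,f)$, making the reverse implication the same kind of one-line rescaling as the forward one, with $\alpha_2=2(1-\epsilon)\alpha$ explicit. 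A further bonus: your argument never touches the sieve term, so the equivalence holds for an arbitrary sieve $\Phi$, whereas the paper's reverse implication is genuinely tied to $\Phi=\|\cdot\|_{\osc}^2$ because Theorem~\ref{thm:WPI_iff_RUPI} is specific to that sieve. Your proof is therefore both quantitatively and qualitatively stronger than what the paper records.
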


\begin{proof}
This follows from Lemmas~\ref{lem:P-to-PP-WPI} and~\ref{lem:PP-to-P-WPI}.
\end{proof}
\begin{rem}
Proposition~\ref{prop:wpi-P-PP-iff}, and some of the results below
could also be phrased in terms of the alternative multiplicative reversibilizations
of $P_{\epsilon}$, i.e. $P_{\epsilon}P_{\epsilon}^{*}$.
\end{rem}

\begin{prop}
\label{prop:P/S-convergence-P/S}The following hold:
\begin{enumerate}
\item If $P$ is $\left\Vert \cdot\right\Vert _{{\rm osc}}^{2}$-convergent,
then $S^{2}$ satisfies an $\left\Vert \cdot\right\Vert _{{\rm osc}}^{2}$-WPI
and $S$ is $\left\Vert \cdot\right\Vert _{{\rm osc}}^{2}$-convergent.
\item Let $\epsilon\in(0,1)$. If $S$ or $P$ are $\left\Vert \cdot\right\Vert _{{\rm osc}}^{2}$-convergent
then $P_{\epsilon}^{*}P_{\epsilon}$ satisfies an $\left\Vert \cdot\right\Vert _{{\rm osc}}^{2}$-WPI
and $P_{\epsilon}$ is $\left\Vert \cdot\right\Vert _{{\rm osc}}^{2}$-convergent.
\end{enumerate}
\end{prop}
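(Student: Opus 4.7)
The plan is to chain together the equivalences established earlier: $\|\cdot\|_{\osc}^{2}$-convergence implies RUPI for every power (Lemma~\ref{lem:osc-conv-RUPI}), RUPI is equivalent to an $\|\cdot\|_{\osc}^{2}$-WPI (Theorem~\ref{thm:WPI_iff_RUPI}), and an $\|\cdot\|_{\osc}^{2}$-WPI for a self-adjoint kernel $T^{*}T$ delivers $\|\cdot\|_{\osc}^{2}$-convergence of $T$ (Theorem~\ref{thm:WPI_F_bd}). Throughout I will freely use $\mathcal{E}(P,f)=\mathcal{E}(S,f)$ (Remark~\ref{rem:nonrev_WPI}) to move $\|\cdot\|_{\osc}^{2}$-WPIs between $P$ and its additive reversibilization $S$.

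For part 1, the first goal is to show $S^{2}$ satisfies an $\|\cdot\|_{\osc}^{2}$-WPI, and I will do this by verifying that $S^{2}$ is RUPI. Starting from $\|\cdot\|_{\osc}^{2}$-convergence of $P$, Lemma~\ref{lem:osc-conv-RUPI} already gives RUPI of $P^{2}$, so Lemma~\ref{lem:RUPI_equiv} furnishes, for each $\epsilon>0$, an integer $m$ and some $\delta>0$ with $\langle\mathbf{1}_{A},\sum_{n=0}^{m}P^{2n}\mathbf{1}_{B}\rangle\ge\delta$ whenever $\mu(A)\wedge\mu(B)\ge\epsilon$. I then expand $S^{2n}=4^{-n}(P+P^{*})^{2n}$ as a sum over length-$2n$ words in $\{P,P^{*}\}$ and observe that each such word, applied to the non-negative function $\mathbf{1}_{B}$, yields a non-negative function; retaining only the single word $P^{2n}$ therefore gives $\langle\mathbf{1}_{A},S^{2n}\mathbf{1}_{B}\rangle\ge 4^{-n}\langle\mathbf{1}_{A},P^{2n}\mathbf{1}_{B}\rangle$. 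Summing over $n=0,\dots,m$ produces a uniform lower bound of $4^{-m}\delta$, confirming the RUPI condition for $S^{2}$ via Lemma~\ref{lem:RUPI_equiv}. Theorem~\ref{thm:WPI_iff_RUPI} then produces the desired $\|\cdot\|_{\osc}^{2}$-WPI for $S^{2}$, and since $S$ is $\mu$-reversible we have $S^{*}S=S^{2}$, so Theorem~\ref{thm:WPI_F_bd} applied with $P:=S$ delivers the $\|\cdot\|_{\osc}^{2}$-convergence of $S$.

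For part 2, if $P$ is $\|\cdot\|_{\osc}^{2}$-convergent then part 1 already provides $\|\cdot\|_{\osc}^{2}$-convergence of $S$, so I may assume without loss of generality that $S$ is $\|\cdot\|_{\osc}^{2}$-convergent. Lemma~\ref{lem:osc-conv-RUPI} combined with Theorem~\ref{thm:WPI_iff_RUPI} then yields an $\|\cdot\|_{\osc}^{2}$-WPI for $S$, which I transfer to $P$ via $\mathcal{E}(P,f)=\mathcal{E}(S,f)$. Proposition~\ref{prop:wpi-P-PP-iff} now hands over an $\|\cdot\|_{\osc}^{2}$-WPI for $P_{\epsilon}^{*}P_{\epsilon}$, and a final application of Theorem~\ref{thm:WPI_F_bd} (with $P\mapsto P_{\epsilon}$) gives $\|\cdot\|_{\osc}^{2}$-convergence of $P_{\epsilon}$.

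The only step that is not essentially bookkeeping is the word-expansion argument in part 1, where I must exploit the positivity of $\mathbf{1}_{B}$ under every word in $\{P,P^{*}\}^{2n}$ in order to compare $S^{2n}$ with $P^{2n}$ despite the non-commutativity of $P$ and $P^{*}$; the remaining reductions merely chain through the RUPI/WPI/convergence equivalences already established in Sections~\ref{subsec:fundamentals-Definitions-and-basic} and~\ref{subsec:establish-WPI-WPIs-from-RUPI}.
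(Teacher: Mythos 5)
Your proof is correct, but the key step in part~1 takes a genuinely different route from the paper. To show that $S^{2}$ satisfies an $\|\cdot\|_{\osc}^{2}$-WPI given that $P^{2}$ is RUPI, you expand $S^{2n}=4^{-n}(P+P^{*})^{2n}$ into words in $\{P,P^{*}\}$, exploit the positivity of each word applied to $\mathbf{1}_{B}$ to retain only the $P^{2n}$ term, and thereby propagate RUPI from $P^{2}$ to $S^{2}$ before converting to a WPI via Theorem~\ref{thm:WPI_iff_RUPI}. The paper instead passes directly at the level of Dirichlet forms, observing that $\mathcal{E}(S^{2},f)=\tfrac{1}{4}\{\mathcal{E}(P^{2},f)+\mathcal{E}((P^{*})^{2},f)+\mathcal{E}(PP^{*},f)+\mathcal{E}(P^{*}P,f)\}\geq\tfrac{1}{4}\mathcal{E}(P^{2},f)$, so a WPI for $P^{2}$ immediately yields one for $S^{2}$ (with $\alpha$ scaled by $4$). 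The Dirichlet-form comparison is shorter and, unlike the RUPI detour, preserves quantitative rate information; your word-expansion argument is nonetheless valid and in fact mirrors the technique the paper itself uses in Lemma~\ref{lem:PP-to-P-WPI}. In part~2 your reduction from the case ``$P$ osc-convergent'' to ``$S$ osc-convergent'' via part~1 is unnecessary --- Lemma~\ref{lem:osc-conv-RUPI} applied directly to $P$ with $k=1$ already yields a WPI for $P$ --- but it is harmless, and the remainder of part~2 coincides with the paper.
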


\begin{proof}
For the first part, if $P$ is $\left\Vert \cdot\right\Vert _{{\rm osc}}^{2}$-convergent,
then Lemma~\ref{lem:osc-conv-RUPI} implies that $P^{2}$ is RUPI
and satisfies an $\left\Vert \cdot\right\Vert _{{\rm osc}}^{2}$-WPI.
We may then deduce that $S^{2}$ satisfies an $\left\Vert \cdot\right\Vert _{{\rm osc}}^{2}$-WPI
because for any $f\in{\rm L}_{0}^{2}(\mu)$,
\[
\mathcal{E}(S^{2},f)=\frac{1}{4}\left\{ \mathcal{E}(P^{2},f)+\mathcal{E}((P^{*})^{2},f)+\mathcal{E}(PP^{*},f)+\mathcal{E}(P^{*}P,f)\right\} \geq\frac{1}{4}\mathcal{E}(P^{2},f).
\]
It follows that $S$ is $\left\Vert \cdot\right\Vert _{{\rm osc}}^{2}$
-convergent by Theorem~\ref{thm:WPI_F_bd}. For the second part,
if $S$ or $P$ are $\left\Vert \cdot\right\Vert _{{\rm osc}}^{2}$
-convergent then Lemma~\ref{lem:osc-conv-RUPI} implies that $S$,
or equivalently $P$, satisfies an $\left\Vert \cdot\right\Vert _{{\rm osc}}^{2}$-WPI.
Hence, by Proposition~\ref{prop:wpi-P-PP-iff}, $P_{\epsilon}^{*}P_{\epsilon}$
satisfies an $\left\Vert \cdot\right\Vert _{{\rm osc}}^{2}$-WPI,
from which we can deduce $\left\Vert \cdot\right\Vert _{{\rm osc}}^{2}$-convergence
by Theorem~\ref{thm:WPI_F_bd}.
\end{proof}
\begin{rem}
The appearance of $\epsilon\in(0,1)$ in the implication $S$ is $\left\Vert \cdot\right\Vert _{{\rm osc}}^{2}$-convergent
$\Rightarrow$ $P_{\epsilon}$ is $\left\Vert \cdot\right\Vert _{{\rm osc}}^{2}$-convergent
cannot be removed, since it is possible that $S$ is $\left\Vert \cdot\right\Vert _{{\rm osc}}^{2}$-convergent
but $P$ is not; see Example~\ref{exa:circle-walks}. On the other
hand, $S$ being $\left\Vert \cdot\right\Vert _{{\rm osc}}^{2}$-convergent
is a necessary condition for $P$ to be $\left\Vert \cdot\right\Vert _{{\rm osc}}^{2}$-convergent.
The appearance of $\epsilon\in(0,1)$ in the implication $P$ is $\left\Vert \cdot\right\Vert _{{\rm osc}}^{2}$-convergent
$\Rightarrow$ $P_{\epsilon}^{*}P_{\epsilon}$ satisfies an $\left\Vert \cdot\right\Vert _{{\rm osc}}^{2}$-WPI
also cannot be removed; see Proposition~\ref{prop:nonrev-geo-reducible}
and note that in that example $S^{2}$ is $\mu$-irreducible and so
$S$ is $\left\Vert \cdot\right\Vert _{{\rm osc}}^{2}$-convergent
by Corollary~\ref{cor:WPI_from_irred}.
\end{rem}

\begin{example}[Walks on the circle]
\label{exa:circle-walks}For $x,y\in\mathsf{E}=\{1,\ldots,m\}$ let
$P(x,y)={\bf 1}_{\{1,\ldots,m-1\}}(x){\bf 1}_{\{x+1\}}(y)+{\bf 1}_{\{m\}}(x){\bf 1}_{\{1\}}(y)$
so that $P^{*}(x,y)={\bf 1}_{\{2,\ldots,m\}}(x){\bf 1}_{\{x-1\}}(y)+{\bf 1}_{\{1\}}(x){\bf 1}_{\{m\}}(y)$.
Then the Markov chain associated with $P$ is deterministic and one
can deduce that $P$ is not $\left\Vert \cdot\right\Vert _{{\rm osc}}^{2}$-convergent.
On the other hand, $S=(P+P^{*})/2$ encodes a random walk on $\{1,\ldots,m\}$
and is $\left\Vert \cdot\right\Vert _{{\rm osc}}^{2}$-convergent.
\end{example}

In practice, the following result may be useful.
\begin{prop}
\label{prop:hold-equiv-wpi-conv-pp}Assume $P$ is $\mu$-invariant
and satisfies ${\rm ess}_{\mu}\inf_{x}P(x,\{x\})\in(0,1)$. Then the
following are equivalent.
\begin{enumerate}
\item $P$ satisfies an $\left\Vert \cdot\right\Vert _{{\rm osc}}^{2}$-WPI;
\item $P^{*}P$ satisfies an $\left\Vert \cdot\right\Vert _{{\rm osc}}^{2}$-WPI;
\item $P$ is $\left\Vert \cdot\right\Vert _{{\rm osc}}^{2}$-convergent;
\item $PP^{*}$ satisfies an $\left\Vert \cdot\right\Vert _{{\rm osc}}^{2}$-WPI;
\item $P^{*}$ is $\left\Vert \cdot\right\Vert _{{\rm osc}}^{2}$-convergent.
\end{enumerate}
\end{prop}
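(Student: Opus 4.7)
The plan is to establish a chain (1) $\Leftrightarrow$ (2) $\Rightarrow$ (3) $\Rightarrow$ (1), then obtain the mirror chain (1$'$) $\Leftrightarrow$ (4) $\Rightarrow$ (5) $\Rightarrow$ (1$'$) for $P^{*}$ in place of $P$, and finally glue them by identifying Dirichlet forms.

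Set $\varepsilon := {\rm ess}_{\mu}\inf_{x}P(x,\{x\})>0$. The equivalence (1) $\Leftrightarrow$ (2) is a pure Dirichlet-form sandwich: Lemma~\ref{lem:PP-dirichlet-form-ub} gives $\mathcal{E}(P^{*}P,f)\leq 2\mathcal{E}(P,f)$, while Lemma~\ref{lem:stick-dirichlet-lower-bound}, which applies because of the holding-probability hypothesis, gives $\mathcal{E}(P^{*}P,f)\geq 2\varepsilon\,\mathcal{E}(P,f)$. A $(\|\cdot\|_{\rm osc}^{2},\alpha)$-WPI for $P$ therefore transfers to a $(\|\cdot\|_{\rm osc}^{2},\alpha/(2\varepsilon))$-WPI for $P^{*}P$, and conversely a $(\|\cdot\|_{\rm osc}^{2},\alpha)$-WPI for $P^{*}P$ yields a $(\|\cdot\|_{\rm osc}^{2},2\alpha)$-WPI for $P$; since the sieve $\|\cdot\|_{\rm osc}^{2}$ is unchanged, this is genuine equivalence. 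The implication (2) $\Rightarrow$ (3) is then Theorem~\ref{thm:WPI_F_bd} applied to $T=P^{*}P$ with $\Phi=\|\cdot\|_{\rm osc}^{2}$. For (3) $\Rightarrow$ (1), I would invoke Lemma~\ref{lem:osc-conv-RUPI} with $k=1$: $\|\cdot\|_{\rm osc}^{2}$-convergence of $P$ gives that $P$ itself is RUPI and already carries an $\|\cdot\|_{\rm osc}^{2}$-WPI.

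To obtain the mirrored equivalences, note that by Lemma~\ref{lem:stick-probs-equal}, $P^{*}(x,\{x\})=P(x,\{x\})$ $\mu$-a.e., so the holding-probability hypothesis passes verbatim to $P^{*}$. Repeating the previous paragraph with $P$ replaced by $P^{*}$ (and using $(P^{*})^{*}P^{*}=PP^{*}$) yields that ``$P^{*}$ satisfies an $\|\cdot\|_{\rm osc}^{2}$-WPI'' is equivalent to (4) and to (5).

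The final step is to connect ``$P$ satisfies an $\|\cdot\|_{\rm osc}^{2}$-WPI'' with ``$P^{*}$ satisfies an $\|\cdot\|_{\rm osc}^{2}$-WPI''. This is immediate from Remark~\ref{rem:nonrev_WPI}: $\mathcal{E}(P,f)=\mathcal{E}((P+P^{*})/2,f)=\mathcal{E}(P^{*},f)$ for every $f\in\mathrm{L}_{0}^{2}(\mu)$, so a $(\|\cdot\|_{\rm osc}^{2},\alpha)$-WPI holds for $P$ iff it holds for $P^{*}$ (with exactly the same $\alpha$). Chaining with the two paragraphs above closes the equivalence.

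No step presents a real obstacle, since all the machinery has been built earlier. The only points that demand care are (i) that the transfers of WPIs preserve the sieve $\Phi=\|\cdot\|_{\rm osc}^{2}$ (they do, as only $\alpha$ is rescaled), (ii) that the hypothesis $\varepsilon<1$ is not actually invoked in the argument (only $\varepsilon>0$ is used, to apply Lemma~\ref{lem:stick-dirichlet-lower-bound}; the upper bound $\varepsilon<1$ is retained simply to exclude the trivial kernel $P={\rm Id}$), and (iii) that the holding-probability hypothesis legitimately transfers to $P^{*}$ via Lemma~\ref{lem:stick-probs-equal}.
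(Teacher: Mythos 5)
Your proof is correct, and it follows the same overall skeleton as the paper's (the cycle through $\|\cdot\|_{\rm osc}^{2}$-convergence via Theorem~\ref{thm:WPI_F_bd} and Lemma~\ref{lem:osc-conv-RUPI}, and the $P\leftrightarrow P^{*}$ symmetry via Lemma~\ref{lem:stick-probs-equal} plus $\mathcal{E}(P,f)=\mathcal{E}(P^{*},f)$). The genuine departure is in (a)\,$\Rightarrow$\,(b): the paper rewrites $P=T_{\varepsilon}$ with $T:=(P-\varepsilon\,{\rm Id})/(1-\varepsilon)$, checks that $T$ is itself a $\mu$-invariant kernel carrying a WPI, and then invokes Proposition~\ref{prop:wpi-P-PP-iff}; you bypass that auxiliary kernel entirely by applying Lemma~\ref{lem:stick-dirichlet-lower-bound} directly, giving $\mathcal{E}(P^{*}P,f)\geq 2\varepsilon\,\mathcal{E}(P,f)$ and hence a $\bigl(\|\cdot\|_{\rm osc}^{2},\alpha/(2\varepsilon)\bigr)$-WPI for $P^{*}P$. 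Interestingly, the two routes produce the identical constant: Lemma~\ref{lem:P-to-PP-WPI} applied to $T_{\varepsilon}$ yields $\mathcal{E}(T_{\varepsilon}^{*}T_{\varepsilon},f)\geq 2\varepsilon(1-\varepsilon)\,\mathcal{E}(T,f)=2\varepsilon\,\mathcal{E}(P,f)$. Your route is cleaner, and you correctly observe that it makes the upper bound $\varepsilon<1$ superfluous — it is needed in the paper only to make $T$ well-defined. The one bit of redundancy on your side is that you prove (2)\,$\Rightarrow$\,(1) via Lemma~\ref{lem:PP-dirichlet-form-ub} and then still close (1)\,$\Rightarrow$\,(2)\,$\Rightarrow$\,(3)\,$\Rightarrow$\,(1); either the direct implication or the long cycle would suffice, but carrying both does no harm.
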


\begin{proof}
(b. $\Rightarrow$ c.) follows from Theorem~\ref{thm:WPI_F_bd},
and (c. $\Rightarrow$ a.) follows from Lemma~\ref{lem:osc-conv-RUPI}.
We now show (a. $\Rightarrow$ b.). Let $\varepsilon={\rm ess}_{\mu}\inf_{x}P(x,\{x\})\in(0,1)$.
Then $T:=(P-\varepsilon{\rm Id})/(1-\varepsilon)$ is also a $\mu$-invariant
Markov kernel and also satisfies an $\left\Vert \cdot\right\Vert _{{\rm osc}}^{2}$-WPI
since $\mathcal{E}(T,f)=(1-\varepsilon)^{-1}\mathcal{E}(P,f)$. Since
$P=T_{\varepsilon}$, we deduce by Proposition~\ref{prop:wpi-P-PP-iff}
that $P^{*}P=T_{\varepsilon}^{*}T_{\varepsilon}$ satisfies an $\left\Vert \cdot\right\Vert _{{\rm osc}}^{2}$-WPI.

We now show that the cycle (a. $\Rightarrow$ d. $\Rightarrow$ e.
$\Rightarrow$ a.) can also be deduced. Observe that ${\rm ess}_{\mu}\inf_{x}P^{*}(x,\{x\})={\rm ess}_{\mu}\inf_{x}P(x,\{x\})$
by Lemma~\ref{lem:stick-probs-equal}, and $P$ satisfying an $\left\Vert \cdot\right\Vert _{{\rm osc}}^{2}$-WPI
is equivalent to $P^{*}$ satisfying an $\left\Vert \cdot\right\Vert _{{\rm osc}}^{2}$-WPI,
since $\mathcal{E}(P,f)=\mathcal{E}(P^{*},f)$. Because $(P^{*})^{*}P^{*}=PP^{*}$,
we have that (a. $\Rightarrow$ d.) is equivalent to (a. $\Rightarrow$
b.) and (d. $\Rightarrow$ e.) is equivalent to (b. $\Rightarrow$
c.) and (e. $\Rightarrow$ a.) is equivalent to (c. $\Rightarrow$
a.). 
\end{proof}
\begin{lem}
\label{lem:P-to-PP-WPI}Let $P$ be $\mu$-invariant and assume $P$,
or equivalently $S$, satisfies a $(\Phi,\alpha)$-WPI. For $\epsilon\in(0,1)$,
$P_{\epsilon}^{*}P_{\epsilon}$ satisfies a $(\Phi,\frac{1}{2\epsilon(1-\epsilon)}\alpha)$-WPI.
\end{lem}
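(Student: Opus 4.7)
The plan is to reduce the Dirichlet form $\mathcal{E}(P_\epsilon^*P_\epsilon,f)$ to the Dirichlet form $\mathcal{E}(P,f)$ (equivalently $\mathcal{E}(S,f)$, by Remark~\ref{rem:nonrev_WPI}) via a short algebraic identity, and then substitute into the hypothesised WPI. Since $\Phi$ does not depend on the kernel, no work is required on the $r\Phi(f)$ side of the inequality; everything comes down to a lower bound on the new Dirichlet form.

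The main step is the following computation. Expanding $P_\epsilon^*P_\epsilon = (\epsilon\,\Id + (1-\epsilon)P^*)(\epsilon\,\Id + (1-\epsilon)P)$ gives
\[
P_\epsilon^*P_\epsilon \;=\; \epsilon^2\,\Id + 2\epsilon(1-\epsilon)\,S + (1-\epsilon)^2\,P^*P,
\]
and using the identity $1-\epsilon^2 = 2\epsilon(1-\epsilon) + (1-\epsilon)^2$ one obtains the clean decomposition
\[
\Id - P_\epsilon^*P_\epsilon \;=\; 2\epsilon(1-\epsilon)\,(\Id - S) + (1-\epsilon)^2\,(\Id - P^*P).
\]
Pairing with $f$ and using that both terms on the right yield nonnegative Dirichlet forms, I obtain
\[
\mathcal{E}(P_\epsilon^*P_\epsilon,f) \;=\; 2\epsilon(1-\epsilon)\,\mathcal{E}(S,f) + (1-\epsilon)^2\,\mathcal{E}(P^*P,f) \;\geq\; 2\epsilon(1-\epsilon)\,\mathcal{E}(P,f),
\]
where the final equality $\mathcal{E}(S,f)=\mathcal{E}(P,f)$ is Remark~\ref{rem:nonrev_WPI}.

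Given the hypothesised $(\Phi,\alpha)$-WPI for $P$, namely $\|f\|_2^2 \leq \alpha(r)\mathcal{E}(P,f) + r\Phi(f)$, the bound above rearranges to $\mathcal{E}(P,f) \leq [2\epsilon(1-\epsilon)]^{-1}\mathcal{E}(P_\epsilon^*P_\epsilon,f)$, and substituting yields
\[
\|f\|_2^2 \;\leq\; \frac{\alpha(r)}{2\epsilon(1-\epsilon)}\,\mathcal{E}(P_\epsilon^*P_\epsilon,f) + r\,\Phi(f), \qquad r>0,\; f\in \ELL_0(\mu),
\]
which is exactly the claimed $(\Phi,\tfrac{1}{2\epsilon(1-\epsilon)}\alpha)$-WPI for $P_\epsilon^*P_\epsilon$. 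There is no significant obstacle: the only subtlety is spotting the algebraic identity $1-\epsilon^2 = 2\epsilon(1-\epsilon) + (1-\epsilon)^2$ that produces the clean splitting, and noting that discarding the $(1-\epsilon)^2\mathcal{E}(P^*P,f)$ term is where the inequality (rather than equality) enters.
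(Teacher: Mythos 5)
Your proof is correct and follows essentially the same route as the paper: expand $P_\epsilon^*P_\epsilon=\epsilon^2\,\Id+2\epsilon(1-\epsilon)S+(1-\epsilon)^2P^*P$, drop the nonnegative $(1-\epsilon)^2\mathcal{E}(P^*P,f)$ contribution to get $\mathcal{E}(P_\epsilon^*P_\epsilon,f)\geq 2\epsilon(1-\epsilon)\mathcal{E}(P,f)$, and substitute. The only cosmetic difference is that you phrase the decomposition at the level of $\Id-P_\epsilon^*P_\epsilon$ and make the identity $1-\epsilon^2=2\epsilon(1-\epsilon)+(1-\epsilon)^2$ explicit, whereas the paper reads the lower bound off the kernel expansion directly.
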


\begin{proof}
It is straightforward to verify that $P_{\epsilon}^{*}=\epsilon{\rm Id}+(1-\epsilon)P^{*}$,
and therefore
\[
P_{\epsilon}^{*}P_{\epsilon}=\epsilon^{2}{\rm Id}+\epsilon(1-\epsilon)(P^{*}+P)+(1-\epsilon)^{2}P^{*}P.
\]
It follows that
\[
\mathcal{E}(P_{\epsilon}^{*}P_{\epsilon},f)\geq2\epsilon(1-\epsilon)\mathcal{E}\left(\frac{P^{*}+P}{2},f\right)=2\epsilon(1-\epsilon)\mathcal{E}\left(P,f\right),
\]
from which we may conclude.
\end{proof}
\begin{lem}
\label{lem:PP-to-P-WPI}Let $\epsilon\in(0,1)$. If $P_{\epsilon}^{*}P_{\epsilon}$
satisfies an $\left\Vert \cdot\right\Vert _{{\rm osc}}^{2}$-WPI then
$P$ satisfies an $\left\Vert \cdot\right\Vert _{{\rm osc}}^{2}$-WPI.
\end{lem}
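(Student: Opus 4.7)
The plan is to reduce the implication to a direct comparison of Dirichlet forms, via Lemma~\ref{lem:PP-dirichlet-form-ub}. Since $P_{\epsilon}=\epsilon\,\mathrm{Id}+(1-\epsilon)P$ is $\mu$-invariant, Lemma~\ref{lem:PP-dirichlet-form-ub} gives
\[
\mathcal{E}(P_{\epsilon}^{*}P_{\epsilon},f)\;\leq\;2\mathcal{E}(P_{\epsilon},f), \qquad f\in\ELL_{0}(\mu).
\]
A one-line computation using that $\mathcal{E}(\mathrm{Id},f)=0$ and bilinearity gives $\mathcal{E}(P_{\epsilon},f)=(1-\epsilon)\mathcal{E}(P,f)$, hence
\[
\mathcal{E}(P_{\epsilon}^{*}P_{\epsilon},f)\;\leq\;2(1-\epsilon)\,\mathcal{E}(P,f).
\]

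Now I would simply plug this into the assumed $(\|\cdot\|_{\osc}^{2},\alpha)$-WPI for $P_{\epsilon}^{*}P_{\epsilon}$: for every $f\in\ELL_{0}(\mu)$ and $r>0$,
\[
\|f\|_{2}^{2}\;\leq\;\alpha(r)\,\mathcal{E}(P_{\epsilon}^{*}P_{\epsilon},f)+r\|f\|_{\osc}^{2}\;\leq\;2(1-\epsilon)\alpha(r)\,\mathcal{E}(P,f)+r\|f\|_{\osc}^{2}.
\]
Thus $P$ satisfies a $(\|\cdot\|_{\osc}^{2},\tilde\alpha)$-WPI with $\tilde\alpha:=2(1-\epsilon)\alpha$, which is exactly what is required (note $\tilde\alpha$ inherits monotonicity and finiteness from $\alpha$).

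There is essentially no obstacle here; the only nontrivial input is Lemma~\ref{lem:PP-dirichlet-form-ub}, which is a standard result of Diaconis--Saloff-Coste quoted earlier. Note that this argument does \emph{not} rely on $\epsilon$ being bounded away from $0$ or $1$ for the structural implication, although of course the constant $2(1-\epsilon)$ degenerates as $\epsilon\uparrow 1$. This is the natural dual to Lemma~\ref{lem:P-to-PP-WPI}, which goes in the opposite direction and uses the sticky-Dirichlet bound Lemma~\ref{lem:stick-dirichlet-lower-bound}; together they yield the equivalence stated in Proposition~\ref{prop:wpi-P-PP-iff}.
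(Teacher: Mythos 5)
Your proof is correct and gives a much more direct route than the paper's argument. The paper's proof of Lemma~\ref{lem:PP-to-P-WPI} is non-quantitative and circuitous: it first shows that a WPI for $P_{\epsilon}^{*}P_{\epsilon}$ implies one for $(S_{\epsilon})^{2}$ via the Dirichlet-form bound $\mathcal{E}(S_{\epsilon}^{2},f)\geq\tfrac{1}{4}\mathcal{E}(P_{\epsilon}^{*}P_{\epsilon},f)$, then invokes Theorem~\ref{thm:WPI_iff_RUPI} to translate this into the RUPI property for $(S_{\epsilon})^{2}$, unwinds RUPI through resolvent sums to deduce RUPI for $S_{\epsilon}$ and then $S$, and finally appeals once more to Theorem~\ref{thm:WPI_iff_RUPI} to get back to a WPI. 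Your argument bypasses all of that by noting that Lemma~\ref{lem:PP-dirichlet-form-ub} (applied to the $\mu$-invariant kernel $P_{\epsilon}$) together with $\mathcal{E}(P_{\epsilon},f)=(1-\epsilon)\mathcal{E}(P,f)$ gives $\mathcal{E}(P_{\epsilon}^{*}P_{\epsilon},f)\leq 2(1-\epsilon)\mathcal{E}(P,f)$, and a direct substitution into the assumed WPI then yields a $(\|\cdot\|_{\osc}^{2},2(1-\epsilon)\alpha)$-WPI for $P$. Everything checks out: the inequality is in the right direction (you need an upper bound on $\mathcal{E}(P_{\epsilon}^{*}P_{\epsilon},f)$ in terms of $\mathcal{E}(P,f)$ to convert the WPI, and Lemma~\ref{lem:PP-dirichlet-form-ub} provides exactly that), and $\tilde\alpha=2(1-\epsilon)\alpha$ inherits the required monotonicity and finiteness. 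What this buys you over the paper's route: an explicit multiplicative constant $2(1-\epsilon)$ rather than a pure existence statement, avoidance of the RUPI machinery entirely, and a nicely symmetric pairing with Lemma~\ref{lem:P-to-PP-WPI} (which goes the other way via the matching lower bound $\mathcal{E}(P_{\epsilon}^{*}P_{\epsilon},f)\geq 2\epsilon(1-\epsilon)\mathcal{E}(P,f)$). One small remark: you attribute the converse to Lemma~\ref{lem:stick-dirichlet-lower-bound}, but the paper's proof of Lemma~\ref{lem:P-to-PP-WPI} actually derives that lower bound directly from the expansion of $P_{\epsilon}^{*}P_{\epsilon}$; the spirit is the same but the citation is slightly off.
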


\begin{proof}
First, suppose we have a Markov kernel $T$ such that $T^{*}T$ satisfies
an $\left\Vert \cdot\right\Vert _{{\rm osc}}^{2}$-WPI. Then for $S_{T}:=(T+T^{*})/2$
we have that 
\[
S_{T}^{2}=\frac{1}{4}\left\{ T^{2}+(T^{*})^{2}+TT^{*}+T^{*}T\right\} ,
\]
and so $\mathcal{E}(S_{T}^{2},f)\geq\mathcal{E}(T^{*}T,f)/4$. This
implies that $S_{T}^{2}$ also satisfies an $\left\Vert \cdot\right\Vert _{{\rm osc}}^{2}$-WPI. 

Hence, $P_{\epsilon}^{*}P_{\epsilon}$ satisfying an $\left\Vert \cdot\right\Vert _{{\rm osc}}^{2}$-WPI
implies that $(S_{\epsilon})^{2}$ satisfies an $\left\Vert \cdot\right\Vert _{{\rm osc}}^{2}$-WPI,
where $S_{\epsilon}:=(P_{\epsilon}+P_{\epsilon}^{*})/2$. It follows
from Theorem~\ref{thm:WPI_iff_RUPI} that $(S_{\epsilon})^{2}$ is
RUPI, which implies that $S_{\epsilon}$ is RUPI since for any $m\in\mathbb{N}$,
\[
\left\langle {\bf 1}_{A},\sum_{k=0}^{m}(S_{\epsilon})^{2k}{\bf 1}_{B}\right\rangle \leq\left\langle {\bf 1}_{A},\sum_{k=0}^{2m}(S_{\epsilon})^{k}{\bf 1}_{B}\right\rangle .
\]
Since $S_{\epsilon}=(P_{\epsilon}+P_{\epsilon}^{*})/2=\epsilon{\rm Id}+(1-\epsilon)S$
with $S=(P+P^{*})/2$, we may further deduce that $S$ is RUPI since
for any $m\in\mathbb{N}$,
\[
\left\langle {\bf 1}_{A},\sum_{k=0}^{m}(S_{\epsilon})^{k}{\bf 1}_{B}\right\rangle =\left\langle {\bf 1}_{A},\sum_{k=0}^{m}\sum_{j=0}^{k}a_{k,j}S^{j}{\bf 1}_{B}\right\rangle \leq m\left\langle {\bf 1}_{A},\sum_{k=0}^{m}S^{k}{\bf 1}_{B}\right\rangle ,
\]
where we have used the fact that for each $k\in\{0,\ldots,m\}$, $a_{k,0},a_{k,1},\ldots,a_{k,k}\geq0$
and $\sum_{j}a_{k,j}=1$. It follows that $S$ and therefore $P$
satisfy an $\left\Vert \cdot\right\Vert _{{\rm osc}}^{2}$-WPI. 
\end{proof}

\subsubsection{$\mu$-irreducibility implies a WPI\label{subsec:-irreducibility-implies-a}}

To establish that a given kernel $T$ is RUPI, it is sufficient to
show a simple irreducibility condition.
\begin{defn}
We say that a Markov kernel $T$ on $(\E,\mathscr{E})$ is $\nu$-irreducible
for a measure $\nu$ on $(\E,\mathscr{E})$ if for any measurable
set $A\in\mathscr{E}$ with $\nu(A)>0$, we have that
\[
\sum_{n=0}^{\infty}\lambda^{n}T^{n}(x,A)>0,\quad\forall x\in\mathsf{E},
\]
for some (and hence all) $0<\lambda<1$.
\end{defn}

\begin{prop}[{\cite[Corollary 4.5]{Gong2006}}]
\label{prop:irred_RUPI}Suppose that $T$ is $\mu$-irreducible.
Then $T$ is RUPI.
\end{prop}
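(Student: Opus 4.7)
My plan is to verify the alternative RUPI characterization from Lemma~\ref{lem:RUPI_equiv}: for each $\epsilon>0$, exhibit $m\in\mathbb{N}$ and $\delta>0$ with $\big\langle\mathbf{1}_A,\sum_{n=0}^{m}T^n\mathbf{1}_B\big\rangle\geq\delta$ for all $A,B\in\mathscr{E}$ with $\mu(A)\wedge\mu(B)\geq\epsilon$. The first step is immediate: $\mu$-irreducibility directly yields the pointwise statement that, for any $B$ with $\mu(B)>0$ and any $\lambda\in(0,1)$, the potential $g_{\lambda,B}(x):=R(\lambda,T)\mathbf{1}_B(x)=\sum_{n\geq 0}\lambda^nT^n(x,B)$ is strictly positive at every $x\in\mathsf{E}$, whence $\langle\mathbf{1}_A,R(\lambda,T)\mathbf{1}_B\rangle=\int_A g_{\lambda,B}\,\mathrm{d}\mu>0$ whenever $\mu(A),\mu(B)>0$. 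The substance of the proposition is to upgrade this existence statement to a uniform lower bound over $\{(A,B):\mu(A)\wedge\mu(B)\geq\epsilon\}$.

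The route I would follow relies on standard Markov-chain potential theory. Since $T$ is $\mu$-irreducible and $\mu$ is an invariant probability, the Meyn--Tweedie machinery applies and furnishes a \emph{small set}: there exist $C\in\mathscr{E}$ with $\mu(C)>0$, an integer $m_0\geq 1$, a constant $\delta_0>0$, and a non-trivial measure $\nu$ equivalent to the restriction of $\mu$ to a set of positive measure, such that $T^{m_0}(x,\cdot)\geq\delta_0\nu(\cdot)$ for every $x\in C$. Moreover, $\mu$-invariance together with $\mu$-irreducibility delivers Harris recurrence, which in turn produces uniform tightness of hitting times to $C$ on large-measure subsets of $\mathsf{E}$: for each $\eta>0$ there exists $N(\eta)$ with $\mu(\{x:\mathbb{P}_x(\tau_C\leq N)\geq\eta\})\geq 1-\eta$. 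Combining these ingredients via a decomposition of the schematic form
$$\sum_{n=0}^{N+m_0}T^n(x,B)\;\geq\;\delta_0\,\nu(B)\cdot\mathbb{P}_x(\tau_C\leq N),$$
and absorbing the factor $\nu(B)$ into a uniform lower bound via the equivalence of $\nu$ with $\mu|_{C'}$ (possibly after one further application of $T$ to move $\mu$-mass of $B$ into the domain of equivalence through irreducibility) should yield the required uniform lower bound for $m=N+m_0+O(1)$.

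The main obstacle is precisely this passage from pointwise to uniform positivity. A purely functional-analytic attempt — extract weak-$*$ limits $\mathbf{1}_{A_n}\overset{*}{\rightharpoonup}f$ and $\mathbf{1}_{B_n}\overset{*}{\rightharpoonup}g$ from a sequence violating the uniform bound, and pass to the limit in the bilinear pairing $\langle\mathbf{1}_{A_n},R(\lambda,T)\mathbf{1}_{B_n}\rangle$ — fails because the cross-term $\langle\mathbf{1}_{A_n}-f,R(\lambda,T)(\mathbf{1}_{B_n}-g)\rangle$ need not vanish even when both factors converge weak-$*$ to zero (consider, on $[0,2\pi]$, $A_n=B_n=\{\sin(nx)>0\}$). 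The small-set route circumvents this by exploiting the positivity-preserving structure of $T$ and the renewal structure bequeathed by $C$ in a way that abstract compactness arguments cannot, and this is in essence the argument recorded in Gong--Wu.
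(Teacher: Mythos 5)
The paper does not actually prove this proposition: it is stated as a citation to \cite[Corollary~4.5]{Gong2006}, so there is no in-paper proof to compare against. Evaluating your sketch on its own: the overall strategy --- reduce to the criterion of Lemma~\ref{lem:RUPI_equiv}, invoke Meyn--Tweedie small-set structure for the $\mu$-irreducible chain with invariant probability $\mu$, and combine a minorization $T^{m_0}(x,\cdot)\geq\delta_0\nu(\cdot)$ on a small set $C$ with a uniform-in-measure control of hitting times of $C$ --- is a legitimate and fairly standard route, and your diagnosis of why a naive weak-$*$ compactness argument fails is correct.

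The gap is exactly where you hedge: \emph{"absorbing the factor $\nu(B)$ \ldots possibly after one further application of $T$."} The minorizing measure $\nu$ produced by the small-set theorem is not in general absolutely continuous with respect to $\mu$ on all of $\mathsf{E}$, so for a set $B$ with $\mu(B)\geq\epsilon$ you may have $\nu(B)=0$, and a single extra application of $T$ does not repair this. What is needed after the regeneration to $\nu$ is to wait and average: use that, for a positive Harris chain, the Ces\`aro averages $\frac{1}{K}\sum_{k=1}^{K}\nu T^{k}$ converge to $\mu$ in total variation (Orey's theorem), so that for $K$ large $\frac{1}{K}\sum_{k=1}^{K}\nu T^{k}(B)\geq\epsilon/2$ uniformly over $\{B:\mu(B)\geq\epsilon\}$. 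Direct convergence of $\nu T^{k}(B)$ can fail because $\mu$-irreducibility alone does not give aperiodicity; the Ces\`aro form is essential and happens to fit the sum $\sum_{n=0}^{m}T^{n}$ in Lemma~\ref{lem:RUPI_equiv}. In fact, once one is invoking Orey's theorem, the small set is an unnecessary detour: one can apply Ces\`aro TV convergence of $\frac{1}{N}\sum_{n=0}^{N-1}T^{n}(x,\cdot)\to\mu$ directly for $\mu$-a.e.\ $x$, take an Egorov-type large set $E$ with $\mu(E)>1-\epsilon/4$ on which the convergence is uniform, and lower-bound $\langle\mathbf{1}_{A},\frac{1}{N}\sum_{n<N}T^{n}\mathbf{1}_{B}\rangle\geq\mu(A\cap E)(\mu(B)-\epsilon/4)\geq(3\epsilon/4)^{2}$, which gives the required uniform bound with no mention of $\nu$ at all. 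Two smaller caveats you should flag: the small-set (and Orey) machinery assumes $\mathscr{E}$ is countably generated, whereas the paper works on a general measurable space; and the passage from $\mu$-irreducibility with invariant probability to Harris recurrence holds only after discarding a $\mu$-null set of starting points, which is harmless here because the RUPI quantity is an integral against $\mu$, but it should be said.
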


Thus we immediately obtain by Theorem~\ref{thm:WPI_iff_RUPI} that
$\mu$-irreducibility is a sufficient condition for the existence
of an $\|\cdot\|_{\osc}^{2}$-WPI.

\begin{cor}
\label{cor:WPI_from_irred}Suppose the Markov kernel $T$ is $\mu$-irreducible.
Then $T$ possesses an $\left\Vert \cdot\right\Vert _{{\rm osc}}^{2}$-WPI
by Proposition~\ref{prop:irred_RUPI} and Theorem~\ref{thm:WPI_iff_RUPI}.
Moreover, if ${\rm ess}_{\mu}\inf_{x}T(x,\{x\})>0$ then $T^{*}T$
possesses an $\left\Vert \cdot\right\Vert _{{\rm osc}}^{2}$-WPI and
$T$ is $\left\Vert \cdot\right\Vert _{{\rm osc}}^{2}$-convergent
by Proposition~\ref{prop:hold-equiv-wpi-conv-pp}.
\end{cor}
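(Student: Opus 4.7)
The corollary is essentially a bookkeeping consequence of results already established in this subsection, so the proof plan is more about chaining results than overcoming a real obstacle. My approach is to split the statement into its two parts and trace the implications in order.

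For the first assertion, the plan is to invoke Proposition~\ref{prop:irred_RUPI} directly: $\mu$-irreducibility of $T$ yields that $T$ is RUPI. Then an application of Theorem~\ref{thm:WPI_iff_RUPI} (the equivalence of RUPI and the existence of a $\left\Vert \cdot\right\Vert _{{\rm osc}}^{2}$-WPI for $\mu$-invariant Markov kernels) immediately delivers the desired $\left\Vert \cdot\right\Vert _{{\rm osc}}^{2}$-WPI for $T$. Nothing further is needed here.

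For the ``moreover'' part, the assumption is that $\varepsilon:={\rm ess}_{\mu}\inf_{x}T(x,\{x\})>0$. To apply Proposition~\ref{prop:hold-equiv-wpi-conv-pp}, which gives the equivalence between a $\left\Vert \cdot\right\Vert _{{\rm osc}}^{2}$-WPI for $T$, a $\left\Vert \cdot\right\Vert _{{\rm osc}}^{2}$-WPI for $T^{*}T$, and $\left\Vert \cdot\right\Vert _{{\rm osc}}^{2}$-convergence of $T$, I need $\varepsilon\in(0,1)$. The plan is to dispose of the case $\varepsilon=1$ separately: if $T(x,\{x\})=1$ for $\mu$-almost all $x$, then $T^{n}(x,A)=\delta_{x}(A)$ on a $\mu$-full set for all $n\in\mathbb{N}$, so the resolvent $\sum_{n\ge0}\lambda^{n}T^{n}(x,\cdot)$ is supported on $\{x\}$ for $\mu$-almost every $x$, contradicting $\mu$-irreducibility (unless $\mathsf{E}$ is, up to $\mu$-null sets, a single point, a trivial case in which all statements hold vacuously). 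Hence $\varepsilon\in(0,1)$, Proposition~\ref{prop:hold-equiv-wpi-conv-pp} applies, and from the $\left\Vert \cdot\right\Vert _{{\rm osc}}^{2}$-WPI for $T$ established in the first part we deduce both a $\left\Vert \cdot\right\Vert _{{\rm osc}}^{2}$-WPI for $T^{*}T$ and $\left\Vert \cdot\right\Vert _{{\rm osc}}^{2}$-convergence of $T$.

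There is no real obstacle: the only minor care needed is to verify that the holding probability is strictly less than one so that Proposition~\ref{prop:hold-equiv-wpi-conv-pp} is applicable. Since the excerpt already phrases the corollary as a direct consequence of the cited propositions, the final written proof can be a single short paragraph citing Proposition~\ref{prop:irred_RUPI}, Theorem~\ref{thm:WPI_iff_RUPI}, and Proposition~\ref{prop:hold-equiv-wpi-conv-pp} in sequence.
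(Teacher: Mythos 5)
Your proposal is correct and follows essentially the same route as the paper, which presents the corollary as a direct chaining of Proposition~\ref{prop:irred_RUPI}, Theorem~\ref{thm:WPI_iff_RUPI}, and Proposition~\ref{prop:hold-equiv-wpi-conv-pp}. The one addition you make is worth noting: Proposition~\ref{prop:hold-equiv-wpi-conv-pp} requires the holding probability to lie in the \emph{open} interval $(0,1)$, whereas the corollary only assumes ${\rm ess}_{\mu}\inf_{x}T(x,\{x\})>0$, and the paper leaves the exclusion of the endpoint $1$ implicit. Your argument that $\varepsilon=1$ would force $T^{n}(x,\cdot)=\delta_{x}$ on a $\mu$-full set, so that for any partition of $\mathsf{E}$ into two $\mu$-positive sets the resolvent from one to the other vanishes on a $\mu$-positive set of starting points, contradicting $\mu$-irreducibility (unless $\mu$ is a point mass, in which case $\ELL_{0}(\mu)=\{0\}$ and all claims hold trivially), correctly closes this small gap. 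So your write-up is a sound, slightly more careful version of the same proof.
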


It is important to note that $P$ possessing a WPI does not necessarily
imply that $\left\Vert P^{n}f\right\Vert _{2}\to0$ for all relevant
functions. Indeed, a reversible, periodic Markov kernel may satisfy
an $\left\Vert \cdot\right\Vert _{{\rm osc}}^{2}$-WPI yet $\left\Vert P^{n}f\right\Vert _{2}$
cannot converge to $0$ for all bounded functions.
\begin{rem}
When $P$ is reversible, it is possible to deduce the existence of
a WPI for $P^{2}=P^{*}P$ from a WPI for $P$, provided that one has
some additional control on the left spectral gap; see \cite[Section 2.2.1]{ALPW2021}.
In turn, the existence of a WPI for $P$ can often be straightforwardly
deduced from Corollary~\ref{cor:WPI_from_irred} by establishing
irreducibility of $P$.
\end{rem}

\begin{rem}
Corollary~\ref{cor:WPI_from_irred} allows us to guarantee the existence
of a $\left\Vert \cdot\right\Vert _{{\rm osc}}^{2}$-WPI for $T=\left(P^{*}\right)^{k}P^{k}$,
for some $k\in\mathbb{N}$, in many situations. If $P$ is reversible
then $P^{*}P=P^{2}$ being $\mu$-irreducible implies that a WPI exists
for $P^{*}P$. Note that if $P^{2}$ is not $\mu-$irreducible, then
neither is $\left(P^{*}\right)^{k}P^{k}=(P^{2})^{k}$ for any $k>1$.
If $P$ is $\mu$-invariant and admits an $\ELL_{0}(\mu)$-spectral
gap then there exists some $k\in\mathbb{N}$ such that $\left\Vert P^{k}f\right\Vert _{2}\leq C\left\Vert f\right\Vert _{2}$
for some $C<1$ and all $f\in\ELL_{0}(\mu)$ and hence $\left(P^{*}\right)^{k}P^{k}$
admits a strong Poincaré inequality. However, if $P$ is nonreversible
then even if $\left\Vert P^{n}f\right\Vert _{2}$ decays geometrically
for bounded functions, it is possible that for all $k\in\mathbb{N}$,
$\left(P^{*}\right)^{k}P^{k}$ is not $\mu$-irreducible and does
not admit a WPI; see Example~\ref{exa:irred-p*kpk} and Proposition~\ref{prop:nonrev-geo-reducible}. 
\end{rem}

The following example demonstrates (in case 1) that for an arbitrary
$k\in\mathbb{N}$, there exists nonreversible $P$ such that $\left(P^{*}\right)^{k}P^{k}$
is $\mu$-irreducible while $\left(P^{*}\right)^{i}P^{i}$ is not
$\mu$-irreducible for any positive integer $i<k$. It also demonstrates
(case 2) that $(P^{*})^{k}P^{k}$ cannot be $\mu$-irreducible for
any $k\in\mathbb{N}$ even though $P$ is $\mu$-irreducible, and
in this case it is not clear that one can define an appropriate WPI
that provides a vanishing upper bound on $\left\Vert P^{n}f\right\Vert _{2}^{2}$.

In fact, similar examples have been considered by \cite{haggstrom2005central}
and \cite[Section~6]{stadje2011three}, who are essentially interested
in geometrically ergodic Markov chains for which a CLT fails to hold
for an $\ELL$ function, or which do not admit an $\ELL_{0}$ spectral
gap. We construct such an example in Proposition~\ref{prop:nonrev-geo-reducible}.
Our consideration of the following family of examples is very natural;
because there can be arbitrarily long periods of deterministic behaviour,
lack of $\mu$-irreducibility is straightforward to deduce.
\begin{example}
\label{exa:irred-p*kpk}Let $\mathsf{E}=\{1,2,\ldots\}^{2}$, and
$\nu$ a probability mass function on $\{1,2,\ldots\}$ such that
$\nu(1)\in(0,1)$ and $\nu$ has a finite mean. Define
\[
P(i,j;i',j')=\begin{cases}
1 & j<i,i'=i,j'=j+1,\\
\nu(i') & j=i,j'=1,\\
1 & \nu(i)\mathbf{1}\{j\leq i\}=0,(i',j')=(1,1),\\
0 & \text{otherwise}.
\end{cases}
\]
The intuition is that the Markov chain moves to the right along ``level''
$i$ deterministically until it reaches the point $(i,i)$, at which
point it jumps to the start of another level $(K,1)$ where $K\sim\nu$.
The third statement is concerned with initialization of the chain
outside the support of the invariant distribution $\mu$, which one
can verify directly is given by
\[
\mu(i,j)=\frac{\nu(i)\mathbf{1}\{j\leq i\}}{\sum_{k=1}^{\infty}\nu(k)k}.
\]
$P$ is $\mu$-irreducible with an accessible, aperiodic atom $(1,1)$
and its Markov chain converges to $\mu$ in total variation from any
starting point (by, e.g., \cite[Theorem~7.6.4]{douc2018markov}).

By viewing $P^{*}$ as the time-reversal of $P$, and satisfying $\mu(i,j)P^{*}(i,j;i',j')=\mu(i',j')P(i',j';i,j)$,
we may define
\[
P^{*}(i,j;i',j')=\begin{cases}
1 & j>1,i'=i,j'=j-1,\\
\nu(i') & j=1,j'=i',\\
1 & \nu(i)\mathbf{1}\{j\leq i\}=0,(i',j')=(1,1),\\
0 & \text{otherwise}.
\end{cases}
\]

Case 1:\textbf{ }Assume that for some $i_{0}\in\{2,3,\ldots\}$, $\nu(i)>0$
for all $i\leq i_{0}$ and $\nu(i)=0$ for $i>i_{0}$. This means
there is a maximum level length of $i_{0}$. We see that if $k<i_{0}$
then
\[
(P^{*})^{k}P^{k}(i_{0},i_{0};i_{0},i_{0})=1,
\]
since $(P^{*})^{k}(i_{0},i_{0};i_{0}-k,i_{0}-k)=1$ and $P^{k}(i_{0}-k,i_{0}-k;i_{0},i_{0})=1$.
Hence $(P^{*})^{k}P^{k}$ is reducible for any $k<i_{0}-1$. On the
other hand, for $k\geq i_{0}$, we may deduce that $(P^{*})^{k}P^{k}$
is $\mu$-irreducible. In particular, since $P^{*}(1,1;1,1)=P(1,1;1,1)=\nu(1)\in(0,1)$,
we see that $(P^{*})^{k}(i,j;1,1)>0$ for all $(i,j)\in\mathsf{E}$,
from which one may deduce that $(P^{*})^{k}P^{k}(i,j;i',j')>0$ for
all $i,j,i',j'\in\mathsf{E}$ such that $\mu(i',j')>0$. Note that
since $P(i,j;1,1)=1$ for all $(i,j)$ such that $\mu(i,j)=0$, this
is essentially a finite state space Markov chain after 1 step, and
hence convergence is geometric. 

Case 2: Assume that $\nu(i)>0$ for all $i\in\{1,2,\ldots\}$. For
any $k\in\mathbb{N}$ we may consider level $i>k$ and we see that
$(P^{*})^{k}P^{k}(i,i;i,i)=1$ so $(P^{*})^{k}P^{k}$ is reducible.
Hence, there does not exist $k\in\mathbb{N}$ such that $(P^{*})^{k}P^{k}$
is $\mu$-irreducible.
\end{example}

Our final result in this section shows that $P$ being $\Phi$-convergent
does not imply that there exists $k\in\mathbb{N}$ such that $(P^{*})^{k}P^{k}$
admits a $\Phi$-WPI when $P$ is nonreversible, even in the case
where $\gamma$ decays geometrically. We note that by Proposition~\ref{prop:cattiaux-et-al-gamma-p},
geometric convergence can be extended to all functions in ${\rm L}_{0}^{p}(\mu)$
for any $p>2$.
\begin{prop}
\label{prop:nonrev-geo-reducible}For the chain in Example~\ref{exa:irred-p*kpk},
let $\nu(i)=(1-a)a^{i-1}$ for some $a\in(0,1)$. Then
\begin{enumerate}
\item $P$ is geometrically ergodic and 
\[
\left\Vert P^{n}f\right\Vert _{2}^{2}\leq\left\Vert f\right\Vert _{{\rm osc}}^{2}C\rho^{2n},\qquad f\in\ELL_{0}(\mu),
\]
for some $C>0$ and $\rho\in(0,1)$; 
\item $(P^{*})^{k}P^{k}$ does not admit an $\left\Vert \cdot\right\Vert _{{\rm osc}}^{2}$-WPI
for any $k\in\mathbb{N}$.
\end{enumerate}
\end{prop}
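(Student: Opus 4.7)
The plan is to handle the two assertions separately: part 1 by a classical atom/drift argument upgraded to $\mathrm{L}^2$, and part 2 by applying the lower bound on $\beta^\star$ from Theorem~\ref{thm:lower-bound-beta-star}.

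For part 1, observe that $(1,1)$ is an aperiodic accessible atom, since $P((1,1),(1,1)) = \nu(1) = 1-a > 0$ and from any state $(i,j)$ the chain reaches $(1,1)$ with positive probability within $i-j+1$ jumps. I would introduce the Lyapunov function $V(i,j) := b^{i}$ with $b > 1$ chosen so that $ab^2 < 1$, and verify a geometric drift condition of the form $PV \leq \lambda V + K \mathbf{1}_{\{(1,1)\}}$ with $\lambda \in (0,1)$, $K < \infty$ (using $\sum_{K} \nu(K) b^K = (1-a)b/(1-ab) < \infty$ on the jumping states $(i,i)$, with a slight enlargement of the exceptional set if needed to handle the ``level-interior'' states). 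Standard results (e.g.\ \cite{meyn:tweedie:1993}) then give $\|P^{n}(x,\cdot) - \mu\|_{{\rm TV}} \leq C' V(x)\rho^{n}$ for some $\rho \in (0,1)$. To pass to the stated $\mathrm{L}^2$ bound, note that for $f \in \mathrm{L}_{0}^{2}(\mu)$,
\[
|P^{n}f(x)| \leq \|f\|_{\osc}\,\|P^{n}(x,\cdot) - \mu\|_{{\rm TV}},
\]
and integrating the square against $\mu$,
\[
\|P^{n}f\|_{2}^{2} \leq C'^{\,2}\|f\|_{\osc}^{2}\rho^{2n}\!\int V^{2}\,\mathrm{d}\mu.
\]
Finiteness of $\int V^{2}\,\mathrm{d}\mu = (1-a)^2\sum_{i\geq1} i\,(ab^{2})^{i-1} b^{2}$ follows exactly from $ab^{2} < 1$.

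For part 2, which is the main content of the proposition, observe first that $(P^{*})^{k}P^{k}$ is automatically $\mu$-reversible, so Theorem~\ref{thm:lower-bound-beta-star} applies. The key combinatorial fact is that whenever $(i,j)$ satisfies $j \geq 1$ and $i \geq j+k$, the forward dynamics $P^{k}$ moves deterministically within level $i$, taking $(i,j)$ to $(i,j+k)$, and $(P^{*})^{k}$ deterministically reverses this to return to $(i,j)$ (at each backward step the $j$-coordinate stays strictly greater than $1$). Hence $(P^{*})^{k}P^{k}(x,\{x\}) = 1$ for every such $x$, so the set $A_{0} := \{x : (P^{*})^{k}P^{k}(x,\{x\}) = 1\}$ contains $\{(i,j) : j\geq 1,\, i \geq j+k\}$. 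Using $\mu(i,j) = (1-a)^{2}a^{i-1}\mathbf{1}\{j\leq i\}$ one computes
\[
\mu(A_{0}) \geq \sum_{i\geq k+1}\sum_{j=1}^{i-k}(1-a)^{2}a^{i-1} = (1-a)^{2}\sum_{m\geq1} m\,a^{m+k-1} = a^{k} > 0.
\]
Since $A_\varepsilon \supseteq A_0$ for all $\varepsilon > 0$ and $\mu(A_\varepsilon) \downarrow \mu(A_0)$ as $\varepsilon \downarrow 0$, Theorem~\ref{thm:lower-bound-beta-star} (applied to $(P^{*})^{k}P^{k}$) gives that any $\beta$ in a $(\|\cdot\|_{\osc}^{2},\beta)$-WPI for $(P^{*})^{k}P^{k}$ must satisfy
\[
\beta(s) \geq \mu(A_{0})(1 - \mu(A_{0})) \geq a^{k}(1 - a^{k}) > 0, \qquad s > 0,
\]
contradicting the requirement $\beta(s) \to 0$ as $s \to \infty$ in Definition~\ref{def:WPI}.

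The main obstacle lies in part 1: Meyn--Tweedie-style drift arguments only give TV convergence with a state-dependent prefactor $V(x)$, so recovering the $\mathrm{L}^{2}(\mu)$ bound controlled by $\|f\|_{\osc}$ forces $V \in \mathrm{L}^{2}(\mu)$, which in this geometric setting imposes the strict constraint $b < a^{-1/2}$ on the drift function. Once this balancing is done, part 1 is routine and part 2 is an almost immediate consequence of the deterministic nature of $P^{k}$ on long levels combined with Theorem~\ref{thm:lower-bound-beta-star}.
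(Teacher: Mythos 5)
Your Part~2 argument is correct but takes a different route from the paper: you apply Theorem~\ref{thm:lower-bound-beta-star} to the $\mu$-reversible kernel $(P^{*})^{k}P^{k}$, using the observation that $(P^{*})^{k}P^{k}(x,\{x\})=1$ whenever $x=(i,j)$ with $i\geq j+k$, which gives $\mu(A_{0})\geq a^{k}>0$ and hence a uniform-in-$s$ lower bound on any admissible $\beta$. The paper's proof is more direct: it exhibits $f_{k}=\mathbf{1}_{A_{k}}-\mu(A_{k})$ with $A_{k}=\{(i,i):i>k\}$, checks that $\|P^{k}f_{k}\|_{2}=\|f_{k}\|_{2}$ (so $\calE((P^{*})^{k}P^{k},f_{k})=0$ while $\|f_{k}\|_{2}>0$), and concludes immediately. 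Both arguments rest on the same deterministic structure of long levels; the paper's avoids the need to additionally verify $\mu(A_{0})<1$. One slip in your chain: $\mu(A_{0})(1-\mu(A_{0}))\geq a^{k}(1-a^{k})$ does not follow from $\mu(A_{0})\geq a^{k}$ alone, since $p\mapsto p(1-p)$ is not monotone on $(0,1)$; what you actually need, and what does hold, is $0<\mu(A_{0})<1$.

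Your Part~1 has a genuine gap. The proposed Lyapunov function $V(i,j)=b^{i}$ is constant along a level: for $j<i$ the chain moves deterministically from $(i,j)$ to $(i,j+1)$, so $PV(i,j)=V(i,j+1)=b^{i}=V(i,j)$, and no contraction $PV\leq\lambda V$ with $\lambda<1$ holds off a small set. A ``slight enlargement of the exceptional set'' cannot rescue this: one would have to put all of $\{(i,j):j<i\}$ into the exceptional set, and that set is not petite --- from $(i,1)$ it takes $i-1$ deterministic steps to even reach the jump state $(i,i)$, so no uniform minorization holds across it. The fix is a Lyapunov function that decreases as $j$ increases, tracking the distance to the level endpoint; the paper takes $V(i,j)=\lambda^{j-i}$ with $\lambda\in(\sqrt{a},1)$, giving $PV(i,j)=\lambda V(i,j)$ for $j<i$, $PV(i,i)=\sum_{K}\nu(K)\lambda^{1-K}<\infty$ when $\lambda>a$, and $\mu(V^{2})<\infty$ when $\lambda>\sqrt{a}$. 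With such a $V$ your subsequent route (a Meyn--Tweedie TV bound, then $|P^{n}f(x)|\leq\|f\|_{\osc}\,\|P^{n}(x,\cdot)-\mu\|_{\mathrm{TV}}$ and integrating the square against $\mu$) works, and is essentially equivalent to the paper's use of Baxendale's theorem.
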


\begin{proof}
We will apply \cite[Theorem~1.1]{baxendale2005renewal}. We may consider
the state space to be the $\mu$-full set $\mathsf{E}=\{(i,j)\in\{1,2,\ldots\}^{2}:j\leq i\}$
for simplicity. We now verify the assumptions (A1)-(A3) in \cite{baxendale2005renewal}.
We first define the set $C=\{(i,j)\in E:i=j\}$. We define the probability
measure $\tilde{\nu}(i,j)=\nu(i)\mathbb{\mathbf{1}}\{j=1\}$, and
we have that 
\[
P(x,A)=\tilde{\nu}(A),\qquad x\in C.
\]
For any $\lambda\in(\sqrt{a},1)$ we may define the Lyapunov function
$V(i,j):=\lambda^{j-i}$ and we observe that $V\geq1$ on $\mathsf{E}$,
with $PV(x)\leq\lambda V(x){\bf 1}_{C^{\complement}}(x)+K{\bf 1}_{C}(x)$,
where
\[
K=\tilde{\nu}(V)=\sum_{i\geq1}\nu(i)V(i,1)\propto\sum_{i\geq1}a^{i}\lambda^{1-i}<\infty,
\]
since $\lambda>\sqrt{a}>a$. Finally, we note that $\tilde{\nu}(C)=\nu(1)>0$.
It then follows by the theorem that there exist $M>0$, $\rho\in(0,1)$
such that
\[
\sup_{f:\left|f\right|\leq V,\mu(f)=0}\left|P^{n}f(x)\right|\leq MV(x)\rho^{n}.
\]
Hence, we may deduce that for $f$ such that $\left|f\right|\leq V$
\[
\left\Vert P^{n}f\right\Vert _{2}^{2}\leq M^{2}\mu(V^{2})\rho^{2n},
\]
and since $\lambda>\sqrt{a}$, we have
\begin{align*}
\mu(V^{2}) & =\sum_{i\geq1}\sum_{j=1}^{i}\mu(i,j)V(i,j)^{2}\\
 & \propto\sum_{i\geq1}a^{i}\sum_{j=1}^{i}\lambda^{2(j-i)}\\
 & \leq\frac{\lambda^{2}}{1-\lambda^{2}}\sum_{i\geq1}\frac{a^{i}}{\lambda^{2i}}\\
 & <\infty.
\end{align*}
The bound on $\left\Vert P^{n}f\right\Vert _{2}^{2}$ for bounded
functions then follows since $V\geq1$.

For the second part, let $k\in\mathbb{N}$ be arbitrary. Let $A_{k}=\{(i,i):i>k\}$
and $f_{k}={\bf 1}_{A_{k}}-\mu(A_{k})$, which satisfies $\mu(f_{k})=0$.
Then $\left\Vert P^{k}f_{k}\right\Vert _{2}^{2}=\left\langle \left(P^{*}\right)^{k}P^{k}f_{k},f_{k}\right\rangle =\left\langle f_{k},f_{k}\right\rangle =\left\Vert f_{k}\right\Vert _{2}^{2}$,
so $\mathcal{E}((P^{*})^{k}P^{k},f_{k})=0$. Since $\left\Vert f_{k}\right\Vert _{2}>0$,
$(P^{*})^{k}P^{k}$ cannot satisfy a $\left\Vert \cdot\right\Vert _{{\rm osc}}^{2}$-WPI.
\end{proof}

\subsection{Lyapunov meets Poincaré\label{subsec:establish-WPI-Lyapunov-meets-Poincar=0000E9}}

A difficulty with functional-analytic approaches to the study of Markov
chains is the challenge posed by unbounded supports for $\mu$; in
particular, handling the tails of $\mu$. A general strategy consists
of splitting the state space $\mathsf{E}$ into a distinguished set
$C$ on which a form of strong Poincaré inequality is established,
while the behaviour of the chain on $C^{\complement}$ is handled
with a Lyapunov drift function. Such ideas have been primarily explored
for certain classes of continuous-time Markov processes, with \cite{bakry2008simple}
establishing quantitative strong Poincaré inequalities for the overdamped
Langevin process; these results were later extended to heavy-tailed
target distributions in \cite{cattiaux2010functional} to establish
WPIs. It is only recently that some of these ideas were extended to
discrete-time Markov chains in \cite{taghvaei2021lyapunov} where
a strategy to establish strong Poincaré inequalities is proposed;
we note also the recent contribution of \cite{canizo2021harris}.
In this subsection we first briefly review the key results of \cite{taghvaei2021lyapunov},
show how they can be improved in the spirit of \cite{bakry2008simple}
by using local Poincaré inequalities (Subsection~\ref{subsec:Lyapunov-The-geometric-scenario}).
In Subsection~\ref{subsec:Lyapunov-The-subgeometric-scenario}, we
show how these results can be extended to \textit{subgeometric} drift
conditions in order to establish WPIs. 

We first define precisely the restriction of the $\mu$-invariant
kernel $P$ to the set $C$ and the notion of \textit{local} Poincaré
inequality. 
\begin{defn}
\label{def:restricted_C}For some $C\in\mathscr{E}_{+}$, we define
the \textit{restriction of $\mu$ to $C$} to be the probability measure
$\mu_{C}$ supported on $C$ given by
\[
\mu_{C}(A):=\frac{\mu(A\cap C)}{\mu(C)},\qquad A\in\mathscr{E},
\]
and \textit{the restriction of $P$ to $C$} is defined to be the
kernel $P_{C}$ defined as: for each $x\in\mathsf{E}$,
\[
P_{C}f(x):=P(f\cdot\mathbf{1}_{C})(x)+f(x)P(x,C^{\complement})\,.
\]
We will say that a \textit{restricted Poincaré inequality holds for}
$P$ on $C$ if a strong Poincaré inequality holds for $P_{C}$: for
some $\mathrm{C_{r}}>0$ and all $f\in\ELL_{0}(\mu_{C})$,
\begin{align}
\|f\|_{\mu_{C},2}^{2} & =\mu(C)^{-1}\int f^{2}(x)\mathbf{1}_{C}(x)\,\mu(\dif x)\nonumber \\
 & \le\mathrm{C}_{\mathrm{r}}\calE(P_{C},f),\label{eq:restricted_PI}
\end{align}
where $\calE(P_{C},f):=\int\mu_{C}({\rm d}x)P_{C}(x,{\rm d}y)\left[f(y)-f(x)\right]^{2}$
for any $f\in\ELL(\mu_{C})$.

This can equivalently be expressed as requiring: for any $f\in\ELL(\mu)$,
\[
\mu_{C}\big(f_{m}^{2}\big)\leq\mathrm{C}_{\mathrm{r}}\calE(P_{C},f),
\]
with $m:=\mu(f\cdot\mathbf{1}_{C})/\mu(C)$ and $f_{m}:=f-m$.

Finally, we will say that a \textit{local Poincaré inequality holds
for $P$ on $C$} if for some $\mathrm{C}_{l}>0$, for all $f\in\ELL(\mu)$,
there is some $m\in\R$ such that setting $f_{m}:=f-m$, we have
\begin{equation}
\|f_{m}\mathbb{\mathbf{1}}_{C}\|_{2}^{2}\leq\mathrm{C}_{l}\mathcal{E}(P,f).\label{eq:local-SPI}
\end{equation}
\end{defn}

We note that when $P$ is reversible, the restriction $P_{C}$ is
simply a Metropolis--Hastings Markov kernel targeting $\mu_{C}$
and using proposal distribution $P$. The $\mu_{C}$-reversibility
of such restrictions is well-known; a proof is provided for completeness.
For a nonreversible, $\mu$-invariant $P$ it is also well-known that
$P_{C}$ is not necessarily $\mu_{C}$-invariant. 

\begin{lem}
\label{lem:local_SPI_to_SPI} Let $P$ be a $\mu-$reversible Markov
kernel. Then the kernel $P_{C}$ is $\mu_{C}-$reversible, and furthermore
if a restricted Poincaré inequality (\ref{eq:restricted_PI}) holds
for $P_{C}$, then the the following local Poincaré inequality for
$P$ on $C$ holds: for any $f\in\ELL(\mu)$,
\[
\|f_{m}\mathbb{\mathbf{1}}_{C}\|_{2}^{2}\leq\mathrm{C}_{\mathrm{r}}\mathcal{E}(P,f),
\]
with $m:=\mu(f\mathbf{1}_{C})/\mu(C)$ and $f_{m}:=f-m$.
\end{lem}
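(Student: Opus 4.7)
The plan is to establish both assertions by direct computation, treating the two constituent pieces of $P_{C}$ (the off-diagonal piece $P(x,\dif y)\mathbf{1}_{C}(y)$ and the sticky piece $P(x,C^{\complement})\delta_{x}(\dif y)$) separately in each argument.

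For the reversibility claim, I would read off from the definition that
\[
\mu_{C}(\dif x)\,P_{C}(x,\dif y)=\mu(C)^{-1}\mathbf{1}_{C}(x)\mu(\dif x)\Big[\mathbf{1}_{C}(y)P(x,\dif y)+P(x,C^{\complement})\delta_{x}(\dif y)\Big].
\]
The second summand is concentrated on the diagonal $\{x=y\}$, hence manifestly invariant under swapping the roles of $x$ and $y$. For the first summand, the factor $\mathbf{1}_{C}(x)\mathbf{1}_{C}(y)$ is symmetric, while $\mu(\dif x)P(x,\dif y)$ is symmetric by $\mu$-reversibility of $P$; combining these yields symmetry of the whole joint measure, i.e. $\mu_{C}$-reversibility of $P_{C}$.

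For the local Poincaré inequality, set $m:=\mu(f\mathbf{1}_{C})/\mu(C)=\mu_{C}(f)$, so that $f_{m}:=f-m$ lies in $\ELL_{0}(\mu_{C})$. Applying the restricted Poincaré inequality (\ref{eq:restricted_PI}) to $f_{m}$ and using that the Dirichlet form depends only on the increments $f(y)-f(x)$ gives
\[
\|f_{m}\|_{\mu_{C},2}^{2}\leq\mathrm{C}_{\mathrm{r}}\,\calE(P_{C},f_{m})=\mathrm{C}_{\mathrm{r}}\,\calE(P_{C},f).
\]
The key observation is that the $\delta_{x}$-piece of $P_{C}$ contributes zero to $\calE(P_{C},f)$ since $[f(x)-f(x)]^{2}=0$, so the integral reduces to one over $C\times C$ with respect to $\mu\otimes P$ and is bounded above by the same integral extended to all of $\mathsf{E}\times\mathsf{E}$; this produces $\calE(P_{C},f)\leq\mu(C)^{-1}\calE(P,f)$. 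Multiplying by $\mu(C)$ and using $\mu(C)\|f_{m}\|_{\mu_{C},2}^{2}=\|f_{m}\mathbf{1}_{C}\|_{2}^{2}$ yields the advertised bound.

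The argument is essentially bookkeeping, and I do not anticipate any serious obstacle. The one point that deserves care is keeping the two normalization conventions for Dirichlet forms consistent (the $\tfrac{1}{2}$ implicit in $\calE(T,f)=\langle(\Id-T)f,f\rangle$ for $\mu$-reversible $T$), so that the factors of $\mu(C)^{-1}$ and $\mu(C)$ cancel cleanly and the constant comes out exactly $\mathrm{C}_{\mathrm{r}}$ without stray factors of $2$.
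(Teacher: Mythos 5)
Your proof is correct and follows essentially the same route as the paper: for reversibility, you observe that the sticky diagonal piece of $\mu_{C}\otimes P_{C}$ is symmetric trivially while the off-diagonal piece inherits symmetry from $\mu$-reversibility of $P$; for the local Poincar\'e inequality, you apply \eqref{eq:restricted_PI} to $f_{m}\in\ELL_{0}(\mu_{C})$, observe the Dirac piece contributes nothing to the Dirichlet form, relax $\mathbf{1}_{C}(x)\mathbf{1}_{C}(y)\leq 1$, and cancel the factors of $\mu(C)$. This is precisely the paper's chain of inequalities, just with the intermediate bound $\calE(P_{C},f)\leq\mu(C)^{-1}\calE(P,f)$ displayed separately rather than absorbed into one long computation.

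Your note of caution about the $\tfrac{1}{2}$ normalization is well placed and in fact touches a genuine slip in the paper's text: Definition~\ref{def:restricted_C} displays $\calE(P_{C},f):=\int\mu_{C}(\dif x)P_{C}(x,\dif y)[f(y)-f(x)]^{2}$ \emph{without} the factor $\tfrac{1}{2}$, which is inconsistent with the global convention $\calE(T,f)=\langle(\Id-T)f,f\rangle=\tfrac{1}{2}\int\mu(\dif x)T(x,\dif y)[f(x)-f(y)]^{2}$ used for the reversible $P$. The paper's own proof of this lemma quietly restores the $\tfrac{1}{2}$ (it writes $\tfrac{\mathrm{C_r}}{2}$ after invoking \eqref{eq:restricted_PI}); taking the displayed definition literally would instead land you at $\|f_{m}\mathbf{1}_{C}\|_{2}^{2}\leq 2\,\mathrm{C_r}\,\calE(P,f)$. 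Your argument comes out exactly as $\mathrm{C_r}\,\calE(P,f)$ because you implicitly use the same $\tfrac{1}{2}$ convention for both $\calE(P_{C},\cdot)$ and $\calE(P,\cdot)$, which is the intended reading.
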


\begin{proof}
We first check $\mu_{C}$-reversibility of $P_{C}$. For $f,g\in\ELL(\mu)$,
let
\[
v=\int f(x)g(x)\mu_{C}({\rm d}x)P(x,C^{\complement}),
\]
and by the $\mu-$reversibility of $P$, we have
\begin{align*}
\int f(x)g(y)\mu_{C}({\rm d}x)P_{C}(x,{\rm d}y) & =\frac{1}{\mu(C)}\int f(x)\mathbf{1}_{C}(x)g(y)\mathbf{1}_{C}(y)\mu({\rm d}x)P(x,{\rm d}y)+v\\
 & =\frac{1}{\mu(C)}\int f(y)\mathbf{1}_{C}(y)g(x)\mathbf{1}_{C}(x)\mu({\rm d}x)P(x,{\rm d}y)+v\\
 & =\int f(y)g(x)\mu_{C}({\rm d}x)P_{C}(x,{\rm d}y).
\end{align*}
Now from the restricted Poincaré inequality, we have
\begin{align*}
\int f_{m}^{2}(x)\mathbf{1}_{C}(x){\rm d}\mu({\rm d}x) & \leq\mu(C)\cdot\frac{\mathrm{C}_{\mathrm{r}}}{2}\int\mu_{C}({\rm d}x)P_{C}(x,{\rm d}y)\left[f(y)-f(x)\right]^{2}\\
 & =\frac{\mathrm{C}_{\mathrm{r}}}{2}\int\mu(\dif x)P\left(x,\dif y\right)\,\mathbf{1}_{C}(x)\mathbf{1}_{C}\left(y\right)\,\left[f(x)-f(y)\right]^{2}\\
 & \le\frac{\mathrm{C}_{\mathrm{r}}}{2}\int\mu(\dif x)P\left(x,\dif y\right)\,\left[f(x)-f(y)\right]^{2}\\
 & =\mathrm{C}_{{\rm r}}\mathcal{E}(P,f)\,.
\end{align*}
\end{proof}
In Section~\ref{subsec:Local-Poincar=0000E9-inequalities-iso}, we
show how one can deduce local Poincaré inequalities when $\mu$ has
a strongly log-concave density and a coupling argument 

\subsubsection{The geometric scenario\label{subsec:Lyapunov-The-geometric-scenario}}

The following is the adaption of \cite{bakry2008simple} to the discrete
time scenario by \cite{taghvaei2021lyapunov} where we here replace
the minorization condition with a local Poincaré inequality. 
\begin{thm}[\cite{taghvaei2021lyapunov}]
\label{thm:bakry-taghvaei} Assume the existence of $C\subset\mathsf{X}$,
a Lyapunov function $V\colon\mathsf{E}\rightarrow[1,\infty)$, and
constants $K,b,>0$ and $\lambda\in(0,1]$ such that 
\begin{equation}
PV\leqslant\left(1-\lambda\right)V+b\mathbf{1}_{C},\label{eq:drift-condition}
\end{equation}
and that we have the following local Poincaré inequality for $P$
on $C$: for any $f\in\ELL_{0}(\mu)$, there exists some $m>0$ such
that for $f_{m}:=f-m$,
\begin{equation}
\left\langle f_{m}^{2},\mathbf{1}_{C}\right\rangle \leqslant K\left\langle f,\left({\rm Id}-P\right)f\right\rangle .\label{eq:semi-local-poincare}
\end{equation}
Then we have the following (strong) Poincaré inequality for $P$:
for any $f\in\ELL(\mu)$,
\[
\frac{\lambda}{1+Kb}\|f-\mu(f)\|_{2}^{2}\leqslant\left\langle f,\left({\rm Id}-P\right)f\right\rangle .
\]
\end{thm}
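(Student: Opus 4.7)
I will work in the $\mu$-reversible setting, which is the natural one for the manipulations below (and under which the local Poincar\'e inequality relates cleanly to the restricted one via Lemma~\ref{lem:local_SPI_to_SPI}). Both sides of the claimed inequality are invariant under $f \mapsto f - \mu(f)$, so I reduce at once to the case $f \in \ELL_0(\mu)$ and aim to show $\|f\|_2^2 \le \tfrac{1+Kb}{\lambda}\,\mathcal{E}(P,f)$.

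First I would apply the local Poincar\'e inequality (\ref{eq:semi-local-poincare}) to $f$ to extract $m \in \mathbb{R}$ and set $g := f - m$, so that $\mathcal{E}(P,g) = \mathcal{E}(P,f)$, $\|f\|_2^2 \le \|g\|_2^2$, and $\|g\mathbf{1}_C\|_2^2 \le K\,\mathcal{E}(P,f)$. Since $V \ge 1$, dividing the drift (\ref{eq:drift-condition}) by $V$ yields the pointwise bound
\[
\lambda \;\le\; \frac{V - PV}{V} \;+\; b\,\mathbf{1}_C.
\]
Multiplying by $g^2$ and integrating against $\mu$ gives
\[
\lambda\|g\|_2^2 \;\le\; \int g^2\,\frac{V-PV}{V}\,\dif\mu \;+\; b\!\int g^2 \mathbf{1}_C\,\dif\mu,
\]
and the second term is bounded by $Kb\,\mathcal{E}(P,f)$ by the local Poincar\'e inequality.

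The crux---which I expect to be the main obstacle---is a key lemma: for $\mu$-reversible $P$ and any $V > 0$,
\[
\int g^2\,\frac{V-PV}{V}\,\dif\mu \;\le\; \mathcal{E}(P,g).
\]
To prove this I would expand the left-hand side as $\|g\|_2^2 - \int(g^2/V)\,PV\,\dif\mu$, use reversibility to rewrite $\int(g^2/V)\,PV\,\dif\mu = \int V\,P(g^2/V)\,\dif\mu$, and observe that since $\mathcal{E}(P,g) = \|g\|_2^2 - \int g\,Pg\,\dif\mu$, it suffices to show
\[
\int g\,Pg\,\dif\mu \;\le\; \int V\,P(g^2/V)\,\dif\mu.
\]
This follows from the pointwise Young inequality
\[
2\,g(x)g(y) \;\le\; \frac{V(y)}{V(x)}\,g(x)^2 + \frac{V(x)}{V(y)}\,g(y)^2,
\]
integrated against $\mu(\dif x)P(x,\dif y)$: the first term on the right integrates directly to $\int V\,P(g^2/V)\,\dif\mu$, the second integrates to the same quantity by reversibility, while the left-hand side gives $2\int g\,Pg\,\dif\mu$.

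Combining the two bounds yields $\lambda\|g\|_2^2 \le (1+Kb)\,\mathcal{E}(P,f)$, and since $\|f\|_2^2 \le \|g\|_2^2$ the conclusion follows. The only genuinely non-trivial step is the key lemma, which can be viewed as a discrete-time analogue of the Bakry--Cattiaux--Guillin identity $\int (-\mathcal{L}V/V)\,f^2\,\dif\mu \le \mathcal{E}(f,f)$, and is precisely the point at which $\mu$-reversibility is used; the remainder of the argument is a pointwise manipulation of the drift condition coupled with a direct application of the local Poincar\'e inequality.
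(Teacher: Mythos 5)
Your proof is correct and follows essentially the same route as the paper: reduce to $f\in\ELL_0(\mu)$, divide the drift condition by $V$, multiply by $g^2=(f-m)^2$, integrate, and invoke a key lemma bounding $\langle g^2, 1-PV/V\rangle \le \mathcal{E}(P,g)$ together with the local Poincar\'e inequality. The paper isolates that key lemma as Lemma~\ref{lem:taghvaei-important} and proves it by expanding the non-negative quantity $\tfrac12\int \mu(\dif x)P(x,\dif y)\,V(x)V(y)\bigl(g(y)/V(y)-g(x)/V(x)\bigr)^2$, which is exactly the completed-square form of the Young inequality $2g(x)g(y)\le \tfrac{V(y)}{V(x)}g(x)^2+\tfrac{V(x)}{V(y)}g(y)^2$ you use, so the two arguments are identical in substance.
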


\begin{proof}
 From $PV\leq(1-\lambda)V+b\mathbb{\mathbf{1}}_{C}$ we obtain $\lambda V\leq({\rm Id}-P)V+b\mathbb{\mathbf{1}}_{C}$,
and so for $f\in\ELL(\mu)$, 
\[
f_{m}^{2}\leq\frac{1}{\lambda}\frac{f_{m}^{2}({\rm Id}-P)V}{V}+\frac{b}{\lambda}\frac{f_{m}^{2}}{V}\mathbb{\mathbf{1}}_{C}.
\]
We observe also that $PV/V\leq1+b<\infty$. Hence, using the variational
characterization of the mean, key Lemma~\ref{lem:taghvaei-important}
(noting that $\sup_{x\in\mathsf{E}}PV/V(x)<\infty$) and (\ref{eq:semi-local-poincare}),
\begin{align*}
\left\Vert f\right\Vert _{2}^{2} & \leq\left\Vert f_{m}\right\Vert _{2}^{2}\\
 & \leq\frac{1}{\lambda}\left\langle f_{m}^{2},1-PV/V\right\rangle +\frac{b}{\lambda}\left\langle f_{m}^{2},{\bf 1}_{C}\right\rangle \\
 & \leq\frac{1+Kb}{\lambda}\left\langle f,({\rm Id}-P)f\right\rangle ,
\end{align*}
and we conclude.
\end{proof}
The proof relies on the important lemma.
\begin{lem}[\cite{taghvaei2021lyapunov}]
\label{lem:taghvaei-important}Let $P$ be $\mu-$reversible, $V\colon\mathsf{E}\rightarrow[1,\infty)$
such that $\left\Vert PV/V\right\Vert _{\infty}<\infty$. Then for
any $f\in\ELL(\mu)$, $m\in\mathbb{R}$ we have
\[
\left\langle (f-m)^{2},1-PV/V\right\rangle \leq\left\langle f,\left({\rm Id}-P\right)f\right\rangle \,.
\]
\end{lem}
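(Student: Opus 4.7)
The plan is to reduce the claimed inequality, after symmetrizing via reversibility, to an elementary pointwise bound. Since both sides are invariant under the substitution $f \mapsto f-m$ (the Dirichlet form annihilates constants, and $(f-m)^2$ already carries the shift), set $g := f-m \in \ELL(\mu)$. Using
\[
1 - \frac{PV(x)}{V(x)} \;=\; \int P(x,\dif y)\,\frac{V(x)-V(y)}{V(x)},
\]
and applying Fubini (justified because $\|PV/V\|_\infty < \infty$ and $g \in \ELL(\mu)$ yield $\int\mu(\dif x)P(x,\dif y)\,g(x)^2 |V(x)-V(y)|/V(x) \le (1+\|PV/V\|_\infty)\|g\|_2^2 < \infty$), one rewrites
\[
\langle (f-m)^2, 1 - PV/V\rangle \;=\; \int\!\int \mu(\dif x)P(x,\dif y)\,g(x)^2\,\frac{V(x)-V(y)}{V(x)}.
\]

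The crucial step is to symmetrize using $\mu$-reversibility, $\mu(\dif x)P(x,\dif y) = \mu(\dif y)P(y,\dif x)$: relabeling $x\leftrightarrow y$ in the above and averaging with the original gives
\[
\langle (f-m)^2, 1 - PV/V\rangle \;=\; \frac{1}{2}\int\!\int \mu(\dif x)P(x,\dif y)\,(V(x)-V(y))\!\left[\frac{g(x)^2}{V(x)} - \frac{g(y)^2}{V(y)}\right].
\]
Because $\calE(P,f) = \calE(P,g) = \frac{1}{2}\int\!\int \mu(\dif x)P(x,\dif y)\,(g(x)-g(y))^2$, integrating termwise reduces the claim to the pointwise inequality
\[
(V(x)-V(y))\!\left[\frac{g(x)^2}{V(x)} - \frac{g(y)^2}{V(y)}\right] \;\le\; (g(x)-g(y))^2,
\]
valid $\mu\otimes P$-a.e.\ since $V \ge 1 > 0$.

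Writing $a:=g(x)$, $b:=g(y)$, $u:=V(x)$, $v:=V(y)$, the left side expands to $a^2 + b^2 - a^2 v/u - b^2 u/v$ and the right to $a^2 - 2ab + b^2$, so the inequality is equivalent to $2ab \le a^2 v/u + b^2 u/v$, i.e.\ $(a\sqrt{v/u} - b\sqrt{u/v})^2 \ge 0$ — a direct application of AM--GM. Integrating this pointwise bound against $\mu\otimes P$ concludes the proof.

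The main obstacle I anticipate is guessing the correct symmetrization: one must rewrite $1 - PV/V$ as an integral of $(V(x)-V(y))/V(x)$ and then use reversibility to produce the antisymmetric bracketed combination $g(x)^2/V(x) - g(y)^2/V(y)$. Once this form is in hand, both the identification with the Dirichlet form and the reduction to AM--GM are transparent; the hypothesis $\|PV/V\|_\infty < \infty$ enters only to legitimize Fubini.
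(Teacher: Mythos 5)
Your proof is correct and is essentially the paper's argument repackaged: the pointwise inequality you establish by AM--GM, namely $(V(x)-V(y))\bigl[g(x)^2/V(x) - g(y)^2/V(y)\bigr] \le (g(x)-g(y))^2$, is exactly the statement that $V(x)V(y)\bigl(g(x)/V(x) - g(y)/V(y)\bigr)^2 \ge 0$, which is the nonnegative quadratic the paper integrates and expands into $\langle g^2, PV/V\rangle - \langle g, Pg\rangle$. You symmetrize via reversibility explicitly before applying the pointwise bound, whereas the paper uses reversibility implicitly when recognizing the cross-terms as inner products; both likewise observe that the Dirichlet form is unaffected by the shift $f \mapsto f-m$.
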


\begin{proof}
We have for any $g\in\ELL(\mu)$,
\begin{align*}
0\leqslant\frac{1}{2}\cdot\int\mu\left(\mathrm{d}x\right)\cdot & P\left(x,\mathrm{d}y\right)\cdot V\left(x\right)\cdot V\left(y\right)\cdot\left(\frac{g\left(y\right)}{V\left(y\right)}-\frac{g\left(x\right)}{V\left(x\right)}\right)^{2}\\
 & =\left\langle g^{2},PV/V\right\rangle -\left\langle g,Pg\right\rangle \\
 & =\left\langle g,g\right\rangle -\left\langle g^{2},1-PV/V\right\rangle -\left\langle g,g\right\rangle +\left\langle g,({\rm Id}-P)g\right\rangle \\
 & =\left\langle g,({\rm Id}-P)g\right\rangle -\left\langle g^{2},1-PV/V\right\rangle .
\end{align*}
Further we notice that for any $f\in\ELL(\mu)$ and $m\in\mathbb{R},$
\begin{align*}
\left\langle f-m,\left({\rm Id}-P\right)(f-m)\right\rangle  & =\left\langle f-m,\left({\rm Id}-P\right)f\right\rangle \\
 & =\left\langle f,\left({\rm Id}-P\right)f\right\rangle -m\mu\big(({\rm Id}-P)f\big)\\
 & =\left\langle f,\left({\rm Id}-P\right)f\right\rangle \,,
\end{align*}
and we conclude.
\end{proof}

\subsubsection{The subgeometric scenario\label{subsec:Lyapunov-The-subgeometric-scenario} }

The following is a useful, simple result, which is related to \cite[Theorem~14.3.7]{meyn:tweedie:1993}
and \cite[Proposition~4.3.2]{douc2018markov} but with slightly different
conditions and conclusions.
\begin{lem}
\label{lem:muf-mus}Let $X$ be a Markov chain with Markov operator
$P$ and unique invariant probability measure $\mu$. Suppose $V$,
$f$ and $\mathfrak{s}$ are nonnegative, finite-valued functions
on $\mathsf{E}$ such that
\[
PV\leq V-f+\mathfrak{s}.
\]
Then $\mu(f)\leq\mu(\mathfrak{s})$, whether or not $\mu(f)=\infty$.
\end{lem}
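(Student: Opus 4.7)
The plan is to iterate the drift, integrate over a sublevel set of $V$, and combine with Birkhoff's ergodic theorem. Applying $P$ repeatedly to $PV \leq V - f + \mathfrak{s}$ and using the positivity of $P$, one obtains inductively $P^n V \leq V - \sum_{k=0}^{n-1} P^k f + \sum_{k=0}^{n-1} P^k \mathfrak{s}$, and since $P^n V \geq 0$ this rearranges to the pointwise bound
\[
\sum_{k=0}^{n-1} P^k f \leq V + \sum_{k=0}^{n-1} P^k \mathfrak{s}.
\]

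The main obstacle is that $V$ need not be $\mu$-integrable, so a global integration against $\mu$ would only yield $\mu(f) \leq \mu(V)/n + \mu(\mathfrak{s})$, which is vacuous when $\mu(V) = \infty$. The remedy is to localize: set $B_{M'} := \{V \leq M'\}$, divide the displayed inequality by $n$, and integrate only over $B_{M'}$. Using $V \leq M'$ there together with $\mu$-invariance (which gives $\int P^k \mathfrak{s}\, d\mu = \mu(\mathfrak{s})$), one obtains
\[
\int_{B_{M'}} \frac{1}{n}\sum_{k=0}^{n-1} P^k f \, d\mu \leq \frac{M'}{n} + \mu(\mathfrak{s}),
\]
so the boundary term vanishes as $n \to \infty$.

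To pass to the limit on the left, I would use that uniqueness of $\mu$ as an invariant probability forces ergodicity (otherwise a nontrivial invariant set would produce a second invariant measure by conditioning on it). Birkhoff's ergodic theorem, applied to the nonnegative $f$ under the stationary law of the chain, then gives $\frac{1}{n}\sum_{k=0}^{n-1} f(X_k) \to \mu(f) \in [0,\infty]$ almost surely; conditioning on $X_0 = x$ and invoking Fatou yields $\liminf_n \frac{1}{n}\sum_{k=0}^{n-1} P^k f(x) \geq \mu(f)$ for $\mu$-a.e.\ $x$. A second application of Fatou to the integral over $B_{M'}$ produces
\[
\mu(B_{M'})\,\mu(f) \leq \liminf_n \int_{B_{M'}} \frac{1}{n}\sum_{k=0}^{n-1} P^k f \, d\mu \leq \mu(\mathfrak{s}).
\]
Since $V$ is finite-valued, $\mu(B_{M'}) \to 1$ as $M' \to \infty$, giving $\mu(f) \leq \mu(\mathfrak{s})$ whether or not either side is finite.
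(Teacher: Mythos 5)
Your proof is correct and shares the central ingredients with the paper's argument: iterate the drift inequality to obtain the pointwise bound $\sum_{k=0}^{n-1} P^k f \leq V + \sum_{k=0}^{n-1} P^k \mathfrak{s}$, then invoke Birkhoff's ergodic theorem (with ergodicity supplied by uniqueness of $\mu$) to identify the limiting Ces\`aro averages. Where you diverge is in how the $V$ term is killed. You integrate the inequality against $\mu$ over the sublevel set $B_{M'} = \{V \leq M'\}$, exploit $V \leq M'$ there to get the vanishing term $M'/n$, pass to the $n \to \infty$ limit via Fatou, and finally let $M' \to \infty$ using that $V$ is finite-valued so that $\mu(B_{M'}) \to 1$. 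The paper sidesteps the localization entirely: it fixes a single $\mu$-generic starting point $x$ at which the ergodic averages converge (handling $\mu(g) = \infty$ via truncation $g \wedge m$ and monotone convergence), divides by $n$, and notes that $V(x)/n \to 0$ simply because $V(x)$ is a fixed finite number. The pointwise route is more economical, needing no exhaustion by sublevel sets and a lighter application of Fatou; your version is more laborious but equally valid, and arguably makes the role of the finite-valuedness assumption on $V$ more visible, since it is invoked through the explicit statement $\mu(B_{M'}) \to 1$ rather than silently at a generic $x$.
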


\begin{proof}
We have
\begin{align*}
0 & \leq P^{n}V(x)\\
 & =V(x)+\sum_{i=1}^{n}\mathbb{E}_{x}\left[V(X_{i})-V(X_{i-1})\right]\\
 & =V(x)+\sum_{i=1}^{n}\mathbb{E}_{x}\left[PV(X_{i-1})-V(X_{i-1})\right]\\
 & \leq V(x)+\sum_{i=1}^{n}\mathbb{E}_{x}\left[\mathfrak{s}(X_{i-1})-f(X_{i-1})\right],
\end{align*}
and hence we find
\[
\mathbb{E}_{x}\left[\sum_{k=0}^{n-1}f(X_{k})\right]\leq V(x)+\mathbb{E}_{x}\left[\sum_{k=0}^{n-1}\mathfrak{s}(X_{k})\right].
\]
Since $P$ has a unique invariant probability measure, we may apply
Birkhoff's ergodic theorem; see, e.g., \cite[Theorems~5.2.6 and~5.2.1]{douc2018markov}.
First suppose $g\geq0$ is such that $\mu(g)=\infty$. Then by the
ergodic theorem, for $\mu$-almost all $x$ and any $m\geq0$,
\[
\lim_{n\to\infty}\frac{1}{n}\mathbb{E}_{x}\left[\sum_{k=0}^{n-1}m\wedge g(X_{k})\right]=\mu(m\wedge g),
\]
and taking $m\to\infty$ we obtain $\lim_{n\to\infty}\frac{1}{n}\mathbb{E}_{x}\left[\sum_{k=0}^{n-1}g(X_{k})\right]=\infty=\mu(g)$.
Next, suppose $g\geq0$ with $\mu(g)<\infty$. Then by the ergodic
theorem, for $\mu$-almost all $x$,
\[
\mu(g)=\lim_{n\to\infty}\frac{1}{n}\mathbb{E}_{x}\left[\sum_{k=0}^{n-1}g(X_{k})\right].
\]
Hence, for $\mu$-almost all (and therefore some) $x$, 
\[
\mu(f)=\lim_{n\to\infty}\frac{1}{n}\mathbb{E}_{x}\left[\sum_{k=0}^{n-1}f(X_{k})\right]\leq\lim_{n\to\infty}\frac{1}{n}\left\{ V(x)+\mathbb{E}_{x}\left[\sum_{k=0}^{n-1}\mathfrak{s}(X_{k})\right]\right\} =\mu(\mathfrak{s}),
\]
and so we may conclude that $\mu(f)\leq\mu(\mathfrak{s})$.
\end{proof}

\begin{thm}
\label{thm:WPI-from-drift-condition}Let $P$ be a $\mu-$reversible
Markov kernel, $\mu$ its unique invariant probability measure, such
that:
\begin{enumerate}
\item there exists a set $C\in\mathscr{E}$, a function $V\colon\mathsf{E}\rightarrow[1,\infty)$
and $b>0$ such that
\begin{align*}
PV & \leq V-\phi\circ V+b\mathbb{\mathbf{1}}_{C}\,,
\end{align*}
where $\phi\colon[1,\infty)\rightarrow(0,\infty)$ is a concave, continuous
and increasing function;
\item a local Poincaré inequality holds: there exists $K>0$ such that for
any $f\in\ELL(\mu)$, 
\begin{equation}
\|f_{m}\mathbb{\mathbf{1}}_{C}\|_{2}^{2}\leq K\mathcal{E}(P,f)\,,\label{eq:semi-local-SPI-1}
\end{equation}
with $m=\mu(f\cdot\mathbf{1}_{C})/\mu(C)$ and $f_{m}:=f-m$.
\end{enumerate}
Then for any $f\in\mathrm{L}_{0}^{2}(\mu)$ and $s>0$,
\[
\|f\|_{2}^{2}\leq s\mathcal{E}(P,f)+\beta(s)\|f\|_{{\rm osc}}^{2},
\]
 where 
\begin{align*}
\beta(s) & :=\frac{b\mu(C)}{\phi\circ({\rm Id}/\phi)^{-1}\big(s/(1+Kb)\big)}\,.
\end{align*}

\end{thm}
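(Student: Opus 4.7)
The plan is to mimic the geometric scenario (Theorem~\ref{thm:bakry-taghvaei}) but to replace the constant rate $\lambda$ with the spatially varying rate $\phi\circ V/V$ dictated by the subgeometric drift, and then to trade off the two resulting contributions by splitting the integration on a sub-level set $\{V\leq R\}$ whose threshold $R$ will become the WPI parameter. I first note that, since $PV\leq V-\phi\circ V+b\mathbf{1}_{C}$ with $V\geq 1$, we have $PV/V\leq 1+b<\infty$, so Lemma~\ref{lem:taghvaei-important} applies to any $g\in\ELL(\mu)$. Taking $g=f_{m}$ with $m=\mu(f\cdot\mathbf{1}_{C})/\mu(C)$ and noting that $\mathcal{E}(P,f_{m})=\mathcal{E}(P,f)$, the drift gives $1-PV/V\geq \phi\circ V/V-b\mathbf{1}_{C}/V$, hence, using $V\geq 1$ and the local Poincaré inequality~(\ref{eq:semi-local-SPI-1}),
\[
\langle f_{m}^{2},\phi\circ V/V\rangle\leq \langle f_{m}^{2},1-PV/V\rangle+b\langle f_{m}^{2},\mathbf{1}_{C}\rangle\leq (1+Kb)\,\mathcal{E}(P,f).
\]

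Next I would exploit the fact that for $\phi:[1,\infty)\to(0,\infty)$ concave, continuous and increasing with $\phi(1)>0$, the ratio $v\mapsto v/\phi(v)$ is strictly increasing on $[1,\infty)$, so $(\mathrm{Id}/\phi)^{-1}$ is well defined on its range. For $R\geq 1$, the monotonicity yields $\mathbf{1}_{\{V\leq R\}}\leq (R/\phi(R))\cdot\phi\circ V/V$, so
\[
\langle f_{m}^{2},\mathbf{1}_{\{V\leq R\}}\rangle\leq \frac{R}{\phi(R)}\,\langle f_{m}^{2},\phi\circ V/V\rangle\leq \frac{R(1+Kb)}{\phi(R)}\,\mathcal{E}(P,f).
\]
For the complementary tail, Lemma~\ref{lem:muf-mus} applied to the drift (with $f=\phi\circ V$, $\mathfrak{s}=b\mathbf{1}_{C}$) gives $\mu(\phi\circ V)\leq b\mu(C)$, whence $\mu(V>R)\leq b\mu(C)/\phi(R)$. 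Since $\mu(f)=0$ implies $\|f_{m}\|_{\infty}\leq \|f\|_{\osc}$, this yields
\[
\langle f_{m}^{2},\mathbf{1}_{\{V>R\}}\rangle\leq \|f\|_{\osc}^{2}\cdot \frac{b\mu(C)}{\phi(R)}.
\]

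Combining the two pieces and using $\|f\|_{2}^{2}\leq \|f_{m}\|_{2}^{2}$ (valid since $\mu(f)=0$), I obtain the one-parameter family of bounds
\[
\|f\|_{2}^{2}\leq \frac{R(1+Kb)}{\phi(R)}\,\mathcal{E}(P,f)+\frac{b\mu(C)}{\phi(R)}\,\|f\|_{\osc}^{2}, \qquad R\geq 1.
\]
Setting $s=R(1+Kb)/\phi(R)$, i.e.\ $R=(\mathrm{Id}/\phi)^{-1}\!\big(s/(1+Kb)\big)$, delivers the stated $\beta(s)=b\mu(C)/\phi\circ(\mathrm{Id}/\phi)^{-1}\!\big(s/(1+Kb)\big)$.

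The main obstacle I anticipate is the monotonicity step: one needs to verify cleanly that $v/\phi(v)$ is increasing and surjective onto a half-line large enough so that every $s>0$ (of interest for the WPI) corresponds to some admissible $R\geq 1$. For $s$ below the minimal attainable value $(1+Kb)/\phi(1)$ the reparametrization degenerates, but this regime can be absorbed by the trivial bound $\|f\|_{2}^{2}\leq \|f\|_{\osc}^{2}$ (since $\mu(f)=0$), which is consistent with $\beta$ being bounded by $\mathfrak{a}\leq 1$. A secondary point worth being careful about is that $V\geq 1$ is used twice (once to bound $\mathbf{1}_{C}/V\leq \mathbf{1}_{C}$, once to ensure $\phi\circ V$ is lower bounded by a positive constant so $\mu(V>R)\leq b\mu(C)/\phi(R)$ holds for all $R\geq 1$).
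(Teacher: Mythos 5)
Your proof is correct and follows essentially the same route as the paper's: the same passage from the drift to a weight involving $\phi\circ V/V$, the same appeal to Lemma~\ref{lem:taghvaei-important} to control $\langle f_m^2,1-PV/V\rangle$ and to Lemma~\ref{lem:muf-mus} for $\mu(\phi\circ V)\leq b\mu(C)$, and the same sub-level-set decomposition --- your $\{V\leq R\}$ is exactly the paper's $A(s)$ under the change of variable $R=({\rm Id}/\phi)^{-1}(s)$; integrating the drift inequality before performing the level-set split, rather than splitting pointwise and then integrating, is a harmless reordering and yields the identical $\beta$. Regarding the concern you flag about small $s$: it resolves automatically, since $V\geq 1$ and $\phi$ increasing give $\mu(\phi\circ V)\geq\phi(1)$, so Lemma~\ref{lem:muf-mus} forces $b\mu(C)\geq\phi(1)$; hence the displayed $\beta(s)$ already satisfies $\beta(s)\geq 1$ whenever $s\leq(1+Kb)/\phi(1)$, and the WPI holds trivially in that regime without any separate case. (One small mis-attribution: the bound $\|f_m\|_\infty\leq\|f\|_{\rm osc}$ comes from $m$ being a $\mu_C$-average of $f$, not from $\mu(f)=0$; the hypothesis $\mu(f)=0$ is what gives $\|f\|_2^2\leq\|f_m\|_2^2$, as you correctly use in the next step.)
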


\begin{proof}
From $PV\leq V-\phi\circ V+b\mathbb{\mathbf{1}}_{C}$ we obtain $\phi\circ V\leq({\rm Id}-P)V+b\mathbb{\mathbf{1}}_{C}$
and for $f\in\ELL(\mu)$ we obtain 
\[
f_{m}^{2}\leq\frac{f_{m}^{2}({\rm Id}-P)V}{\phi\circ V}+b\frac{f_{m}^{2}}{\phi\circ V}\mathbb{\mathbf{1}}_{C}.
\]
Now with $A(s):=\{x\in\mathsf{E}\colon s\,\phi\circ V(x)\geq V(x)\}$
for $s>0$ we have
\begin{align*}
f_{m}^{2}\mathbf{1}_{A(s)} & \leq\frac{f_{m}^{2}({\rm Id}-P)V}{\phi\circ V}\mathbb{\mathbf{1}}_{A(s)}+b\frac{f_{m}^{2}}{\phi\circ V}\mathbb{\mathbf{1}}_{C\cap A(s)}\,,\\
 & \leq s\frac{f_{m}^{2}({\rm Id}-P)V}{V}\mathbb{\mathbf{1}}_{A(s)}+sb\frac{f_{m}^{2}}{V}\mathbb{\mathbf{1}}_{C\cap A(s)}\,.
\end{align*}
Hence for $s>0$, we have 
\begin{align*}
f_{m}^{2} & =f_{m}^{2}{\bf 1}_{A(s)}+f_{m}^{2}{\bf 1}_{A^{\complement}(s)}\\
 & \leq s\frac{f_{m}^{2}({\rm Id}-P)V}{V}+sb\frac{f_{m}^{2}}{V}\mathbb{\mathbf{1}}_{C}+f_{m}^{2}\mathbf{1}_{A^{\complement}(s)}.
\end{align*}
We observe also that $PV/V\leq1+b<\infty$. Consequently, we can take
expectations with respect to $\mu$, yielding
\begin{align*}
\|f_{m}\|_{2}^{2} & \leq s\left\langle f_{m}^{2},1-PV/V\right\rangle +sb\left\Vert f_{m}\mathbf{1}_{C}\right\Vert _{2}^{2}+\left\Vert f_{m}\right\Vert _{\infty}^{2}\mu(A^{\complement}(s))\\
 & \leq s\mathcal{E}(P,f)+sb\left\Vert f_{m}\mathbf{1}_{C}\right\Vert _{2}^{2}+\left\Vert f\right\Vert _{{\rm osc}}^{2}\mu(A^{\complement}(s)),
\end{align*}
where we have used $V\geq1$, Lemma~\ref{lem:taghvaei-important}
and $\left\Vert f_{m}\right\Vert _{\infty}^{2}\leq\left\Vert f\right\Vert _{{\rm osc}}^{2}$
since ${\rm ess}\inf_{x}\left|f(x)\right|\leq m\leq{\rm ess}\sup_{x}\left|f(x)\right|$.
Since $\phi$ is concave, increasing and continuous, the function
$x\mapsto x/\phi(x)$ is increasing and continuous and therefore invertible,
and we can write $A^{\complement}(s)=\{x\in\mathsf{E}\colon\phi\circ({\rm Id}/\phi)^{-1}(s)<\phi\circ V(x)\}$,
and therefore
\begin{align*}
\mu(A^{\complement}(s)) & =\mu\left(\phi\circ V>\phi\circ({\rm Id}/\phi)^{-1}(s)\right)\\
 & \leq\mu\left(\phi\circ V\geq\phi\circ({\rm Id}/\phi)^{-1}(s)\right)\\
 & \leq\frac{\mu(\phi\circ V)}{\phi\circ({\rm Id}/\phi)^{-1}(s)},
\end{align*}
where $\mu(\phi\circ V)\leq b\mu(C)$ by Lemma~\ref{lem:muf-mus}.
Using $f\in\ELL_{0}(\mu)$ and (\ref{eq:semi-local-SPI-1}), 
\[
\|f\|_{2}^{2}\leq\|f_{m}\|_{2}^{2}\leq(1+Kb)s\mathcal{E}(P,f)+\frac{b\mu(C)}{\phi\circ({\rm Id}/\phi)^{-1}(s)}\|f\|_{{\rm osc}}^{2}.
\]
and we conclude.
\end{proof}
\begin{rem}
The assumption that $P$ is reversible can be relaxed to some extent.
If
\[
\frac{1}{2}(P+P^{*})V\leq V-\phi\circ V+b\mathbb{\mathbf{1}}_{C}\,,
\]
then the conclusion also holds for nonreversible $P$. In particular,
this condition allows for the use of Lemma~\ref{lem:taghvaei-important},
which is the only part of the proof utilizing reversibility.
\end{rem}

As pointed out by \cite{taghvaei2021lyapunov}, a standard minorization
condition yields a local PI \ref{eq:local-SPI}.
\begin{lem}[{\cite[Equation (8)]{taghvaei2021lyapunov}}]
\label{lem:minorization-semi-local-SPI}Let $P$ be a $\mu-$invariant
Markov kernel satisfying
\begin{align*}
P(x,A) & \geq\epsilon\nu(A)\mathbf{1}\{x\in C\}.
\end{align*}
Then with $m=\mu(f\mathbf{1}_{C})/\mu(C)$, 
\[
\|f_{m}\mathbb{\mathbf{1}}_{C}\|_{2}^{2}\leq\frac{2}{\epsilon}\mathcal{E}(P,f).
\]
\end{lem}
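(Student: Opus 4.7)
The plan is to combine the variational characterization of $m$ with the minorization hypothesis and the standard Dirichlet-form identity. First, I would observe that since $m=\mu(f\mathbf{1}_{C})/\mu(C)$ is the mean of $f$ under the restricted probability measure $\mu_{C}$, it minimizes $c\mapsto\int_{C}[f(x)-c]^{2}\mu(\dif x)$ over $c\in\mathbb{R}$. In particular, for every fixed $y\in\mathsf{E}$, taking the constant $c=f(y)$ yields the pointwise inequality
\[
\int_{C}[f(x)-m]^{2}\mu(\dif x)\;\leq\;\int_{C}[f(x)-f(y)]^{2}\mu(\dif x).
\]
Assuming, as is standard in minorization statements, that $\nu$ is a probability measure, I would integrate both sides against $\nu(\dif y)$ and apply Fubini to obtain
\[
\|f_{m}\mathbf{1}_{C}\|_{2}^{2}\;\leq\;\int_{C}\mu(\dif x)\int\nu(\dif y)\,[f(x)-f(y)]^{2}.
\]

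Next, I would invoke the minorization: for each $x\in C$, $\nu(\dif y)\leq\epsilon^{-1}P(x,\dif y)$ holds in the sense of nonnegative measures, so integrating the nonnegative function $y\mapsto[f(x)-f(y)]^{2}$ against each side gives
\[
\int\nu(\dif y)\,[f(x)-f(y)]^{2}\;\leq\;\frac{1}{\epsilon}\int P(x,\dif y)\,[f(x)-f(y)]^{2}
\]
for every such $x$. Substituting this back, enlarging the outer integration domain from $C$ to $\mathsf{E}$ (the integrand is nonnegative), and using the identity
\[
\calE(P,f)\;=\;\frac{1}{2}\int\mu(\dif x)\,P(x,\dif y)\,[f(x)-f(y)]^{2},
\]
which is valid for any $\mu$-invariant kernel (not merely reversible ones), I arrive at the stated bound $\|f_{m}\mathbf{1}_{C}\|_{2}^{2}\leq 2\epsilon^{-1}\calE(P,f)$.

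I do not anticipate any real obstacle: the three ingredients (variational characterization of the mean, the minorization bound, and the symmetric expression for $\calE(P,f)$) combine cleanly, and the factor of $2$ appears automatically from the $\tfrac{1}{2}$ in the Dirichlet-form formula. The only step that merits a one-line check is the Dirichlet-form identity in the possibly nonreversible setting, which follows at once from $\calE(P,f)=\langle(\Id-P)f,f\rangle$ together with $\int Pf^{2}\,\dif\mu=\int f^{2}\,\dif\mu$ by $\mu$-invariance.
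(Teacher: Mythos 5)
Your argument is correct and complete. Since the paper merely cites \cite[Equation (8)]{taghvaei2021lyapunov} without reproducing the proof, there is no internal proof to compare against, but each of your steps checks out: the variational characterization of $m$ as the $\mu_{C}$-mean gives the pointwise inequality against any constant $f(y)$; averaging over $\nu$ (a probability measure, as is standard) and applying the minorization $\nu(\dif y)\leq\epsilon^{-1}P(x,\dif y)$ for $x\in C$ yields the stated bound after extending the outer integral from $C$ to $\mathsf{E}$; and the identity $\calE(P,f)=\tfrac{1}{2}\int\mu(\dif x)P(x,\dif y)[f(x)-f(y)]^{2}$ indeed holds for any $\mu$-invariant (not necessarily reversible) $P$, precisely by the invariance argument you sketch. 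This is the natural route to the result and almost certainly the one taken in the cited reference.
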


While this provides a relatively straightforward route to establishing
a local PI, such an approach may not be sufficiently precise when
one is interested in quantitative estimates. In Lemma~\ref{lem:local_SPI_to_SPI}
the minorization condition is replaced with a local PI on $C$, but
$P$ is assumed $\mu-$reversible. This mirrors \cite[Proof of Theorem 1.4]{bakry2008simple}.
\begin{example}
When $\phi(v)=cv^{\alpha}$ for $\alpha\in[0,1)$, we obtain $({\rm Id}/\phi)^{-1}(s)=(cs)^{1/(1-\alpha)}$
and therefore $\phi\circ({\rm Id}/\phi)^{-1}(s)=c(cs)^{\alpha/(1-\alpha)}$.
We conclude that $\beta(s)\propto s^{-\alpha/(1-\alpha)}$, and thereby
obtain $\gamma(n)\propto n^{-\alpha/(1-\alpha)}$. Drift and minorisation
techniques directly lead to a total variation rate of $n^{-\alpha/(1-\alpha)}$,
which we do not recover since \cite[Remark~12]{ALPW2021} gives a
total variation rate of $\gamma^{1/2}(n)$. On the other hand, Proposition~\ref{prop:clt}
implies a CLT for bounded functions if $\alpha/(1-\alpha)>1$, i.e.
$\alpha>1/2$. This improves upon the condition $\alpha\geq2/3$ in
\cite[Theorem~4.2]{jarner2002polynomial} and is close to the condition
$\alpha\geq1/2$ obtained when the existence of an atom is assumed
\cite[Theorem~4.4]{jarner2002polynomial}. We can straightforwardly
obtain rates of convergence and CLTs for functions in ${\rm L}_{0}^{p}(\mu)$,
for $p>2$ using Proposition~\ref{prop:cattiaux-et-al-gamma-p} and
Remark~\ref{rem:clt-lp}, which may be more convenient than considering
functions dominated by a power of the Lyapunov function $V$.
\end{example}

\begin{example}
If $\phi(v)=cv/\log(v)^{\alpha}$ then 
\[
\frac{v}{\phi(v)}=c^{-1}\log(v)^{\alpha}=s\iff v=\exp\big((cs)^{1/\alpha}\big)
\]
 and therefore 
\[
\beta(s)\propto s\exp\big(-(cs)^{1/\alpha}\big)\leq\exp\big(-(c's)^{1/\alpha}\big)
\]
which leads to a rate of convergence
\[
C'\exp\left(-\left\{ C(1+\alpha)n\right\} ^{1/(1+\alpha)}\right)
\]
 which is similar to what is obtained by \cite{douc2018markov}.
\end{example}

\subsubsection{\label{subsec:Local-Poincar=0000E9-inequalities-iso}Local Poincaré
and isoperimetric inequalities}

We use some general results, largely inspired by their recent use
in \cite{Dwivedi-Chen-Wainwright-JMLR:v20:19-306}.
\begin{lem}[{\cite[Theorem 4.2]{cousins2014cubic}, Isoperimetric inequality}]
\label{lem:cousin-vempala}Let $\mu$ be a probability measure on
$\E\subset\R^{d}$, whose density $\mu(x)\propto\exp(-U(x))$ w.r.t.
Lebesgue is $m$-strongly log-concave, i.e.
\[
U\left(x+z\right)-U\left(x\right)-\left\langle \nabla U\left(x\right),z\right\rangle \geqslant\frac{m}{2}\left|z\right|^{2}.
\]
 Then for any (nonempty) $S_{1},S_{2},S_{3}\subset\mathsf{E}$ defining
a partition of $\mathsf{E}$ we have

\[
\mu\left(S_{3}\right)\geqslant\log2\cdot\sqrt{m}\cdot d\left(S_{1},S_{2}\right)\cdot\mu\left(S_{1}\right)\cdot\mu\left(S_{2}\right)\,,
\]
where $d\left(S_{1},S_{2}\right):=\inf\left\{ \left|z-z'\right|\colon\left(z,z'\right)\in S_{1}\times S_{2}\right\} $.

\end{lem}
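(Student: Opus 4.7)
The plan is to reduce the $d$-dimensional inequality to a one-dimensional statement via a localization argument in the style of Lov\'asz--Simonovits, and then prove the resulting 1D estimate directly from strong log-concavity. Given a partition $(S_1,S_2,S_3)$ with $S_3$ separating $S_1$ from $S_2$, I would first assume (by standard measure-theoretic approximation) that $S_1$ and $S_2$ are compact with $d(S_1,S_2)=t>0$, and consider the signed measure-theoretic functional
\[
\Psi(\mu):=\mu(S_3)-\log 2\cdot\sqrt{m}\cdot t\cdot\mu(S_1)\cdot\mu(S_2).
\]
The localization lemma then allows me to restrict attention to extremal $\mu$ that are supported on a one-dimensional ``needle'' --- a segment $[p,q]\subset\mathsf{E}$ --- with density of the form $\exp(-\tilde U(s))$ along the segment, where $\tilde U$ inherits $m$-strong convexity in the parameter $s$ from the restriction of $U$ to the line. (In the generalized needle form used by Cousins--Vempala, an extra factor $(1+a s)^{d-1}$ appears, but since this factor is log-concave it only strengthens the log-concavity of the needle density.)

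Next, the problem reduces to the following 1D claim: if $h=e^{-u}$ is a density on an interval with $u''\geq m$, and $A_1,A_2,A_3$ partition the interval with $A_3$ containing a sub-interval $[c,c+t]$ separating $A_1$ (to the left) from $A_2$ (to the right), then
\[
\int_{A_3}h\,\mathrm{d}s\geq\log 2\cdot\sqrt{m}\cdot t\cdot\int_{A_1}h\,\mathrm{d}s\cdot\int_{A_2}h\,\mathrm{d}s.
\]
For this, I would normalize so that $\int h=1$ and write $p_i:=\int_{A_i}h$. The key analytic input is that for a strongly log-concave 1D density, the conditional density on any sub-interval is still strongly log-concave, and the mode shift when conditioning on a half-line is controlled by $1/\sqrt{m}$. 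One then extracts a lower bound on $p_3$ in terms of $p_1 p_2$ by comparing $h$ on the separating gap $[c,c+t]$ to its values at the endpoints, using $u(s)-u(c)\leq \tfrac{m}{2}(s-c)^2+u'(c)(s-c)$ on one side and the reverse on the other --- this is where the constants $\sqrt m$ and $\log 2$ appear, with $\log 2$ entering via an optimization $\sup_x xe^{-x^2}$-type calculation or equivalently through a truncated Gaussian bound.

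The main obstacle is not the 1D estimate itself, which is essentially a one-variable calculus problem, but rather justifying that the needle decomposition preserves both the log-concavity and the strong-convexity modulus $m$ simultaneously, so that the constant $\sqrt{m}$ survives the reduction. This requires care with the generalized needle measures arising in the Cousins--Vempala variant of localization, since the polynomial weight interacts nontrivially with the exponential; one must verify that after incorporating $(1+as)^{d-1}$ the effective density along the needle remains $m$-strongly log-concave, for which the standard monotonicity-of-modes argument suffices since adding a concave (in fact log-concave) factor does not decrease the second derivative of $-\log$.
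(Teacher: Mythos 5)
The paper does not prove this lemma: it cites it verbatim from Cousins and Vempala (their Theorem~4.2), and the only commentary offered is the remark immediately following, which notes that the hypothesis has been rephrased from ``log-concave relative to a Gaussian'' to the equivalent $m$-strong convexity of $U$. So there is no ``paper's proof'' to compare against; your task amounts to reconstructing the argument in the cited reference, and your outline does indeed follow that route (Lov\'asz--Simonovits / Kannan--Lov\'asz--Simonovits localization to needles, then a 1D estimate).

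Two places in your sketch are thinner than the argument actually requires. First, the functional $\Psi(\mu)=\mu(S_3)-\log 2\cdot\sqrt m\cdot t\cdot\mu(S_1)\mu(S_2)$ is bilinear rather than linear in $\mu$, so the ``plain'' localization lemma (which handles $\int f_1\,\dif\mu>0,\int f_2\,\dif\mu>0$) does not apply off the shelf. One needs the four-function theorem of Kannan--Lov\'asz--Simonovits, or the equivalent bisection-to-a-needle argument in which each bisecting hyperplane is chosen to preserve the ratio $\mu(S_1):\mu(S_2)$ so that the product term survives the limit; you invoke ``the localization lemma'' without indicating which form, and the linear one would not suffice. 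Second, your claim about the generalized needle weight $(1+as)^{d-1}$ is correct but slightly misphrased: that factor is log-concave (not merely concave), so multiplying by it adds a \emph{convex} function to the negative log-density, and hence does not decrease the modulus $m$; this is the statement you need, and you do arrive at it. What remains genuinely undone is the one-dimensional computation that produces the explicit constant $\log 2\cdot\sqrt m$ from $u''\geq m$ on the needle — you flag it as a ``one-variable calculus problem,'' which is fair, but it is the part of the proof where the stated constant actually comes from, so an argument that stops there has not yet proved the lemma as stated.
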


\begin{rem}
In the original result of \cite[Theorem 4.2]{cousins2014cubic}, the
hypothesis on $\mu$ is formulated in terms of the log-concavity of
the Radon--Nikodym derivative of $\mu$ with respect to an appropriate
Gaussian measure. We have rephrased the result slightly to emphasize
the relationship with the strong convexity of the potential, which
is consistent with the presentation of \cite[Section 5.4]{Dwivedi-Chen-Wainwright-JMLR:v20:19-306}.

\end{rem}

\begin{thm}[\cite{lovasz1999hit,belloni2009computational,Dwivedi-Chen-Wainwright-JMLR:v20:19-306}]
\label{thm:local-conductance-lower-bound}Let $\mu$ be a probability
measure on $\E\subset\R^{d}$, whose density w.r.t. Lebesgue is $m$-strongly
log-concave, and $C\subseteq\mathsf{E}$ be a convex set. Let $P$
be a $\mu-$invariant Markov kernel and assume that there exist $\delta,\epsilon>0$
such that for $z,z'\in C$, $\left|z-z'\right|\leqslant\delta$ implies
\[
\left\Vert P\left(z,\cdot\right)-P\left(z',\cdot\right)\right\Vert _{\mathrm{TV}}<1-\varepsilon\,.
\]
Then for any $A\in\mathscr{E}$,
\[
\mu\otimes P\big(A\times A^{\complement}\big)\geqslant\frac{\varepsilon}{4}\min\left\{ 1,\frac{\log2}{8}\delta\sqrt{m}\right\} \min\left\{ \mu\left(A\cap C\right),\mu\left(A^{\complement}\cap C\right)\right\} \,,
\]
and
\[
\mu\otimes P\big(A\times A^{\complement}\big)\geqslant\frac{1}{\mu(C)}\frac{\varepsilon}{4}\min\left\{ 1,\frac{\log2}{4}\delta\sqrt{m}\right\} \mu\left(A\cap C\right)\mu\left(A^{\complement}\cap C\right).
\]
\end{thm}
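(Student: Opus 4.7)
The plan is to adapt the classical Lov\'{a}sz--Simonovits conductance argument. I first identify ``bad'' pieces of $A\cap C$ and $A^{\complement}\cap C$ on which the kernel $P$ fails to move mass across the boundary of $A$, namely
\begin{align*}
A_{1} & :=\bigl\{ x\in A\cap C\colon P(x,A^{\complement})<\varepsilon/2\bigr\} ,\\
A_{2} & :=\bigl\{ x\in A^{\complement}\cap C\colon P(x,A)<\varepsilon/2\bigr\} .
\end{align*}
For any $x\in A_{1}$ and $y\in A_{2}$ one has $\left\Vert P(x,\cdot)-P(y,\cdot)\right\Vert _{\mathrm{TV}}\geqslant P(x,A)-P(y,A)>1-\varepsilon$, and the contrapositive of the local-overlap hypothesis therefore forces $\left|x-y\right|>\delta$, i.e.\ $d(A_{1},A_{2})\geqslant\delta$.

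Two cheap inputs are used next. The $\mu$-invariance of $P$ supplies the flow-balance identity $\mu\otimes P(A\times A^{\complement})=\mu\otimes P(A^{\complement}\times A)$, and the defining property of $A_{1},A_{2}$ yields
\[
\mu\otimes P(A\times A^{\complement})\geqslant\tfrac{\varepsilon}{2}\,\mu\bigl((A\cap C)\setminus A_{1}\bigr)\quad\text{and}\quad\mu\otimes P(A\times A^{\complement})\geqslant\tfrac{\varepsilon}{2}\,\mu\bigl((A^{\complement}\cap C)\setminus A_{2}\bigr),
\]
the second bound obtained via flow balance. I then split into two cases. If $\mu(A_{1})\leqslant\mu(A\cap C)/2$ or $\mu(A_{2})\leqslant\mu(A^{\complement}\cap C)/2$, the corresponding ``good'' set has mass at least $\mu(A\cap C)/2$ or $\mu(A^{\complement}\cap C)/2$ and the displayed bounds immediately deliver the $\varepsilon/4$-term in both inequalities (without any $\sqrt{m}\delta$ factor). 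Otherwise both $A_{1},A_{2}$ dominate their respective halves; I would then apply Lemma~\ref{lem:cousin-vempala} to the restricted probability $\mu_{C}$ on $C$ with partition $(A_{1},A_{2},C\setminus(A_{1}\cup A_{2}))$ to obtain
\[
\mu\bigl(C\setminus(A_{1}\cup A_{2})\bigr)\geqslant\log2\cdot\sqrt{m}\cdot\delta\cdot\frac{\mu(A_{1})\mu(A_{2})}{\mu(C)},
\]
and combine this with $\mu\otimes P(A\times A^{\complement})\geqslant(\varepsilon/4)\mu(C\setminus(A_{1}\cup A_{2}))$ (obtained by taking the larger of the two direct estimates above) together with $\mu(A_{j})>\tfrac{1}{2}\mu(\cdot\cap C)$. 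This produces the second stated inequality directly. The first inequality then follows by assuming WLOG $\mu(A\cap C)\leqslant\mu(A^{\complement}\cap C)$ so that $\mu(A^{\complement}\cap C)/\mu(C)\geqslant1/2$, which trades one factor of $\mu(A^{\complement}\cap C)/\mu(C)$ for $1/2$ and accounts for the passage from $\log2/4$ to $\log2/8$.

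The step requiring the most care is the reduction to the conditioned measure $\mu_{C}$: the isoperimetric inequality must be applied \emph{inside} $C$ (using $\mu_{C}$) rather than on $\mathsf{E}$ (using $\mu$), because applying it on $\mathsf{E}$ would control $\mu(\mathsf{E}\setminus(A_{1}\cup A_{2}))$, which contains the a priori uncontrolled mass $\mu(\mathsf{E}\setminus C)$ and so does not lower-bound $\mu(C\setminus(A_{1}\cup A_{2}))$, the only piece of $\mathsf{E}\setminus(A_{1}\cup A_{2})$ relevant to the conductance estimate. This transfer is legitimate because $\mu_{C}$ inherits the $m$-strong log-concavity of $\mu$ on the convex set $C$: the restriction of an $m$-strongly convex potential to a convex set retains the same modulus of strong convexity.
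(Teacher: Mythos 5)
Your proof is correct and follows essentially the same path as the paper's: you introduce the same bad sets $A_1$, $A_2$, establish $d(A_1,A_2)\geqslant\delta$ in the same way, perform the same two-case split, apply Lemma~\ref{lem:cousin-vempala} to the conditional measure $\mu_C$, and derive the first inequality from the second via the same $\mu(A\cap C)\mu(A^{\complement}\cap C)/\mu(C)\geqslant\tfrac12\min\{\mu(A\cap C),\mu(A^{\complement}\cap C)\}$ observation. The only superficial difference is that you obtain $\mu\otimes P(A\times A^{\complement})\geqslant\tfrac{\varepsilon}{4}\mu(C\setminus(A_1\cup A_2))$ by taking the larger of the two one-sided flow estimates, whereas the paper averages the forward and backward flows (using $\mu$-invariance in the same way); and your explicit remark that $\mu_C$ inherits the $m$-strong log-concavity of $\mu$ on the convex set $C$ is a welcome clarification of a point the paper leaves implicit.
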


\begin{proof}
  Let $\delta,\epsilon>0$ be as above. For $A\in\mathscr{E}$
define the sets
\begin{align*}
S_{1} & :=\left\{ z\in A\cap C\colon P\left(z,A^{\complement}\right)<\varepsilon/2\right\} \\
S_{2} & :=\left\{ z\in A^{\complement}\cap C\colon P\left(z,A\right)<\varepsilon/2\right\} 
\end{align*}
and $S_{3}:=C\cap\big(S_{1}\cup S_{2}\big)^{\complement}$. We consider
two cases. First we establish that when either $\mu\left(S_{1}\right)\leqslant\frac{1}{2}\mu\left(A\cap C\right)$
or $\mu\left(S_{2}\right)\leqslant\frac{1}{2}\mu\left(A^{\complement}\cap C\right)$,
then 
\[
\mu\otimes P\big(A\times A^{\complement}\big)\geqslant\frac{1}{4}\cdot\varepsilon\cdot\min\left\{ \mu\left(A\cap C\right),\mu\big(A^{\complement}\cap C\big)\right\} \,.
\]
If $\mu\left(S_{1}\right)\leqslant\frac{1}{2}\mu\left(A\cap C\right)$
then
\begin{align*}
\mu\left(A\cap C\right) & =\mu\left(S_{1}\right)+\mu\left(\left(A\cap C\right)\setminus S_{1}\right)\\
 & \leqslant\frac{1}{2}\mu\left(A\cap C\right)+\mu\left(\left(A\cap C\right)\setminus S_{1}\right)\,,
\end{align*}
that is $\frac{1}{2}\mu\left(A\cap C\right)\leqslant\mu\left(\left(A\cap C\right)\setminus S_{1}\right).$
Now, 
\begin{align*}
\mu\otimes P\big(A\times A^{\complement}\big) & \geqslant\mu\otimes P\left(\left(\left(A\cap C\right)\setminus S_{1}\right)\times A^{\complement}\right)\\
 & \geqslant\frac{1}{2}\cdot\varepsilon\cdot\mu\left(\left(A\cap C\right)\setminus S_{1}\right)\\
 & \geqslant\frac{1}{4}\cdot\varepsilon\cdot\mu\left(A\cap C\right).
\end{align*}
Similarly if $\mu\left(S_{2}\right)\leqslant\frac{1}{2}\mu\left(A^{\complement}\cap C\right)$
then
\begin{align*}
\mu\big(A^{\complement}\cap C\big) & =\mu\big(S_{2}\big)+\mu\left(\big(A^{\complement}\cap C\big)\setminus S_{2}\right).\\
 & \leqslant\frac{1}{2}\mu\big(A^{\complement}\cap C\big)+\mu\left(\big(A^{\complement}\cap C\big)\setminus S_{2}\right)
\end{align*}
that is $\frac{1}{2}\mu\big(A^{\complement}\cap C\big)\leqslant\mu\left(\big(A^{\complement}\cap C\big)\setminus S_{2}\right)$
and arguing as before:
\begin{align*}
\mu\otimes P\big(A^{\complement}\times A\big) & \geqslant\mu\otimes P\left(\left(\big(A^{\complement}\cap C\big)\setminus S_{2}\right)\times A\right)\\
 & \geqslant\frac{1}{2}\cdot\varepsilon\cdot\mu\left(\big(A^{\complement}\cap C\big)\setminus S_{2}\right)\\
 & \geqslant\frac{1}{4}\cdot\varepsilon\cdot\mu\left(A^{\complement}\cap C\right).
\end{align*}
As noticed by \cite{Dwivedi-Chen-Wainwright-JMLR:v20:19-306}, reversibility
is not required to establish the following
\begin{align*}
\mu\otimes P\left(A\times A^{\complement}\right) & =\mu\otimes P\left(\mathsf{X}\times A^{\complement}\right)-\left[\mu\otimes P\left(A^{\complement}\times\mathsf{X}\right)-\mu\otimes P\left(A^{\complement}\times A\right)\right]\\
 & =\mu\left(A^{\complement}\right)-\mu\left(A^{\complement}\right)+\mu\otimes P\left(A^{\complement}\times A\right)\\
 & =\mu\otimes P\left(A^{\complement}\times A\right)\,,
\end{align*}
and this allows us to establish our first claim. Using the fact that
for $B\in\mathscr{E}$,
\[
1\geq\frac{\mu(B\cap C)}{\mu(C)},
\]
we may also deduce that if $\mu\left(S_{1}\right)\leqslant\frac{1}{2}\mu\left(A\cap C\right)$
or $\mu\left(S_{2}\right)\leqslant\frac{1}{2}\mu\left(A^{\complement}\cap C\right)$
then
\[
\mu\otimes P\big(A\times A^{\complement}\big)\geqslant\frac{1}{4\mu(C)}\cdot\varepsilon\cdot\mu\left(A\cap C\right)\cdot\mu\big(A^{\complement}\cap C\big)\,.
\]
In the second case, $\mu\left(S_{1}\right)>\frac{1}{2}\mu\left(A\cap C\right)$
and $\mu\big(S_{2}\big)>\frac{1}{2}\mu\left(A^{\complement}\cap C\right)$.
We then compute
\begin{align*}
\mu\otimes P\left(A\times A^{\complement}\right) & =\frac{1}{2}\mu\otimes P\left(A\times A^{\complement}\right)+\frac{1}{2}\mu\otimes P\left(A^{\complement}\times A\right)\\
 & \geqslant\frac{1}{2}\mu\otimes P\left(\left(A\cap C\cap S_{1}^{\complement}\right)\times A^{\complement}\right)+\frac{1}{2}\mu\otimes P\left(\left(A^{\complement}\cap C\cap S_{2}^{\complement}\right)\times A\right)\\
 & \geqslant\frac{1}{4}\cdot\varepsilon\cdot\mu\left(A\cap C\cap S_{1}^{\complement}\right)+\frac{1}{4}\cdot\varepsilon\cdot\mu\left(A^{\complement}\cap C\cap S_{2}^{\complement}\right)\\
 & =\frac{1}{4}\cdot\varepsilon\cdot\mu\left(C\cap\left(S_{1}\cup S_{2}\right)^{\complement}\right)\\
 & =\frac{1}{4}\cdot\varepsilon\cdot\mu\left(S_{3}\right)\,.
\end{align*}
Now for $\left(z,z'\right)\in S_{1}\times S_{2}$ we have
\begin{align*}
\left\Vert P\left(z,\cdot\right)-P\left(z',\cdot\right)\right\Vert _{\mathrm{TV}} & \geqslant P\left(z,A\right)-P\left(z',A\right)\\
 & =1-P\left(z,A^{\complement}\right)-P\left(z',A\right)\\
 & \geqslant1-\varepsilon.
\end{align*}
This implies that $d\left(S_{1},S_{2}\right)=\inf\left\{ \left|z-z'\right|\colon\left(z,z'\right)\in S_{1}\times S_{2}\right\} >\delta$,
since for $z,z'\in\mathsf{X}$, $\left|z-z'\right|\leqslant\delta$
implies
\[
\left\Vert P\left(z,\cdot\right)-P\left(z',\cdot\right)\right\Vert _{\mathrm{TV}}<1-\varepsilon.
\]
From Lemma~\ref{lem:cousin-vempala} applied to the measure $\mu_{C}\left(\cdot\right):=\mu\left(\cdot\cap C\right)/\mu\left(C\right)$,
we can thus write that
\begin{align*}
\mu\left(S_{3}\right) & \geqslant\mu\left(C\right)\cdot\log2\cdot\sqrt{m}\cdot d\left(S_{1},S_{2}\right)\cdot\frac{\mu\left(S_{1}\right)}{\mu\left(C\right)}\cdot\frac{\mu\left(S_{2}\right)}{\mu\left(C\right)}\\
 & \geqslant\frac{\log2}{\mu\left(C\right)}\cdot\sqrt{m}\cdot\delta\cdot\left(\frac{1}{2}\mu\left(A\cap C\right)\right)\cdot\left(\frac{1}{2}\mu\big(A^{\complement}\cap C\big)\right)
\end{align*}
and consequently that
\[
\mu\otimes P\left(A\times A^{\complement}\right)\geqslant\mu\left(C\right)^{-1}\cdot\frac{\log2}{16}\cdot\varepsilon\cdot\sqrt{m}\cdot\delta\cdot\mu\big(A\cap C\big)\cdot\mu\big(A^{\complement}\cap C\big)\,.
\]
The second result then follows. To obtain the first result, since
$p\cdot\left(1-p\right)\geqslant\frac{1}{2}\cdot\min\left(p,1-p\right)$
for $0<p<1$, it holds that
\begin{align*}
\mu\left(A\cap C\right)\cdot\mu\big(A^{\complement}\cap C\big) & =\mu\left(C\right)^{2}\cdot\left[\frac{\mu\left(A\cap C\right)}{\mu\left(C\right)}\right]\cdot\left[1-\frac{\mu\left(A\cap C\right)}{\mu\left(C\right)}\right]\\
 & \geqslant\frac{1}{2}\cdot\mu\left(C\right)^{2}\cdot\min\left\{ \frac{\mu\left(A\cap C\right)}{\mu\big(C\big)},\frac{\mu\left(A^{\complement}\cap C\right)}{\mu\big(C\big)}\right\} \\
 & =\frac{1}{2}\cdot\mu\left(C\right)\cdot\min\left\{ \mu\left(A\cap C\right),\mu\left(A^{\complement}\cap C\right)\right\} .
\end{align*}
\end{proof}
\begin{cor}
\label{cor:iso-to-local-pi}Under the conditions of Theorem~\ref{thm:local-conductance-lower-bound},
we can deduce that if $P$ is also reversible then a restricted Poincaré
inequality holds for $P$ on $C$, and a local Poincaré inequality
holds for $P$ on $C$.
\end{cor}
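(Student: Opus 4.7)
The plan is a two-step reduction. First I establish the restricted Poincar\'e inequality for $P$ on $C$, that is, a strong Poincar\'e inequality for the $\mu_C$-reversible kernel $P_C$ on $\ELL_0(\mu_C)$. The local Poincar\'e inequality for $P$ on $C$ then follows immediately from Lemma~\ref{lem:local_SPI_to_SPI}, which shows that a restricted PI for $P$ on $C$ automatically implies a local PI for $P$ on $C$ with the same constant (and also gives us the $\mu_C$-reversibility of $P_C$ that we use below).

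For the restricted PI I would invoke the classical Cheeger inequality (\ref{eq:cheeger-ineqs}) applied to $P_C$. This reduces to showing that $P_C$ has strictly positive strong conductance with respect to $\mu_C$, i.e.\ a bound of the form $\mathcal{E}(P_C,\mathbf{1}_B) \geq \kappa_0 \, \mu_C(B)\mu_C(C\setminus B)$ for all $B \subseteq C$. My route is to replay the proof of Theorem~\ref{thm:local-conductance-lower-bound} with the triple $(\mu,P,\mathsf{E})$ replaced by $(\mu_C,P_C,C)$, the same convex set $C$ now playing both roles. The hypothesis on $\mu_C$ transfers because $\mu$ being $m$-strongly log-concave and $C$ being convex imply that $\mu_C$ is also $m$-strongly log-concave on $C$, so Lemma~\ref{lem:cousin-vempala} applies directly to $\mu_C$ and supplies the Case~2 isoperimetric bound.

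The main obstacle is transferring the TV overlap assumption from $P$ to $P_C$: a naive triangle inequality yields
\[
\|P_C(z,\cdot)-P_C(z',\cdot)\|_{\mathrm{TV}} \le P(z,C^{\complement}) + \|P(z,\cdot)-P(z',\cdot)\|_{\mathrm{TV}} + P(z',C^{\complement}),
\]
so the rerouted escape mass $P(\cdot,C^{\complement})$, which becomes a Dirac holding at the current point under $P_C$, a priori degrades the TV bound. To compensate I would refine the definition of the sets $S_1, S_2$ in the proof of Theorem~\ref{thm:local-conductance-lower-bound} by also requiring $P(z,C^{\complement})$ to be below a small threshold, and rebalance the various thresholds so that in Case~2 the contrapositive of the TV hypothesis for $P$ still forces $d(S_1,S_2) > \delta$, while in Case~1 the flow $\mu \otimes P\bigl(B \times (C\setminus B)\bigr)$ inherits the required lower bound, possibly with a constant-factor loss. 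Combining the resulting positive conductance of $P_C$ with Cheeger's inequality yields the restricted Poincar\'e inequality for $P$ on $C$, and Lemma~\ref{lem:local_SPI_to_SPI} then delivers the local Poincar\'e inequality.
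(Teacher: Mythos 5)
Your high-level plan is exactly the paper's: lower bound the conductance of $P_C$ with respect to $\mu_C$, feed it into Cheeger's inequality (\ref{eq:cheeger-ineqs}) to get the restricted PI, and then pass to the local PI via Lemma~\ref{lem:local_SPI_to_SPI}. Where you differ is that the paper claims the restricted conductance bound follows directly ``from the conclusion of Theorem~\ref{thm:local-conductance-lower-bound}'', whereas you propose to re-run its proof with the triple $(\mu_C,P_C,C)$. Your instinct is the right one, because the paper's shortcut has a gap: for $A\subseteq C$ one has
\[
\mu(C)\cdot\mu_C\otimes P_C\bigl(A\times(C\setminus A)\bigr)
=\int_A\mu(\mathrm{d}x)\,P(x,C\setminus A)
=\mu\otimes P(A\times A^{\complement})-\int_A\mu(\mathrm{d}x)\,P(x,C^{\complement}),
\]
so the restricted flow equals the global flow \emph{minus} the escape flow into $C^{\complement}$. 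The theorem lower bounds the global flow, which therefore does \emph{not} directly lower bound the restricted flow; the implication goes the wrong way. You have correctly identified the underlying obstruction, namely that the TV overlap hypothesis for $P$ does not transfer to $P_C$, since the escape mass becomes a Dirac at the current point and can inflate $\|P_C(z,\cdot)-P_C(z',\cdot)\|_{\mathrm{TV}}$ by up to $\max\bigl(P(z,C^{\complement}),P(z',C^{\complement})\bigr)$.

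However, your proposed fix does not obviously close this gap either. If you also exclude from $S_1$ (resp.\ $S_2$) the points with $P(z,C^{\complement})$ above a threshold, Case~2 goes through with rebalanced constants, but the difficulty simply migrates to Case~1: a point $z\in A$ removed from $S_1$ because $P(z,C^{\complement})$ is large contributes to the global flow $\mu\otimes P(A\times A^{\complement})$ but need not contribute anything to the restricted flow $\int_A\mu(\mathrm{d}x)\,P(x,C\setminus A)$, so the flow estimate $\mathcal{E}(P_C,\mathbf{1}_A)\gtrsim\varepsilon\,\mu_C(A)$ breaks precisely on the escape-heavy part of $A$. Closing this requires an extra ingredient controlling the $\mu$-mass of the escape-heavy set uniformly relative to $\mu(A)$ for all $A\subseteq C$ (e.g.\ via reversibility one gets $\mu(\{z\in C:P(z,C^{\complement})>\eta\})\leq\mu(C^{\complement})/\eta$, but this is not small relative to an arbitrary $\mu(A)$), or else a different route that bypasses the conductance of $P_C$ altogether and targets the local PI inequality $\|f_m\mathbf{1}_C\|_2^2\leq C_l\,\mathcal{E}(P,f)$ directly, so that the escape term is retained rather than discarded. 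In short: same decomposition as the paper, you are more candid about where the real work is, but the crucial step -- turning the theorem's bound on global flow out of $A$ into a bound on flow from $A$ into $C\setminus A$ -- is left open by both your sketch and the paper's one-line justification.
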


\begin{proof}
We obtain from the conclusion of Theorem~\ref{thm:local-conductance-lower-bound}
that for any $A\in\mathcal{E}$ with $A\subseteq C$,
\[
\mu_{C}\otimes P_{C}(A\times A^{\complement})\geq\frac{\varepsilon}{4}\min\left\{ 1,\frac{\log2}{8}\delta\sqrt{m}\right\} \min\left\{ \mu_{C}\left(A\right),\mu_{C}\left(A^{\complement}\right)\right\} ,
\]
and
\[
\mu_{C}\otimes P_{C}\big(A\times A^{\complement}\big)\geqslant\frac{\varepsilon}{4}\min\left\{ 1,\frac{\log2}{4}\delta\sqrt{m}\right\} \mu_{C}\left(A\right)\mu_{C}\left(A^{\complement}\right).
\]
Since $P$ is reversible, (\ref{eq:cheeger-ineqs}) implies that $P_{C}$
admits a strong Poincaré inequality. Lemma~\ref{lem:local_SPI_to_SPI}
then implies that a local Poincaré holds for $P$ on $C$.
\end{proof}

\subsection{Restricted Markov chains and vanishing Poincaré constants \label{subsec:establish-WPI-Restricted-Markov-chains}}

In this subsection, we establish a link between the existence of SPIs
for restrictions of a Markov chain $P$ to suitable sets and WPIs
for the unrestricted chain. Roughly speaking, for subgeometric chains,
it is possible that the restriction of the chain to a `nice' set $A$
exhibits a strong Poincaré inequality, but as $\mu\left(A\right)$
grows, the constant in this inequality necessarily degenerates. We
will show that the rate at which this constant degenerates as $A$
grows allows one to deduce a quantitative weak Poincaré inequality
for $P$. In what follows, we let $\Phi=\left\Vert \cdot\right\Vert _{{\rm osc}}^{2}$.
Note that ${\rm var}_{\mu}(f)\leq\Phi(f)$.

In the following result we upper and lower bound $\mathcal{E}(P,f)$
by quantities involving Dirichlet forms associated to the restriction
of $P$ to a set $A$.

\begin{lem}
\label{lem:P-a-rev-e-bound}Let $P$ be $\mu$-reversible. Let $A\in\mathscr{E}_{+}$
and $P_{A}$ be the $\mu_{A}$-reversible restriction of $P$ to $A$.
Then 
\[
\mu(A)\mathcal{E}(P_{A},f)\leq\mathcal{E}(P,f)\leq\mu(A)\mathcal{E}(P_{A},f)+\mu(A^{\complement})\Phi(f).
\]
\end{lem}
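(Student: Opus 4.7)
The plan is to express each Dirichlet form as a double integral against $\mu(\mathrm{d}x)\,P(x,\mathrm{d}y)$ and then subtract. Using $\mu$-reversibility of $P$ one has the standard identity $\mathcal{E}(P,f) = \tfrac{1}{2}\int\mu(\mathrm{d}x)\,P(x,\mathrm{d}y)\,[f(x)-f(y)]^{2}$. For $P_{A}$, a short preliminary check (analogous to the reversibility calculation in Lemma~\ref{lem:local_SPI_to_SPI}) shows that $P_{A}$ is $\mu_{A}$-reversible. Substituting the explicit form $P_{A}(x,\mathrm{d}y) = P(x,\mathrm{d}y)\mathbf{1}_{A}(y) + \delta_{x}(\mathrm{d}y)\,P(x,A^{\complement})$ and noting that the $\delta_{x}$ contribution vanishes in the Dirichlet form because $[f(x)-f(x)]^{2}=0$, one obtains
\[
\mu(A)\,\mathcal{E}(P_{A},f) = \tfrac{1}{2}\int\mu(\mathrm{d}x)\,P(x,\mathrm{d}y)\,\mathbf{1}_{A}(x)\mathbf{1}_{A}(y)\,[f(x)-f(y)]^{2}.
\]

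Subtracting then yields the key identity
\[
\mathcal{E}(P,f) - \mu(A)\,\mathcal{E}(P_{A},f) = \tfrac{1}{2}\int\mu(\mathrm{d}x)\,P(x,\mathrm{d}y)\,\bigl[1 - \mathbf{1}_{A}(x)\mathbf{1}_{A}(y)\bigr]\,[f(x)-f(y)]^{2},
\]
and the lower bound $\mu(A)\mathcal{E}(P_{A},f) \leq \mathcal{E}(P,f)$ is immediate by nonnegativity of the integrand.

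For the upper bound, I would bound $[f(x)-f(y)]^{2}\leq\|f\|_{\mathrm{osc}}^{2}=\Phi(f)$ pointwise $\mu\otimes P$-almost everywhere; this is valid because both marginals of $\mu\otimes P$ equal $\mu$, so $f$ inherits its $\mu$-essential bounds. This reduces the upper bound to estimating $\tfrac{\Phi(f)}{2}\,\mu\otimes P\bigl((A\times A)^{\complement}\bigr)$. Decomposing $(A\times A)^{\complement}$ as the disjoint union $(A^{\complement}\times\mathsf{E})\cup(A\times A^{\complement})$, the first piece contributes $\mu(A^{\complement})$ directly, while the second satisfies $\mu\otimes P(A\times A^{\complement}) = \mu\otimes P(A^{\complement}\times A) \leq \mu(A^{\complement})$ by $\mu$-reversibility of $P$. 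Summing gives $\mu\otimes P\bigl((A\times A)^{\complement}\bigr) \leq 2\mu(A^{\complement})$, and multiplying by $\Phi(f)/2$ produces the desired bound $\mu(A^{\complement})\Phi(f)$.

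There is no substantial obstacle; the argument is essentially a bookkeeping exercise against the doubly-integrated Dirichlet form. The only point requiring care is identifying $\mu(A)\mathcal{E}(P_{A},f)$ with the restricted double integral above, since the self-holding $\delta_{x}$ mass of $P_{A}$ must be correctly seen to contribute zero.
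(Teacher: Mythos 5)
Your argument is correct and is essentially the paper's proof: both use the $\mu$-reversible double-integral representation of the Dirichlet form, observe that the self-holding $\delta_{x}$ part of $P_{A}$ contributes nothing, split the integration domain into $A\times A$ and its complement, bound $[f(x)-f(y)]^{2}\le\Phi(f)$, and control $\mu\otimes P\bigl((A\times A)^{\complement}\bigr)$ by $2\mu(A^{\complement})$ via the same decomposition $(A^{\complement}\times\mathsf{E})\cup(A\times A^{\complement})$. The only cosmetic difference is that you invoke reversibility to write $\mu\otimes P(A\times A^{\complement})=\mu\otimes P(A^{\complement}\times A)\le\mu(A^{\complement})$ whereas the paper uses $\mu$-invariance directly, $\mu\otimes P(A\times A^{\complement})\le\mu\otimes P(\mathsf{E}\times A^{\complement})=\mu(A^{\complement})$.
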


\begin{proof}
We have
\begin{align*}
\mathcal{E}(P,f) & =\frac{1}{2}\int\mu({\rm d}x)P(x,{\rm d}y)\left\{ f(x)-f(y)\right\} ^{2}\\
 & \geq\frac{1}{2}\mu(A)\int\mu_{A}({\rm d}x)P_{A}(x,{\rm d}y)\left\{ f(x)-f(y)\right\} ^{2}\\
 & =\mu(A)\mathcal{E}(P_{A},f),
\end{align*}
and
\begin{align*}
\mathcal{E}(P,f) & =\frac{1}{2}\int\mu({\rm d}x)P(x,{\rm d}y)\left\{ f(x)-f(y)\right\} ^{2}\\
 & =\frac{1}{2}\int_{A\times A}\mu({\rm d}x)P(x,{\rm d}y)\left\{ f(x)-f(y)\right\} ^{2}\\
 & \hspace{3cm}+\frac{1}{2}\int_{(A\times A)^{\complement}}\mu({\rm d}x)P(x,{\rm d}y)\left\{ f(x)-f(y)\right\} ^{2}\\
 & \leq\frac{\mu(A)}{2}\int\mu_{A}({\rm d}x)P_{A}(x,{\rm d}y)\left\{ f(x)-f(y)\right\} ^{2}\\
 & \hspace{3cm}+\frac{1}{2}\Phi(f)\left\{ \mu\otimes P(A^{\complement}\times\mathsf{E})+\mu\otimes P(A\times A^{\complement})\right\} \\
 & \leq\mu(A)\mathcal{E}(P_{A},f)+\mu(A^{\complement})\Phi(f),
\end{align*}
where we have used the fact that since $P$ is $\mu$-invariant,
\[
\mu\otimes P(A\times A^{\complement})\leq\mu\otimes P(E\times A^{\complement})=\mu(A^{\complement}).
\]
\end{proof}
Now let $\Pi$ be the Markov kernel such that $\Pi(x,\cdot)=\mu(\cdot)$
for all $x\in\mathsf{E}$, and let $\Pi_{A}$ be the corresponding
restriction as in Definition~\ref{def:restricted_C}: for $B\in\mathscr{E}$,
$\Pi_{A}(x,B)=\mu(A\cap B)+\mu(A^{\complement}){\bf 1}_{B}(x)$, which
is not necessarily equal to $\mu_{A}(B)=\mu(A)^{-1}\mu(A\cap B)$.
In fact, we have the following.
\begin{lem}
For $A\in\mathscr{E}_{+}$, $\mathcal{E}(\Pi_{A},f)=\mu(A){\rm var}_{\mu_{A}}(f)$.
\end{lem}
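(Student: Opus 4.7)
The plan is to compute $\mathcal{E}(\Pi_A,f)$ directly from the definition of the restricted kernel and then recognize the result as a variance under $\mu_A$.

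First, I would rewrite the action of $\Pi_A$ as a kernel: by Definition~\ref{def:restricted_C} applied to $\Pi(x,\cdot)=\mu(\cdot)$, we have
\[
\Pi_A(x,\mathrm{d}y) = \mathbf{1}_A(y)\,\mu(\mathrm{d}y) + \mu(A^{\complement})\,\delta_x(\mathrm{d}y),
\]
which may be checked by evaluating $\Pi_A f(x) = \mu(f\cdot\mathbf{1}_A) + \mu(A^{\complement})f(x)$ against the formula $\Pi_A f(x) = \Pi(f\cdot\mathbf{1}_A)(x) + f(x)\Pi(x,A^{\complement})$.

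Next, I would plug this decomposition into the defining expression for $\mathcal{E}(\Pi_A,f)$ (the symmetric Dirichlet form of Definition~\ref{def:restricted_C}). The $\delta_x$ component contributes nothing because $[f(y)-f(x)]^2$ vanishes on the diagonal, so only the $\mathbf{1}_A(y)\,\mu(\mathrm{d}y)$ component survives. Writing $\mathbf{1}_A(y)\,\mu(\mathrm{d}y) = \mu(A)\,\mu_A(\mathrm{d}y)$ produces an integral of the form
\[
\tfrac{1}{2}\,\mu(A)\int \mu_A(\mathrm{d}x)\,\mu_A(\mathrm{d}y)\,[f(y)-f(x)]^2.
\]

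Finally, I would invoke the elementary identity $\tfrac{1}{2}\int\nu(\mathrm{d}x)\,\nu(\mathrm{d}y)\,[f(y)-f(x)]^2 = \mathrm{var}_{\nu}(f)$, applied with $\nu=\mu_A$, to conclude $\mathcal{E}(\Pi_A,f) = \mu(A)\,\mathrm{var}_{\mu_A}(f)$. There is no real obstacle here; the only thing worth double-checking is the normalization convention for $\mathcal{E}(P_C,f)$ in Definition~\ref{def:restricted_C} (the factor of $\tfrac{1}{2}$ versus not), since the final constant depends on it, but the stated identity is consistent with the reversible Dirichlet form convention used elsewhere in the manuscript (e.g.\ in Lemma~\ref{lem:P-a-rev-e-bound}).
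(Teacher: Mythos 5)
Your proof is correct and follows essentially the same route as the paper: plug the kernel decomposition $\Pi_A(x,\mathrm{d}y)=\mathbf{1}_A(y)\,\mu(\mathrm{d}y)+\mu(A^\complement)\,\delta_x(\mathrm{d}y)$ into the symmetric $\tfrac12$-Dirichlet form, drop the diagonal term, rescale $\mathbf{1}_A(y)\,\mu(\mathrm{d}y)=\mu(A)\,\mu_A(\mathrm{d}y)$, and invoke the variance identity. Your closing remark about the $\tfrac12$ normalization is well taken: the formula for $\calE(P_C,f)$ as typeset in Definition~\ref{def:restricted_C} omits the $\tfrac12$, but the paper's own computations (including its proof of this lemma and Lemma~\ref{lem:P-a-rev-e-bound}) consistently use the $\tfrac12$-convention, and your interpretation is the one consistent with the stated conclusion.
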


\begin{proof}
We have
\begin{align*}
\mathcal{E}(\Pi_{A},f) & =\frac{1}{2}\int\mu_{A}({\rm d}x)\Pi_{A}(x,{\rm d}y)\left\{ f(x)-f(y)\right\} ^{2}\\
 & =\frac{1}{2}\int\mu_{A}({\rm d}x)\Pi(x,{\rm d}y){\bf 1}_{A}(y)\left\{ f(x)-f(y)\right\} ^{2}\\
 & =\frac{1}{2}\mu(A)\int\mu_{A}({\rm d}x)\mu_{A}({\rm d}y)\left\{ f(x)-f(y)\right\} ^{2}\\
 & =\mu(A){\rm var}_{\mu}(f).
\end{align*}
\end{proof}
\begin{cor}
\label{cor:mu-a-var-bound}Letting $P=\Pi$ in Lemma~\ref{lem:P-a-rev-e-bound},
we obtain
\[
\mu(A)^{2}{\rm var}_{\mu_{A}}(f)\leq{\rm var}_{\mu}(f)\leq\mu(A)^{2}{\rm var}_{\mu_{A}}(f)+\mu(A^{\complement})\Phi(f),\qquad A\in\mathscr{E}_{+}.
\]
\end{cor}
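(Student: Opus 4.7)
The plan is to simply specialize Lemma~\ref{lem:P-a-rev-e-bound} to the particular choice $P=\Pi$, using the explicit Dirichlet form computations available in that setting. First I would verify that $\Pi$ is $\mu$-reversible so that the hypothesis of Lemma~\ref{lem:P-a-rev-e-bound} applies: this is immediate since $\mu\otimes\Pi(\mathrm{d}x,\mathrm{d}y)=\mu(\mathrm{d}x)\mu(\mathrm{d}y)$, which is symmetric in $(x,y)$.

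Next I would compute $\mathcal{E}(\Pi,f)$ directly via the symmetric form:
\[
\mathcal{E}(\Pi,f)=\frac{1}{2}\int\mu(\mathrm{d}x)\mu(\mathrm{d}y)\{f(x)-f(y)\}^{2}=\mu(f^{2})-\mu(f)^{2}=\mathrm{var}_{\mu}(f).
\]
The preceding lemma already supplies the companion identity $\mathcal{E}(\Pi_{A},f)=\mu(A)\,\mathrm{var}_{\mu_{A}}(f)$ (reading the right-hand variance as being with respect to $\mu_{A}$, consistent with the preceding display).

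Plugging these two identifications into the two-sided bound of Lemma~\ref{lem:P-a-rev-e-bound},
\[
\mu(A)\mathcal{E}(\Pi_{A},f)\leq\mathcal{E}(\Pi,f)\leq\mu(A)\mathcal{E}(\Pi_{A},f)+\mu(A^{\complement})\Phi(f),
\]
turns the middle term into $\mathrm{var}_{\mu}(f)$ and both outer occurrences of $\mu(A)\mathcal{E}(\Pi_{A},f)$ into $\mu(A)^{2}\mathrm{var}_{\mu_{A}}(f)$, yielding exactly the claimed chain of inequalities. There is no serious obstacle: the corollary is a direct specialization, and the only thing to keep straight is that the normalization of $\mu_{A}$ produces a factor of $\mu(A)^{2}$ (rather than $\mu(A)$) after the substitutions are carried out.
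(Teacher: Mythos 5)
Your proof is correct and follows exactly the paper's intended route: substitute $P=\Pi$ in Lemma~\ref{lem:P-a-rev-e-bound}, use $\mathcal{E}(\Pi,f)=\mathrm{var}_{\mu}(f)$ and the preceding lemma's identity $\mathcal{E}(\Pi_{A},f)=\mu(A)\,\mathrm{var}_{\mu_{A}}(f)$, and simplify. You also correctly noted the $\mu_{A}$-versus-$\mu$ discrepancy in the final line of that lemma's proof (a typo in the paper) and that $\Pi$ is $\mu$-reversible as required by the hypothesis.
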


\begin{thm}
\label{thm:vanishing-gaps-wpi}Let $P$ be $\mu$-reversible. For
$A\in\mathscr{E}_{+}$, define $\gamma_{P}(A)$ to be the (right)
``spectral gap''
\[
\gamma_{P}(A)=\inf_{f\in\mathrm{L}^{2}(\mu)}\frac{\mathcal{E}(P_{A},f)}{{\rm var}_{\mu_{A}}(f)}.
\]
Then a $P$ satisfies a $(\Phi,\beta)$-WPI with
\[
\beta(s)=1\wedge\inf_{A\in\mathscr{E}_{+}}\left\{ \mu(A^{\complement}):\gamma_{P}(A)\geq\frac{\mu(A)}{s}\right\} .
\]
\end{thm}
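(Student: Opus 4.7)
The proof looks like it should combine the two preceding lemmas in a very direct way. My plan is as follows.

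Fix $s>0$ and $f\in\mathrm{L}_0^2(\mu)$, so that $\|f\|_2^2={\rm var}_\mu(f)$. For any $A\in\mathscr{E}_+$ satisfying the condition $\gamma_P(A)\geq\mu(A)/s$, the definition of $\gamma_P(A)$ gives
\[
{\rm var}_{\mu_A}(f)\;\leq\;\frac{1}{\gamma_P(A)}\mathcal{E}(P_A,f)\;\leq\;\frac{s}{\mu(A)}\mathcal{E}(P_A,f).
\]
I would then apply Corollary~\ref{cor:mu-a-var-bound} to pass from the variance under $\mu_A$ back to the variance under $\mu$:
\[
\|f\|_2^2\;\leq\;\mu(A)^2\,{\rm var}_{\mu_A}(f)+\mu(A^\complement)\,\Phi(f).
\]
Substituting the previous bound and then applying the inequality $\mu(A)\mathcal{E}(P_A,f)\leq\mathcal{E}(P,f)$ from Lemma~\ref{lem:P-a-rev-e-bound} yields
\[
\|f\|_2^2\;\leq\;s\,\mu(A)\,\mathcal{E}(P_A,f)+\mu(A^\complement)\,\Phi(f)\;\leq\;s\,\mathcal{E}(P,f)+\mu(A^\complement)\,\Phi(f).
\]
Taking the infimum over all admissible $A$ produces the bound $\|f\|_2^2\leq s\,\mathcal{E}(P,f)+\beta_0(s)\,\Phi(f)$ with $\beta_0(s):=\inf\{\mu(A^\complement):\gamma_P(A)\geq\mu(A)/s\}$.

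To obtain the full $\beta(s)=1\wedge\beta_0(s)$, I would observe the trivial inequality $\|f\|_2^2={\rm var}_\mu(f)\leq\|f\|_{\rm osc}^2=\Phi(f)$, which is valid because $\mathfrak{a}\leq1$ for $\Phi=\|\cdot\|_{\rm osc}^2$ (Example~\ref{exa:osc2}). This shows $\|f\|_2^2\leq s\,\mathcal{E}(P,f)+\Phi(f)$ for every $s>0$, giving the ``$1\wedge$'' cap and the conclusion.

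There is no real obstacle here, as all the ingredients are already established; the only mild subtlety is checking that the set of $A$ satisfying $\gamma_P(A)\geq\mu(A)/s$ is nonempty or handling the convention $\inf\emptyset=\infty$ combined with the ``$1\wedge$'' so that $\beta$ is always $\leq 1$, finite, and decreasing in $s$. Monotonicity of $\beta$ follows because the feasible set of $A$'s only grows as $s$ increases, so $\beta_0(s)$ is nonincreasing in $s$, and the minimum with $1$ preserves this.
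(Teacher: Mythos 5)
Your proposal is correct and follows essentially the same route as the paper: both start from Corollary~\ref{cor:mu-a-var-bound} and Lemma~\ref{lem:P-a-rev-e-bound}, invoke the definition of $\gamma_{P}(A)$ together with the feasibility condition $\gamma_{P}(A)\geq\mu(A)/s$, and handle the $1\wedge$ cap (and the $\inf\emptyset$ case) via the trivial bound ${\rm var}_{\mu}(f)\leq\Phi(f)$. The only difference is the order in which the three inequalities are chained, which is cosmetic.
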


\begin{proof}
Let $s>0$. If $\mathcal{S}=\left\{ A\in\mathscr{E}_{+}:\gamma_{P}(A)\geq\mu(A)/s\right\} $
is empty, we may take $\beta(s)=1$ since ${\rm var}_{\mu}(f)\leq\Phi(f)$.
Otherwise, let $A\in\mathcal{S}$. By Corollary~\ref{cor:mu-a-var-bound}
and Lemma~\ref{lem:P-a-rev-e-bound}, we obtain that for any $f\in\mathrm{L}^{2}(\mu)$,
\begin{align*}
{\rm var}_{\mu}(f) & \leq\mu(A)^{2}{\rm var}_{\mu_{A}}(f)+\mu(A^{\complement})\Phi(f)\\
 & \leq\frac{\mu(A)^{2}}{\gamma_{P}(A)}\mathcal{E}(P_{A},f)+\mu(A^{\complement})\Phi(f)\\
 & \leq\frac{\mu(A)}{\gamma_{P}(A)}\mathcal{E}(P,f)+\mu(A^{\complement})\Phi(f)\\
 & \leq s\mathcal{E}(P,f)+\mu(A^{\complement})\Phi(f),
\end{align*}
from which we may deduce that one may take $\beta(s)=\mu(A^{\complement})$.
The result then follows by taking the infimum over $A\in\mathcal{S}$.
\end{proof}
We may revisit the WPI obtained for the IMH in \cite{ALPW2021} from
this perspective as follows; the argument is essentially the same.
\begin{example}
Consider the IMH with target $\pi$ and proposal $q$, and let $w={\rm d}\pi/{\rm d}q$.
If we define $A=\{x:w(x)\leq s\}$ then we may write
\begin{align*}
\mathcal{E}(P_{A},f) & =\frac{1}{2}\int\pi_{A}({\rm d}x)P_{A}(x,{\rm d}y)\left\{ f(x)-f(y)\right\} ^{2}\\
 & =\frac{1}{2}\int\pi_{A}({\rm d}x)q({\rm d}y)\left\{ 1\wedge\frac{w(y)}{w(x)}\right\} {\bf 1}_{A}(y)\left\{ f(x)-f(y)\right\} ^{2}\\
 & \geq\frac{1}{2s}\pi(A)\int\pi_{A}({\rm d}x)\pi_{A}({\rm d}y)\left\{ f(x)-f(y)\right\} ^{2}\\
 & =\frac{\pi(A)}{s}{\rm var}_{\pi_{A}}(f).
\end{align*}
It follows that $\gamma_{P}(A)\geq\pi(A)/s$, and so by Theorem~\ref{thm:vanishing-gaps-wpi}
we may take $\beta(s)=\pi(A^{\complement})$ in a $(\left\Vert \cdot\right\Vert _{{\rm osc}}^{2},\beta)$-WPI.
This argument is clearly related to the well-known fact that the IMH
has a spectral gap if and only if $w$ is upper bounded by a finite
constant \cite[Theorem~2.1]{mengersen1996rates}, and we obtain the
subgeometric rate here by considering the measures of a sequence of
sets on which $w$ is upper bounded by an increasing sequence of constants.
\end{example}

\begin{rem}
One may equivalently deduce a $(\Phi,\alpha)$-WPI with 
\[
\alpha(r)=\inf_{A\in\mathscr{E}_{+}}\left\{ \frac{\mu(A)}{\gamma_{P}(A)}:\mu(A)\geq1-r\right\} .
\]
If we define $\gamma_{P}(t)=\sup_{A\in\mathscr{E}_{+}}\{\gamma_{P}(A):\mu(A)\geq1-t\}$
then we see that $\alpha(r)\leq1/\gamma_{P}(r)$ and the rate at which
$\gamma_{P}(r)\to0$ as $r\to0$ provides an upper bound on the convergence
rate.
\end{rem}

To our knowledge, the observation that a subgeometric rate of convergence
can be related to the rate of decay of the spectral gap on an appropriate
sequence of sets is novel. Considering restrictions of $\mu$ and
$P$ to a set $A$ is reminiscent of the notion of spectral profile
introduced by \cite{goel2006mixing}, which involves instead considering
$\mathcal{E}(P,f)/{\rm var_{\mu}}(f)$ when $f\geq0$ has support
restricted to appropriately chosen sets $(S_{t})$, and considering
the decay as $\mu(S_{t})\to1$. However, it is not clear how to relate
the two concepts, and we note that the spectral profile was introduced
to obtain bounds on mixing times whereas we are interested here in
subgeometric rates of convergence.

\begin{rem}
Clearly if $P$ has a (right) spectral gap $\gamma_{P}=\gamma_{P}(\mathsf{E})>0$
then we have $\beta(s)=0$ for $s\geq\gamma_{P}^{-1}$.
\end{rem}

For some Markov kernels $P$ with state space $\mathsf{E}=\mathbb{R}^{d}$,
the restriction of $P$ to a ball around the origin will have a non-zero
right spectral gap. In such cases, the sequence of balls with increasing
radius defines a sequence of restrictions and the rate at which the
gap decreases together with the rate at which the $\mu$-measure of
the balls tends to $1$ can be used to deduce a WPI.
\begin{example}
Assume that for a Markov kernel $P$ there is a family of sets $(A_{t})_{t\geq1}$
constants $C,a,b>0$ such that for all $t\geq1$,
\[
\gamma_{P}(A_{t})\geq Ct^{-a},\qquad\mu(A_{t}^{\complement})\leq Dt^{-b}.
\]
Then we find that for $\gamma_{P}(A_{t})\geq\frac{1}{s}$ is satisfied
by taking $t=(Cs)^{\frac{1}{a}}$, and we then find $\mu(A_{t}^{\complement})\leq D(Cs)^{-\frac{b}{a}}$.
Hence $P$ satisfies a $(\Phi,\beta)$-WPI with $\beta(s)=D(Cs)^{-\frac{b}{a}}$.
This argument may be valid when $P$ is a random-walk Metropolis kernel
on a heavy-tailed target, and $A_{t}$ is a ball of radius $t$ around
the origin, although proving rigorously the lower bounds on $\gamma_{P}(A_{t})$
is not trivial. 
\end{example}

\section{Examples and applications\label{sec:Examples-and-applications}}

\subsection{Lower bounds for pseudo-marginal MCMC\label{subsec:example-Lower-bounds-pseudo}}

We consider a specific and theoretically tractable ABC example covered
by positive results from \cite{ALPW2021}. We show now that there
is a quantitative version of the argument in \cite{lee-latuszynski-2014}
that ABC with local proposals is subgeometric, and that the lower
bound on the polynomial rate matches the upper bound given by \cite{ALPW2021}.

In this subsection, we let $\tilde{P}$ be the pseudo-marginal Markov
kernel, and in particular we focus on complementing the results in
\cite{ALPW2021}. For any measurable $A$ such that $(x,w)\not\in A$,
we may write
\[
\tilde{P}(x,w;A)=\int q(x,{\rm d}y)Q_{y}({\rm d}u)\left\{ 1\wedge r(x,y)\frac{u}{w}\right\} {\bf 1}_{A}(y,u),
\]
where $\{Q_{x}:x\in\mathsf{E}\}$ is a family of probability measures
such that $\int Q_{x}({\rm d}w)w=1$. We focus on the ABC example
in \cite[Section~4.3]{ALPW2021}, with some prior $\nu$ and an approximate,
intractable likelihood $\ell_{{\rm ABC}}$. In particular, for some
$N\in\mathbb{N}$ and any $x\in\mathsf{E}$ we denote
\[
Q_{x}(A)=Q_{x,N}(A)=\mathbb{P}_{x}\left(\frac{1}{N}\sum_{i=1}^{N}W_{i}\in A\right),
\]
where under $\mathbb{P}_{x}$, $W_{i}=\frac{1}{\ell_{{\rm ABC}}(x)}B_{i}$
and $B_{1},\ldots,B_{N}$ are independent ${\rm Bernoulli}(\ell_{{\rm ABC}}(x))$
random variables. The parameter $N$ thereby controls the concentration
of $W\sim Q_{x}$ around $1$, and we use the subscript $N$ to emphasize
this dependence.
\begin{prop}
Consider the general ABC example in \cite[Section~4.3]{ALPW2021},
and take for $a,q\in(0,1)$,

\begin{align*}
\nu\left(x\right) & =\left(1-q\right)q^{x-1}{\bf 1}_{\{1,2,\ldots\}}(x),\\
\ell_{{\rm ABC}}\left(x\right) & =a^{x-1}{\bf 1}_{\{1,2,\ldots\}}(x),
\end{align*}
and $q\left(x,x-1\right)=q\left(x,x+1\right)=1/2$. Then, for any
$N\geqslant1$, if $\tilde{P}$ admits a $\left(\Phi,\beta\right)$-WPI
then $\beta\left(s\right)\in\Omega\left(s^{-\frac{\log(aq)}{\log(a)}}\right)$.
\end{prop}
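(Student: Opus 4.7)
The plan is to apply Theorem~\ref{thm:lower-bound-beta-star} (the sticky-set lower bound on $\beta^{\star}$) to the pseudo-marginal kernel $\tilde{P}$ on the extended state space, identifying a sticky set whose $\tilde{\mu}$-measure decays polynomially in the stickiness parameter $\varepsilon$ with the prescribed exponent.

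First I would control the acceptance probability. Because $\ell_{\rm ABC}(y)/\ell_{\rm ABC}(x)\cdot\nu(y)/\nu(x)= (aq)^{y-x}$ on neighbours and $q$ is symmetric, the ratio $r(x,y)$ equals $aq$ for $y=x+1$ and $1/(aq)$ for $y=x-1$. Using $1\wedge t\leq t$ and the fact that $\int Q_y(\dif u)\,u=1$, one gets, for $x\geq 2$,
\[
1-\tilde P\bigl((x,w),\{(x,w)\}\bigr)\;\leq\;\tfrac{1}{2w}\bigl(aq+1/(aq)\bigr)\;=:\;C(a,q)/w.
\]
Hence, writing $W_\varepsilon:=C(a,q)/\varepsilon$,
\[
A_\varepsilon\;\supseteq\;\bigl\{(x,w)\colon x\geq 2,\ w\geq W_\varepsilon\bigr\}.
\]

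Next I would estimate $\tilde{\mu}(w\geq W_\varepsilon)$. The extended target is $\tilde{\mu}(x,w)=(1-aq)(aq)^{x-1}\cdot w\,Q_x(w)$, since $\pi(x)\propto (aq)^{x-1}$ and $\mathbb{E}_{Q_x}W=1$. Writing $p=a^{x-1}$, so $w$ is supported on $\{k/(Np)\colon k=0,\ldots,N\}$, the size-biased identity for the binomial gives, for any $W\leq 1/p$,
\[
\tilde{\mu}(w\geq W\mid x)\;=\;\mathbb{P}\bigl(K'\geq \lceil NpW\rceil-1\bigr),\qquad K'\sim\mathrm{Bin}(N-1,p),
\]
which equals $1$ whenever $NpW\leq 1$, i.e. whenever $a^{x-1}\leq 1/(NW)$. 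Summing the $\pi(x)$-mass of those $x$ yields, with $x_0\sim 1+\log(NW_\varepsilon)/|\log a|$,
\[
\tilde\mu(A_\varepsilon)\;\geq\;\sum_{x\geq x_0}(1-aq)(aq)^{x-1}\;\gtrsim\;(NW_\varepsilon)^{\log(aq)/|\log a|}\;=\;c(a,q,N)\cdot\varepsilon^{\,\log(aq)/\log(a)},
\]
since $|\log a|=-\log a$ with $a\in(0,1)$ flips a sign.

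Finally I would apply Theorem~\ref{thm:lower-bound-beta-star}: with $\lambda:=\log(aq)/\log(a)>0$,
\[
\beta(s)\;\geq\;\beta^{\star}(s)\;\geq\;\sup_{\varepsilon\in(0,1)}\tilde\mu(A_\varepsilon)\bigl(1-s\varepsilon-\tilde\mu(A_\varepsilon)\bigr)\;\gtrsim\;\sup_{\varepsilon}c\varepsilon^{\lambda}\bigl(1-s\varepsilon-c\varepsilon^{\lambda}\bigr).
\]
Choosing $\varepsilon=1/(2s)$ makes the bracket bounded below by $1/4$ for all $s$ large enough, delivering $\beta(s)\gtrsim s^{-\lambda}=s^{-\log(aq)/\log(a)}$, which is the claim.

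I expect the main obstacle to be the tail computation in step~3: rewriting $\tilde\mu(w\geq W\mid x)$ via the size-biased-binomial identity is the only non-mechanical manipulation, and it is essential to obtain an exponent independent of $N$ (a naïve estimate via $\tilde\mu(w=1/a^{x-1}\mid x)=a^{(N-1)(x-1)}$ gives the $N$-dependent exponent $\log(qa^N)/\log(a)$, which matches the desired bound only for $N=1$). A minor detail is handling the $x=1$ boundary, which is harmless because only the geometric tail over $x\geq x_0$ contributes to the asymptotics.
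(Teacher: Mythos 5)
Your proof is correct, but it reaches the conclusion by a genuinely different route from the paper. The paper works directly with the weak conductance $\kappa(u)$ and the level sets $A_{\rho}=\{(x,w):x>\rho\}$ of the marginal coordinate: it bounds $\tilde\pi\otimes\tilde P(A_{\rho}\times A_{\rho}^{\complement})$ by noting that the only way to leave $A_{\rho}$ is from level $\rho+1$ and that the proposed Bernoulli weight at level $\rho$ is nonzero with probability at most $Na^{\rho-1}$, giving $\kappa(u)\in\mathcal{O}(u^{\log a/\log(aq)})$ and then invoking the Cheeger lower bound on $\alpha^{\star}$ from Theorem~\ref{thm:WPI_from_cond}. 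You instead invoke the sticky-set lower bound of Theorem~\ref{thm:lower-bound-beta-star}, identifying the sticky set $A_{\varepsilon}\supseteq\{(x,w):x\geq2,\ w\geq C(a,q)/\varepsilon\}$ via the elementary bound $1-\tilde P((x,w),\{(x,w)\})\leq C(a,q)/w$, and then estimating $\tilde\mu(A_{\varepsilon})$ through the size-biased binomial identity. The two sets capture slightly different mechanisms for slow mixing -- the paper's $A_{\rho}$ captures inability to exit the tail of the marginal chain, while your $A_{\varepsilon}$ captures stickiness caused by an unusually large imported weight -- but both are ultimately Cheeger-type arguments on indicator functions (both Theorem~\ref{thm:lower-bound-beta-star} and the conductance bound rest on $\mathcal{E}(P,\mathbf{1}_A)=\mu\otimes P(A\times A^{\complement})$), and both deliver the same exponent $\lambda=\log(aq)/\log a$. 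Your key observation -- that one should use the event $\{w\geq W_{\varepsilon}\}$ and exploit the fact that the size-biased binomial puts \emph{full} mass above $1/(Np)$, rather than conditioning on the extreme value $w=1/p$ -- is exactly what removes the spurious $N$-dependence from the exponent and makes the lower bound match the upper bound from \cite{ALPW2021} uniformly in $N$; the paper's argument achieves the same via Bernoulli's inequality $1-(1-a^{\rho-1})^N\leq Na^{\rho-1}$, which likewise pushes the $N$ into the constant rather than the exponent.
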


\begin{proof}
The ABC posterior is $\pi_{{\rm ABC}}\left(x\right)=\left(1-aq\right)\left(aq\right)^{x-1}{\bf 1}_{\{1,2,\ldots\}}(x)$,
i.e. ${\rm Geometric}\left(1-aq\right)$. We define the pseudo-marginal
target distribution on $\left(x,w\right)$ to be $\tilde{\pi}$. We
may define the set, with $\rho\in\mathbb{N}$,
\[
A_{\rho}=\left\{ \left(x,w\right):x>\rho\right\} ,
\]
and we obtain $\tilde{\pi}\left(A_{\rho}\right)=\pi_{{\rm ABC}}\left(x>\rho\right)=\left(aq\right)^{\rho}$. 

Let $u<aq/4$, and take $\rho=\left\lfloor \log\left(2u\right)/\log\left(aq\right)\right\rfloor $.
Since $x-1\leq\left\lfloor x\right\rfloor \leq x$, we deduce that
$2u\leq\tilde{\pi}(A_{\rho})<1/2$, and hence $\tilde{\pi}(A_{\rho})\tilde{\pi}(A_{\rho}^{\complement})>2u\cdot\frac{1}{2}=u$.
Now, we find that for $\left(x,w\right)\in A_{\rho}$, and any $N\in\mathbb{N}$,
we have the bound
\begin{align*}
\tilde{P}\left(x,w;A_{\rho}^{\complement}\right) & \leqslant{\bf 1}_{\{\rho+1\}}(x)q\left(\rho+1,\rho\right)\int Q_{\rho,N}\left({\rm d}u\right)\left\{ 1\wedge\frac{\pi_{{\rm ABC}}\left(\rho\right)}{\pi_{{\rm ABC}}\left(\rho+1\right)}\cdot\frac{u}{w}\right\} \\
 & \leqslant{\bf 1}_{\{\rho+1\}}(x)\frac{1}{2}Q_{\rho,N}\left(u>0\right)\\
 & \leqslant{\bf 1}_{\{\rho+1\}}(x)\frac{N}{2}a^{\rho-1},
\end{align*}
where we have used Bernoulli's inequality to deduce that
\begin{align*}
Q_{\rho,N}\left(u>0\right) & =1-\left(1-a^{\rho-1}\right)^{N}\\
 & \leqslant1-\left(1-Na^{\rho-1}\right)\\
 & \leqslant Na^{\rho-1}.
\end{align*}
Hence, we obtain that
\begin{align*}
\tilde{\pi}\otimes\tilde{P}\left(A_{\rho},A_{\rho}^{\complement}\right) & \leqslant\tilde{\pi}\left(A_{\rho}\right)\frac{N}{2}a^{\rho-1}\\
 & \leqslant\tilde{\pi}\left(A_{\rho}\right)\frac{N}{2}a^{\frac{\log\left(2u\right)}{\log\left(aq\right)}-2}.
\end{align*}
It follows that the weak conductance satisfies
\begin{align*}
\kappa\left(u\right) & \leqslant\frac{\tilde{\pi}\otimes\tilde{P}\left(A_{\rho},A_{\rho}^{\complement}\right)}{\tilde{\pi}\otimes\tilde{\pi}\left(A_{\rho},A_{\rho}^{\complement}\right)}\\
 & \leqslant Na^{\frac{\log\left(2u\right)}{\log\left(aq\right)}-2}\\
 & =Na^{-2}\left(2u\right)^{\frac{\log\left(a\right)}{\log\left(aq\right)}}.
\end{align*}
and so we see that $\kappa\left(u\right)\in\mathcal{O}\left(u^{\frac{\log\left(a\right)}{\log\left(aq\right)}}\right)$.
This then implies that $\alpha^{\star}\left(r\right)\in\Omega\left(r^{-\frac{\log\left(a\right)}{\log\left(aq\right)}}\right)$
as $r\downarrow0$ and $\beta^{\star}\left(s\right)\in\Omega\left(s^{-\frac{\log\left(aq\right)}{\log\left(a\right)}}\right)$. 
\end{proof}
\begin{rem}
\cite{ALPW2021} considered the setting where the marginal chain is
geometric with strong Poincaré constant $C_{{\rm P}}$. They showed
that one may take $\beta(s)=\beta'(C_{{\rm P}}s)/C_{{\rm P}}$ where
$\beta'(s)=\tilde{\pi}\left(w\geqslant s\right)$. In the case $N=1$,
we see that 
\begin{align*}
\tilde{\pi}\left(w\geqslant s\right) & =\tilde{\pi}\left(\left\{ x:\frac{1}{a^{x-1}}\geqslant s\right\} \right)\\
 & =\pi\left(\left\{ x:x\geqslant\frac{\log\left(s\right)}{-\log\left(a\right)}+1\right\} \right)\\
 & =\left(aq\right)^{\left\lceil \frac{\log\left(s\right)}{-\log\left(a\right)}\right\rceil }\\
 & \sim s^{-\frac{\log\left(aq\right)}{\log\left(a\right)}}.
\end{align*}
Alternatively, \cite{ALPW2021} showed that for any $N\in\mathbb{N}$
and $p\in\mathbb{N}$, $\beta\left(s\right)\in\mathcal{O}\left(s^{-p}\right)$
if $\int\nu\left({\rm d}x\right)\ell_{{\rm ABC}}\left(x\right)^{-(p-1)}<\infty$,
which in this case corresponds to $q/a^{p-1}<1$, or equivalently
$p<\log\left(aq\right)/\log\left(q\right)$, matching the lower bound
above.
\end{rem}

\subsection{Lower bounds for RWM targeting heavy-tailed distributions\label{subsec:example-Lower-bounds-RWM-heavy-tail}}

In this subsection, we assume $\left|\cdot\right|$ is a norm. We
consider $P$ a $\mu$-invariant kernel that is local in the sense
that 
\[
b\left(r\right):=\inf_{x\in\mathsf{E}}P\left(x,\mathcal{B}(x,r)\right),
\]
is a real-valued function with $\lim_{r\to\infty}b(r)=1$. We assume
in this subsection that $\Phi(\cdot)=\left\Vert \cdot\right\Vert _{{\rm osc}}^{2}$.

When $\mu$ has polynomial tails, we seek to demonstrate that arguments
used to show that $\kappa\left(0\right)=0$, and hence that $P$ does
not admit a spectral gap, may also be used to lower bound $\alpha$
or $\beta$ in a WPI for $P$. In this sense, such arguments can be
made quantitative, although we require more information on the measure
of suitable sets to deduce rate information. The following argument
is inspired by the approach taken in the proof of \cite[Theorem~6.3]{papaspiliopoulos-roberts-2008}.

The first lemma upper bounds $\mu\otimes P\left(A\times A^{\complement}\right)$.
\begin{lem}
\label{lem:heavy-muP-ball}Let
\[
\phi\left(\rho,K\right):=\frac{\mathbb{P}_{\mu}\left(\left|X\right|>\rho+K\right)}{\mathbb{P}_{\mu}\left(\left|X\right|>\rho\right)}.
\]
Then with $A=\mathcal{B}(0,\rho)^{\complement}$ and any $K>0$, 
\[
\mu\otimes P\left(A\times A^{\complement}\right)\leqslant\mu\left(A\right)\left\{ 1-\phi\left(\rho,K\right)\cdot b\left(K\right)\right\} .
\]
\end{lem}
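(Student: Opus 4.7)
The approach is to decompose the outer region $A=\mathcal{B}(0,\rho)^{\complement}$ into two shells, a ``far'' shell $A_K:=\mathcal{B}(0,\rho+K)^{\complement}$ and an ``intermediate'' shell $A\setminus A_K=\{x\colon\rho<|x|\leq\rho+K\}$. The key geometric observation is that for $x\in A_K$, by the triangle inequality any $y\in A^{\complement}$ (i.e.\ $|y|\leq\rho$) satisfies $|x-y|\geq|x|-|y|>K$, so $A^{\complement}\subseteq\mathcal{B}(x,K)^{\complement}$. Hence for such $x$,
\[
P(x,A^{\complement})\leq P(x,\mathcal{B}(x,K)^{\complement})=1-P(x,\mathcal{B}(x,K))\leq1-b(K).
\]
On the intermediate shell we use only the trivial bound $P(x,A^{\complement})\leq1$.

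Combining these, I would write
\[
\mu\otimes P(A\times A^{\complement})=\int_{A_K}\mu(\mathrm{d}x)\,P(x,A^{\complement})+\int_{A\setminus A_K}\mu(\mathrm{d}x)\,P(x,A^{\complement})\leq(1-b(K))\mu(A_K)+\mu(A\setminus A_K).
\]
By definition of $\phi(\rho,K)$ we have $\mu(A_K)=\phi(\rho,K)\mu(A)$ and consequently $\mu(A\setminus A_K)=(1-\phi(\rho,K))\mu(A)$. Substituting and simplifying yields
\[
\mu\otimes P(A\times A^{\complement})\leq\mu(A)\bigl\{(1-b(K))\phi(\rho,K)+(1-\phi(\rho,K))\bigr\}=\mu(A)\bigl\{1-b(K)\phi(\rho,K)\bigr\},
\]
which is the claim.

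There is no real obstacle here; the only point requiring a little care is the triangle-inequality step, which uses only that $|\cdot|$ is a norm (so the reverse triangle inequality is valid). All remaining manipulations are bookkeeping with the definition of $\phi(\rho,K)$ and the sub-Markov property of $P$.
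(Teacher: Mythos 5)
Your proof is correct, and it is essentially the same argument as the paper's, just expressed in complementary form. The paper lower-bounds the ``stay'' probability $\int\mu_A(\mathrm{d}x)\,P(x,A)$ by considering the coupled random variables $(X,\xi_X)$ with $X\sim\mu_A$ and observing that the event $\{|X|>\rho+K,\,|\xi_X|\leq K\}$ forces $|Y|>\rho$, which is exactly the reverse-triangle-inequality step you use; you instead upper-bound the ``escape'' mass $\int_A\mu(\mathrm{d}x)\,P(x,A^{\complement})$ pointwise by splitting $A$ into the far shell $A_K$ and the intermediate shell. The two presentations produce identical bookkeeping ($\mu(A_K)=\phi(\rho,K)\mu(A)$ etc.), so neither has a real advantage; yours is arguably slightly more elementary in that it avoids the probabilistic coupling language and works directly with the kernel.
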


\begin{proof}
Let $\left(X,Y\right)\sim\mu_{A}\otimes P$, where $\mu_{A}$ is as
defined in Definition~\ref{def:restricted_C}, and consider the representation
$Y=X+\xi_{X}$. We bound
\begin{align*}
\mathbb{P}\left(\left|Y\right|>\rho\right) & \geqslant\mathbb{P}\left(\left|X\right|>\rho+K,\left|Y\right|>\rho\right)\\
 & =\mathbb{P}\left(\left|X\right|>\rho+K,\left|X+\xi_{X}\right|>\rho\right)\\
 & \geqslant\mathbb{P}\left(\left|X\right|>\rho+K,\left|\xi_{X}\right|\leqslant K\right)\\
 & \geqslant\phi\left(\rho,K\right)\cdot b\left(K\right).
\end{align*}
where we have used that the two conditions $\left|X\right|>\rho+K,\left|\xi_{X}\right|\leqslant K\implies\rho+K-K\leq|X|-|\xi_{X}|\leq|X+\xi_{X}|$
and the fact that $X\sim\mu_{A}$. It follows that
\[
\int\mu_{A}\left({\rm d}x\right)P\left(x,A\right)\geqslant\phi\left(\rho,K\right)\cdot b\left(K\right),
\]
and hence
\begin{align*}
\mu\otimes P\left(A\times A^{\complement}\right) & =\int_{A}\mu\left({\rm d}x\right)P\left(x,A^{\complement}\right)\\
 & =\mu\left(A\right)\int\mu_{A}\left({\rm d}x\right)P\left(x,A^{\complement}\right)\\
 & \leqslant\mu\left(A\right)\left\{ 1-\phi\left(\rho,K\right)b\left(K\right)\right\} .
\end{align*}
\end{proof}
In the following, the $\mu$ considered is a multi-dimensional version
of the stylized one-dimensional case considered in \cite[Eq.  52]{jarner-tweedie-2003}.
Although the argument is likely to be useful in other cases, it is
necessary to have fairly precise control on both $\mu\left(\mathcal{B}(0,\rho)\right)$
and $\mu\left(\mathcal{B}(0,\rho)^{\complement}\right)$ in order
to quantify how $\phi\left(\rho,K\right)$ tends to $1$ as $\rho$
and $K$ increase.
\begin{prop}
\label{prop:heavy-lim-kappa}Assume that for some $t>0$,
\[
\mu\left(\mathcal{B}(0,\rho)^{\complement}\right)=\rho^{-t},\qquad\rho\geq1.
\]
Assume there exist $D,\eta>0$ such that $P$ satisfies 
\[
b\left(K\right)\geqslant1-DK^{-\eta},\qquad K>0,
\]
where $b(\cdot)$ is as defined in Lemma~\ref{lem:heavy-muP-ball}.
Then $\beta^{\star}\left(s\right)\in\Omega\left(s^{-t\frac{\eta+1}{\eta}}\right)$.
\end{prop}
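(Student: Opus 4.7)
The plan is to apply the converse Cheeger bound of Proposition~\ref{prop:cheeger-lower} (see also Remark~\ref{rem:cheeger-remark}), which tells us that $1/\alpha^{\star}(r)\leq 2\kappa(2r)$, so an upper bound of the form $\kappa(u)\in\mathcal{O}(u^{\gamma})$ for some $\gamma>0$ will immediately yield $\alpha^{\star}(r)\in\Omega(r^{-\gamma})$. Combining with the inverse relation $\beta^{\star}=(\alpha^{\star})^{-1}$ established in Theorem~\ref{thm:alpha-beta-star-WPI-continuous-convex}, this upgrades to $\beta^{\star}(s)\in\Omega(s^{-1/\gamma})$. The target rate $s^{-t(\eta+1)/\eta}$ dictates that we should aim for $\gamma=\eta/(t(\eta+1))$.

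To build the upper bound on $\kappa$, I will test the definition of weak conductance on the family of sets $A_{\rho}:=\mathcal{B}(0,\rho)^{\complement}$. The polynomial tail assumption yields $\mu(A_{\rho})=\rho^{-t}$ and hence $\mu\otimes\mu(A_{\rho}\times A_{\rho}^{\complement})=\rho^{-t}(1-\rho^{-t})$, which for $\rho$ large behaves like $\rho^{-t}$. Lemma~\ref{lem:heavy-muP-ball} then controls the numerator $\mu\otimes P(A_{\rho}\times A_{\rho}^{\complement})\leq\mu(A_{\rho})\{1-\phi(\rho,K)b(K)\}$ for any $K>0$, where $\phi(\rho,K)=(1+K/\rho)^{-t}$ under our tail assumption. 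Thus
\[
\kappa(u)\leq\frac{1-\phi(\rho,K)b(K)}{1-\rho^{-t}}\qquad\text{for }u<\rho^{-t}(1-\rho^{-t}).
\]

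The key calculation is to pick $K=K(\rho)$ optimally. Using $1-(1+x)^{-t}\leq tx$ (which holds for all $t,x>0$ by integrating $\tfrac{d}{dx}(1+x)^{-t}$) together with $b(K)\geq 1-DK^{-\eta}$ gives the clean bound
\[
1-\phi(\rho,K)b(K)\leq\frac{tK}{\rho}+DK^{-\eta}.
\]
Balancing the two terms as $K$ varies leads to $K^{\star}\propto\rho^{1/(\eta+1)}$ and to a minimum value of order $\rho^{-\eta/(\eta+1)}$. Setting $\rho\sim u^{-1/t}$ so that $u$ is comparable to $\rho^{-t}(1-\rho^{-t})$, one obtains $\kappa(u)\in\mathcal{O}\bigl(u^{\eta/(t(\eta+1))}\bigr)$.

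The last step is to transport this through the two inversions. Proposition~\ref{prop:cheeger-lower} gives $\alpha^{\star}(r)\geq 1/(2\kappa(2r))\in\Omega\bigl(r^{-\eta/(t(\eta+1))}\bigr)$, and then Theorem~\ref{thm:alpha-beta-star-WPI-continuous-convex} yields $\beta^{\star}(s)\in\Omega\bigl(s^{-t(\eta+1)/\eta}\bigr)$, which is the desired conclusion. The only mild obstacle is book-keeping of multiplicative constants and ensuring $\rho$ is chosen large enough so that $1-\rho^{-t}$ is bounded below (e.g.\ by $1/2$), and that the optimal $K^{\star}/\rho$ lies in a regime where the linearization $1-(1+x)^{-t}\leq tx$ is not wasteful; both issues affect constants only and not the polynomial exponent.
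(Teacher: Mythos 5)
Your proof is correct and mirrors the paper's argument: same test sets $A_\rho=\mathcal{B}(0,\rho)^{\complement}$, same appeal to Lemma~\ref{lem:heavy-muP-ball}, same choice of $K\sim\rho^{1/(\eta+1)}$ with $\rho\sim u^{-1/t}$, and the same two inversions via Proposition~\ref{prop:cheeger-lower} and Theorem~\ref{thm:alpha-beta-star-WPI-continuous-convex}. The only cosmetic difference is that you get $\kappa(u)\in\mathcal{O}\bigl(u^{\eta/(t(\eta+1))}\bigr)$ from the elementary bound $1-(1+x)^{-t}\leq tx$, whereas the paper computes the exact limit $\lim_{v\downarrow0}v^{-\frac{\eta}{t(\eta+1)}}\bigl\{1-\phi(\rho,K)b(K)\bigr\}=t+D$; both give the same polynomial exponent.
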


\begin{proof}
Let $\rho_{0}=2^{1/t}$, which satisfies $\mu\left(\mathcal{B}(0,\rho_{0})\right)=\frac{1}{2}$,
from which we may deduce that $\mu\left(\mathcal{B}(0,\rho)\right)>1/2$
for all $\rho>\rho_{0}$. This will be the smallest $\rho$ which
we consider, and it satisfies
\[
\mu\otimes\mu\left(B_{\rho_{0}}\left(0\right)^{\complement}\times B_{\rho_{0}}\left(0\right)\right)=\frac{1}{2}\rho_{0}^{-t}=\frac{1}{4}=:u_{0}.
\]
Given any $u<u_{0}$, we take $\rho=\left(2u\right)^{-\frac{1}{t}}>\rho_{0}$
and $A=\mathcal{B}(0,\rho)^{\complement}$, which satisfies $\mu\left(A^{\complement}\right)>1/2$,
and so it holds that
\[
\mu\otimes\mu\left(A\times A^{\complement}\right)>\frac{1}{2}\rho^{-t}=u.
\]
By Lemma~\ref{lem:heavy-muP-ball}, we obtain that for any $K>0$,
\begin{align*}
\kappa\left(u\right) & \leqslant\frac{\mu\otimes P\left(A\times A^{\complement}\right)}{\mu\otimes\mu\left(A\times A^{\complement}\right)}\\
 & =\frac{1-\phi\left(\rho,K\right)b\left(K\right)}{\mu\left(A^{\complement}\right)}\\
 & \leqslant2\left\{ 1-\phi\left(\rho,K\right)b\left(K\right)\right\} .
\end{align*}
Letting $v=2u$, we thus find that
\begin{align*}
\phi\left(\rho,K\right) & =\frac{\mathbb{P}_{\mu}\left(\left|X\right|>\rho+K\right)}{\mathbb{P}_{\mu}\left(\left|X\right|>\rho\right)}\\
 & =\frac{\left(\rho+K\right)^{-t}}{\rho^{-t}}\\
 & =\frac{1}{v\left(v^{-\frac{1}{t}}+K\right)^{t}}\\
 & =\frac{1}{\left(1+v^{\frac{1}{t}}K\right)^{t}}.
\end{align*}
Hence, we have the bound
\[
1-\phi\left(\rho,K\right)b\left(K\right)\leqslant1-\frac{1-DK^{-\eta}}{\left(1+v^{\frac{1}{t}}K\right)^{t}},
\]
and by taking $K=v^{-\frac{1}{t+\eta t}}$, we may deduce that
\[
\lim_{v\downarrow0}\left\{ v^{-\frac{1}{t}\cdot\frac{\eta}{\eta+1}}\left\{ 1-\frac{1-DK^{-\eta}}{\left(1+v^{\frac{1}{t}}K\right)^{t}}\right\} \right\} =t+D,
\]
from which we may conclude that $\kappa\left(u\right)\in\mathcal{O}\left(u^{\frac{1}{t}\cdot\frac{\eta}{\eta+1}}\right)$.
Since $\alpha^{\star}\left(r\right)\geqslant\frac{1}{2\kappa\left(2r\right)}$
by Remark~\ref{rem:cheeger-remark}, we obtain $\alpha^{\star}\left(r\right)\in\Omega\left(r^{-\frac{1}{t}\cdot\frac{\eta}{\eta+1}}\right)$
as $r\downarrow0$, and so $\beta^{\star}\left(s\right)\in\Omega\left(s^{-t\frac{\eta+1}{\eta}}\right)$.
\end{proof}
\begin{rem}
If $P$ is $\mu$-reversible, one may then deduce that by Proposition~\ref{prop:rev-L2-conv-rate-lower-bound}
if $\epsilon>0$ then $\left\Vert P^{n}f\right\Vert ^{2}$ cannot
be in $\mathcal{O}\left(n^{-t\frac{\eta+1}{\eta}-\epsilon}\right)$
for all $f\in\mathrm{L}_{0}^{2}(\mu)$ with $\Phi(f)<\infty$. We
see that, similar to \cite{jarner2007convergence} and \cite{jarner-tweedie-2003},
the lower bounds suggest that faster rates are possible if $\eta$
is close to $0$, i.e. $P\left(x,\cdot\right)$ is heavy-tailed for
all $x$.
\end{rem}

\subsection{Spectral gap of the RWM in high-dimensions\label{subsec:Spectral-gap-of-RWM}}

We \nocite{miclo2000trous} let $\mathsf{X}=\mathsf{Z}=\mathbb{R}^{d}$
throughout. Let $P$ be the Markov transition probability of the Random
Walk Metropolis (RWM) with Gaussian proposal, defined for any $(x,A)\in\mathsf{X}\times\mathscr{X}$
\[
Q_{x}(A)=\int{\bf 1}_{A}(x+d^{-1/2}z)\,Q({\rm d}z),
\]
where $Q=\mathcal{N}\left(0,\sigma^{2}\mathrm{Id}\right)$. Then,
for any $\left(x,A\right)\in\mathsf{X}\times\mathscr{X}$,
\[
P\left(x,A\right)=\int_{A}\alpha\left(x,d^{-1/2}z\right)Q\left({\rm d}z\right)+\mathbf{1}_{A}\left(x\right)\left[1-\alpha\left(x\right)\right]\,,
\]
with for any $\left(x,z\right)\in\mathsf{X}\times\mathsf{Z}$, $\alpha\left(x,z\right):=\min\left\{ 1,\mathsf{r}\left(x,z\right)\right\} $
and
\begin{align}
\mathsf{r}\left(x,z\right) & :=\frac{\pi\left(x+z\right)}{\pi\left(x\right)},\nonumber \\
\alpha(x) & :=\int\alpha\left(x,d^{-1/2}z\right)Q\left({\rm d}z\right),\label{eq:alpha_RWM_def}
\end{align}
and $\pi:\mathsf{X}\to[0,\infty)$ is a target density with respect
to Lebesgue measure with $\pi\left(x\right)\propto\exp\left(-U\left(x\right)\right)$.
In this section and in Section~\ref{subsec:Spectral-gap-for-gaussian},
we denote by $\left|\cdot\right|$ the Euclidean norm in $\mathbb{R}^{d}$,
i.e. $\left|x\right|=\left(\sum_{i=1}^{d}x_{i}^{2}\right)^{1/2}$.
\begin{assumption}
\label{assu:target-distribution}We assume the following properties
of our target distribution:
\begin{enumerate}
\item $U$ is spherically symmetric with $U(x)=u(\left|x\right|^{2})$,
for some increasing function $u:[0,\infty)\to[0,\infty)$. In particular,
$U$ attains its minimum at $0$. 
\item For some $L\geqslant m>0$, the potential $U$ is $m$-strongly convex
and $L$-smooth, i.e. for all $x,z$, one has the bounds
\[
\frac{m}{2}\left|z\right|^{2}\leqslant U\left(x+z\right)-U\left(x\right)-\left\langle \nabla U\left(x\right),z\right\rangle \leqslant\frac{L}{2}\left|z\right|^{2}.
\]
\end{enumerate}
\end{assumption}

We impose here spherical symmetry on the potential to make our proof
simple, noting that similar results could be expected to hold without
this assumption. A very natural example of $\pi$ satisfying the above
is the normal distribution with covariance matrix $\sigma_{0}^{2}\mathrm{Id}$,
for which one can take $m=L=\frac{1}{\sigma_{0}^{2}}$. 
\begin{example}
\label{exa:gaussian-mL}Assume $\pi$ is $\mathcal{N}(0,\sigma_{0}^{2}I_{d})$,
so $U(x)=\frac{1}{2\sigma_{0^{2}}}\left|x\right|^{2}$. Then 
\[
U(x+z)-U(x)-\left\langle \nabla U(x),z\right\rangle =\frac{1}{2\sigma_{0}^{2}}\left|z\right|^{2},
\]
so we have $L=m=1/\sigma_{0}^{2}$.
\end{example}

Another natural class of examples with strongly convex and smooth
potentials (but not spherical symmetry) comes from considering Bayesian
posterior measures for which the prior is normal, and the log-likelihood
is concave with bounded Hessian.
\begin{example}
Consider the task of Bayesian logistic regression, taking as prior
$\pi_{0}=\mathcal{N}(0,\sigma_{0}^{2}I_{d})$, and observing covariate-response
pairs $\left\{ \left(a_{i},y_{i}\right)\right\} _{i=1}^{N}\subset\mathbb{R}^{d}\times\left\{ 0,1\right\} $.
The potential corresponding to the posterior measure is then given
by
\[
U\left(x\right)=\frac{1}{2\sigma_{0^{2}}}\left|x\right|^{2}+\sum_{i=1}^{N}\left\{ \log\left(1+\exp\left(-\left\langle a_{i},x\right\rangle \right)\right)-y_{i}\left\langle a_{i},x\right\rangle \right\} 
\]
Writing $A$ for the $n\times d$ matrix with columns given by the
$\left\{ a_{i}\right\} $, one can check that $U$ is $m$-strongly
convex and $L$-smooth with $m\geqslant\frac{1}{\sigma_{0}^{2}}$
and $L\leqslant\frac{1}{\sigma_{0}^{2}}+\frac{1}{4}\lambda_{{\rm Max}}\left(AA^{\top}\right)$.
\end{example}

The strategy of the proof of the following is to combine two different
coupling arguments, in combination with a global application of Theorem~\ref{thm:local-conductance-lower-bound},
which itself rests on the isoperimetric inequality of Lemma~\ref{lem:cousin-vempala}.
Recall that the proposal increments are $\mathcal{N}(0,\sigma d^{-1/2})$.
We define ``the centre'' of the space to be $\{x:\left|x\right|\leq b_{\kappa}\sigma d^{1/2}\}$
for some constant $b_{\kappa}>0$, and we always consider points that
are close to each other, in that $\left|x-y\right|\leq b_{\delta}\sigma d^{-1/2}$
for some (small) constant $b_{\delta}$. The proposals $Q_{x}$ and
$Q_{y}$ can be made close in total variation by Pinsker's inequality
for sufficiently small $b_{\delta}$.
\begin{enumerate}
\item When $x$ and $y$ are both in ``the centre'', we can then ensure
that $P(x,\cdot)$ and $P(y,\cdot)$ are close in total variation
by additionally ensuring that the acceptance probability is uniformly
lower bounded in the centre by a constant strictly above $1/2$ since
then
\[
\left\Vert P(x,\cdot)-P(y,\cdot)\right\Vert _{{\rm TV}}\leq\left\Vert P(x,\cdot)-Q_{x}\right\Vert _{{\rm TV}}+\left\Vert Q_{x}-Q_{y}\right\Vert _{{\rm TV}}+\left\Vert P(y,\cdot)-Q_{y}\right\Vert _{{\rm TV}}
\]
can be made less than $1$ by taking $b_{\delta}$ and $b_{\kappa}\sigma^{2}$
sufficiently small. This part of the proof that imposes a maximal
value of $\sigma$, which is slightly at odds with the common practice
of making the acceptance probability close to $1/4$ rather than larger
than $1/2$.
\item When at least one of $x$ and $y$ are not in ``the centre'', we
can use a different coupling argument that takes advantage of the
fact that the set of points $\{w:\left|w\right|\leq\left|x\right|\wedge\left|y\right|\}$
will be accepted as proposals from both $x$ and $y$, and is sufficiently
large if $b_{\kappa}$ is large enough. This overlap allows one to
obtain a non-trivial bound on $\left\Vert P(x,\cdot)-P(y,\cdot)\right\Vert _{{\rm TV}}$
with an acceptance rate that is less than $1/2$, which is important
because in the tails of the distribution one cannot obtain an acceptance
rate larger than $1/2$.
\end{enumerate}
\begin{thm}
Let Assumption~\ref{assu:target-distribution} hold. Let $\sigma=\varsigma/\sqrt{L}$
with $\varsigma\leq\varsigma_{\star}=0.073$. Then the conductance
(see equation (\ref{eq:cheeger-ineqs})) is lower bounded as follows:
\[
\kappa(0)\geq8.46\times10^{-5}\varsigma\sqrt{\frac{m}{Ld}},
\]
and hence
\[
{\rm Gap}(P)={\rm Gap}_{{\rm R}}(P)\geq8.94\times10^{-10}\cdot\varsigma^{2}\cdot\frac{m}{Ld}.
\]
\end{thm}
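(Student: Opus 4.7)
The plan is to apply Theorem~\ref{thm:local-conductance-lower-bound} with $C=\mathsf{X}$ and $m$ the strong convexity constant of $U$, which only requires establishing a quantitative local TV estimate of the form $\|P(x,\cdot)-P(y,\cdot)\|_{\mathrm{TV}}<1-\varepsilon$ whenever $|x-y|\leq\delta$, for explicit constants $\delta,\varepsilon>0$. With such an estimate in hand, Theorem~\ref{thm:local-conductance-lower-bound} immediately yields $\kappa(0)\geq(\varepsilon/4)\cdot\min\{1,(\log2/8)\delta\sqrt{m}\}$, and Cheeger's inequality (\ref{eq:cheeger-ineqs}) yields ${\rm Gap}(P)\geq\kappa(0)^{2}/8$. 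The quoted constants will result from picking $\delta$ proportional to $\sigma d^{-1/2}=\varsigma/\sqrt{Ld}$, which is the natural spatial scale of the proposal.

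I would set $\delta=b_{\delta}\sigma d^{-1/2}$ and a ``central region'' $\mathcal{K}=\{x\in\mathsf{X}:|x|\leq b_{\kappa}\sigma d^{1/2}\}$, then split the argument by whether both $x,y\in\mathcal{K}$ or not. In the \emph{central} case, by the triangle inequality
\[
\|P(x,\cdot)-P(y,\cdot)\|_{\mathrm{TV}}\leq \|Q_{x}-Q_{y}\|_{\mathrm{TV}}+(1-\alpha(x))+(1-\alpha(y)),
\]
since $\|P(x,\cdot)-Q_{x}\|_{\mathrm{TV}}=1-\alpha(x)$. The first term is bounded by $b_{\delta}/2$ by Pinsker applied to two Gaussians with identical covariances, since $\mathrm{KL}(Q_{x}\Vert Q_{y})=d|x-y|^{2}/(2\sigma^{2})\leq b_{\delta}^{2}/2$. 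For the second and third, I would lower-bound $\alpha(x)$ uniformly on $\mathcal{K}$ by Taylor-expanding using the $L$-smoothness bound $U(x+d^{-1/2}z)-U(x)\leq\langle\nabla U(x),d^{-1/2}z\rangle+Ld^{-1}|z|^{2}/2$; under $Q$, $\langle\nabla U(x),d^{-1/2}z\rangle$ is centred Gaussian with variance $\sigma^{2}d^{-1}|\nabla U(x)|^{2}$ and $|z|^{2}$ concentrates around $d$. Using $|\nabla U(x)|\leq L|x|\leq Lb_{\kappa}\sigma d^{1/2}$ on $\mathcal{K}$ (from $L$-smoothness and $\nabla U(0)=0$, a consequence of the spherical symmetry and minimum at $0$), one obtains $\alpha(x)\geq 1-\eta(b_{\kappa},\varsigma)$ where $\eta$ can be made arbitrarily small by taking $\varsigma$ sufficiently small; this is what dictates the bound $\varsigma\leq\varsigma_{\star}=0.073$.

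In the \emph{tail} case, where say $|x|\geq b_{\kappa}\sigma d^{1/2}$, the acceptance probability may drop well below $1/2$ so the previous bound is useless. Instead, using spherical symmetry and monotonicity of $u$, every proposal $w$ with $|w|\leq|x|\wedge|y|$ has $\pi(w)\geq\pi(x)\vee\pi(y)$ and is therefore accepted with probability one from either state. Writing $r=|x|\wedge|y|$, this gives a common lower bound
\[
P(x,A)\wedge P(y,A)\geq Q_{x}(A\cap\mathcal{B}(0,r))\wedge Q_{y}(A\cap\mathcal{B}(0,r))
\]
for every measurable $A$, and hence
\[
\|P(x,\cdot)-P(y,\cdot)\|_{\mathrm{TV}}\leq 1-\bigl(Q_{x}(\mathcal{B}(0,r))\wedge Q_{y}(\mathcal{B}(0,r))\bigr)+\|Q_{x}-Q_{y}\|_{\mathrm{TV}}.
\]
The main input is a uniform lower bound on $Q_{x}(\mathcal{B}(0,|x|))$ when $|x|\geq b_{\kappa}\sigma d^{1/2}$. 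Writing $X+d^{-1/2}Z$ with $Z\sim\mathcal{N}(0,\sigma^{2}I)$ and decomposing $Z$ along $x$ and orthogonal to it, $|x+d^{-1/2}Z|^{2}=(|x|+d^{-1/2}Z_{\parallel})^{2}+d^{-1}|Z_{\perp}|^{2}$, and one can show this is $\leq|x|^{2}$ with probability bounded away from zero provided $b_{\kappa}$ is sufficiently large compared with $1$ (since the drift toward $0$ dominates the transverse fluctuations when $|x|$ is large enough).

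The main obstacle is calibrating the three constants $b_{\delta}, b_{\kappa}, \varsigma$ simultaneously: $b_{\delta}$ must be small enough that $\|Q_{x}-Q_{y}\|_{\mathrm{TV}}$ plus the rejection budget stays below $1$ in both cases; $b_{\kappa}$ must be \emph{small} enough that the central rejection estimate works with $\varsigma\leq\varsigma_{\star}$, yet \emph{large} enough that the tail overlap argument leaves a positive margin; and $\varsigma$ must be below the explicit threshold $0.073$. Optimizing these yields the explicit constant $\varepsilon$ and then $\delta\sqrt{m}\propto\varsigma\sqrt{m/(Ld)}$, from which
\[
\kappa(0)\gtrsim\varsigma\sqrt{m/(Ld)},\qquad {\rm Gap}(P)\gtrsim\varsigma^{2}\,m/(Ld),
\]
with the numerical constants as stated. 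Note that reversibility of $P$ makes ${\rm Gap}(P)={\rm Gap}_{\mathrm{R}}(P)$, so the Cheeger step directly produces the advertised bound.
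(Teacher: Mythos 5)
Your proposal follows the paper's proof almost exactly: split into a central ball of radius $b_{\kappa}\sigma d^{1/2}$ and its complement; in the centre bound $\|P(x,\cdot)-P(y,\cdot)\|_{\rm TV}$ by the triangle inequality through $Q_x,Q_y$ with Pinsker plus a uniform acceptance lower bound from $L$-smoothness; in the tail couple through the common acceptance region $\{|w|\leq|x|\wedge|y|\}$ using spherical symmetry and monotonicity of $u$; then apply Theorem~\ref{thm:local-conductance-lower-bound} with $C=\mathsf{X}$ and Cheeger. The paper's concrete choices are $b_\delta=1/16$, $b_\kappa=4+1/16$, $\varepsilon=1/32$, which your calibration sketch correctly describes as the remaining bookkeeping.

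One small imprecision worth flagging: you write that ``reversibility of $P$ makes ${\rm Gap}(P)={\rm Gap}_{\rm R}(P)$,'' but reversibility alone gives only a real spectrum in $[-1,1]$, not that the left edge is no closer to $-1$ than the right edge is to $1$. The paper obtains this equality from \cite[Lemma~3.1]{baxendale2005renewal}, which uses the positivity of the Gaussian proposal kernel; the Gaussianity, not just reversibility, is what makes $P$ a positive operator and hence rules out spectrum near $-1$.
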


\begin{proof}
Let $\kappa:=(4+1/16)\sigma$ and $\delta:=\sigma/16$. Let $S:=\mathcal{B}\left(0,\kappa\cdot d^{1/2}\right)$.
Assume $x,y\in\mathsf{E}$ satisfy $\left|x-y\right|\leq\delta_{d}:=\delta d^{-1/2}$.
In either case $(x,y)\in S\times S$ or $(x,y)\not\in S\times S$,
then $\left\Vert P(x,\cdot)-P(y,\cdot)\right\Vert _{{\rm TV}}\leq\frac{31}{32}$
by Lemma~\ref{lem:inside-tv-bound} Lemma~\ref{lem:tv-outside-bound}
respectively. Hence, we may apply Theorem~\ref{thm:local-conductance-lower-bound}
with $C=\mathsf{E}$ to deduce 
\begin{align*}
\kappa(0) & =\inf_{A\in\mathscr{E}}\frac{\mu\otimes P(A\times A^{\complement})}{\mu\otimes\mu(A\times A^{\complement})}\\
 & \geq\frac{\varepsilon}{4}\min\left\{ 1,\frac{\log2}{4}\frac{\sigma}{16}\sqrt{\frac{m}{d}}\right\} \\
 & \geq\frac{1}{4\cdot32}\min\left\{ 1,\frac{\log2}{4}\frac{\varsigma}{16}\sqrt{\frac{m}{Ld}}\right\} \\
 & \geq\frac{1}{128}\min\left\{ 1,0.01083\varsigma\sqrt{\frac{m}{Ld}}\right\} \\
 & \geq8.46\times10^{-5}\varsigma\sqrt{\frac{m}{Ld}},
\end{align*}
noting that $m\leq L$ by Assumption~\ref{assu:target-distribution},
$\varsigma\leq\varsigma_{\star}<1$ and $d\geq1$. The bound on ${\rm Gap_{R}(P)}$
follows by (\ref{eq:cheeger-ineqs}), and we have ${\rm Gap}(P)={\rm Gap}_{{\rm R}}(P)$
by \cite[Lemma~3.1]{baxendale2005renewal}, since $Q$ is Gaussian. 
\end{proof}
\begin{rem}
If $\pi=\mathcal{N}(0,\sigma_{0}^{2}I_{d})$, then $m/L=1$ and $L=1/\sigma_{0}^{2}$.
Hence, we see that $\sigma$ should scale proportionally with $\sigma_{0}$
as one would expect by a reparametrization argument, and that the
bound is then independent of $\sigma_{0}$. The conductance/spectral
gap lower bound is maximized by taking $\sigma=0.073/\sqrt{L}$, and
for our argument one cannot take $\sigma$ larger than this. Theorem~\ref{thm:gaussian-rwm-conductance}
below shows that a more specific argument allows for a stronger statement
allowing arbitrary $\varsigma>0$ while retaining the same dimension
dependence.
\end{rem}

\begin{rem}
In several places in the proof we have adopted dimension-independent
bounds, e.g. by taking $d=1$, which are certainly sub-optimal for
large $d$. Similarly, for the sake of clarity we have made a few
choices of constants that are certainly not optimal. Hence, we can
expect that a more refined analysis would produce a larger lower bound
on the conductance and a larger maximum value of $\sigma\sqrt{L}$.
However, the proof strategy of ensuring a high acceptance rate in
the centre does seem to naturally force $\sigma$ to be artificially
small.
\end{rem}

\begin{rem}
We can inspect Assumption~\ref{assu:target-distribution} when $u\colon\mathbb{R}_{+}\rightarrow\mathbb{R}$
is continuously differentiable. It is useful to understand conditions
on the function $u$ which will guarantee that the desired estimates
hold. First, compute explicitly that
\end{rem}

\begin{align*}
\nabla U\left(x\right) & =2\cdot\dot{u}\left(\left|x\right|^{2}\right)\cdot x,\\
\nabla^{2}U\left(x\right) & =2\cdot\left[2\cdot\ddot{u}\left(\left|x\right|^{2}\right)\cdot x\cdot x^{\top}+\dot{u}\left(\left|x\right|^{2}\right)\cdot{\rm Id}\right].
\end{align*}
For sufficiently smooth potentials, strong convexity and smoothness
can be formulated in terms of the first two derivatives of $u$. In
particular, $m$-strong convexity requires that for all $x$, it holds
that
\begin{align*}
m & \leqslant\inf_{v}\left\{ \frac{v^{\top}\nabla^{2}U\left(x\right)v}{\left|v\right|^{2}}\right\} \\
 & =2\cdot\inf_{v}\left\{ \frac{2\ddot{u}\left(\left|x\right|^{2}\right)\left(v^{\top}x\right)^{2}+\dot{u}\left(\left|x\right|^{2}\right)\left|v\right|^{2}}{\left|v\right|^{2}}\right\} \\
 & =2\cdot\inf_{v}\left\{ \frac{2\ddot{u}\left(\left|x\right|^{2}\right)\left(v^{\top}x\right)^{2}}{\left|v\right|^{2}}+\dot{u}\left(\left|x\right|^{2}\right)\right\} \\
 & =2\cdot\left\{ 2\cdot\min\left(0,\ddot{u}\left(\left|x\right|^{2}\right)\cdot\left|x\right|^{2}\right)+\dot{u}\left(\left|x\right|^{2}\right)\right\} ,
\end{align*}
i.e. that $\inf_{s\geqslant0}\left\{ 2\cdot\min\left(0,\ddot{u}\left(s\right)\cdot s\right)+\dot{u}\left(s\right)\right\} \geqslant\frac{m}{2}$.
Similar calculations show that $L$-smoothness requires that $\sup_{s\geqslant0}\left\{ 2\cdot\max\left(0,\ddot{u}\left(s\right)\cdot s\right)+\dot{u}\left(s\right)\right\} \leqslant\frac{L}{2}$.
To be more concrete, suppose that $u$ satisfies $0<m_{1}\leqslant\dot{u}\left(s\right)\leqslant L_{1}$
and $\left|\ddot{u}\left(s\right)\right|\leqslant L_{2}s^{-1}$ with
$L_{2}\leqslant\frac{m_{1}}{2}$, i.e. it is increasing, essentially
sandwiched between two affine functions, and its derivative has slow
variation at infinity. It then follows that
\begin{align*}
2\cdot\min\left(0,\ddot{u}\left(s\right)\cdot s\right)+\dot{u}\left(s\right) & \geqslant m_{1}-2\cdot L_{2}\\
2\cdot\max\left(0,\ddot{u}\left(s\right)\cdot s\right)+\dot{u}\left(s\right) & \leqslant L_{1}+2\cdot L_{2},
\end{align*}
i.e. that we can take $m=m_{1}-2\cdot L_{2}>0$, $L=L_{1}+2\cdot L_{2}$.

The following two lemmas are known and useful bounds on the total
variation distance between two normal distributions, and tail probabilities
for $\chi^{2}$ random variables.
\begin{lem}
\label{lem:RWM-continuity-proposal}For any $\epsilon>0$ and $x,y\in\mathsf{X}$
such that $|x-y|\leqslant\epsilon\cdot d^{-1/2}$ it holds that
\[
\left\Vert Q_{x}-Q_{y}\right\Vert _{\mathrm{TV}}\leq\frac{\epsilon}{2\sigma}.
\]
\end{lem}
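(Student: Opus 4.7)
The plan is to observe that $Q_x$ is a translated multivariate Gaussian: if $Z \sim Q = \mathcal{N}(0, \sigma^2 \mathrm{Id})$, then $x + d^{-1/2} Z \sim \mathcal{N}(x, (\sigma^2/d) \mathrm{Id})$, so $Q_x = \mathcal{N}(x, (\sigma^2/d) \mathrm{Id})$. Since $Q_x$ and $Q_y$ differ only in their mean and share the same (scalar) covariance, I would apply Pinsker's inequality $\|Q_x - Q_y\|_{\mathrm{TV}} \leq \sqrt{\frac{1}{2} \mathrm{KL}(Q_x \| Q_y)}$ together with the closed-form KL divergence between two equal-covariance Gaussians, namely
\[
\mathrm{KL}(Q_x \| Q_y) = \tfrac{1}{2}(x-y)^\top \Sigma^{-1}(x-y) = \tfrac{1}{2} \cdot \tfrac{d}{\sigma^2}\,|x-y|^2.
\]

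Plugging in yields $\|Q_x - Q_y\|_{\mathrm{TV}} \leq \tfrac{1}{2\sigma}\sqrt{d}\,|x-y|$, and substituting the hypothesis $|x-y| \leq \epsilon\,d^{-1/2}$ gives the stated bound $\epsilon/(2\sigma)$. There are no real obstacles here: the only minor point is that we must justify the Gaussian KL formula and Pinsker's inequality (both standard), and verify that the factor arising from the scaling $d^{-1/2}$ in the proposal increment exactly cancels the $\sqrt{d}$ coming from the inverse covariance — which is the whole point of the $d^{-1/2}$ scaling, and why the $d$-dependence disappears from the final bound.

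An alternative route, if one prefers to avoid citing Pinsker, would be a direct coupling argument: construct $Z \sim Q$ used for both proposals so that the coupled proposals are $x + d^{-1/2} Z$ and $y + d^{-1/2} Z$, and then use the maximal coupling of two univariate Gaussians along the direction $x - y$. This gives the same bound up to the constant and might produce a sharper numerical factor, but Pinsker is cleaner and sufficient for the stated inequality.
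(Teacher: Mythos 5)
Your proof is correct and is essentially the same as the paper's: both compute the Gaussian KL divergence $\mathrm{KL}(Q_x,Q_y)=\tfrac{d}{2\sigma^2}|x-y|^2$ and then apply Pinsker's inequality. The only cosmetic difference is that the paper derives the KL formula by expanding the expectation of the log-ratio rather than quoting the closed-form equal-covariance formula.
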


\begin{proof}
This is obtained via Pinsker's inequality. Compute that
\begin{align*}
\mathrm{KL}\left(Q_{x},Q_{y}\right) & =\mathbb{E}_{u\sim\mathcal{N}\left(x,d^{-1}\sigma^{2}{\rm Id}\right)}\left[\frac{|u-y|^{2}}{2\sigma^{2}/d}-\frac{|u-x|^{2}}{2\sigma^{2}/d}\right]\\
 & =\frac{d}{2\cdot\sigma^{2}}\cdot\mathbb{E}_{\xi\sim\mathcal{N}\left(0,\mathrm{{\rm Id}}\right)}\left[|x-y+\sigma d^{-1/2}\cdot\xi|^{2}-|\sigma d^{-1/2}\cdot\xi|^{2}\right]\\
 & =\frac{d}{2\cdot\sigma^{2}}\cdot|x-y|^{2}.
\end{align*}
Hence, if $|x-y|\leqslant\epsilon\cdot d^{-1/2}$ then it follows
that $\mathrm{KL}\left(Q_{x},Q_{y}\right)\leqslant\frac{\epsilon^{2}}{2\cdot\sigma^{2}}\cdot$
Recalling Pinsker's inequality, we deduce that
\begin{align*}
\left\Vert Q_{x}-Q_{y}\right\Vert _{\mathrm{TV}} & \leqslant\sqrt{\mathrm{KL}\left(Q_{x},Q_{y}\right)/2}\\
 & =\frac{\epsilon}{2\sigma}\,,
\end{align*}
as claimed.
\end{proof}
\begin{lem}[{\cite[Lemma 1]{laurent-massart-2000}}]
\label{lem:laurent-massart} If $W\sim\chi_{d}^{2}$ then for $u>0$
we have 
\[
\mathbb{P}\left(W\geqslant d+2\sqrt{du}+2u\right)\leq\exp\left(-u\right).
\]
In particular, for $\epsilon\in(0,1)$, with $\exp\left(-u\right)=\epsilon$,
$\chi\left(\epsilon,d\right):=1+2\sqrt{\frac{\log\epsilon^{-1}}{d}}+2\frac{\log\epsilon^{-1}}{d}$
and $\chi\left(\epsilon\right):=\chi\left(\epsilon,1\right)$, we
have 
\[
\mathbb{P}\left(W\geqslant d\cdot\chi\left(\epsilon\right)\right)\leqslant\mathbb{P}\left(W\geqslant d\cdot\chi\left(\epsilon,d\right)\right)\leqslant\epsilon\,.
\]
We also have, for $u>0$,
\[
\mathbb{P}\left(W\leq d-2\sqrt{du}\right)\leq\exp\left(-u\right).
\]
\end{lem}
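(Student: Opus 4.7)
The plan is to derive both the upper and lower tail bounds for $W \sim \chi_d^2$ via the classical Chernoff method applied to the moment generating function $\mathbb{E}[e^{\lambda W}] = (1-2\lambda)^{-d/2}$ (valid for $\lambda < 1/2$), and then obtain the two corollary statements by straightforward substitution. I would begin by recalling that $W$ is a sum of $d$ i.i.d.\ squared standard normals, so the cumulant generating function of $W - d$ is
\[
\Lambda(\lambda) := \log \mathbb{E}[e^{\lambda(W - d)}] = -\tfrac{d}{2}\log(1-2\lambda) - \lambda d, \qquad \lambda \in (0, 1/2).
\]

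The key analytic step is the elementary bound
\[
-\tfrac{1}{2}\log(1-x) - \tfrac{x}{2} = \sum_{k \geq 2} \frac{x^k}{2k} \leq \frac{x^2}{4(1-x)}, \qquad x \in (0,1),
\]
applied at $x = 2\lambda$, which yields $\Lambda(\lambda) \leq \dfrac{d\lambda^2}{1-2\lambda}$. Markov's inequality then gives, for any $t > 0$,
\[
\mathbb{P}(W - d \geq t) \leq \exp\left(-\lambda t + \frac{d\lambda^2}{1-2\lambda}\right).
\]
Choosing $\lambda = \lambda_\star := u/(d + 2u) \cdot (\text{appropriate factor})$ so that the right-hand side above becomes $e^{-u}$ at the threshold $t = 2\sqrt{du} + 2u$ will deliver the first displayed inequality. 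Concretely, one verifies with $\lambda = \frac{1}{2}\bigl(1 - \sqrt{d/(d+2u)}\bigr)$ that $-\lambda t + \tfrac{d\lambda^2}{1-2\lambda} \leq -u$, completing the upper tail bound.

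For the lower tail I would perform the analogous Chernoff estimate with $\lambda < 0$: the MGF identity extends directly, and the same Taylor estimate (applied at $x = 2\lambda < 0$, where the inequality $-\tfrac{1}{2}\log(1-x) - \tfrac{x}{2} \leq \tfrac{x^2}{4}$ holds uniformly) yields $\log\mathbb{E}[e^{\lambda(W-d)}] \leq d\lambda^2$ for $\lambda \in (-\infty, 0)$. Optimizing $\mathbb{P}(W \leq d - t) \leq \exp(\lambda t + d\lambda^2)$ over $\lambda < 0$ with $\lambda = -\sqrt{u/d}$ at $t = 2\sqrt{du}$ then gives the stated $e^{-u}$ bound.

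Finally, the ``in particular'' statement follows by setting $u = \log \epsilon^{-1}$ so that $d + 2\sqrt{du} + 2u = d \cdot \chi(\epsilon, d)$, and noting $\chi(\epsilon,d) \leq \chi(\epsilon)$ for all $d \geq 1$ (both summands $\sqrt{\log\epsilon^{-1}/d}$ and $\log\epsilon^{-1}/d$ are monotone decreasing in $d$). The main potential obstacle is verifying the explicit optimizing choice of $\lambda$ cleanly—this is where the Laurent--Massart bookkeeping becomes slightly delicate—but it is a routine optimization once the Taylor estimate is in hand; since this lemma is cited directly from \cite{laurent-massart-2000}, the paper may simply omit the details and refer the reader there.
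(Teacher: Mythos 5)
The paper offers no proof of this lemma; it is cited verbatim from Laurent and Massart, so there is nothing in the paper to compare against. Your Chernoff-based outline is in fact the argument used in the cited reference, and the architecture you describe — moment generating function $(1-2\lambda)^{-d/2}$, the power-series bound on the cumulant generating function, Markov's inequality, and then a separate, simpler estimate at negative $\lambda$ for the lower tail — is correct. The bound $-\tfrac12\log(1-x)-\tfrac{x}{2}\leq \tfrac{x^2}{4(1-x)}$ on $(0,1)$ and $\leq\tfrac{x^2}{4}$ on $(-\infty,0]$ are both right (the latter because the difference is increasing with value $0$ at $x=0$), your lower-tail choice $\lambda=-\sqrt{u/d}$ is the exact minimizer of $\lambda t + d\lambda^2$ at $t=2\sqrt{du}$, and the ``in particular'' deduction via $u=\log\epsilon^{-1}$ and monotonicity of $\chi(\epsilon,\cdot)$ is routine.

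There is one concrete error in the upper-tail bookkeeping. The explicit choice $\lambda=\tfrac12\bigl(1-\sqrt{d/(d+2u)}\bigr)$ you propose does \emph{not} push the exponent down to $-u$; a quick sanity check at $d=1$, $u=1/2$ gives $-\lambda t + d\lambda^2/(1-2\lambda)\approx -0.32 > -0.5$. The correct minimizer of $-\lambda t + d\lambda^2/(1-2\lambda)$ over $\lambda\in(0,1/2)$ is $\lambda^\star=\tfrac12\bigl(1-\sqrt{d/(d+2t)}\bigr)$ with $t$ (not $u$) under the square root. At the threshold $t=2\sqrt{du}+2u$ one has $d+2t=(\sqrt{d}+2\sqrt{u})^2$, so this simplifies to $\lambda^\star=\sqrt{u}/(\sqrt{d}+2\sqrt{u})$, and the minimized exponent is exactly
\[
-\tfrac12\bigl(d+t-\sqrt{d(d+2t)}\bigr)=-u,
\]
which is the clean identity that makes the Laurent--Massart threshold tight. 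Substituting the corrected $\lambda^\star$ repairs the argument and everything else you wrote goes through.
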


\begin{lem}
\label{lem:RWM-coupling-Q-P}Assume that $U$ attains its minimum
at $0$, and is $L-$smooth. For any $\epsilon>0$, if $\kappa\geq\sigma$
and 
\[
\kappa\sigma\leq\frac{1}{L}\cdot\frac{-\log(1-\frac{\epsilon}{2})}{\chi(\frac{\epsilon}{4})}\cdot\frac{2}{3},
\]
then for all $x\in\mathcal{B}\left(0,\kappa\cdot d^{1/2}\right)$
\[
\left\Vert Q_{x}\left(\cdot\right)-P\left(x,\cdot\right)\right\Vert {}_{{\rm TV}}\leq\epsilon\,.
\]
\end{lem}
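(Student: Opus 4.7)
The plan is to reduce the TV bound to a lower bound on the average acceptance rate $\alpha(x)$, and then verify that $\alpha(x) \geq 1 - \epsilon$ uniformly on $\mathcal{B}(0,\kappa d^{1/2})$ using $L$-smoothness together with standard concentration for the Gaussian proposal. The starting point is the classical coupling bound
\[
\|Q_x - P(x,\cdot)\|_{\mathrm{TV}} \leq 1 - \alpha(x),
\]
which follows by coupling the proposal $Y \sim Q_x$ to the realized next state and noting that a discrepancy can only occur on a rejection. Writing $\Delta(z) := U(x + d^{-1/2}z) - U(x)$, we have $1 - \alpha(x) = \int (1 - e^{-\Delta(z)})_+ Q(\mathrm{d}z)$, so for any $t>0$,
\[
1 - \alpha(x) \leq (1 - e^{-t}) + \mathbb{P}_{z\sim Q}\bigl(\Delta(z) > t\bigr).
\]
Choosing $t := -\log(1 - \epsilon/2)$ makes the first term exactly $\epsilon/2$, so it remains to control the tail probability by $\epsilon/2$.

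The main tool is the $L$-smoothness estimate
\[
\Delta(z) \leq \langle \nabla U(x), d^{-1/2} z\rangle + \tfrac{L}{2d}|z|^2,
\]
together with $\nabla U(0) = 0$, which yields $|\nabla U(x)| \leq L|x| \leq L\kappa d^{1/2}$ for $x \in \mathcal{B}(0,\kappa d^{1/2})$. The quadratic term is controlled by Lemma~\ref{lem:laurent-massart}: on the event $\{|z|^2 \leq \sigma^2 d\, \chi(\epsilon/4)\}$ (of probability $\geq 1 - \epsilon/4$) it is bounded by $\tfrac{L\sigma^2}{2}\chi(\epsilon/4)$. The linear term $G := \langle \nabla U(x), d^{-1/2}z\rangle$ is Gaussian with variance at most $d^{-1}\sigma^2|\nabla U(x)|^2 \leq L^2\kappa^2\sigma^2$, and the Gaussian tail bound gives $G \leq L\kappa\sigma\sqrt{2\log(4/\epsilon)}$ with probability $\geq 1 - \epsilon/4$; this is exactly where dimension-independence is bought, since the naive deterministic bound on $E$ would scale like $\sqrt{d}$.

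It then suffices to verify that the two bounds sum to at most $t$. Allocating the budget as $2t/3 + t/3$: using $\sigma \leq \kappa$ we have $\tfrac{L\sigma^2}{2}\chi(\epsilon/4) \leq \tfrac{L\kappa\sigma}{2}\chi(\epsilon/4)$, and the hypothesis $L\kappa\sigma\,\chi(\epsilon/4) \leq 2t/3$ immediately yields the quadratic contribution $\leq t/3$. For the linear term, the elementary inequality $\sqrt{2u} \leq 1 + 2\sqrt{u} + 2u$ with $u = \log(4/\epsilon)$ gives $\sqrt{2\log(4/\epsilon)} \leq \chi(\epsilon/4)$, so the same hypothesis yields $L\kappa\sigma\sqrt{2\log(4/\epsilon)} \leq 2t/3$. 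A union bound over the two good events completes the argument: $\mathbb{P}(\Delta > t) \leq \epsilon/2$, hence $1 - \alpha(x) \leq \epsilon$, and we conclude.

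The only delicate step is the dimension-independence for the linear term: one must resist the temptation to deterministically bound $|G|$ on the chi-squared event, and instead exploit Gaussian concentration directly. The precise choice of $t = -\log(1-\epsilon/2)$ together with the $2/3$–$1/3$ split of the acceptance ratio deficit is what makes the stated constants in the hypothesis on $\kappa\sigma$ emerge cleanly.
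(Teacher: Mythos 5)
Your proof is correct and uses essentially the same key ingredients as the paper's: the identity (or inequality) $\left\Vert Q_{x}-P(x,\cdot)\right\Vert _{\mathrm{TV}}=1-\alpha(x)$; the $L$-smoothness upper bound on $\Delta(z)$; the observation that $|\nabla U(x)|\leq L\kappa d^{1/2}$ on the ball so the linear term has a dimension-free sub-Gaussian tail; Lemma~\ref{lem:laurent-massart} for the quadratic term; and the elementary inequality $\sqrt{2\log(4/\epsilon)}\leq\chi(\epsilon/4)$ to express both thresholds in terms of $\chi(\epsilon/4)$.

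The only difference from the paper is bookkeeping. The paper applies the Markov-type bound $\alpha(x)\geq a\,Q(\mathsf{r}\geq a)$ with $a=1-\epsilon/2$, merges the two concentration events into a single threshold $\sigma L\chi(\epsilon/4)(\kappa+\sigma/2)$, uses $\kappa\geq\sigma$ to replace $\kappa+\sigma/2$ by $\tfrac{3}{2}\kappa$, and finishes with $(1-\epsilon/2)^{2}\geq1-\epsilon$. You instead split $1-\alpha(x)\leq(1-e^{-t})+\mathbb{P}(\Delta>t)$ with $t=-\log(1-\epsilon/2)$ and allocate $t/3$ to the quadratic tail threshold and $2t/3$ to the linear one. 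These are essentially the same computation (your split is exactly the observation that with $\sigma\leq\kappa$ the quadratic threshold is at most half the linear one, which is what $\kappa+\sigma/2\leq\tfrac{3}{2}\kappa$ encodes), and both land on the same constant $2/3$ in the hypothesis. Your decomposition $1-\alpha\leq(1-e^{-t})+\mathbb{P}(\Delta>t)$ is marginally weaker than the paper's $1-\alpha\leq(1-e^{-t})+e^{-t}\mathbb{P}(\Delta>t)$ but suffices and is arguably cleaner to state.
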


\begin{proof}
First, note that for $(x,A)\in\mathsf{E}\times\mathscr{E}$,
\[
\left|Q_{x}(A)-P(x,A)\right|=\left|\int\mathbf{1}\{x+d^{-1/2}z\in A\}[1-\alpha(x,d^{-1/2}z)]Q({\rm d}z)-[1-\alpha(x)]\mathbf{1}\{x\in A\}\right|\,,
\]
with $\alpha\left(x\right):=\int\alpha\left(x,d^{-1/2}z\right)\cdot Q\left(\mathrm{d}z\right)$
as in (\ref{eq:alpha_RWM_def}), which is maximized for $A=\mathsf{E}\setminus\{x\}$
or $A=\{x\}$ since we are considering the difference of non-negative
terms. Therefore
\[
\left\Vert Q_{x}\left(\cdot\right)-P\left(x,\cdot\right)\right\Vert _{{\rm TV}}=\int|1-\alpha\left(x,d^{-1/2}z\right)|\cdot Q\left({\rm d}z\right)=1-\alpha\left(x\right)\,.
\]
As suggested in \cite{Dwivedi-Chen-Wainwright-JMLR:v20:19-306} we
use Markov's inequality, that is for $a\in(0,1]$,
\begin{equation}
\alpha\left(x\right)\geqslant a\cdot Q\left(\mathsf{r}\left(x,d^{-1/2}z\right)\geqslant a\right),\label{eq:RWM_markov_ineq}
\end{equation}
which motivates seeking a lower bound for
\[
\mathsf{r}\left(x,d^{-1/2}z\right)=\frac{\pi\left(x+d^{-1/2}z\right)}{\pi\left(x\right)}=\exp\left(U\left(x\right)-U\left(x+d^{-1/2}z\right)\right).
\]
We begin by noting that for $\left(x,z\right)\in\mathsf{X}\times\mathsf{Z}$,
\begin{align*}
U\left(x+d^{-1/2}z\right)-U\left(x\right) & \leq\left\langle \nabla U\left(x\right),d^{-1/2}z\right\rangle +\frac{L}{2}\left|d^{-1/2}z\right|^{2}.
\end{align*}
If $Z\sim Q=\mathcal{N}(0,\sigma^{2}I_{d})$, then $\left\langle \nabla U\left(x\right),Z\right\rangle \sim\mathcal{N}\left(0,\sigma^{2}\cdot\left|\nabla U\left(x\right)\right|^{2}\right)$
and from the equivalent characterization of $L-$smoothness \cite[Lemma 9]{Dwivedi-Chen-Wainwright-JMLR:v20:19-306}
with $\nabla U\left(0\right)=0$ we have $\left|\nabla U\left(x\right)\right|\leqslant L\cdot\left|x\right|.$
Hence $\sup_{x\in\mathcal{B}\left(0,\kappa\cdot d^{1/2}\right)}\left|\nabla U\left(x\right)\right|\leqslant L\cdot\kappa\cdot d^{1/2}$,
and from Chernoff's inequality for a normal random variable $\bar{Z}\sim\mathcal{N}\left(0,1\right)$,
that is $\mathbb{P}\left(\bar{Z}\geqslant u\right)\leq\exp\left(-\frac{1}{2}u^{2}\right)$
for $u>0$, we can write that
\begin{align*}
Q\left(\left\langle \nabla U\left(x\right),d^{-1/2}z\right\rangle >u\right) & =\mathbb{P}\left(\bar{Z}\cdot\frac{\sigma\cdot\left|\nabla U\left(x\right)\right|}{d^{1/2}}\geqslant u\right)\\
 & \leqslant\mathbb{P}\left(\bar{Z}\cdot\frac{\sigma\cdot L\cdot\kappa\cdot d^{1/2}}{d^{1/2}}\geqslant u\right)\\
 & =\mathbb{P}\left(\bar{Z}\geqslant\frac{u}{\sigma\cdot L\cdot\kappa}\right)\leqslant\exp\left(-\frac{u^{2}}{2\cdot\sigma^{2}\cdot L^{2}\cdot\kappa^{2}}\right).
\end{align*}
In particular, taking $u=\sigma\cdot L\cdot\kappa\cdot\sqrt{2\cdot\log\left(\frac{4}{\epsilon}\right)}$,
we see that
\[
Q\left(\left\langle \nabla U\left(x\right),d^{-1/2}z\right\rangle >\sigma\cdot L\cdot\kappa\cdot\sqrt{2\cdot\log\left(\frac{4}{\epsilon}\right)}\right)\leqslant\frac{\epsilon}{4}.
\]
From Lemma~\ref{lem:laurent-massart}, we have that
\[
Q\left(\frac{L}{2}\cdot\left|d^{-1/2}\cdot z\right|^{2}>\sigma^{2}\cdot\frac{L}{2}\cdot\chi\left(\frac{\epsilon}{4}\right)\right)\leqslant\frac{\epsilon}{4}.
\]
Note that for random variables $X,Y$ we have $\mathbb{P}\left(X+Y>a+b\right)\leqslant\mathbb{P}\left(X>a\right)+\mathbb{P}\left(Y>b\right)$
for $a,b\in\mathbb{R}$, because
\begin{align*}
\mathbb{P}\left(X+Y>a+b\right)= & \mathbb{P}\left(X>a,X+Y>a+b\right)+\mathbb{P}\left(X<a,Y>a+b-X\right)\\
\leq & \mathbb{P}\left(X>a\right)+\mathbb{P}\left(Y>b\right).
\end{align*}
Consequently for $x\in\mathcal{B}\left(0,\kappa\cdot d^{1/2}\right)$,

\begin{align*}
Q\left(U\left(x+d^{-1/2}z\right)-U\left(x\right)>\sigma\cdot L\cdot\chi\left(\frac{\epsilon}{4}\right)\cdot\left(\kappa+\frac{\sigma}{2}\right)\right)\\
 & \hspace{-8cm}\leqslant Q\left(\left\langle \nabla U\left(x\right),d^{-1/2}z\right\rangle +\frac{L}{2}\left|d^{-1/2}z\right|^{2}\geqslant\sigma\cdot L\cdot\kappa\cdot\chi\left(\frac{\epsilon}{4}\right)+\frac{\sigma^{2}}{2}\cdot L\cdot\chi\left(\frac{\epsilon}{4}\right)\right)\\
 & \hspace{-8cm}\leqslant Q\left(\left\langle \nabla U\left(x\right),d^{-1/2}z\right\rangle \geqslant\sigma\cdot L\cdot\kappa\cdot\chi\left(\frac{\epsilon}{4}\right)\right)\\
 & \hspace{-2cm}+Q\left(\frac{L}{2}\left|d^{-1/2}z\right|^{2}\geqslant\frac{\sigma^{2}}{2}\cdot L\cdot\chi\left(\frac{\epsilon}{4}\right)\right)\\
 & \hspace{-8cm}\leqslant Q\left(\left\langle \nabla U\left(x\right),d^{-1/2}z\right\rangle \geqslant\sigma\cdot L\cdot\kappa\cdot\sqrt{2\cdot\log\left(\frac{4}{\epsilon}\right)}\right)\\
 & \hspace{-2cm}+Q\left(\frac{L}{2}\left|d^{-1/2}z\right|^{2}\geqslant\frac{\sigma^{2}}{2}\cdot L\cdot\chi\left(\frac{\epsilon}{4}\right)\right)\\
 & \hspace{-8cm}\leqslant\frac{\epsilon}{4}+\frac{\epsilon}{4}=\frac{\epsilon}{2},
\end{align*}
that is, 
\begin{align*}
Q\left(r\left(x,d^{-1/2}z\right)\geqslant\exp\left(-\sigma L\cdot\chi\left(\frac{\epsilon}{4}\right)\cdot\left(\kappa+\frac{\sigma}{2}\right)\right)\right) & \geqslant1-\frac{\epsilon}{2}.
\end{align*}
It follows that by taking $a=\exp\left(-\sigma\cdot L\cdot\chi\left(\frac{\epsilon}{4}\right)\cdot\left(\kappa+\frac{\sigma}{2}\right)\right)$
in Markov's inequality (\ref{eq:RWM_markov_ineq}) and assuming $\kappa\geq\sigma$
we can bound 
\begin{align*}
\alpha\left(x\right) & \geqslant\exp\left(-\sigma\cdot L\cdot\chi\left(\frac{\epsilon}{4}\right)\cdot\left(\kappa+\frac{\sigma}{2}\right)\right)\cdot\left(1-\frac{\epsilon}{2}\right)\\
 & \geqslant\exp\left(-L\cdot\chi\left(\frac{\epsilon}{4}\right)\cdot\frac{3}{2}\kappa\sigma\right)\cdot\left(1-\frac{\epsilon}{2}\right).
\end{align*}
Now if 
\[
\kappa\sigma\leq\frac{1}{L}\cdot\frac{-\log(1-\frac{\epsilon}{2})}{\chi(\epsilon/4)}\cdot\frac{2}{3},
\]
then $\exp\left(-L\cdot\chi\left(\frac{\epsilon}{4}\right)\cdot\frac{3}{2}\kappa\sigma\right)\geqslant1-\frac{\epsilon}{2}$,
so that $\alpha\left(x\right)\geqslant\left(1-\frac{\epsilon}{2}\right)^{2}\geqslant1-\epsilon$,
and hence that $1-\alpha\left(x\right)\leqslant\epsilon$, as claimed.
\end{proof}
\begin{lem}
\label{lem:inside-tv-bound}Assume that $U$ attains its minimum at
$0$, and is $L-$smooth. Let $\sigma\leq\varsigma/\sqrt{L}$ with
$\varsigma\leq\varsigma_{\star}:=0.073$, $\kappa:=(4+1/16)\sigma$,
and $\delta:=\sigma/16$. Let $S=\mathcal{B}\left(0,\kappa\cdot d^{1/2}\right)$.
Then for $(x,y)\in S\times S$ such that $\left|x-y\right|\leq\delta d^{-1/2}$
we have
\[
\left\Vert P(x,\cdot)-P(y,\cdot)\right\Vert _{{\rm TV}}\leq\frac{31}{32}.
\]
\end{lem}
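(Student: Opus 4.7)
The plan is a three-term triangle inequality splitting
\[
\|P(x,\cdot)-P(y,\cdot)\|_{{\rm TV}} \leq \|P(x,\cdot)-Q_x\|_{{\rm TV}} + \|Q_x-Q_y\|_{{\rm TV}} + \|Q_y-P(y,\cdot)\|_{{\rm TV}},
\]
and bounding each piece with the two lemmas immediately preceding this statement. The middle term is immediate: since $|x-y| \leq \delta d^{-1/2}$ with $\delta = \sigma/16$, Lemma~\ref{lem:RWM-continuity-proposal} gives $\|Q_x-Q_y\|_{{\rm TV}} \leq \delta/(2\sigma) = 1/32$. This leaves a budget of $15/16$ for the two outer terms, which I would split evenly by applying Lemma~\ref{lem:RWM-coupling-Q-P} to each endpoint with $\epsilon = 15/32$, yielding $15/32 + 1/32 + 15/32 = 31/32$. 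The hypothesis $(x,y)\in S\times S$ is precisely what makes Lemma~\ref{lem:RWM-coupling-Q-P} applicable to both endpoints.

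The substantive step is then to verify that the hypotheses of Lemma~\ref{lem:RWM-coupling-Q-P} at $\epsilon = 15/32$ follow from $\sigma \leq \varsigma_\star/\sqrt{L}$ with $\varsigma_\star = 0.073$. The trivial condition $\kappa \geq \sigma$ holds because $\kappa = (4+1/16)\sigma$. The real constraint is
\[
\kappa\sigma = (4+\tfrac{1}{16})\sigma^2 \leq \frac{1}{L} \cdot \frac{-\log(1-\epsilon/2)}{\chi(\epsilon/4)} \cdot \frac{2}{3}.
\]
Plugging in $\epsilon = 15/32$ requires computing $\chi(15/128) = 1 + 2\sqrt{\log(128/15)} + 2\log(128/15)$ and $-\log(49/64)$; these reduce the inequality to the numerical bound $\sigma^2 L \lesssim 0.0053$, i.e., $\varsigma \leq 0.073$, which matches $\varsigma_\star$ in the hypothesis.

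I expect no real obstacle. The conceptual work has already been done in Lemmas~\ref{lem:RWM-continuity-proposal} and~\ref{lem:RWM-coupling-Q-P}, which are manifestly engineered for exactly this application; the constants $\delta$, $\kappa$, and $\varsigma_\star$ in the statement have been pre-chosen so that the triangle inequality closes with the stated budget $31/32$. The only care required is the numerical verification above, and the observation that the bounds in Lemma~\ref{lem:RWM-coupling-Q-P} depend on $x$ only through membership in $\mathcal{B}(0,\kappa d^{1/2}) = S$, so the same $\epsilon$ works at both $x$ and $y$.
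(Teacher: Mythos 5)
Your proposal is correct and is essentially identical to the paper's proof: the same three-term triangle inequality through $Q_x$ and $Q_y$, the same $1/32$ bound on the middle term via Lemma~\ref{lem:RWM-continuity-proposal}, the same application of Lemma~\ref{lem:RWM-coupling-Q-P} with $\epsilon = 15/32$ to each outer term, and the same numerical check that $\varsigma_\star^2 = 0.073^2 \approx 0.00533$ satisfies the resulting constraint $(4+1/16)\sigma^2 L \leq \frac{2}{3}\cdot\frac{-\log(49/64)}{\chi(15/128)}$.
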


\begin{proof}
We have
\[
\left\Vert P(x,\cdot)-P(y,\cdot)\right\Vert _{{\rm TV}}\leq\left\Vert P(x,\cdot)-Q_{x}\right\Vert _{{\rm TV}}+\left\Vert Q_{x}-Q_{y}\right\Vert _{{\rm TV}}+\left\Vert P(y,\cdot)-Q_{y}\right\Vert _{{\rm TV}}.
\]
We have $\left\Vert Q_{x}-Q_{y}\right\Vert _{{\rm TV}}\leq1/32$ by
Lemma~\ref{lem:RWM-continuity-proposal}. We wish to show that for
$x\in S$, 
\[
\left\Vert P(x,\cdot)-Q_{x}\right\Vert _{{\rm TV}}\leq15/32.
\]
This is ensured by Lemma~\ref{lem:RWM-coupling-Q-P}: taking $\epsilon=15/32$
we need to verify that for $b_{\kappa}:=4+1/16$,
\[
\kappa\sigma=b_{\kappa}\sigma^{2}\leq\frac{1}{L}\cdot\frac{-\log(1-\frac{\epsilon}{2})}{\chi(\frac{\epsilon}{4})}\cdot\frac{2}{3},
\]
and so it is sufficient to take 
\[
\sigma^{2}\leq\frac{0.073^{2}}{L}\leq\frac{1}{L}\cdot\frac{-\log(\frac{49}{64})}{\chi(15/128)}\cdot\frac{2}{3}\cdot\frac{1}{4+1/16}.
\]
\end{proof}
\begin{lem}
\label{lem:min-dist-x-given-y-delta}If $\left|y-x\right|\leq\delta$
and $\left|y\right|\geq\delta$ then $\left|x\right|^{2}\geq(\left|y\right|-\delta)^{2}$.
\end{lem}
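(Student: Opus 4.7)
The plan is to apply the reverse triangle inequality and then square. First I would observe that the reverse triangle inequality gives $|x| \geq |y| - |y-x|$, and using the hypothesis $|y-x| \leq \delta$ this yields $|x| \geq |y| - \delta$. The hypothesis $|y| \geq \delta$ is precisely what ensures that the right-hand side $|y| - \delta$ is nonnegative, so that squaring preserves the inequality and we obtain $|x|^2 \geq (|y| - \delta)^2$.

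There is no real obstacle here: the statement is a two-line consequence of the (reverse) triangle inequality for the norm $|\cdot|$, and the condition $|y| \geq \delta$ exists solely to rule out the sign-change issue when squaring. The only thing to be careful about is that squaring is monotone on $[0,\infty)$ but not on all of $\mathbb{R}$, which is exactly why the hypothesis $|y| \geq \delta$ is needed; if $|y| < \delta$ then $|y|-\delta$ is negative and the squared inequality could fail (for instance $|x|$ could be very small while $(|y|-\delta)^2$ is positive).
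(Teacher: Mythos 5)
Your proof is correct. The paper instead writes $x = y + r$ with $|r| \leq \delta$, expands $|x|^2 = |y|^2 + 2\langle y, r\rangle + |r|^2$, and applies Cauchy--Schwarz to the cross term to get $|x|^2 \geq (|y|-|r|)^2 \geq (|y|-\delta)^2$; this leans on the inner-product structure of the Euclidean norm, which is what is in force in this section. Your route via the reverse triangle inequality $|x| \geq |y| - |x-y|$ followed by squaring is shorter and works for an arbitrary norm, not just one induced by an inner product; it also makes the role of the hypothesis $|y| \geq \delta$ completely transparent (it is exactly what makes squaring legitimate), a point the paper leaves implicit in the phrase ``from which we can conclude.'' Both arguments are sound; yours is the more elementary and more general of the two.
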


\begin{proof}
Let $x=y+r$ where $\left|r\right|\leq\delta\leq\left|y\right|$.
Then by Cauchy--Schwarz,
\begin{align*}
\left|x\right|^{2} & =\left|y\right|^{2}+2\left\langle y,r\right\rangle +\left|r\right|^{2}\\
 & \geq\left|y\right|^{2}-2\left|y\right|\left|r\right|+\left|r\right|^{2}\\
 & =\left(\left|y\right|-\left|r\right|\right)^{2},
\end{align*}
from which we can conclude.
\end{proof}

\begin{lem}
\label{lem:outside-prob-smaller-norm}For any $\beta\in(0,1)$, let
$z_{\beta}$ denote the $\beta$-quantile of the $\mathcal{N}(0,1)$
distribution, namely $\mathbb{P}(Z_{1}\ge z_{\beta})=1-\beta$ for
$Z_{1}\sim N(0,1)$. For any $\alpha<1/2$, let $x\in\mathbb{R}^{d}$
satisfy $\left|x\right|\geq c\sigma d^{1/2}$ for some $c>(2\alpha z_{1-\alpha})^{-1}$.
Then if $W\sim Q_{x}$, 
\[
\mathbb{P}(\left|W\right|\leq\left|x\right|)\geq\alpha-\frac{1}{2cz_{1-\alpha}}>0.
\]
In particular, if $\alpha=1/4$ and $c>3$, $\mathbb{P}\left(Z\in A\right)>\frac{1}{4}-\frac{3}{4c}=\frac{1}{4}(1-\frac{3}{c})>0$.
\end{lem}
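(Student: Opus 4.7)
The plan is to reduce everything to a one-dimensional projection in the direction of $x$ together with a crude second-moment control on the orthogonal component. Writing $W = x + \xi$ with $\xi \sim \mathcal{N}(0,\sigma^2 d^{-1}\mathrm{Id})$ and $\hat{x} = x/|x|$, the identity
\[
|W|^2 - |x|^2 = 2|x|\,\xi_\parallel + |\xi|^2, \qquad \xi_\parallel := \langle \hat{x},\xi\rangle \sim \mathcal{N}(0,\sigma^2/d),
\]
shows that $\{|W|\leq |x|\}$ is equivalent to $\{|\xi|^2 \leq -2|x|\xi_\parallel\}$. So the strategy is to produce a lower bound on the probability of this event by forcing $\xi_\parallel$ to be sufficiently negative (using the Gaussian tail of the one-dimensional projection) while simultaneously keeping $|\xi|^2$ small (using Markov's inequality, since $\mathbb{E}|\xi|^2 = \sigma^2$, independent of $d$).

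Concretely, I would introduce the two events
\[
E := \Bigl\{ \xi_\parallel \leq -\tfrac{\sigma}{\sqrt{d}}\, z_{1-\alpha}\Bigr\}, \qquad
B := \Bigl\{ |\xi|^2 \leq \tfrac{2|x|\sigma}{\sqrt{d}}\, z_{1-\alpha}\Bigr\},
\]
and note that on $E\cap B$ one has $|\xi|^2 \leq 2|x|\sigma z_{1-\alpha}/\sqrt{d} \leq -2|x|\xi_\parallel$, so $E\cap B \subseteq \{|W|\leq |x|\}$. By the definition of $z_{1-\alpha}$, since $\xi_\parallel/(\sigma/\sqrt{d})$ is standard normal, $\mathbb{P}(E) = \alpha$. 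By Markov's inequality and the hypothesis $|x|\geq c\sigma\sqrt{d}$,
\[
\mathbb{P}(B^{\complement}) \leq \frac{\sigma^2}{2|x|\sigma z_{1-\alpha}/\sqrt{d}} = \frac{\sigma\sqrt{d}}{2|x|z_{1-\alpha}} \leq \frac{1}{2cz_{1-\alpha}}.
\]
Using the trivial bound $\mathbb{P}(E\cap B)\geq \mathbb{P}(E) - \mathbb{P}(B^{\complement})$ then yields $\mathbb{P}(|W|\leq |x|) \geq \alpha - (2cz_{1-\alpha})^{-1}$, which is strictly positive by the assumption $c > (2\alpha z_{1-\alpha})^{-1}$. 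The specialization to $\alpha = 1/4$ follows by a direct substitution.

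No step looks like a real obstacle; the only mild subtlety is choosing the threshold inside $B$ to match exactly the quantile used in $E$ so that the inclusion $E\cap B \subseteq \{|W|\leq |x|\}$ holds with the same $z_{1-\alpha}$. Using Markov on $|\xi|^2$ (rather than a sharper $\chi^2$ concentration bound as in Lemma~\ref{lem:laurent-massart}) is what produces the $1/(2cz_{1-\alpha})$ term, and is sufficient since we only need a dimension-free lower bound on $\mathbb{P}(|W|\leq|x|)$.
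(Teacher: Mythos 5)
Your argument is correct and is essentially the paper's proof, reorganized to work with the projection $\xi_\parallel = \langle \hat{x},\xi\rangle$ directly rather than first rotating so that $x=(-|x|,0,\ldots,0)$; the key ingredients (one-dimensional quantile event giving mass $\alpha$, Markov's inequality on $|\xi|^2$ yielding the $1/(2cz_{1-\alpha})$ correction, and the inclusion-exclusion bound $\mathbb{P}(E\cap B)\geq\mathbb{P}(E)-\mathbb{P}(B^{\complement})$) are identical. The only cosmetic difference is that you choose the threshold in $B$ using $|x|$ rather than $c\sigma\sqrt{d}$ and invoke the hypothesis $|x|\geq c\sigma\sqrt{d}$ only at the Markov step, which is a harmless rearrangement.
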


\begin{proof}
Without loss of generality, we may assume that $x=(-\left|x\right|,0,\ldots,0)$.
Let $W=x+\sigma d^{-1/2}Z$ where $Z\sim\mathcal{N}(0,I_{d})$, and
$w=x+\sigma d^{-1/2}z$. Then 
\begin{align*}
A & =\{z:\left|w\right|\leq\left|x\right|\}\\
 & =\{z:\left|x+\sigma d^{-1/2}z\right|\leq\left|x\right|\}\\
 & =\left\{ z:2\frac{\sigma}{d^{1/2}}\sum_{i=1}^{d}x_{i}z_{i}+\frac{\sigma^{2}}{d}\sum_{i=1}^{d}z_{i}^{2}\leq0\right\} \\
 & =\left\{ z:\frac{\sigma}{d^{1/2}}\sum_{i=1}^{d}z_{i}^{2}\leq-2\sum_{i=1}^{d}x_{i}z_{i}\right\} \\
 & =\left\{ z:\sum_{i=1}^{d}z_{i}^{2}\leq\frac{2}{\sigma}\left|x\right|z_{1}d^{1/2}\right\} \\
 & \supseteq\left\{ z:\sum_{i=1}^{d}z_{i}^{2}\leq2cv_{1}d\right\} \cap\left\{ z:z_{1}\geq v_{1}\right\} ,
\end{align*}
for any $v_{1}>0$. Now take $v_{1}=z_{1-\alpha}$. Then,
\begin{align*}
\mathbb{P}\left(Z\in A\right) & \geq\mathbb{P}\left(\left|Z\right|^{2}\leq2cz_{1-\alpha}d,Z_{1}\geq z_{1-\alpha}\right)\\
 & \geq\mathbb{P}\left(\left|Z\right|^{2}\leq2cz_{1-\alpha}d\right)+\mathbb{P}(Z_{1}\geq z_{1-\alpha})-1\\
 & =\mathbb{P}\left(\left|Z\right|^{2}\leq2cz_{1-\alpha}d\right)+\alpha-1.
\end{align*}
By Markov's inequality, we have
\[
\mathbb{P}\left(\left|Z\right|^{2}>2cz_{1-\alpha}d\right)\leq\frac{1}{2cz_{1-\alpha}},
\]
and so
\[
\mathbb{P}\left(Z\in A\right)\geq\alpha-\frac{1}{2cz_{1-\alpha}}.
\]
For the last part, observe that if $\alpha=1/4$ then $z_{1-\alpha}>2/3$,
and the conclusion follows.
\end{proof}
\begin{lem}
\label{lem:tv-outside-bound}Assume $U(x)=u(\left|x\right|^{2})$
with $u:[0,\infty)\to[0,\infty)$ increasing. For $\sigma>0$, let
$\kappa=b_{\kappa}\sigma$, $\delta=b_{\delta}\sigma$ for some constants
$b_{\kappa}>b_{\delta}$. Let $(x,y)\in\left(\mathcal{B}\left(0,\kappa\cdot d^{1/2}\right)\times\mathcal{B}\left(0,\kappa\cdot d^{1/2}\right)\right)^{\complement}$.
Then if $\left|x-y\right|\leq\delta d^{-1/2}$, we have that 
\begin{equation}
\left\Vert P(x,\cdot)-P(y,\cdot)\right\Vert _{{\rm TV}}\leq\frac{3}{4}+\frac{3}{4(b_{\kappa}-b_{\delta})}+\frac{b_{\delta}}{2}.\label{eq:lem_tv_out}
\end{equation}
In particular, if we take $b_{\kappa}=4+1/16$ and $b_{\delta}=1/16$,
then we obtain
\[
\left\Vert P(x,\cdot)-P(y,\cdot)\right\Vert _{{\rm TV}}\leq\frac{31}{32}.
\]
\end{lem}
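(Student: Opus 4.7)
The plan is to exploit the spherical symmetry of $U$: since $U(x) = u(|x|^2)$ with $u$ increasing, any proposal $w$ with $|w| \le |x|$ has acceptance probability $1 \wedge \exp(u(|x|^2) - u(|w|^2)) = 1$ from state $x$. This lets us avoid the problem that acceptance rates are small in the tails (which prevented the simple proposal-comparison argument used in Lemma~\ref{lem:inside-tv-bound}).

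By symmetry of total variation, I may relabel so that $|x| \le |y|$. At least one of $|x|, |y|$ is $\ge \kappa d^{1/2}$, and combined with $|x - y| \le \delta d^{-1/2}$ and Lemma~\ref{lem:min-dist-x-given-y-delta} (applied with $\delta$ replaced by $\delta d^{-1/2}$), this will force $|x| \ge \kappa d^{1/2} - \delta d^{-1/2} \ge (b_\kappa - b_\delta) \sigma d^{1/2}$ using $d \ge 1$. So both points are deep in the tails, with $|x|$ slightly smaller.

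Let $A_x = \{w : |w| \le |x|\} \subseteq A_y$. Writing $q_x, q_y$ for the densities of $Q_x, Q_y$, on $A_x$ both acceptance probabilities equal one, so $P(x, dw) \ge q_x(w)\, dw$ and $P(y, dw) \ge q_y(w)\, dw$ on $A_x$. Hence
\begin{align*}
\|P(x,\cdot) - P(y,\cdot)\|_{\mathrm{TV}} &= 1 - \int \min(P(x, dw), P(y, dw)) \\
&\le 1 - \int_{A_x} \min(q_x(w), q_y(w))\, dw \\
&\le 1 - Q_x(A_x) + \|Q_x - Q_y\|_{\mathrm{TV}},
\end{align*}
where the last step uses $\int_{A_x} \min(q_x, q_y)\, dw \ge Q_x(A_x) - \int (q_x - q_y)_+\, dw = Q_x(A_x) - \|Q_x - Q_y\|_{\mathrm{TV}}$.

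It then remains to plug in the two standard bounds already available: Lemma~\ref{lem:outside-prob-smaller-norm} (with $c = b_\kappa - b_\delta$ and $\alpha = 1/4$, valid since $b_\kappa - b_\delta > 3$ in the concrete case $b_\kappa = 4 + 1/16$, $b_\delta = 1/16$) gives $Q_x(A_x) \ge \tfrac{1}{4}\bigl(1 - \tfrac{3}{b_\kappa - b_\delta}\bigr)$, and Lemma~\ref{lem:RWM-continuity-proposal} with $\epsilon = \delta$ gives $\|Q_x - Q_y\|_{\mathrm{TV}} \le \delta/(2\sigma) = b_\delta/2$. Combining yields \eqref{eq:lem_tv_out}, and the specific numerical bound $31/32$ follows from $3/4 + 3/16 + 1/32 = 31/32$. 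The delicate point is verifying the quantile condition $c > 3$ in Lemma~\ref{lem:outside-prob-smaller-norm} — this is precisely what dictates the choice $b_\kappa - b_\delta = 4$ and pins down the numerics — but there is no real technical obstacle once the monotonicity-of-$u$ observation has replaced the acceptance-probability estimate.
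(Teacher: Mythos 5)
Your proof is correct and follows essentially the same route as the paper's: both rest on the spherical-symmetry observation that proposals into $A_x=\{w:|w|\le|x|\}$ are accepted with probability one, the lower bound $|x|\ge(b_\kappa-b_\delta)\sigma d^{1/2}$ via Lemma~\ref{lem:min-dist-x-given-y-delta}, the chi-square/Gaussian estimate of Lemma~\ref{lem:outside-prob-smaller-norm} with $\alpha=1/4$, and the Pinsker bound $\|Q_x-Q_y\|_{\mathrm{TV}}\le b_\delta/2$. The only difference is cosmetic: you write the estimate using the identity $\|P(x,\cdot)-P(y,\cdot)\|_{\mathrm{TV}}=1-\int\min(P(x,dw),P(y,dw))$ and a density comparison on $A_x$, whereas the paper constructs an explicit (maximal-proposal) coupling $(X',Y')$ and bounds $\mathbb{P}(X'=Y')$; both routes yield the identical intermediate inequality $1-Q_x(A_x)+\|Q_x-Q_y\|_{\mathrm{TV}}$ and then the same arithmetic.
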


\begin{proof}
For $x,y\in\left(\mathcal{B}\left(0,\kappa\cdot d^{1/2}\right)\times\mathcal{B}\left(0,\kappa\cdot d^{1/2}\right)\right)^{\complement}$,
we construct a coupling $(X',Y')$ such that $X'\sim P(x,\cdot)$
and $Y'\sim P(y,\cdot)$, and will show that $\mathbb{P}(X'=Y')\geq1-\epsilon$,
with $\epsilon$ as in the right-hand side of (\ref{eq:lem_tv_out}).
Without loss of generality, assume $\left|x\right|\leq\left|y\right|$.
Hence, we have by Lemma~\ref{lem:min-dist-x-given-y-delta} the (crude)
bound $\left|y\right|\geq\left|x\right|\geq\kappa d^{1/2}-\delta d^{-1/2}\geq(\kappa-\delta)d^{1/2}$.
Let $(W_{x},W_{y})$ be distributed according to a maximal coupling
of $Q_{x}$ and $Q_{y}$. By Lemma~\ref{lem:RWM-continuity-proposal},
\[
\mathbb{P}(W_{x}=W_{y})=1-\left\Vert Q_{x}-Q_{y}\right\Vert _{{\rm TV}}\geq1-\delta/2\sigma.
\]
On the event $W_{x}=W_{y}$, we have $X'=Y'=W_{x}$ if $\left|W_{x}\right|\leq\left|x\right|$,
since $U(x)=u(\left|x\right|)$ so the proposals will be accepted
with probability one. Note that $W_{x}=x+\sigma d^{-1/2}Z$, where
$Z\sim\mathcal{N}(0,I_{d})$. Hence, by Lemma~\ref{lem:outside-prob-smaller-norm},
\[
\mathbb{P}(\left|W_{x}\right|\leq\left|x\right|)\geq\alpha-\frac{1}{2cz_{1-\alpha}},
\]
for any $\alpha<1/2$ and $c=(\kappa-\delta)/\sigma$. Hence we have
the bound
\begin{align*}
\mathbb{P}(X'=Y') & \geq\mathbb{P}(W_{x}=W_{y},\left|W_{x}\right|\leq\left|X\right|)\\
 & \geq\mathbb{P}(W_{x}=W_{y})+\mathbb{P}(\left|W_{x}\right|\leq\left|X\right|)-1\\
 & \geq\mathbb{P}(\left|W_{x}\right|\leq\left|X\right|)-\frac{\delta}{2\sigma}\\
 & \geq\alpha-\frac{\sigma}{2(\kappa-\delta)z_{1-\alpha}}-\frac{\delta}{2\sigma}.
\end{align*}
Now, taking $\alpha=1/4$, we obtain
\[
\mathbb{P}(X'=Y')\geq\frac{1}{4}-\frac{3}{4(b_{\kappa}-b_{\delta})}-\frac{b_{\delta}}{2},
\]
and we conclude by the coupling inequality $\left\Vert P(x,\cdot)-P(y,\cdot)\right\Vert _{{\rm TV}}\leq\mathbb{P}(X'\neq Y')$.
\end{proof}

\subsection{\label{subsec:Spectral-gap-for-gaussian}Spectral gap for the RWM
on a Gaussian target}

When $\pi$ is $\mathcal{N}(0,\sigma_{0}^{2}I_{d})$, it is possible
to obtain more precise bounds on the conductance and spectral gap,
and also for the proposal standard deviation to be an arbitrary multiple
of $\sigma_{0}$, when scaled appropriately by $d^{-1/2}$. 
\begin{lem}
\label{lem:normal-accept-x}Assume $U(x)=\frac{1}{2\sigma_{0}^{2}}\left|x\right|^{2}$.
Let $X'\sim P(x,\cdot)$ with proposal $W=x+\sigma_{d}Z$, where $\sigma_{d}=\varsigma d^{-1/2}\sigma_{0}$
for some $\varsigma>0$ and $Z\sim\mathcal{N}(0,I_{d})$. Then
\[
\mathbb{P}(X'=W)\geq\exp\left\{ -\frac{\varsigma^{2}}{2}\left[1+2d^{-1/2}+2d^{-1}\right]\right\} \cdot\frac{1}{2}\cdot(1-{\rm e}^{-1}).
\]
\end{lem}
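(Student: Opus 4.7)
The quantity $\mathbb{P}(X'=W)$ is exactly $\mathbb{E}_Z[\alpha(x,\sigma_d Z)]$ where $\alpha(x,z)=\min\{1,\pi(x+z)/\pi(x)\}$. For the Gaussian target, a direct computation gives
\[
-\log\frac{\pi(W)}{\pi(x)}=U:=\frac{\sigma_d\langle x,Z\rangle}{\sigma_0^{2}}+\frac{\sigma_d^{2}|Z|^{2}}{2\sigma_0^{2}}=V+Y,
\]
with $V:=\sigma_d\langle x,Z\rangle/\sigma_0^{2}$ a mean-zero Gaussian and $Y:=\varsigma^{2}|Z|^{2}/(2d)$ a scaled $\chi^{2}_d$. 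As in the proof of Lemma~\ref{lem:RWM-coupling-Q-P}, the plan is to apply a Markov-type bound: for any $a\in(0,1]$,
\[
\mathbb{E}[\min\{1,\pi(W)/\pi(x)\}]\geq a\cdot\mathbb{P}(U\leq -\log a),
\]
and then to optimize $a$.

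The critical step, and the only one that is not routine, is to obtain a lower bound on $\mathbb{P}(U\leq t)$ that is \emph{uniform in $x$}: because $\mathrm{Var}(V)=\varsigma^{2}|x|^{2}/(d\sigma_0^{2})$ grows with $|x|$, a naive tail argument would degrade for large $|x|$. The trick is a symmetry/decoupling argument. For $x\neq 0$ (the case $x=0$ being immediate), set $e:=x/|x|$ and write $Z=Z_1 e+Z'$ with $Z_1=\langle e,Z\rangle\sim\mathcal{N}(0,1)$ and $Z'\in e^{\perp}$ independent of $Z_1$. Then $V=(\sigma_d|x|/\sigma_0^{2})Z_1$ has the sign of $Z_1$, so $\{Z_1\leq 0\}\subseteq\{V\leq 0\}\subseteq\{U\leq Y\}$. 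By the conditional symmetry of $Z_1$ given $(|Z_1|,Z')$, one has $\mathbb{P}(Z_1\leq 0\mid |Z_1|,Z')=\tfrac{1}{2}$, so $Y$ (which is a function of $|Z_1|$ and $Z'$) is independent of $\mathrm{sign}(Z_1)$ and therefore
\[
\mathbb{P}(U\leq t)\geq\mathbb{P}(Z_1\leq 0,\,Y\leq t)=\tfrac{1}{2}\mathbb{P}(Y\leq t),
\]
the key bound being free of $|x|$.

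To finish, apply Lemma~\ref{lem:laurent-massart} with $u=1$ to obtain $\mathbb{P}(|Z|^{2}\geq d+2\sqrt{d}+2)\leq e^{-1}$, i.e.
\[
\mathbb{P}\!\left(Y\leq t_{*}\right)\geq 1-e^{-1},\qquad t_{*}:=\tfrac{\varsigma^{2}}{2}\bigl(1+2d^{-1/2}+2d^{-1}\bigr).
\]
Choosing $a=e^{-t_{*}}$ and combining gives
\[
\mathbb{P}(X'=W)\geq e^{-t_{*}}\cdot\tfrac{1}{2}(1-e^{-1})=\exp\!\Bigl\{-\tfrac{\varsigma^{2}}{2}\bigl[1+2d^{-1/2}+2d^{-1}\bigr]\Bigr\}\cdot\tfrac{1}{2}(1-e^{-1}),
\]
which is the claimed bound. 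The only nontrivial ingredient is the symmetry decoupling in the second step; everything else is a direct application of tools already used in Lemma~\ref{lem:RWM-coupling-Q-P}.
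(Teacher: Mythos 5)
Your proof is correct and takes essentially the same route as the paper: after a spherical-symmetry reduction (the paper rotates $x$ to $(x_1,0,\ldots,0)$; you equivalently decompose $Z=Z_1 e+Z'$ with $e=x/|x|$), both arguments exploit the independence of $\mathrm{sign}(Z_1)$ and $|Z|^2$ to obtain the factor $\tfrac12$, then apply Lemma~\ref{lem:laurent-massart} with $u=1$ and the elementary bound $\mathbb{E}[\min\{1,\mathsf{r}\}]\geq a\,\mathbb{P}(\mathsf{r}\geq a)$. The only cosmetic difference is that you state the $x=0$ degeneracy and the sign/magnitude decoupling explicitly, which the paper leaves implicit.
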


\begin{proof}
Since the proposal and target are spherically symmetric, we may assume
without loss of generality that $x=(x_{1},0,\ldots,0)$. Then
\begin{align*}
\left|x+\sigma d^{-1/2}z\right|^{2} & =\left|x\right|^{2}+2\sigma_{d}\left\langle x,z\right\rangle +\frac{\sigma^{2}}{d}\left|z\right|^{2}\\
 & =\left|x\right|^{2}+2\sigma_{d}x_{1}z_{1}+\sigma_{d}^{2}\left|z\right|^{2}.
\end{align*}
Hence,
\begin{equation}
U(W)-U(x)=\frac{1}{2\sigma_{0}^{2}}\left\{ 2\sigma_{d}x_{1}Z_{1}+\sigma_{d}^{2}\left|Z\right|^{2}\right\} .\label{eq:U(W)-U(x)}
\end{equation}
Now, for $r_{d}>0$,
\[
\mathbb{P}\left(x_{1}Z_{1}\leq0,\frac{\sigma_{d}^{2}}{2\sigma_{0}^{2}}\left|Z\right|^{2}\leq r_{d}\right)=\frac{1}{2}\mathbb{P}\left(\left|Z\right|^{2}\leq dr_{d}\cdot\frac{2}{\varsigma^{2}}\right),
\]
since $\mathbb{I}(Z_{1}>0)$ is independent of $\left|Z\right|^{2}$.
By Lemma~\ref{lem:laurent-massart}, we have
\[
\mathbb{P}\left(\left|Z\right|^{2}\leq d\left\{ 1+2\sqrt{\frac{u}{d}}+2\frac{u}{d}\right\} \right)\geq1-\exp(-u).
\]
So, taking $u=1$, we set
\[
r_{d}=\frac{\varsigma^{2}}{2}(1+2d^{-1/2}+2d^{-1}),
\]
which gives 
\[
\mathbb{P}\left(x_{1}Z_{1}\leq0,\frac{\sigma_{d}^{2}}{2\sigma_{0}^{2}}\left|Z\right|^{2}\leq r_{d}\right)\geq\frac{1}{2}\cdot\left\{ 1-\exp(-1)\right\} .
\]
It thus follows from (\ref{eq:U(W)-U(x)}) that 
\[
\mathbb{P}(U(W)-U(x)\leq r_{d})\geq\frac{1}{2}\cdot(1-{\rm e}^{-1}),
\]
and so
\[
\mathbb{P}\left(U(W)-U(x)\leq\frac{\varsigma^{2}}{2}\left\{ 1+2\sqrt{\frac{1}{d}}+2\frac{1}{d}\right\} \right)\geq\frac{1}{2}\cdot(1-{\rm e}^{-1}),
\]
from which we may conclude, since on this event the proposal is accepted
with probability at least $\exp\left\{ -\frac{\varsigma^{2}}{2}\left[1+2d^{-1/2}+2d^{-1}\right]\right\} $.
\end{proof}
\begin{thm}
\label{thm:gaussian-rwm-conductance}Assume $U(x)=\frac{1}{2\sigma_{0}^{2}}\left|x\right|^{2}$,
and let $\sigma=\varsigma\sigma_{0}$ for any $\varsigma>0$. Then
the conductance
\[
\kappa(0)\geq0.00216\exp\left\{ -\varsigma^{2}\left[1+2d^{-1/2}+2d^{-1}\right]\right\} \cdot\varsigma d^{-1/2},
\]
and hence
\[
{\rm Gap}(P)={\rm Gap}_{{\rm R}}(P)\geq5.83\times10^{-7}\cdot\exp\left\{ -2\varsigma^{2}\left[1+2d^{-1/2}+2d^{-1}\right]\right\} \cdot\varsigma^{2}d^{-1}.
\]
\end{thm}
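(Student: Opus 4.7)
The plan is to apply Theorem~\ref{thm:local-conductance-lower-bound} with $C = \mathsf{E} = \mathbb{R}^{d}$ and $m = 1/\sigma_{0}^{2}$ (by Example~\ref{exa:gaussian-mL}, $U$ is $(1/\sigma_{0}^{2})$-strongly log-concave), using its second (multiplicative) inequality. The preceding theorem imposed $\varsigma \leq 0.073$ in order to guarantee a large acceptance rate everywhere via a triangle-inequality argument through $Q_{x}$. I will instead exploit the radial structure of the Gaussian through Lemma~\ref{lem:normal-accept-x}, which furnishes the uniform lower bound $\alpha(x) \geq c(\varsigma, d) := \tfrac{1-e^{-1}}{2} \exp\!\bigl(-\tfrac{\varsigma^{2}}{2}[1 + 2d^{-1/2} + 2d^{-1}]\bigr)$ for all $x \in \mathbb{R}^{d}$ and all $\varsigma > 0$.

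To verify the TV-continuity hypothesis of Theorem~\ref{thm:local-conductance-lower-bound}, I will couple $X' \sim P(x,\cdot)$ and $Y' \sim P(y,\cdot)$ by drawing $(W_{x}, W_{y})$ from a maximal coupling of $(Q_{x}, Q_{y})$ together with an independent $U \sim \mathrm{Unif}(0,1)$ used to decide acceptance from both states. On $\{W_{x} = W_{y} = w\}$, both chains jump to $w$ whenever $U \leq \min\bigl(\alpha(x, w - x), \alpha(y, w - y)\bigr)$. The key observation is that spherical symmetry of $\pi$ together with WLOG $|x| \leq |y|$ gives $\pi(x) \geq \pi(y)$, hence $\alpha(x, w-x) = 1 \wedge \pi(w)/\pi(x) \leq 1 \wedge \pi(w)/\pi(y) = \alpha(y, w-y)$ for every $w$, so the minimum collapses to $\alpha(x, w-x)$. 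Since the diagonal part of the maximal coupling has density $\min(q_{x}, q_{y})$ and $\alpha \leq 1$, this yields
\[
\mathbb{P}(X' = Y') \;\geq\; \int \alpha(x, w-x)\,\min(q_{x}, q_{y})(w)\,\mathrm{d}w \;\geq\; \alpha(x) - \|Q_{x} - Q_{y}\|_{\mathrm{TV}}.
\]
Lemma~\ref{lem:RWM-continuity-proposal} gives $\|Q_{x} - Q_{y}\|_{\mathrm{TV}} \leq |x-y|\sqrt{d}/(2\varsigma\sigma_{0})$, so choosing $R := \varsigma \sigma_{0}\, c(\varsigma,d)\, d^{-1/2}$ ensures $\|Q_{x} - Q_{y}\|_{\mathrm{TV}} \leq c(\varsigma,d)/2$ whenever $|x - y| \leq R$, whence $\|P(x,\cdot) - P(y,\cdot)\|_{\mathrm{TV}} \leq 1 - c(\varsigma,d)/2$.

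Invoking the second inequality in Theorem~\ref{thm:local-conductance-lower-bound} with $\varepsilon = c(\varsigma,d)/2$, $\delta = R$ and $\sqrt{m} = 1/\sigma_{0}$, I obtain
\[
\kappa(0) \;\geq\; \tfrac{c(\varsigma,d)}{8} \, \min\!\left\{1,\; \tfrac{\log 2}{4} \cdot \tfrac{\varsigma\, c(\varsigma,d)}{\sqrt{d}}\right\}.
\]
In the regime where the second argument of the $\min$ is $\leq 1$ this equals $\tfrac{\log 2}{32}\, c(\varsigma,d)^{2}\, \varsigma\, d^{-1/2}$; in the complementary regime the constant bound $c(\varsigma,d)/8$ still dominates this target, because $\varsigma \exp(-\varsigma^{2}/2) \leq e^{-1/2}$ implies $\log 2\cdot c(\varsigma,d)\varsigma d^{-1/2} \leq 4$ uniformly in $\varsigma,d \geq 1$. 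Substituting the explicit form of $c(\varsigma,d)$ produces the prefactor $\tfrac{\log 2\,(1-e^{-1})^{2}}{128} \approx 0.00216$ and the exponent $-\varsigma^{2}[1 + 2d^{-1/2} + 2d^{-1}]$, matching the claim. The spectral-gap bound is then immediate from Cheeger's inequality $\mathrm{Gap}_{\mathrm{R}}(P) \geq \kappa(0)^{2}/8$ in (\ref{eq:cheeger-ineqs}), giving constant $(0.00216)^{2}/8 \approx 5.83 \times 10^{-7}$; finally $\mathrm{Gap}(P) = \mathrm{Gap}_{\mathrm{R}}(P)$ follows from \cite[Lemma~3.1]{baxendale2005renewal} as in the preceding theorem, since the proposal is Gaussian.

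The main conceptual hurdle is the coupling step: the clean collapse $\min(\alpha(x,\cdot), \alpha(y,\cdot)) = \alpha(x,\cdot)$ relies essentially on the radial monotonicity of $\pi$, which is specific to the Gaussian (or any spherically symmetric, radially decreasing) target and would not survive a direct extension to general strongly log-concave $\pi$. Beyond this, the main work is bookkeeping: tracking the two regimes of the $\min$ in Theorem~\ref{thm:local-conductance-lower-bound} and computing the explicit constants $0.00216$ and $5.83 \times 10^{-7}$ that appear in the statement.
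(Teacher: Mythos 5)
Your proof is correct and follows essentially the same route as the paper: the same coupling (maximal coupling of the proposals together with a shared uniform variate deciding acceptance), the same monotonicity observation exploiting $\pi(x)\geq\pi(y)$ for $|x|\leq|y|$, the same invocation of Lemma~\ref{lem:normal-accept-x} and Theorem~\ref{thm:local-conductance-lower-bound} with $C=\mathsf{E}$, and the same constants. The only cosmetic difference is that you express the coupling success probability as $\int \alpha(x,w-x)\min(q_x,q_y)(w)\,\mathrm{d}w \geq \alpha(x)-\left\Vert Q_x-Q_y\right\Vert _{\mathrm{TV}}$, whereas the paper writes the equivalent $\mathbb{P}(W_x=W_y)+\mathbb{P}(X'=W_x)-1$.
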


\begin{proof}
Let $v=\exp\left\{ -\frac{\varsigma^{2}}{2}\left[1+2d^{-1/2}+2d^{-1}\right]\right\} \cdot\frac{1}{2}\cdot(1-{\rm e}^{-1})$.
Let $\delta_{d}=v\sigma d^{-1/2}$, and $x,y\in\mathsf{X}$ such that
$\left|x-y\right|\leq\delta_{d}$. Then $\left\Vert Q_{x}-Q_{y}\right\Vert _{{\rm TV}}\leq v/2$
by Lemma~\ref{lem:RWM-continuity-proposal}. We construct a specific
coupling of $(X',Y')$ such that $X'\sim P(x,\cdot)$ and $Y'\sim P(y,\cdot)$.
Without loss of generality, we may assume that $\left|x\right|\leq\left|y\right|$.
First, let $(W_{x},W_{y})$ be distributed according to a maximal
coupling of $Q_{x}$ and $Q_{y}$. Then, with $\mathcal{U}\sim{\rm Uniform}(0,1)$
we define
\[
X'\mid\{W_{x}=w_{x},W_{y}=w_{y},\mathcal{U}=u\}=\begin{cases}
w_{x} & u\leq\pi(w_{x})/\pi(x),\\
x & u>\pi(w_{x})/\pi(x).
\end{cases}
\]
Similarly we define
\[
Y'\mid\{W_{x}=w_{x},W_{y}=w_{y},\mathcal{U}=u\}=\begin{cases}
w_{y} & u\leq\pi(w_{y})/\pi(y),\\
y & u>\pi(w_{y})/\pi(y).
\end{cases}
\]
By Lemma~\ref{lem:RWM-continuity-proposal}, 
\[
\mathbb{P}(W_{x}=W_{y})=1-\left\Vert Q_{x}-Q_{y}\right\Vert _{{\rm TV}}\geq1-\frac{v}{2}.
\]
On the event $\{W_{x}=W_{y}\}\cap\{X'=W_{x}\}$, we have $X'=Y'=W_{x}$
since $\pi(y)\leq\pi(x)$. Hence, using Lemma~\ref{lem:normal-accept-x},
we have

\begin{align*}
\mathbb{P}(X'=Y') & \geq\mathbb{P}(W_{x}=W_{y},X'=W_{x})\\
 & \geq\mathbb{P}(W_{x}=W_{y})+\mathbb{P}(X'=W_{x})-1\\
 & =1-\left\Vert Q_{x}-Q_{y}\right\Vert _{{\rm TV}}-1+\mathbb{P}(X'=W_{x})\\
 & \ge-\frac{v}{2}+v\\
 & =\frac{v}{2}.
\end{align*}
Hence, $\left\Vert P(x,\cdot)-P(y,\cdot)\right\Vert _{{\rm TV}}\leq\mathbb{P}(X'\neq Y')\leq1-\frac{v}{2}$
by the coupling inequality. We now take $\varepsilon=\frac{v}{2}$
will apply Theorem~\ref{thm:local-conductance-lower-bound} with
$C=\mathsf{E}$. Recall that $m=1/\sigma_{0}^{2}$ from Example~\ref{exa:gaussian-mL},
and since $\frac{\log2}{4}v\varsigma d^{-1/2}\le1$ for any $d\in\mathbb{N}$
and $\varsigma>0$, we deduce that 
\begin{align*}
\kappa(0) & =\inf_{A\in\mathscr{E}}\frac{\mu\otimes P(A\times A^{\complement})}{\mu\otimes\mu(A\times A^{\complement})}\\
 & \geq\frac{\varepsilon}{4}\min\left\{ 1,\frac{\log2}{4}v\sigma d^{-1/2}\sqrt{m}\right\} \\
 & =\frac{v}{8}\min\left\{ 1,\frac{\log2}{4}v\varsigma d^{-1/2}\right\} \\
 & =\frac{v^{2}}{32}\log2\cdot\varsigma d^{-1/2}\\
 & =\exp\left\{ -\varsigma^{2}\left[1+2d^{-1/2}+2d^{-1}\right]\right\} \cdot\frac{1}{4}\cdot(1-{\rm e}^{-1})^{2}\cdot\frac{\log2}{32}\cdot\varsigma d^{-1/2}\\
 & \geq0.00216\cdot\exp\left\{ -\varsigma^{2}\left[1+2d^{-1/2}+2d^{-1}\right]\right\} \cdot\varsigma d^{-1/2}.
\end{align*}
The bound on ${\rm Gap_{R}(P)}$ follows by (\ref{eq:cheeger-ineqs}),
and we have ${\rm Gap}(P)={\rm Gap}_{{\rm R}}(P)$ by \cite[Lemma~3.1]{baxendale2005renewal},
since $Q$ is Gaussian. 
\end{proof}
\begin{rem}
The conductance lower bound is in $\Omega(d^{-1/2})$ and the spectral
gap lower bound is in $\Omega(d^{-1})$. Fixing $\varsigma$, we obtain
\[
\lim_{d\to\infty}\inf\kappa_{d}(0)d^{1/2}\geq0.00216\cdot\exp\left\{ -\varsigma^{2}\right\} \cdot\varsigma.
\]
The maximizing $\varsigma$ for the bound is obtained by $\varsigma^{2}=1/2$,
and this value of $\varsigma^{2}$ gives
\[
\lim_{d\to\infty}\inf\kappa_{d}(0)d^{1/2}\geq0.000926.
\]
This particular bound-maximizing value of $\varsigma$ is likely an
artifact of the proof technique; optimal scaling results suggest that
$\varsigma\approx2.38$ is optimal in high dimensions \cite{roberts2001optimal},
although they do not provide a bound on the conductance or spectral
gap of the associated Markov operator.
\end{rem}

To complement this result, we can show that the conductance must decrease
at least as $\mathcal{O}(d^{-1/2})$ when the proposal standard deviation
scales as $d^{-1/2}$, and that this is the slowest polynomial decay
possible. Hence, we may infer that in terms of optimizing conductance
and spectral gap, $d^{-1/2}$ is the correct polynomial scaling of
the standard deviation.
\begin{prop}
Consider the RWM with Gaussian proposal of standard deviation $\sigma_{d}=\varsigma\sigma_{0}d^{-\beta}$
for some $\beta\in\mathbb{R}$. Then the conductance is bounded as
\[
\kappa(0)\leq2\min\left\{ 2\varsigma d^{-\beta},\exp\left(-\frac{d}{16}\right)+\exp\left(-\varsigma^{2}\frac{d^{1-2\beta}}{8}\right)\right\} ,
\]
and the upper bound is maximized for large $d$ by taking $\beta=1/2$,
giving $\kappa_{d}(0)\leq4\varsigma d^{-1/2}$.
\end{prop}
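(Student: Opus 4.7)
The variational characterization $\kappa(0)=\inf_{A\in\mathscr{E}}\mathcal{E}(P,\mathbf{1}_A)/(\mu(A)\mu(A^{\complement}))$ reduces the task to exhibiting, for each term in the minimum, a single well-chosen test set $A$ whose normalized flow is small. The plan is to use two sets: a half-space giving the polynomial bound, and a ball (roughly of the median radius of $|X|$) giving the exponential bound.

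\emph{First bound.} I would take $A=\{x\in\mathbb{R}^{d}:x_{1}>0\}$. By the reflection symmetry of $\mu=\mathcal{N}(0,\sigma_{0}^{2}I_{d})$, $\mu(A)=\mu(A^{\complement})=1/2$, so $\mu\otimes\mu(A\times A^{\complement})=1/4$. Since the accept--reject mechanism can only remove off-diagonal mass, $P(x,A^{\complement})\leq Q_{x}(A^{\complement})$ for $x\in A$, hence $\mu\otimes P(A\times A^{\complement})\leq\mathbb{P}(X_{1}>0,\,X_{1}+\sigma_{d}Z_{1}<0)$ with $X_{1}\sim\mathcal{N}(0,\sigma_{0}^{2})$, $Z_{1}\sim\mathcal{N}(0,1)$ independent. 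The pair $(X_{1},X_{1}+\sigma_{d}Z_{1})$ is centred bivariate normal with correlation $1/\sqrt{1+r^{2}}$ where $r=\sigma_{d}/\sigma_{0}=\varsigma d^{-\beta}$. Using Sheppard's formula, $\mathbb{P}(X_{1}>0,X_{1}+\sigma_{d}Z_{1}<0)=\arctan(r)/(2\pi)$, whence $\kappa(0)\leq 2\arctan(r)/\pi\leq 2r\leq 4\varsigma d^{-\beta}$.

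\emph{Second bound.} For the exponential term I would take $A=\mathcal{B}(0,R)$ with $R$ chosen close to the median of $|X|$, e.g.\ $R^{2}=\sigma_{0}^{2}d$, which makes $\mu(A)\mu(A^{\complement})$ bounded below by a fixed fraction of $1/4$ (absorbing the outer factor of $2$ in the claimed bound). By $\mu$-reversibility, $\mu\otimes P(A\times A^{\complement})=\mu\otimes P(A^{\complement}\times A)$. The key observation is that on $\{x\in A^{\complement},\,y\in A\}$ one has $|y|\leq R\leq|x|$, so $\pi(y)/\pi(x)\geq 1$ and the Metropolis acceptance is identically $1$; consequently
\[
\mu\otimes P(A\times A^{\complement})=\mathbb{P}\bigl(|X|>R,\,|X+\sigma_{d}Z|\leq R\bigr),
\]
with $X\sim\mu$ and $Z\sim\mathcal{N}(0,I_{d})$ independent. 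Writing $|X+\sigma_{d}Z|^{2}=|X|^{2}+2\sigma_{d}\langle X,Z\rangle+\sigma_{d}^{2}|Z|^{2}$ and conditioning on $Z$ to exploit the spherical symmetry of $X$, I would bound this probability by the union of two ``anomaly'' events: (i) a large deviation of $|Y|^{2}$ below its expected value $(\sigma_{0}^{2}+\sigma_{d}^{2})d$, controlled by the Laurent--Massart lower tail of $\chi^{2}_{d}$ and yielding the $\exp(-d/16)$ term; and (ii) the ``inward kick'' $\sigma_{d}^{2}|Z|^{2}+2\sigma_{d}\langle X,Z\rangle$ exceeding a threshold scaling like $\sigma_{d}^{2}d/4$, which by Gaussian/$\chi^{2}$ concentration contributes the $\exp(-\varsigma^{2}d^{1-2\beta}/8)$ term (the exponent is $\sigma_{d}^{2}d/(8\sigma_{0}^{2})=\varsigma^{2}d^{1-2\beta}/8$).

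\emph{Optimization and obstacle.} For large $d$ the polynomial bound $4\varsigma d^{-\beta}$ is best when $\beta\geq 1/2$, while the exponential bound is best when $\beta<1/2$; the polynomial bound is optimized by $\beta=1/2$, yielding $\kappa_{d}(0)\leq 4\varsigma d^{-1/2}$, and no larger $\beta$ improves the rate. The main obstacle is step (ii) in the second bound: one must carefully calibrate the radius $R$ and the split of the ``shortfall'' $R^{2}-|X|^{2}$ into contributions from $\langle X,Z\rangle$ and $|Z|^{2}$ so that both concentration exponents take exactly the forms $d/16$ and $\varsigma^{2}d^{1-2\beta}/8$; conditioning on the direction $X/|X|$ (which is uniform on $S^{d-1}$ and independent of $|X|$) reduces $\langle X,Z\rangle$ to $|X|\cdot N$ with $N\sim\mathcal{N}(0,1)$ independent of $|Z|^{2}-N^{2}\sim\chi^{2}_{d-1}$, after which the two contributions decouple and the Laurent--Massart bounds apply cleanly.
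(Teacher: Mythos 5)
Your first bound is correct and essentially interchangeable with the paper's: you use Sheppard's exact orthant formula $\mathbb{P}(X_{1}>0,X_{1}+\sigma_{d}Z_{1}<0)=\arctan(r)/(2\pi)$ whereas the paper uses a Chernoff bound and a Gaussian integral to get $\sigma_{d}/\sqrt{\sigma_{0}^{2}+\sigma_{d}^{2}}$; both give $\kappa(0)\leq 4\varsigma d^{-\beta}$ with room to spare.

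Your second bound takes a genuinely different route from the paper, and here there is a real gap. The paper does \emph{not} use a median-radius ball. It takes $B=\mathcal{B}(0,\delta_{d})$ with $\delta_{d}=\tfrac{\sigma_{d}\sqrt{d}}{4\sqrt{2}}\wedge c_{d}$, which for $\beta>0$ is a \emph{tiny} ball of radius $\propto\sigma_{d}\sqrt{d}=\varsigma\sigma_{0}d^{1/2-\beta}\ll\sigma_{0}\sqrt{d}$. For $x\in B$ the proposal $x+\sigma_{d}Z$ almost surely lands far out (since $\sigma_{d}|Z|\approx\sigma_{d}\sqrt{d}\gg\delta_{d}$), so it bounds the \emph{acceptance probability} $P(x,\{x\}^{\complement})$ by $\exp(-d/16)+\exp(-\varsigma^{2}d^{1-2\beta}/8)$, and then uses $\pi\otimes P(B\times B^{\complement})\leq\pi(B)\sup_{x\in B}P(x,\{x\}^{\complement})$ together with $\pi(B^{\complement})\geq 1/2$. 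This only requires controlling $|Z|$ for a worst-case fixed $|x|\leq\delta_{d}$; the two exponents come out cleanly from the Laurent--Massart cutoff $|Z|^{2}\geq d/2$ and the algebraic inequality $|x+\sigma_{d}z|^{2}-|x|^{2}\geq\sigma_{d}|z|(\sigma_{d}|z|-2\delta_{d})\geq\sigma_{d}^{2}d/4$ on that event.

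Your median-ball identity $\mu\otimes P(A\times A^{\complement})=\mathbb{P}(|X|>R,\,|X+\sigma_{d}Z|\leq R)$ is correct and appealing, but the ``anomaly'' decomposition you sketch does not give the stated constants. Your item (i) as written cannot produce $\exp(-d/16)$: $|Y|^{2}/(\sigma_{0}^{2}+\sigma_{d}^{2})\sim\chi^{2}_{d}$ and $\{|Y|^{2}\leq\sigma_{0}^{2}d\}=\{\chi^{2}_{d}\leq d/(1+r^{2})\}$, a deviation of size only $\approx\varsigma^{2}d^{1-2\beta}$ below the mean; Laurent--Massart then gives $\exp(-\varsigma^{4}d^{1-4\beta}/4)$, a much weaker exponent than $d/16$. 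What you actually need is a cutoff on $|Z|^{2}$, not on $|Y|^{2}$. And with the natural split $\{|Z|^{2}<d/2\}$ (yielding $\exp(-d/16)$), the inward-kick threshold for $N=\langle X,Z\rangle/|X|$ becomes $-\sigma_{d}|Z|^{2}/(2|X|)\approx-\sigma_{d}(d/2)/(2\sigma_{0}\sqrt{d})=-\varsigma d^{1/2-\beta}/4$, which gives exponent $\varsigma^{2}d^{1-2\beta}/32$, not $/8$. You noted this calibration as an ``obstacle,'' and it is genuine: controlling the joint law of $(|X|,|Z|,\langle X,Z\rangle)$ under the constraint $|X|>R$ loses constants that the paper's tiny-ball/acceptance-rate argument avoids by design. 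Your approach would prove a bound of the same \emph{order} in $d$ (sufficient for the $\beta=1/2$ conclusion), but not the proposition's exact constants.
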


\begin{proof}
First, let $A=\{x\in\mathsf{X}:x_{1}\geq0\}$, and we observe that
$\pi(A)=\frac{1}{2}$. We let $Z\sim\mathcal{N}(0,I_{d})$, and by
neglecting the acceptance probability and using the Chernoff bound
$\mathbb{P}(Z_{1}\leq-z)\leq\exp(-z^{2}/2)$ for $z>0$, we obtain
the bounds
\begin{align*}
\pi\otimes P(A\times A^{\complement}) & =\int_{A}\pi({\rm d}x)P(x,A^{\complement})\\
 & \leq\int_{A}\pi({\rm d}x)\mathbb{P}(x+\sigma_{d}Z\in A^{\complement})\\
 & =\int_{A}\pi({\rm d}x)\mathbb{P}(x_{1}+\sigma_{d}Z_{1}<0)\\
 & =\frac{1}{\sqrt{2\pi\sigma_{0}^{2}}}\int_{0}^{\infty}\exp\left\{ -\frac{x_{1}^{2}}{2\sigma_{0}^{2}}\right\} \mathbb{P}\left(Z_{1}<-\frac{x_{1}}{\sigma_{d}}\right){\rm d}x_{1}\\
 & \leq\frac{1}{\sqrt{2\pi\sigma_{0}^{2}}}\int_{0}^{\infty}\exp\left\{ -\frac{x_{1}^{2}}{2\sigma_{0}^{2}}-\frac{x_{1}^{2}}{2\sigma_{d}^{2}}\right\} {\rm d}x\\
 & =\left(\frac{\sigma_{d}^{2}}{\sigma_{d}^{2}+\sigma_{0}^{2}}\right)^{1/2}\\
 & \leq\frac{\sigma_{d}}{\sigma_{0}}=\varsigma d^{-\beta},
\end{align*}
and it follows that $\kappa_{d}(0)\leq\varsigma d^{-\beta}/\pi\otimes\pi(A\times A^{\complement})=4\varsigma d^{-\beta}$,
giving the first upper bound. 

Now let $B:=\{x:\left|x\right|\leq\delta_{d}\}$, where $\delta_{d}:=\frac{\sigma_{d}\sqrt{d}}{4\sqrt{2}}\wedge c_{d}=\frac{\sigma d^{-\beta+1/2}}{4\sqrt{2}}\wedge c_{d}$,
where $c_{d}$ is chosen so that $\pi(\left|x\right|\leq c_{d})=1/2$.
Hence, $\pi(B)\leq\frac{1}{2}$ and $\pi(B^{\complement})\geq\frac{1}{2}$.
We observe that for $x\in B$,
\begin{align}
\left|x+\sigma_{d}z\right|^{2}-\left|x\right|^{2} & =\left|x\right|^{2}+2\sigma_{d}\left\langle x,z\right\rangle +\sigma_{d}^{2}\left|z\right|^{2}-\left|x\right|^{2}\nonumber \\
 & \geq-2\sigma_{d}\left|x\right|\left|z\right|+\sigma_{d}^{2}\left|z\right|^{2}\nonumber \\
 & \geq-2\sigma_{d}\delta_{d}\left|z\right|+\sigma_{d}^{2}\left|z\right|^{2}\nonumber \\
 & =\sigma_{d}\left|z\right|\left(\sigma_{d}\left|z\right|-2\delta_{d}\right).\label{eq:xz_RWM_pf}
\end{align}
By Lemma~\ref{lem:laurent-massart},
\[
\mathbb{P}(\left|Z\right|^{2}\leq d-2\sqrt{du})\leq\exp(-u),
\]
and taking $u=d/16$ and $C_{d}=d\sigma_{d}^{2}/2$ we obtain
\[
\mathbb{P}(\sigma_{d}^{2}\left|Z\right|^{2}\leq C_{d})=\mathbb{P}(\left|Z\right|^{2}\leq d/2)\leq\exp(-d/16).
\]
Since $2\delta_{d}\sqrt{C_{d}}\leq C_{d}/2$, on the event $\sigma_{d}^{2}\left|Z\right|^{2}\geq C_{d}$
we have from (\ref{eq:xz_RWM_pf}) that
\[
\left|x+\sigma_{d}Z\right|^{2}-\left|x\right|^{2}\geq\sqrt{C_{d}}\left(\sqrt{C_{d}}-2\delta_{d}\right)=C_{d}-2\delta_{d}\sqrt{C_{d}}\geq C_{d}/2.
\]
It follows that for $x\in B$, the acceptance probability satisfies
\begin{align*}
\mathbb{E}\left[1\wedge r(x,\sigma_{d}Z)\right] & =\mathbb{E}\left[1\wedge\exp\left\{ -\frac{1}{2\sigma_{0}^{2}}\left(\left|x+\sigma_{d}Z\right|^{2}-\left|x\right|^{2}\right)\right\} \right]\\
 & \leq1\cdot\mathbb{P}\left(\sigma_{d}^{2}\left|Z\right|^{2}\leq C_{d}\right)+\exp\left\{ -\frac{1}{2\sigma_{0}^{2}}\left(C_{d}-2\delta_{d}\sqrt{C_{d}}\right)\right\} \cdot\mathbb{P}\left(\sigma_{d}^{2}\left|Z\right|^{2}>C_{d}\right)\\
 & \leq\exp\left(-\frac{d}{16}\right)+\exp\left\{ -\frac{1}{2\sigma_{0}^{2}}\left(C_{d}-2\delta_{d}\sqrt{C_{d}}\right)\right\} \\
 & \leq\exp\left(-\frac{d}{16}\right)+\exp\left(-\frac{C_{d}}{4\sigma_{0}^{2}}\right)\\
 & =\exp\left(-\frac{d}{16}\right)+\exp\left(-\frac{d\sigma_{d}^{2}}{8\sigma_{0}^{2}}\right).
\end{align*}
Therefore,
\begin{align*}
\frac{\pi\otimes P(B\times B^{\complement})}{\pi\otimes\pi(B\times B^{\complement})} & =\frac{\int\pi_{B}({\rm d}x)P(x,B^{\complement})}{\pi(B^{\complement})}\\
 & \leq2\int\pi_{B}({\rm d}x)P(x,\{x\}^{\complement})\\
 & \leq2\left\{ \exp\left(-\frac{d}{16}\right)+\exp\left(-\varsigma^{2}\frac{d^{1-2\beta}}{8}\right)\right\} ,
\end{align*}
and we conclude.
\end{proof}
A natural question is whether the lower bound for the spectral gap
is of the correct order when the proposal standard deviation scales
as $d^{-1/2}$, i.e. whether indeed ${\rm Gap}(P)$ scales as $d^{-1}$.
In this case, we can verify directly that this is the case.
\begin{prop}
Let $\pi$ be such that $\mathbb{E}_{\pi}[X_{1}]=0$ and $\mathbb{E}_{\pi}[X_{1}^{2}]=\sigma_{0}^{2}$,
and the proposal satisfy $Q_{x}(A)=\int_{A}\mathcal{N}(y;x,\sigma_{d}^{2}I_{d}){\rm d}y$
for $A\in\mathscr{X}$. Then
\[
{\rm Gap}(P)\leq\frac{\sigma_{d}^{2}}{2\sigma_{0}^{2}}.
\]
\end{prop}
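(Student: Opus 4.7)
The plan is to exploit the variational characterization $\mathrm{Gap}_{\mathrm{R}}(P) = \inf_{g\in\ELL_{0}(\pi),g\neq 0} \mathcal{E}(P,g)/\|g\|_{2}^{2}$ by plugging in a well-chosen test function. Since $P$ is $\pi$-reversible with Gaussian proposal, the appeal to \cite[Lemma~3.1]{baxendale2005renewal} used in the previous theorem tells us $\mathrm{Gap}(P) = \mathrm{Gap}_{\mathrm{R}}(P)$, so any admissible test function immediately produces an upper bound on $\mathrm{Gap}(P)$.

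The natural choice is the coordinate function $f(x):=x_{1}$. The moment assumptions on $\pi$ give at once $\pi(f)=\mathbb{E}_{\pi}[X_{1}]=0$ and $\|f\|_{2}^{2}=\mathbb{E}_{\pi}[X_{1}^{2}]=\sigma_{0}^{2}$, so $f\in\ELL_{0}(\pi)\setminus\{0\}$ and is an admissible test function.

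It remains to bound the Dirichlet form. Writing the proposal as $Y=X+\sigma_{d}Z$ with $Z\sim\mathcal{N}(0,I_{d})$ independent of $X$, we have $(f(X)-f(Y))^{2}=\sigma_{d}^{2}Z_{1}^{2}$ on a proposed move, while the rejection part of $P$ (the point mass at $x$) contributes $0$ to the Dirichlet form. Hence
\[
\mathcal{E}(P,f)=\frac{1}{2}\int\pi(\dif x)\int\mathcal{N}(\dif z;0,I_{d})\,\alpha(x,x+\sigma_{d}z)\,\sigma_{d}^{2}z_{1}^{2}\leq\frac{\sigma_{d}^{2}}{2}\,\mathbb{E}[Z_{1}^{2}]=\frac{\sigma_{d}^{2}}{2},
\]
where we simply bounded the acceptance probability by $1$. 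Dividing by $\|f\|_{2}^{2}=\sigma_{0}^{2}$ yields $\mathrm{Gap}(P)\leq\mathcal{E}(P,f)/\|f\|_{2}^{2}\leq\sigma_{d}^{2}/(2\sigma_{0}^{2})$, which is the claim. There is no real obstacle: the argument is a one-line test-function computation, and no structural information about $\pi$ beyond the first two moments of its first coordinate is used.
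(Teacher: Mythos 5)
Your proof is correct and is essentially identical to the paper's: the same test function $f(x)=x_{1}$, the same bound on the Dirichlet form by dropping the acceptance probability, and the same final division by $\|f\|_{2}^{2}=\sigma_{0}^{2}$.
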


\begin{proof}
We use the fact that ${\rm Gap}_{{\rm R}}(P)=\inf_{f\in\ELL_{0}(\pi)}\mathcal{E}(P,f)/\left\Vert f\right\Vert _{2}^{2}$.
Let $f(x)=x_{1}$. Then we compute
\begin{align*}
\mathcal{E}(P,f) & =\frac{1}{2}\int\pi({\rm d}x)P(x,{\rm d}y)(y_{1}-x_{1})^{2}\\
 & \leq\frac{1}{2}\int\pi({\rm d}x)Q_{x}({\rm d}y)(y_{1}-x_{1})^{2}\\
 & =\frac{1}{2}\sigma_{d}^{2},
\end{align*}
while $\left\Vert f\right\Vert _{2}^{2}=\sigma_{0}^{2}$, and we conclude
from ${\rm Gap}(P)\leq\mathcal{E}(P,f)/\left\Vert f\right\Vert _{2}^{2}$.
\end{proof}

\subsection{Central limit theorems \label{subsec:example-Central-limit-theorems}}

Obtaining a central limit theorem follows in a relatively straightforward
manner when $\left\Vert P^{n}f\right\Vert _{2}^{2}$ decays quickly
enough.
\begin{prop}
\label{prop:clt}Let $f\in\mathrm{L}_{0}^{2}(\mu)$ with $\Phi(f)<\infty$.
Let $(X_{n})$ be a Markov chain with Markov kernel $P$. Assume $\left\Vert P^{n}f\right\Vert _{2}^{2}\leq\Phi(f)\gamma(n)$
with $\gamma(n)\in\mathcal{O}(n^{-a})$ for some $a>1$. Then for
$\mu$-almost all $X_{0}$,
\[
\frac{1}{\sqrt{n}}\sum_{i=0}^{n-1}f(X_{i})\overset{L}{\to}\mathcal{N}(0,\sigma^{2}),
\]
where $\sigma^{2}=\lim_{n\to\infty}\frac{1}{n}\mathbb{E}_{\mu}\left[\left\{ \sum_{i=0}^{n-1}f(X_{i})\right\} ^{2}\right]<\infty$.
\end{prop}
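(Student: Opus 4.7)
The plan is to invoke the Maxwell--Woodroofe (2000) central limit theorem for stationary Markov chains, which does not require reversibility and whose hypothesis is the summability condition
\[
\sum_{n \geq 1} n^{-3/2} \Big\| \sum_{k=0}^{n-1} P^k f \Big\|_2 < \infty.
\]
Under this condition, Maxwell--Woodroofe yields both the existence of a finite $\sigma^2$ and the CLT for the chain started from $\mu$. This approach naturally dovetails with our hypothesis, since we have quantitative $\ELL(\mu)$ decay of $\|P^n f\|_2$ but no reversibility or spectral gap.

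To verify the summability, I would apply the triangle inequality in $\ELL(\mu)$ to obtain
\[
\Big\| \sum_{k=0}^{n-1} P^k f \Big\|_2 \leq \sum_{k=0}^{n-1} \|P^k f\|_2 \leq \sqrt{\Phi(f)} \sum_{k=0}^{n-1} \sqrt{\gamma(k)},
\]
and then use $\gamma(k) \in \mathcal{O}(k^{-a})$ to bound the inner sum by a constant multiple of $1 \vee n^{1 - a/2}$ (with a logarithmic factor in the critical case $a = 2$). Substituting,
\[
\sum_{n \geq 1} n^{-3/2} \big(1 \vee n^{1-a/2}\big) \lesssim \sum_{n} n^{-(1+a)/2},
\]
which converges precisely when $a > 1$, matching the hypothesis of the proposition exactly.

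The main obstacle is passing from the CLT under the stationary law $\mathbb{P}_\mu$ to one valid for $\mu$-almost every fixed initial condition. The cleanest route is via the martingale approximation underpinning the Maxwell--Woodroofe argument: the summability condition produces $g \in \ELL(\mu)$ and a stationary martingale-increment sequence so that one may write $\sum_{i=0}^{n-1} f(X_i) = M_n + g(X_0) - g(X_n) + o_{\ELL(\mu)}(\sqrt{n})$, with $M_n$ a square-integrable martingale for the natural filtration. The martingale CLT handles $n^{-1/2} M_n$ on each fiber $\mathbb{P}_x$ for $\mu$-a.a.\ $x$, while the boundary term $n^{-1/2}(g(X_0) - g(X_n))$ vanishes in $\mathbb{P}_x$-probability for $\mu$-a.a.\ $x$ by Markov's inequality combined with $\mu$-invariance (which controls $\mathbb{E}_\mu[g(X_n)^2] = \|g\|_2^2$ uniformly in $n$). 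The identification of $\sigma^2$ as the stated Ces\`aro limit is then automatic from the stationary CLT and uniform integrability of $n^{-1}(\sum_{i=0}^{n-1} f(X_i))^2$, which follows from the same martingale decomposition.
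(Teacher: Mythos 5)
Your proof follows essentially the same route as the paper: both verify the Maxwell--Woodroofe summability condition $\sum_n n^{-3/2}\|\sum_{k<n} P^k f\|_2 < \infty$ via Minkowski's inequality and the bound $\|P^k f\|_2 \leq \Phi^{1/2}(f)\gamma^{1/2}(k)$, and both split the polynomial-rate cases to conclude convergence when $a>1$. Your additional remarks on passing from the stationary-law CLT to $\mu$-a.e.\ fixed initial points via the martingale approximation are a useful clarification that the paper leaves implicit in its citation of \cite[Corollary~1]{maxwell2000central}, but the core argument is identical.
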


\begin{proof}
We will verify the Maxwell--Woodroofe condition:
\begin{equation}
\sum_{n=1}^{\infty}n^{-3/2}\left\Vert V_{n}f\right\Vert _{2}<\infty,\label{eq:mw-cond}
\end{equation}
where $V_{n}f=\sum_{k=0}^{n-1}P^{k}f$. The central limit theorem
then follows from \cite[Corollary~1]{maxwell2000central}. Minkowski's
inequality gives
\[
\left\Vert V_{n}f\right\Vert _{2}=\left\Vert \sum_{k=0}^{n-1}P^{k}f\right\Vert _{2}\leq\sum_{k=0}^{n-1}\left\Vert P^{k}f\right\Vert _{2}\leq\Phi^{1/2}(f)\sum_{k=0}^{n-1}\gamma^{1/2}(k).
\]
For $a>1$ then we may write $\gamma^{1/2}(k)\leq C(k+1)^{-a/2}$
for some $C>0$ and note that $\gamma(0)<\infty$ . Then
\begin{align*}
\frac{1}{C}\sum_{k=1}^{n-1}\gamma^{1/2}(k) & \leq\sum_{k=1}^{n-1}(k+1)^{-a/2}\\
 & =\sum_{k=1}^{n-1}(k+1)^{-a/2}\\
 & \leq\int_{1}^{n}x^{-a/2}{\rm d}x\\
 & \leq\frac{2}{2-a}n^{1-\frac{a}{2}},
\end{align*}
from which we may deduce that if $a\in(1,2)$ then $\sum_{k=0}^{n-1}\gamma^{1/2}(k)\in\mathcal{O}(n^{1/2-\epsilon})$
for some $\epsilon>0$ and (\ref{eq:mw-cond}) holds. If $a\geq2$
then $\gamma(n)\in\mathcal{O}(n^{-b})$ for any $b\in(1,2)$ and we
can also conclude that (\ref{eq:mw-cond}) holds.
\end{proof}
\begin{rem}
\label{rem:clt-lp}One may verify that $\gamma(n)\in\mathcal{O}(n^{-a})$
by verifying a $(\Phi,\beta)$-WPI with $\beta\in\mathcal{O}(s^{-a})$;
see \cite[Lemma~15]{ALPW2021}. We observe that if $\gamma(n)\in\mathcal{O}(n^{-b})$
with $\Phi=\left\Vert \cdot\right\Vert _{{\rm osc}}^{2}$ and $b>1$,
then by Proposition~\ref{prop:cattiaux-et-al-gamma-p} we may deduce
that for $p>2$, $\left\Vert P^{n}f\right\Vert _{2}^{2}\leq\left\Vert f\right\Vert _{\mathrm{L}^{p}(\mu)}^{2}\gamma_{p}(n)$
for $f\in\mathrm{L}_{0}^{p}(\mu)$ with $\gamma_{p}(n)\in\mathcal{O}(n^{-b(1-\frac{p}{2})})$.
It then follows that a CLT holds for all $f\in\mathrm{L}_{0}^{p}(\mu)$
if $p>2b/(b-1)$. If $\gamma(n)$ decays faster than polynomially,
then a CLT holds for all $f\in\mathrm{L}_{0}^{p}(\mu)$ with $p>2$
arbitrary.
\end{rem}

\appendix

\section{Miscellaneous results and proofs \label{app:first-appendix}}
\begin{proof}[Proof of Proposition \ref{prop:cattiaux-et-al-gamma-p}]
 We follow the proof of \cite[Lemma 5.1]{Cattiaux2012}. So we choose
some $g\in\mathrm{L}_{0}^{p}(\mu)$ with $\|g\|_{p}=1$, and for $R>1$
to be chosen later, define $g_{R}:=g\wedge R\vee(-R)$, and set $m_{R}:=\int g_{R}\,\dif\mu$.
So we also obtain
\[
|m_{R}|\le\|g\|_{p}^{p}/R^{p-1}
\]
and
\[
\|g-g_{R}\|_{2}^{2}\le\|g\|_{p}^{p}/R^{p-2}.
\]
Then we bound using the fact that $P^{n}$ is a contraction on $\ELL(\mu)$,
\begin{align*}
\|P^{n}g\|_{2} & \le\|P^{n}g-P^{n}g_{R}\|_{2}+\|P^{n}(g_{R}-m_{R})\|_{2}+|m_{R}|\\
 & \le\|g-g_{R}\|_{2}+\|P^{n}(g_{R}-m_{R})\|_{2}+|m_{R}|\\
 & \le\|g\|_{p}^{p}/R^{\frac{p-2}{2}}+\gamma^{1/2}(n)\|g_{R}-m_{R}\|_{\mathrm{osc}}+\|g\|_{p}^{p}/R^{p-1}\\
 & \le1/R^{(p-2)/2}+2R\gamma^{1/2}(n)+1/R^{p-1}\\
 & \le2R\gamma^{1/2}(n)+2/R^{(p-2)/2}.
\end{align*}
Finally this can be optimized by choosing $R=2^{2/p}\gamma^{-1/p}(n)$.
The result then follows.
\end{proof}
\begin{lem}
\label{lem:phi-spectral-gap}Assume $\Phi$ defines a subspace of
$\mathrm{L}_{0}^{2}(\mu)$, $\mathcal{F}=\{f\in\mathrm{L}_{0}^{2}(\mu):\Phi(f)<\infty\}$.
Let $T$ be self-adjoint and assume that $f\in\mathcal{F}\Rightarrow Tf\in\mathcal{F}$.
Let $S$ denote the restriction of $T$ to the Hilbert space $\bar{\mathcal{F}}$,
the closure of $\mathcal{F}$. Then $\psi$ in Remark~\ref{rem:psi-alpha}
satisfies
\[
\psi(0;\Phi)={\rm Gap}_{{\rm R}}(S).
\]
 If $\Phi=\left\Vert \cdot\right\Vert _{{\rm osc}}^{2}$, then $\bar{\mathcal{F}}=\mathrm{L}_{0}^{2}(\mu)$
and $\psi(0;\Phi)$ is the $\mathrm{L}_{0}^{2}(\mu)$ spectral gap
of $T$.
\end{lem}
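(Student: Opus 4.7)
The plan is to use the scaling property of sieves to reduce $\psi(0;\Phi)$ to a Rayleigh quotient, then use density of $\mathcal{F}$ in $\bar{\mathcal{F}}$ together with continuity of the Dirichlet form to identify it with the right spectral gap of $S$.

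First, I would rewrite $\psi(0;\Phi)$. For any nonzero $f\in\mathcal{F}$, the sieve property $\|f\|_{2}^{2}\leq\mathfrak{a}\Phi(f)$ forces $\Phi(f)>0$, so we may normalize by setting $g=f/\sqrt{\Phi(f)}$, which satisfies $\Phi(g)=1$ (using $\Phi(cf)=c^{2}\Phi(f)$) and has $\|g\|_{2}^{2}>0$. Since the Rayleigh quotient $\mathcal{E}(T,f)/\|f\|_{2}^{2}$ is scale invariant, this bijection between unit-$\Phi$ elements and nonzero elements of $\mathcal{F}$ gives
\[
\psi(0;\Phi)=\inf_{f\in\mathcal{F}\setminus\{0\}}\frac{\mathcal{E}(T,f)}{\|f\|_{2}^{2}}.
\]

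Next, I would show that restricting $T$ to $\bar{\mathcal{F}}$ produces a well-defined self-adjoint operator $S$. The hypothesis $f\in\mathcal{F}\Rightarrow Tf\in\mathcal{F}$ together with boundedness of $T$ and density of $\mathcal{F}$ in $\bar{\mathcal{F}}$ implies $T(\bar{\mathcal{F}})\subseteq\bar{\mathcal{F}}$, so $S$ is a bounded self-adjoint operator on the Hilbert space $\bar{\mathcal{F}}$ with $\mathcal{E}(S,g)=\mathcal{E}(T,g)$ for $g\in\mathcal{F}$. By definition,
\[
{\rm Gap}_{\mathrm{R}}(S)=\inf_{g\in\bar{\mathcal{F}}\setminus\{0\}}\frac{\mathcal{E}(S,g)}{\|g\|_{2}^{2}}.
\]
Since the quadratic form $g\mapsto\mathcal{E}(T,g)=\langle(\mathrm{Id}-T)g,g\rangle$ is continuous in the $\mathrm{L}^{2}$ topology and $\mathcal{F}$ is dense in $\bar{\mathcal{F}}$, a standard approximation argument shows that the infimum of $\mathcal{E}(T,g)/\|g\|_{2}^{2}$ over $\bar{\mathcal{F}}\setminus\{0\}$ coincides with that over $\mathcal{F}\setminus\{0\}$; one direction is trivial by inclusion, and the other follows by approximating any $g\in\bar{\mathcal{F}}$ with $\|g\|_{2}=1$ by elements $g_{n}\in\mathcal{F}$ with $\|g_{n}\|_{2}\to1$ and $\mathcal{E}(T,g_{n})\to\mathcal{E}(T,g)$. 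Combining this with the expression above yields $\psi(0;\Phi)={\rm Gap}_{\mathrm{R}}(S)$.

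For the final statement, when $\Phi=\|\cdot\|_{\mathrm{osc}}^{2}$, the set $\mathcal{F}$ consists of all $\mathrm{L}_{0}^{2}(\mu)$ functions with finite oscillation, which contains all bounded mean-zero functions and is therefore dense in $\mathrm{L}_{0}^{2}(\mu)$; hence $\bar{\mathcal{F}}=\mathrm{L}_{0}^{2}(\mu)$ and $S=T|_{\mathrm{L}_{0}^{2}(\mu)}$, so $\psi(0;\Phi)$ is precisely the $\mathrm{L}_{0}^{2}(\mu)$ spectral gap of $T$. The only mild obstacle is verifying the $\mathcal{F}$-invariance hypothesis and that the continuity/density argument really does swap infimum with closure --- both rely on the boundedness of $T$ as a Markov operator (which ensures $\|Tf\|_{\mathrm{osc}}\le\|f\|_{\mathrm{osc}}$ in the $\mathrm{osc}$ case, and ensures continuity of the Dirichlet form in general).
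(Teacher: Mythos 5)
Your argument is correct and follows essentially the same route as the paper: reduce $\psi(0;\Phi)$ to the Rayleigh quotient over $\mathcal{F}\setminus\{0\}$ by the $\Phi(cf)=c^2\Phi(f)$ scaling, extend $T|_{\mathcal{F}}$ to $\bar{\mathcal{F}}$ by boundedness, pass the infimum to $\bar{\mathcal{F}}$ by density, and use truncation to get $\bar{\mathcal{F}}=\mathrm{L}_0^2(\mu)$ when $\Phi=\|\cdot\|_{\rm osc}^2$. The only cosmetic difference is that the paper invokes the variational characterization $\sup_{\|f\|_2=1}\langle Sf,f\rangle=\sup\sigma(S)$ to identify the Rayleigh-quotient infimum with ${\rm Gap}_{\rm R}(S)$, whereas you take that identification as the definition; both are standard and equivalent.
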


\begin{proof}
$\mathcal{F}$ is a normed vector space with norm $\left\Vert \cdot\right\Vert _{2}$,
and hence $\bar{\mathcal{F}}$ is a Hilbert space. We may deduce that
the restriction of $T$ to $\mathcal{F}$ is an operator from $\mathcal{F}$
to $\bar{\mathcal{F}}$, and that $S$ is its unique extension as
a bounded linear operator from $\bar{\mathcal{F}}$ to $\bar{\mathcal{F}}$.
By \cite[Theorem~22.A.19]{douc2018markov} we have $\sup_{f\in\bar{\mathcal{F}},\left\Vert f\right\Vert _{2}=1}\left\langle Sf,f\right\rangle =\sup\sigma(S)$
so that $\inf_{f\in\bar{\mathcal{F}}}\mathcal{E}(S,f)/\left\Vert f\right\Vert _{2}^{2}={\rm Gap}_{{\rm R}}(S)$. 

Now assume that $\Phi=\left\Vert \cdot\right\Vert _{{\rm osc}}^{2}$.
For any $f\in\mathrm{L}_{0}^{2}(\mu)$ we may define $f_{n}={\bf 1}_{A_{n}}\cdot f$
and $g_{n}=f_{n}-\mu(f_{n})$, where $A_{n}=\left\{ x:-n\leq f_{n}(x)\leq n\right\} $.
Then $(g_{n})$ is a sequence of bounded functions in $\mathrm{L}_{0}^{2}(\mu)$
with $g_{n}\to f$ pointwise and $\left|g_{n}\right|\leq\left|f\right|$.
We have
\[
\left|\mu(f_{n})\right|=\left|\mu(f_{n})-\mu(f)\right|\leq\mu(\left|f_{n}-f\right|)=\left\Vert f_{n}-f\right\Vert _{L^{1}(\mu)}\leq\left\Vert f_{n}-f\right\Vert _{2},
\]
from which we obtain that $\left\Vert g_{n}-f\right\Vert _{2}\leq2\left\Vert f_{n}-f\right\Vert _{2}\to0$
by dominated convergence, and hence $\bar{\mathcal{F}}=\mathrm{L}_{0}^{2}(\mu)$.
\end{proof}
\begin{rem}
\label{rem:phi-operator-norm}If $T=P^{*}P$, then $T$ is self-adjoint
and positive, and by \cite[Theorem~22.A.17 and Corollary~22.A.18]{douc2018markov}
we may further deduce that 
\[
\left\Vert S\right\Vert _{\mathcal{\bar{\mathcal{F}}\to\bar{\mathcal{F}}}}=\left\Vert R\right\Vert _{\mathcal{\bar{\mathcal{F}}\to\bar{\mathcal{F}}}}^{2}=1-\psi(0;\Phi),
\]
where $R$ is the restriction of $P$ to $\bar{\mathcal{F}}$.
\end{rem}

\begin{lem}
\label{lem:dirichlet-form-indicator}Let $P$ be a $\mu$-reversible
Markov transition kernel $P$ on $(\E,\mathscr{E})$. Then for any
$A\in\mathcal{E}$
\[
\mathcal{E}(P,\mathbf{1}_{A})=\mu\otimes P\big(A\times A^{\complement}\big)\text{ and }{\rm var}\big(\mathbf{1}_{A}\big)=\mu\otimes\mu\big(A\times A^{\complement}\big)\;.
\]
\end{lem}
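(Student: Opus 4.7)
The proof is direct and requires only the defining properties of the Dirichlet form together with $\mu$-invariance (reversibility simplifies a symmetric-form argument but is not strictly needed for the first identity). The plan is to expand both sides of each identity using the inner product representations on $\ELL(\mu)$ and then exploit $\mu$-invariance of $P$ to rewrite $\mu(A)$ as a product-measure quantity.

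For the first equality, I would unfold the Dirichlet form directly from its definition:
\[
\calE(P,\mathbf{1}_A) = \langle (\Id-P)\mathbf{1}_A,\mathbf{1}_A\rangle = \|\mathbf{1}_A\|_2^2 - \langle P\mathbf{1}_A,\mathbf{1}_A\rangle.
\]
The first term is $\mu(A)$. For the second, note that $P\mathbf{1}_A(x) = P(x,A)$, so $\langle P\mathbf{1}_A,\mathbf{1}_A\rangle = \mu\otimes P(A\times A)$. Using $\mu$-invariance, $\mu(A) = \mu\otimes P(A\times\mathsf{E}) = \mu\otimes P(A\times A) + \mu\otimes P(A\times A^{\complement})$, and subtracting yields $\calE(P,\mathbf{1}_A) = \mu\otimes P(A\times A^{\complement})$. (Alternatively one may start from the symmetric form $\calE(P,f) = \tfrac12\int\mu(\dif x)P(x,\dif y)[f(y)-f(x)]^2$, observe that $[\mathbf{1}_A(y)-\mathbf{1}_A(x)]^2$ equals $1$ precisely when exactly one of $x,y$ lies in $A$, and invoke reversibility to fold the two cross terms into one.)

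For the second equality, one computes
\[
{\rm var}(\mathbf{1}_A) = \mu(\mathbf{1}_A^2) - \mu(\mathbf{1}_A)^2 = \mu(A) - \mu(A)^2 = \mu(A)\mu(A^{\complement}) = \mu\otimes\mu(A\times A^{\complement}),
\]
which uses only $\mathbf{1}_A^2 = \mathbf{1}_A$ and the definition of the product measure.

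There is no real obstacle here: the statement is essentially a bookkeeping identity, and the only subtlety worth flagging is that the first equality does not actually require reversibility—only $\mu$-invariance—whereas the symmetric presentation of the Dirichlet form does make implicit use of reversibility to equate the two off-diagonal contributions $\mu\otimes P(A\times A^{\complement})$ and $\mu\otimes P(A^{\complement}\times A)$.
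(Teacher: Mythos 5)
Your proof is correct. Your primary route differs from the paper's: you expand $\calE(P,\mathbf{1}_A)=\mu(A)-\mu\otimes P(A\times A)$ from the inner-product form $\langle(\Id-P)\mathbf{1}_A,\mathbf{1}_A\rangle$ and recover the off-diagonal term via $\mu$-invariance, $\mu(A)=\mu\otimes P(A\times A)+\mu\otimes P(A\times A^{\complement})$. The paper instead starts from the symmetric quadratic form $\calE(P,\mathbf{1}_A)=\tfrac12\int[\mathbf{1}_A(x)-\mathbf{1}_A(y)]^2\,\mu\otimes P(\dif x,\dif y)$, expands the square into the two cross terms $\mathbf{1}_A(x)\mathbf{1}_{A^{\complement}}(y)+\mathbf{1}_{A^{\complement}}(x)\mathbf{1}_A(y)$, and folds them together using the symmetry of $\mu\otimes P$ (i.e.\ reversibility). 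Your version is a legitimate and marginally cleaner alternative; your observation that only $\mu$-invariance is needed for the first identity is accurate, and is in fact echoed elsewhere in the paper (in the proof of Theorem~\ref{thm:local-conductance-lower-bound}, where $\mu\otimes P(A\times A^{\complement})=\mu\otimes P(A^{\complement}\times A)$ is derived from invariance alone). The variance identity is handled identically in substance in both proofs.
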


\begin{proof}
Let $A\in\mathcal{E}$. By polarization, considering when $|\mathbf{1}_{A}(x)-\mathbf{1}_{A}(y)|=1\neq0$
and using the symmetry of $\mu\otimes P$ we have
\begin{align*}
\mathcal{E}(P,\mathbf{1}_{A}) & =\frac{1}{2}\int\big[\mathbf{1}_{A}(x)-\mathbf{1}_{A}(y)\big]^{2}\mu\otimes P({\rm d}x,{\rm d}y)\\
 & =\frac{1}{2}\int\big[\mathbf{1}_{A}(x)\mathbf{1}_{A^{\complement}}(y)+\mathbf{1}_{A^{\complement}}(x)\mathbf{1}_{A}(y)\big]\mu\otimes P({\rm d}x,{\rm d}y)\\
 & =\int\mathbf{1}_{A}(x)\mathbf{1}_{A^{\complement}}(y)\mu\otimes P({\rm d}x,{\rm d}y)\,.
\end{align*}
The result on the variance follows by considering $P(x,A)=\mu(A)$
for $(x,A)\in\mathsf{E}\times\mathscr{E}$ and the classical identity
${\rm var}\big(\mathbf{1}_{A}\big)=\frac{1}{2}\mathbb{E}_{\mu\otimes\mu}\left[\big(\mathbf{1}_{A}(X)-\mathbf{1}_{A}(Y)\big)^{2}\right]$.
\end{proof}
\begin{lem}[\cite{lawler1988bounds}]
\label{lem:lawler-sokal-beautiful} Let $\nu$ be a symmetric probability
measure on $(\mathsf{E}\times\mathsf{E},\mathscr{E}\otimes\mathscr{E})$.
Then for any $h\colon\mathsf{E}\times\mathsf{E}\rightarrow\mathbb{R_{+}}$
such that $h\in\mathrm{L}^{1}(\nu)$ and for any $x\in\mathsf{E}$,
$y\rightarrow h(x,y)$ is constant. Writing $h(x):=h(x,y)$ for notational
simplicity, define $A_{u}:=\{x\in\mathsf{E}\colon h(x)\leq u\}$ for
$u\geq0$ . Then we have
\begin{align}
\mathbb{E}_{\nu}\left[|h(X)-h(Y)|\right] & =2\int\nu(A_{t},A_{t}^{\complement})\,{\rm d}t\,.\label{eq:bridge-dirichlet-indicators}
\end{align}
\end{lem}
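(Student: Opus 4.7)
The plan is to establish a pointwise layer-cake identity for $|h(x)-h(y)|$ in terms of indicators of the sublevel sets $A_t$, then integrate against $\nu$ and use symmetry. The whole proof should be essentially one Fubini--Tonelli computation once the right identity is in hand.

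First I would write down the pointwise identity: for any $x,y\in\mathsf{E}$ with $h(x),h(y)\geq0$, without loss of generality $h(x)\leq h(y)$, so that
\[
|h(x)-h(y)| \;=\; h(y)-h(x) \;=\; \int_{0}^{\infty} \mathbf{1}_{[h(x),\,h(y))}(t)\,\mathrm{d}t.
\]
The key observation is that $t\in[h(x),h(y))$ if and only if $h(x)\leq t$ and $h(y)>t$, i.e.\ $x\in A_{t}$ and $y\in A_{t}^{\complement}$. Symmetrising in $x,y$ (so as not to rely on the ordering), this rewrites as
\[
|h(x)-h(y)| \;=\; \int_{0}^{\infty}\bigl[\mathbf{1}_{A_{t}}(x)\mathbf{1}_{A_{t}^{\complement}}(y) + \mathbf{1}_{A_{t}^{\complement}}(x)\mathbf{1}_{A_{t}}(y)\bigr]\,\mathrm{d}t,
\]
since for each $t$ at most one of the two products is nonzero, and the resulting set of $t$'s has Lebesgue measure $|h(y)-h(x)|$.

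Next I would integrate both sides against $\nu$ and swap the order of integration by Fubini--Tonelli (justified since the integrand is nonnegative, and since $h\in\mathrm{L}^{1}(\nu)$ ensures the left-hand side, and hence the double integral, is finite). This yields
\[
\mathbb{E}_{\nu}\bigl[|h(X)-h(Y)|\bigr] \;=\; \int_{0}^{\infty}\bigl[\nu(A_{t}\times A_{t}^{\complement}) + \nu(A_{t}^{\complement}\times A_{t})\bigr]\,\mathrm{d}t.
\]
Finally, symmetry of $\nu$ on $\mathsf{E}\times\mathsf{E}$ gives $\nu(A_{t}\times A_{t}^{\complement}) = \nu(A_{t}^{\complement}\times A_{t})$, so the two terms coincide and we obtain the claimed identity \eqref{eq:bridge-dirichlet-indicators}.

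There is no real obstacle here; the only point meriting a line of care is the pointwise layer-cake identity, where one must check the two symmetric indicator cross-terms correctly capture $|h(x)-h(y)|$ regardless of which of $h(x),h(y)$ is larger (and that the diagonal $h(x)=h(y)$ contributes zero). Measurability of $t\mapsto\nu(A_{t}\times A_{t}^{\complement})$ is automatic from monotonicity of $t\mapsto A_{t}$, so Fubini--Tonelli applies without issue.
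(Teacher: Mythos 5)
Your proof is correct and follows essentially the same route as the paper's: a layer-cake (Fubini--Tonelli) decomposition of $|h(x)-h(y)|$ into indicators of the sublevel sets $A_t$, followed by an application of the symmetry of $\nu$ to merge the two cross-terms into a factor of $2$. The only cosmetic difference is that you spell out the symmetrized pointwise identity before integrating and invoke symmetry at the end, whereas the paper invokes symmetry up front to introduce the factor of $2$ directly.
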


\begin{proof}
We have by symmetry of $\nu$ and Fubini,
\begin{align*}
\mathbb{E}_{\nu}\left[|h(X)-h(Y)|\right] & =2\int\int\nu({\rm d}x,{\rm d}y)\mathbf{1}\{h(x)\leq t<h(y)\}\,{\rm d}t\\
 & =2\int\nu(A_{t},A_{t}^{\complement})\,{\rm d}t\;\cdot
\end{align*}
\end{proof}

\bibliographystyle{plain}
\bibliography{bib-subgeom}

\end{document}